\theoremstyle{plain}
\DeclareMathOperator{\Tor}{Tor}
\newtheorem{thm}{Theorem} 
\newtheorem*{thmn}{Theorem} 
\DeclareMathOperator{\cof}{-cof}
\DeclareMathOperator{\End}{End}
\DeclareMathOperator{\cell}{-cell}
\DeclareMathOperator{\inj}{-inj}
\numberwithin{thm}{section}
\newtheorem{lem}[thm]{Lemma}
\newtheorem{cor}[thm]{Corollary} 
\newtheorem{prop}[thm]{Proposition} 
\newtheorem{obs}[thm]{Observation}
\newtheorem{conv}[thm]{Convention}
\theoremstyle{definition}
\newtheorem{defn}[thm]{Definition}
\theoremstyle{remark}
\newtheorem{warn}[thm]{Warning}
\newtheorem{ex}[thm]{Example}
\DeclareMathOperator{\Ext}{Ext}
\DeclareMathOperator{\coker}{coker}
\DeclareMathOperator{\Hom}{Hom}
\DeclareMathOperator{\Gr}{Gr}
\DeclareMathOperator{\cyl}{cyl}
\DeclareMathOperator{\Mot}{Mot}
\DeclareMathOperator{\BP}{BP}
\DeclareMathOperator{\colim}{colim}
\DeclareMathOperator{\Mod}{Mod}
\DeclareMathOperator{\Fil}{Fil}
\DeclareMathOperator{\Sp}{Sp}
\newcommand{\heart}{\ensuremath\heartsuit}
\newcommand{\CC}{\mathbb{C}}
\newcommand{\ZZ}{\mathbb{Z}}
\newcommand{\FF}{\mathbb{F}}
\newcommand{\LL}{\mathbb{L}}
\newcommand{\cat}{\mathcal}
\newcommand{\sSet}{\text{sSet}}
\DeclareMathOperator{\Ch}{Ch}
\DeclareMathOperator{\Fun}{Fun}
\DeclareMathOperator{\im}{im}
\newcommand{\thesistitle}{Model Structures on Infinity-Categories of Filtrations}
\newcommand{\thesisauthor}{Colin Campbell Aitken}
\title{\thesistitle}
\author{\thesisauthor}
\let\ORG@hyper@linkstart\hyper@linkstart
\protected\def\hyper@linkstart#1#2{%
  \lowercase{\ORG@hyper@linkstart{#1}{#2}}}
\begin{document}
\maketitle

\abstract 
In 1974, Gugenheim and May showed that the cohomology $\Ext_A(R,R)$ of a connected augmented algebra over a field $R$ is generated by elements with $s = 1$ under matric Massey products. In particular, this applies to the $E_2$ page of the $H\FF_p$-based Adams spectral sequence. By studying a novel sequence of deformations of a presentably symmetric monoidal stable $\infty$-category $\cat C$, we show that for a variety of spectral sequences coming from filtered spectra, the set of elements on the $E_2$ page surviving to the $E_k$ page is generated under matric Massey products by elements with degree $s < k.$ This work is the author's PhD thesis, completed under the supervision of Peter May. 

\tableofcontents
 \newpage 

\begin{center}{\emph{This thesis is dedicated to the memory of Mary Aitken, who would have insisted on reading the whole thing.}}\end{center}

\section{Introduction}
Essentially all modern computations of the stable homotopy groups of spheres are based on some version of the Adams Spectral Sequence. Isaksen, Wang, and Xu \cite{isaksen2023stable} have recently used both classical and motivic forms of the spectral sequence to compute these homotopy groups through the 90-stem (up to a handful of specified uncertainties.) Similarly, Hill, Hopkins, and Ravenel's resolution \cite{hill2016nonexistence} of the Kervaire Invariant problem depends fundamentally on seminal work of Browder \cite{browder1969kervaire} reducing this fundamentally geometric question to a calculation in the $s = 2$ line of the Adams spectral sequence. 

While algorithms to compute the additive structure of the $E_2$ page \cite{bruner1993ext} and even the $E_3$ page \cite{ nassau2010secondary, chua2022} of the Adams Spectral Sequence are tractable on modern computers, higher differentials still pose a serious computational challenge. This difficulty is typified by a recent paper of Wang and Xu \cite{wang2017triviality}, which devotes nearly eighty pages to computing a single $d_3$ differential.

Differentials are typically computed using a grab bag of ad hoc tools, including comparisons to other spectral sequences (particularly synthetic and $\CC$-motivic Adams Spectral Sequences) and tricks invoking multiplicative structure. As a particularly simple example of the latter, an early paper of John Wang \cite{wang1967cohomology} resolves the Hopf Invariant One problem using little more than the multiplicative structure of the Adams Spectral Sequence and the fact that $h_0h_i^3$ is nonzero. More recent papers often make sophisticated use of \emph{Massey products} and their relationship with Toda brackets in the homotopy groups of spheres.

To avoid discussing signs in the introduction, we work over a field of characteristic two. Given elements $a,b,$ and $c$ of a differential graded algebra with $ab = bc = 0$ in homology, the Massey product $\langle a,b,c \rangle$ is the set of all homology classes $ae + fc$ with $d(e) = bc$ and $d(f) = ab.$ Moss \cite[Theorem~1]{moss} shows that if in addition $a d_k(b) = b d_k(c) = 0$,  we have a ``Leibniz rule''
\[
d_k \langle a,b,c \rangle \subseteq \langle d_k(a),b,c \rangle + \langle a,d_k(b),c \rangle + \langle a,b,d_k(c) \rangle
\]

May's article \cite{mmp} extends this idea to define \emph{matric Massey products}, which produce cohomology classes from more complicated sets of relations than $ab = bc = 0$, and showed that these higher products satisfy their own versions of the Leibniz rule. More generally, Kochman \cite[Theorem 8.2]{kochman1978chain} showed that matric Massey products on one page can be used to compute differentials on future pages, although this requires strong vanishing conditions that rarely hold in practice.

There are several limitations to this approach. The most significant, which we will not discuss further, comes from indeterminacies: the set
\[
\langle d_k(a),b,c \rangle + \langle a,d_k(b),c \rangle + \langle a,b,d_k(c) \rangle
\]
may be much larger than $d_k \langle a,b,c \rangle,$ in which case the Leibniz rule will not uniquely determine the differential. In this case one must either consider different Massey products or use another technique to find the value of the differential.

This paper addresses a more structural concern: to apply a Leibniz rule, we need to show that a given element can be written as a matric Massey product of ``smaller" pieces. If we consider ordinary products, this is not guaranteed even when $s > 1$: for example, when $p = 2$, the element $c_0$ with $s = 8, t = 11$ is indecomposable.

Gugenheim and May \cite{gm} show that this problem goes away if we consider matric Massey products, at least for the $E_2$ page of the classical ($H\FF_p$-based) Adams spectral sequence: this page is completely generated by the $s = 1$ line under matric Massey products. For example, while $c_0$ is irreducible by ordinary products, Bruner \cite[Figure 2.3]{bruner2009adams} shows that we have
\[
c_0 = \langle h_0, h_2^2, h_1 \rangle
\]

However, very few of the elements with $s= 1$ survive to the actual homotopy groups of spheres: when $p = 2$, only $h_0, h_1, h_2,$ and $h_3$ survive (detecting the elements $2, \eta, \nu,$ and $\sigma$ respectively), while for odd primes only classes detecting $p$ and $\alpha$ survive. So it is perhaps surprising that Cohen \cite{cohen} has shown that the $p$-complete homotopy groups of spheres are generated by elements detected by  $s = 1$ under matric Toda brackets. To the best of the author's knowledge, no further results along these lines are known.

This leaves a suggestive gap in the literature. The first well-defined page of the Adams spectral sequence is generated (in the above sense) by its $s = 1$ line, and so is the homotopy ring it converges to. Moreover, some Massey products survive to Toda Brackets in this homotopy ring \cite[Theorem~2]{moss}, although the presence of ``crossing differentials'' complicates this relationship. So while it is challenging to directly access the $E_k$ page of the spectral sequence for $2 < k < \infty$, it is not unreasonable to expect that these ``intermediate" pages should also be be generated by elements with small grading.

The most obvious generalization would be to ask that the $E_k$ page also be generated by the $s = 1$ line under matric Massey products. This is not the case: while $h_4^2$ is a permanent cycle, degree considerations show that it is not a matric Massey product on the $E_k$ page for $k \geq 4.$ Indeed, there are simply not enough differentials on the $E_k$ page for $k \geq 2$ to expect matric Massey products to generate much of these pages.

Instead, in this thesis, we look at the $E_{2,k}$ page of the classical Adams spectral sequence, by which we mean the subset of the $E_2$ page that survives to $E_k$. We show that this is generated by elements of degree $s < k$ for all $2 \leq k < \infty.$ To this end, we adapt one of the most spectacular advances in stable homotopy theory over the past decade: Gheorghe, Isaksen, Wang, and Xu's ``cofiber of tau'' philosophy. This philosophy stems from their observation that the stable motivic $\infty$-category $\Mot_\CC$ can be seen as a deformation whose generic fiber is the $\infty$-category of spectra and whose special fiber contains information about the $E_2$ page of the Adams Spectral Sequence.

More concretely, there is an element $\tau$ called the ``Tate twist'' in the (motivic, $p$-completed) stable stem $\pi_{0,-1}(S)$ such that:
\begin{enumerate}
\item The $\infty$-category $\Mod_{C\tau}(\Mot_{\CC})$ of modules over the cofiber of $\tau$ (``the special fiber $\tau = 0$") is equivalent to the stable $\infty$-category of $\BP_*\BP$ comodules concentrated in even degree.  \cite[Theorem~1.1]{gheorghe2021special}
    \item Given a(n ordinary p-complete) spectrum $X$, the motivic homotopy groups of $X \otimes C\tau$ recover the $E_2$ page of the BP-homology based Adams Spectral Sequence for $X.$ If $X$ is a commutative ring spectrum, this equivalence preserves higher multiplicative structure (e.g. Massey products).
\cite[Lemma~4.56]{pstrkagowski2023synthetic}
    \item The $\infty$-category $\Mot_{\CC}[\tau^{-1}]$ (``the generic fiber $\tau = 1$") is equivalent to the ordinary $\infty$-category of spectra. 
    \cite[Remark~1.15]{gheorghe2021special},\cite{dugger2010motivic}.
\end{enumerate}
Recent work by Isaksen, Wang, and Xu \cite{isaksen2023stable} exploits this deformation structure to relate the Adams spectral sequence to the algebraic Novikov spectral sequence, which is much more computable.  

For our purposes, property (2) is the most relevant: the entire higher multiplicative structure of the $E_2$ page of the Adams Spectral Sequence for $X$ is contained in the homotopy groups of $X \otimes C\tau$, which we can reason about using ordinary tools of stable homotopy theory. We will create a similar deformation to access the $E_k$ page of a more general spectral sequence, which will let us study matric Massey products on this page without having to keep track of all the information stored in the previous pages.

The cofiber of tau is also a key input into Pstragowski's theory of \emph{synthetic spectra} \cite{pstrkagowski2023synthetic}, which produces a similar deformation with BP replaced by a more general Adams-type homology theory $E$. Despite their novelty, synthetic spectra already have a variety of applications: they have been used by Burklund, Hahn, and Senger to compute Toda brackets in the stable homotopy category \cite{burklund2019boundaries}, by Patchkoria and Pstragowski to show that the homotopy category $E(n)$-local spectra is equivalent to an algebraic category for large primes \cite{patchkoria2021adams}, and by  Burklund to resolve an extension in the 54-stem of the homotopy groups of spheres \cite{burklund2021extension}.

In this thesis, we construct a  deformation whose special fiber corresponds with the $E_k$ page of the more general filtration spectral sequence, and show that this deformation carries enough multiplicative structure to support Gugenheim and May's proof nearly word-for-word. 

More concretely, for each positive integer $k$ and any sufficiently structured stable $\infty$-category $\cat C$, we consider the $\infty$-category $\Fil(\cat C) = \Fun(\ZZ, \cat C)$, where we consider $\ZZ$ to be a partially ordered set with one map $i \to j$ whenever  $i \geq j$. 

This has a bigraded suspension functor $\Sigma^{s,q}$ with $\Sigma^{s,q}X(n) = \Sigma^qX(n - s)$ and a natural map
\[
\tau: \Sigma^{-1,0}X \to X
\]
induced by the maps $X(n+1) \to X(n)$ defining $X$. 

We construct a localization of $\Fil(\cat C)$ we call $\cat D^k(\cat C)$ such that:
\begin{thmn}
Under reasonable conditions on $\cat C$, the following are true.
\begin{enumerate}
\item The cofiber of $\tau: S^{-1,0} \to S^{0,0}$ in $\cat D^k(\cat C)$ is equivalent to the cofiber of $\tau^{k+1}: S^{-k-1,0} \to S^{0,0}$ in $\Fil(\cat C)$. To avoid confusion, we refer to this object (which comes with a natural commutative ring structure) as $C\tau^{k+1}.$
\item The $\infty$-category of modules over $C\tau^{k+1}$ $($``the special fiber $\tau = 0$"$)$ is symmetric monoidally equivalent to the derived category of graded elements of $\cat C^{\heart}$, where the grading takes values in $\ZZ \times \{0,1,\ldots, k\}.$
\item For any $X$ in $\Fil(\cat C)$, the (bigraded) homotopy groups of $X \otimes C\tau^{k+1}$ in $\cat D^k(\cat C)$ recover the $E^{k+1}$ page of the filtration spectral sequence associated to $X$. If $X$ is a ring object, this equivalence preserves higher multiplicative information (e.g. matric Massey products).
\item The localization $\cat D^k(\cat C)[\tau^{-1}]$ $($``the generic fiber $\tau = 1$"$)$ is equivalent to $\cat C.$
\end{enumerate}
\end{thmn}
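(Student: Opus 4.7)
The plan is to fix the construction of $\cat D^k(\cat C)$ as a symmetric monoidal localization of $\Fil(\cat C)$ in which the new $\tau$ plays the role of a ``$(k{+}1)$-th root'' of the classical filtration parameter, so that $\tau^{k+1}$ in $\cat D^k(\cat C)$ recovers the original $\tau$ of $\Fil(\cat C)$ and hence $C\tau \simeq C\tau^{k+1}$. With this set-up, I would prove the four claims in the order stated, bootstrapping each from the previous. Part (1) is designed to be nearly tautological: the localization functor $L \colon \Fil(\cat C) \to \cat D^k(\cat C)$ sends the cofiber sequence
\[
\Sigma^{-k-1,0}S \xrightarrow{\tau^{k+1}} S \to C\tau^{k+1}
\]
to a cofiber sequence in $\cat D^k(\cat C)$ which, after reindexing, reads as the cofiber of $\tau \colon \Sigma^{-1,0}S \to S$. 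The commutative ring structure on $C\tau^{k+1}$ is transported from the symmetric monoidal structure on $\Fil(\cat C)$.

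Part (4) follows by observing that inverting $\tau$ in $\cat D^k(\cat C)$ is the same as inverting $\tau^{k+1}$ in $\Fil(\cat C)$, and inverting $\tau$ in $\Fil(\cat C)$ returns the ``generic fiber'' $\cat C$ by a standard colimit argument on filtrations. Part (3) is the conceptual bridge to the filtration spectral sequence: by construction the $E^{k+1}_{s,q}$ entry of the filtration spectral sequence of $X$ is the quotient of elements whose boundary lies $k{+}1$ filtration levels deeper by those arising as boundaries of elements $k{+}1$ levels shallower, and this is precisely what the bigraded homotopy groups of $X \otimes C\tau^{k+1}$ compute. Preservation of matric Massey products is formal once $L$ is symmetric monoidal, since such products are built from $\infty$-categorical cofibers and compositions.

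The main obstacle is part (2), the identification of $\Mod_{C\tau^{k+1}}(\cat D^k(\cat C))$ with the derived category of $(\ZZ \times \{0,1,\ldots,k\})$-graded objects of $\cat C^\heart$. My plan is to exploit the iterated cofiber sequence
\[
C\tau^{i} \to C\tau^{i+1} \to \Sigma^{i,0}C\tau, \qquad 0 \leq i \leq k,
\]
which exhibits $C\tau^{k+1}$ as a $(k{+}1)$-step extension of shifts of $C\tau$. This reduces the problem to the case $k = 0$, where I expect an analogue of the Gheorghe--Wang--Xu identification of $\Mod_{C\tau}$ with the derived category of $\cat C^\heart$ (obtained by erecting a $t$-structure on $\Mod_{C\tau}$ whose heart is $\cat C^\heart$ and showing generation in the appropriate sense). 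Promoting this to a symmetric monoidal equivalence for general $k$ requires constructing a candidate functor from the derived category of $\ZZ \times \{0,\ldots,k\}$-graded objects into $\Mod_{C\tau^{k+1}}$, and then using the $t$-structure together with the extension filtration above to verify conservativity, fullness, and compatibility with tensor products level by level.
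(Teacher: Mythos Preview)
Your proposal rests on a misconception about what $\cat D^k(\cat C)$ is. You describe it as a localization in which a \emph{new} $\tau$ appears as a $(k{+}1)$-th root of the old filtration parameter, so that ``$\tau^{k+1}$ in $\cat D^k(\cat C)$ recovers the original $\tau$ of $\Fil(\cat C)$.'' That is not the construction, and a localization cannot manufacture new maps that were not already present. In the paper, $\cat D^k(\cat C)$ is the localization of $\Fil(\cat C)$ at the class $\cat W^k$ of maps inducing isomorphisms on $\tau^k\pi^0_{**}$, realized via a cofibrantly generated model $\infty$-structure. The map $\tau:S^{-1,0}\to S^{0,0}$ in $\cat D^k(\cat C)$ is literally the image of the same map from $\Fil(\cat C)$; nothing is reindexed.

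Consequently your argument for Part (1) does not go through. The point is \emph{not} that the localization functor $L$ carries the cofiber sequence for $\tau^{k+1}$ to one for $\tau$; rather, $L$ is not exact, so cofibers in $\cat D^k(\cat C)$ differ from those in $\Fil(\cat C)$. Concretely (Corollary~\ref{kcofiber}), for cofibrant $X$ the homotopy cofiber of $f:X\to Y$ is the ordinary pushout of $Y\leftarrow X\to X\otimes C\tau^k$, since $X\otimes C\tau^k\simeq 0$ in $\cat D^k(\cat C)$ and $X\to X\otimes C\tau^k$ is a cofibration. Applying this to $\tau:S^{-1,0}\to S^{0,0}$ yields the pushout square
\[
\xymatrix{
S^{-1,0}\ar[r]^{\tau}\ar[d] & S^{0,0}\ar[d]\\
\Sigma^{-1,0}C\tau^k\ar[r] & C\tau^{k+1},
}
\]
which is Corollary~\ref{ctweird}. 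Your ``reindexing'' explanation misses this mechanism entirely.

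For Part (2), the paper does not reduce to the case $k=0$. It builds a compatible $t$-structure on $\cat D^k(\cat C)$ (Theorem~\ref{tstructure}) whose heart is $\Gr_k(\cat C^\heart)$, observes that $S^{0,0}$ is a \emph{shift algebra} with $(S^{0,0})^{\geq 0}\simeq C\tau^{k+1}$, checks that the connective part is generated under colimits by periodic modules, and then invokes Pstr\k{a}gowski's abstract result \cite[Theorem~3.11]{pstrkagowski2022abstract} to identify $\Mod_{C\tau^{k+1}}(\cat D^k(\cat C)^{\leq 0})$ with $\cat D(\Gr_k\cat C^\heart)^{\leq 0}$; stabilizing gives the full statement. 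Your proposed reduction via the filtration $C\tau^i\to C\tau^{i+1}\to\Sigma^{i,0}C\tau$ does not obviously assemble module categories over successive quotients into a module category over the extension, and in any case the base case you aim for, $\Mod_{C\tau}(\cat D^0(\cat C))$, lives in a \emph{different} ambient category than $\Mod_{C\tau^{k+1}}(\cat D^k(\cat C))$, so the induction is not even set up correctly.

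Your sketches for Parts (3) and (4) are closer in spirit to the paper (Proposition~\ref{sseqform} and the short argument in \S\ref{deformsec}), but both implicitly depend on the correct description of $\cat D^k(\cat C)$ and of how cofibers are computed there, which your proposal does not supply.
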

With this deformation in hand, we will adapt Gugenheim and May's proof to conclude: 
\begin{thmn}
Let $E_r^{s,t}$ be a multiplicative spectral sequence over a field $R$ concentrated in degrees $s \geq 0$, and suppose the $E_1$ page can be chosen to be freely generated by elements in degree $s = 1$.  Let $E_{2,r}^{s,t}$ denote the set of elements in $E_2^{s,t}$ which survive to the $E_r$ page. Then:
\begin{itemize}
\item The $E_2$ page is generated under matric Massey products by elements in degree $s = 1$. 
\item $E_{2,r}^{*,*}$ is generated under matric Massey products by elements in degree $0 < s < r.$
\end{itemize}
\end{thmn}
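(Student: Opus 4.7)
The plan is to reduce both bullets to variants of the Gugenheim--May theorem \cite{gm}, using the deformation $\cat D^k(\cat C)$ constructed above to access higher pages of the spectral sequence.

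For the first bullet, the hypothesis that $E_1$ is freely generated in $s = 1$ means $E_1 \cong T(V)$ for a graded vector space $V$ concentrated in $s = 1$, so $(E_1, d_1)$ is a free dg-algebra on generators in $s = 1$ and its cohomology $E_2$ is naturally isomorphic to $\Ext_A(R, R)$ for the connected augmented algebra $A$ determined (up to duality) by $V$ and $d_1$. Gugenheim--May's theorem applied to $A$ then gives generation of $E_2$ by its $s = 1$ line under matric Massey products.

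For the second bullet, we first realize the given spectral sequence as the filtration spectral sequence of a ring object $X \in \Fil(\cat C)$ for a suitable stable $\infty$-category $\cat C$ such as the derived $\infty$-category of $R$-modules; such a realization is possible because the spectral sequence is multiplicative, concentrated in $s \geq 0$, and has a freely generated $E_1$ page. Specializing the previous theorem to $k = r - 1$, property (3) identifies $\pi_{*,*}(X \otimes C\tau^r)$ in $\cat D^{r-1}(\cat C)$ with the $E_r$ page compatibly with matric Massey products, while property (2) identifies $C\tau^r$-modules with graded objects of $\cat C^{\heart}$ whose filtration grading takes values in $\{0, 1, \ldots, r - 1\}$. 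We then run Gugenheim--May's argument on the ring $X \otimes C\tau^r$ inside the deformation: the freeness of $E_1$ in $s = 1$ lets us build a bar-type resolution whose generators, after pulling back through the identifications in (2) and (3), sit in filtrations $0 < s < r$. This gives generation of the $E_r$ page by elements in this filtration range under matric Massey products. Since $E_{2,r}^{*,*}$ is by definition the subset of $E_2$ surviving to $E_r$, and matric Massey product relations on $E_r$ pull back to matric Massey product relations on $E_2$ (with possibly larger indeterminacy), this yields the desired generation of $E_{2,r}^{*,*}$.

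The main obstacle will be adapting Gugenheim--May's bar-construction proof to the deformation setting $\cat D^{r-1}(\cat C)$. Concretely, one must build a bar-type resolution of $X \otimes C\tau^r$ compatible with the freeness assumption on $E_1$, verify that its generators translate through properties (2) and (3) to elements of the $E_r$ page with $0 < s < r$, and confirm that the resulting Massey product relations on $E_r$ lift to genuine matric Massey product relations between elements of $E_2$. The restriction of the $C\tau^r$-module filtration grading to $\{0, 1, \ldots, r-1\}$ is precisely the structural input that forces generators to have $s < r$, but threading this bound through the matric Massey product machinery and the pullback to $E_2$ will require careful bookkeeping of both representing cochains and their indeterminacies.
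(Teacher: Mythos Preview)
Your first bullet is essentially correct and matches the paper's approach, though the paper phrases it more generally: rather than identifying $E_2$ with some $\Ext_A(R,R)$ and quoting Gugenheim--May, it shows directly that any DGA whose underlying graded algebra is Koszul (in particular, free on degree~$1$ generators) is ``derived Koszul'' in the sense that $R\otimes_U^{\mathbb L}R$ has homology concentrated in degree~$0$, and then runs the suspension--map argument.

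Your second bullet has a genuine gap in the direction of the comparison. You propose to work in $\cat D^{r-1}(\cat C)$, generate the $E_r$ page, and then ``pull back'' matric Massey product relations along the surjection $E_{2,r}\twoheadrightarrow E_r$. But Massey products do not pull back along quotients: if $\bar x\in\langle \bar a,\bar b,\bar c\rangle$ in $E_r$, lifts $a,b,c\in E_{2,r}\subseteq E_2$ need not satisfy $ab=0$ or $bc=0$ in $E_2$ (these products could be nonzero targets of $d_2,\dots,d_{r-1}$), so the bracket $\langle a,b,c\rangle$ may not even be defined on $E_2$. The paper avoids this entirely by working in $\cat D^1(\cat C)$, not $\cat D^{r-1}(\cat C)$. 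The relevant object is $U\otimes C\tau^{r+1}$ viewed in $\cat D^1$, and the relevant map is the \emph{forward} ring map
\[
\pi^1_{**}(U\otimes C\tau^{r+1})\longrightarrow \pi^1_{**}(U\otimes C\tau^{2})=E_2,
\]
whose image is exactly $E_{2,r}$ (Lemma~\ref{survive}). Since this comes from a map of rings, it carries smash Toda brackets to matric Massey products, so it suffices to generate $\pi^1_{**}(U\otimes C\tau^{r+1})$ in degrees $0<s<r$.

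The degree bound also arises differently from what you sketch. It is not the grading restriction on $C\tau^r$-modules in $\cat D^{r-1}$ that does the work; instead, still in $\cat D^1$, one filters $C\tau^{r+1}$ by shifted copies of $C\tau^2$ (Lemma~\ref{ctkp1degrees}). The Koszul hypothesis forces $\pi^1_{**}(R\otimes_U^{\mathbb L}R\otimes C\tau^2)$ to be concentrated in $n=0$, and the filtration then shows $\pi^1_{**}(R\otimes_U^{\mathbb L}R\otimes C\tau^{r+1})$ is concentrated in $1-r\le n\le 0$. Combined with the suspension map $\sigma$ (whose kernel is exactly the decomposables under matric Toda brackets, Proposition~\ref{kersigma}), this forces every element of $\pi^1_{**}(IU\otimes C\tau^{r+1})$ outside that range to be decomposable.
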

The first part is a generalization of the main result of \cite[Chapter 5]{gm}, while the second is wholly new. The result holds in our motivating example of the Adams Spectral Sequence, but also more generally. In fact, we prove that this theorem holds as long as the first page is \emph{Koszul}: that is, generated by elements of degree $s = 1$, with relations in degree $s = 2$, with relations between relations in degree $s = 3$, and so on.

Along the way, we show that our deformation lets us immediately generalize Moss's convergence theorem. This theorem, originally stated and proved for the Adams Spectral Sequence, relates two kinds of higher multiplicative structure: Massey products, which are defined in any differential graded algebra, and Toda brackets, which are defined in on the homotopy groups of ring spectra. We state and prove a generalization of this result to matric Massey products in arbitrary multiplicative filtration spectral sequences in Section \ref{mosssection}, which subsumes all previously published versions we are aware of. The proof relies on lifting Massey products in the $E^k$ page to Toda brackets in $\cat D^k(\cat C),$ following Burklund's proof for the Adams Spectral Sequence using synthetic spectra\cite{burklundsynthetic}.

To construct these localizations, we will use Mazel-Gee's language of \emph{model $\infty$-categories} \cite{mg}. As in the classical theory of model categories, Mazel-Gee considers an $\infty$-category $\cat M$ with collections of 1-morphisms called cofibrations, fibrations, and weak equivalences satisfying certain lifting axioms, and shows that this structure can be used to construct the localization of $\cat M$ with respect to the chosen weak equivalences. 

\begin{thmn}
There is a cofibrantly generated model structure on the $\infty$-category $\Fil(\cat C)$ such that:
\begin{enumerate}
    \item The weak equivalences are maps that become isomorphisms on levelwise homotopy groups after multiplying by $\tau^k.$
\item Every object is fibrant, and a necessary condition for an object to be cofibrant is that the $E_1$ through $E_{k-1}$ pages of its filtration spectral sequence are identical and degreewise projective.\end{enumerate}
\end{thmn}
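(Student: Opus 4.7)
The plan is to construct the model structure by specifying explicit generating cofibrations $I$ and generating trivial cofibrations $J$, and then apply the small object argument in Mazel-Gee's $\infty$-categorical framework \cite{mg}. The construction should be thought of as an analog of the projective model structure on a functor $\infty$-category, twisted by $\tau^k$ so that the cofibrant objects carry exactly enough structure to detect differentials through page $k$.

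To describe the generating sets, let $I_\cat C$ denote a set of generating cofibrations for $\cat C$, which exist under the presentability hypothesis, and let $F_n: \cat C \to \Fil(\cat C)$ be the left adjoint to evaluation at $n$. Write $C\tau^k$ for the cofiber of $\tau^k: S^{-k,0} \to S^{0,0}$ in $\Fil(\cat C)$. The generating cofibrations $I$ will be pushout-products of the maps $F_n(\iota)$, for $\iota \in I_\cat C$ and $n \in \ZZ$, with $C\tau^k$; heuristically, an $I$-cell attaches a block of $k$ consecutive filtration layers at once, glued by $\tau$. The generating trivial cofibrations $J$ consist of those maps in $I$ which are already weak equivalences, together with auxiliary mapping-cylinder maps designed so that $J$-injectivity captures the intended notion of fibration.

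With $I$ and $J$ in hand, the small object argument (valid since $\Fil(\cat C)$ is presentable) produces the two weak factorization systems demanded by Mazel-Gee. The substantive axiomatic step is to identify $I$-injective maps with $J$-injective weak equivalences; this should follow from the explicit form of $I$ together with the stability of $\cat C$ and the natural two-out-of-three property of the class of $\tau^k$-equivalences. That every object is fibrant then reduces to showing the terminal object is $J$-injective, which holds provided the targets of maps in $J$ are chosen to sit inside $C\tau^k$-modules so that the relevant lifting problems are automatic.

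The most delicate step, and the main obstacle, is the characterization of cofibrant objects. A retract of an $I$-cell complex is assembled by iteratively attaching $k$-step $C\tau^k$-cells; I will compute the filtration spectral sequence of such an attachment and show that each cell contributes associated graded pieces whose first possibly nontrivial differential lands on the $E_k$ page, forcing $E_1 = E_2 = \cdots = E_{k-1}$, and that these pieces are degreewise projective because $I_\cat C$-cells are free on projective generators. The inductive analysis tracking how $\tau$-multiplication interacts with iterated cell attachments is the technical heart of this step, and the difficulty is ensuring that the calibration of the generating cofibrations simultaneously yields both the stated property of cofibrant objects and a localization whose underlying $\infty$-category matches $\cat D^k(\cat C)$ from the previous theorem.
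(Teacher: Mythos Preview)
Your overall strategy is right---build a cofibrantly generated model structure via explicit $I$ and $J$ and apply Mazel-Gee's recognition theorem---and you correctly flag the identification of $I$-injectives with $J$-injective weak equivalences as the substantive step. But your generating sets are not pinned down, and as written the plan will not go through.

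First, ``pushout-product with $C\tau^k$'' is ambiguous: $C\tau^k$ is an object, not a map. The paper uses the spheres $S^t$ assumed to compactly generate $\cat C$ and sets
\[
I^k = \{\,S^{n,t} \to \Sigma^{n,t}C\tau^k\,\}, \qquad J^k = \{\,0 \to \Sigma^{n,t}C\tau^k\,\}.
\]
If one interprets your $I$ as pushout-products of $0 \to S^{n,t}$ with the unit map $S^{0,0} \to C\tau^k$, one does recover $I^k$; but your text does not say this. Second, and more seriously, your description of $J$ as ``those maps in $I$ which are already weak equivalences, together with auxiliary mapping-cylinder maps'' is a placeholder. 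No map $S^{n,t} \to \Sigma^{n,t}C\tau^k$ is a $\tau^k$-equivalence (the source has nontrivial $\pi^k$), so the first clause is empty and the second is undefined. The paper's observation is that $J^k$ can simply be $\{0 \to \Sigma^{n,t}C\tau^k\}$, which works precisely because $C\tau^k$ is $k$-acyclic; this also makes ``every object is fibrant'' immediate, since lifting against $0 \to \Sigma^{n,t}C\tau^k$ for the map $X \to 0$ is vacuous.

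With these concrete sets the recognition theorem reduces to two short lemmas: every $I^k$-injection is $k$-exact, and every $k$-exact $J^k$-injection is an $I^k$-injection. Both are proved by first writing down the explicit characterization of $I^k$-inj and $J^k$-inj as surjectivity conditions on $\pi_t(X_{n-k}/X_n)$ and on its fiber product with $\pi_t(X_n)$, and then diagram-chasing with the long exact sequence of the fiber (the second lemma in particular uses that the fiber of a $k$-exact $J^k$-injection is $k$-acyclic). This is the actual content you would need to supply; your proposal gestures at it but does not engage with it. The cofibrant-object description is then a separate, easier argument once the model structure exists: each $I^k$-cell attachment visibly contributes a retract-of-spheres summand to the associated graded whose attaching map factors through $\tau^k$, forcing $d_1 = \cdots = d_{k-1} = 0$.
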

In particular, weak equivalences induce isomorphisms on the $E_k$ page of the objects' filtration spectral sequences, although the converse is not true. We will define $\cat D^k(\cat C)$ to be the homotopy $\infty$-category of this model structure.

\subsection{Roadmap}
This thesis is organized into five sections. 

In Section \ref{filtersection}, we review the definition and basic properties of the $\infty$-category $\Fil(\cat C)$ of filtered objects of a presentably symmetric monoidal stable $\infty$-category $\cat C$. The material in this section is not new, but provides language and background for the rest of the thesis.

In Section \ref{modelsection}, we build cofibrantly generated model structures on $\Fil(\cat C)$, which we call ``the $k$-projective model structure'' for each nonnegative integer $k$. These model structures are closely related to the map $\tau$: in particular, the weak equivalences are the maps which are isomorphisms on homotopy groups after multiplying by $\tau^k$, and the generating cofibrations and acyclic cofibrations are built from spheres and the cofiber of $\tau^k.$ We start by reviewing Mazel-Gee's theory of model $\infty$-categories in Section \ref{modelreview}. In Section \ref{modelconstruct}, we fix for each $k$ sets $I^k$ and $J^k$ of morphism in $\Fil(\cat C)$ and prove that they generate a model structure. In Section \ref{symmonsection} we promote these to symmetric monoidal model structures, and in Section \ref{modelhomspaces} we show how to compute hom-spaces in the homotopy $\infty$-category $\cat D^k(\cat C)$ of the $k$-projective model structure. In Section \ref{compactspheres} we check that these localizations are compactly generated and stable, a technical condition we will need in later chapters. 

In Section \ref{sseqsection}, we review the construction of the spectral sequence of a filtered object $X$, with multiplicative structure if $X$ is a ring object. The main results of this section are well-known, but we need to relate them to the various objects in $\cat D^k(\cat C)$ for the next two sections, and the easiest way to do that is to prove them from scratch. Central are Theorems \ref{sseqform} and \ref{sseqmult}, which show that the $E_k$ page (including its multiplicative structure) is captured by tensoring $X$ with the cofiber of $\tau^{k+1}.$ As examples, we show how to construct the Adams Spectral Sequence in our framework in Section \ref{adamssection}, and similarly construct various Ext and Tor spectral sequences in Section \ref{exttorsec}. 

Section \ref{mmpsection} is the devoted to understanding the multiplicative structure of the various localizations $\cat D^k(\cat C)$ we've produced. In Section \ref{deformsec}, we build a t-structure on $\cat D^k(\cat C)$ and use it to show that we can view $\cat D^k(\cat C)$ as a deformation whose generic fiber is $\cat C$ and whose special fiber is an algebraic category capturing the $E^k$ page of the filtration spectral sequence. In Section \ref{realmmpsection} we use Ariotta's work on coherent chain complexes to define an $\infty$-categorical variant of matric Massey products, which we use in Section \ref{mosssection} to prove a stronger version of Moss's Convergence Theorem than has previously appeared in the literature. 

Finally, in Section \ref{gmsection}, we reinterpret Gugenheim and May's result on using matric Massey products to generate the cohomology of algebras as a statement about Koszulity conditions. The first two sections provide information and som classical background. In Section \ref{gmkoszulsection} we generalize their proof to show that any augmented differential graded algebra whose underlying algebra is Koszul (for example, freely generated by elements in degree $s = 1$) is generated by elements in degree $n = -1$ under matric Massey products. In Section \ref{filkoszulsection} we generalize further from differential graded algebras to multiplicative spectral sequences over a field whose $E^1$ page can be chosen to be Koszul, and prove that the set of elements of the $E^2$ page that survive to the $E^k$ page is generated in degrees $1 \leq s \leq k-1$ under matric Massey products. 

In two appendices we provide proofs of short lemmas and propositions we use in the main text, but whose proofs are not particularly enlightening.
\subsection{Notation and Conventions}
Throughout this paper, we use the language of stable $\infty$-categories freely. We follow the usual convention of phrasing $\infty$-categorical statements model-independently whenever possible, although for concreteness we implicitly use the theory of quasi-categories invented by Boardman and Vogt \cite{boardman1973homotopy} and further developed by Joyal \cite{joyal2002quasi, joyal2008notes} and Lurie \cite{htt, ha}. A ``commutative diagram'' in an $\infty$ category will  always mean ``commutative up to coherent homotopy'', and we will freely say ``\emph{the} object X'' or call $X$ unique as long as it is uniquely determined up to a contractible set of choices.

By ``ring'' (resp. ``commutative ring'') we mean an object of a given stable symmetric monoidal $\infty$-category equipped with an $A_\infty$ (resp. $E_\infty$) multiplication. Given a specified map $f: X \to Y$, we use the notations $Cf$ or $Y/X$ interchangeably to refer to the cofiber of $f$. We will use ``limit'' and ``colimit'' in the $\infty$-categorical sense, saving the words ``homotopy limit'' and ``homotopy colimit'' to refer to (co)limits in the localizations we build. 
\subsection{Acknowledgments}
I am grateful to the many friends who have made even the most painful parts of my time in grad school a genuine joy. It is not an exaggeration to say that I would not have made it through the past six-and-a-half years without you all. I love you dearly and thank you for the many late-night laughs, the genuine comfort, the thrilling competition, and the constant companionship. 

I am particularly grateful to my advisor Peter May for his kind support and patience, continuing to be there for me even as I found that my long-term research interests lay outside of topology. I am also thankful to the many other graduate students, postdocs, and professors who helped me find my place in the topological world, particularly including Akhil Mathew, XiaoLin Danny Shi, and Dylan Wilson. An extra thanks to those outside the University of Chicago who took the time to answer my questions related to this particular topic, including Shaul Barkan, Robert Burklund, Andy Senger, and Aaron Mazel-Gee. 

I am extremely grateful to my research collaborators for stimulating conversations and fascinating work, and welcoming a theoretical mathematician into applied work with open arms. I would especially like to thank Luiza Andrade, Joy Dada, Michael Kremer, Alex Lehner, Jeremy Lowe, Elisa Maffioli, Amy Pickering, Julie Powers, and Witold Więcek. 

I do not have words to sufficiently thank my family, who have given me so much and loved me so deeply over the past twenty-eight years. I love you all very much. Especially the nieces and nephews.
\section{Filtered objects in stable $\infty$-categories}
\label{filtersection}
\subsection{The $\infty$-category $\Fil(\cat C).$}
Throughout this thesis, we fix a presentably symmetric monoidal stable $\infty$-category $\cat C$. In this chapter, we review the definition and basic properties of the $\infty$-category $\Fil(\cat C)$ of filtered objects of $\cat C$. The material in this chapter is mostly well-known, and is provided primarily to review language and concepts we will need for the remainder of the thesis.

\begin{defn}
Given a cocomplete stable $\infty$-category $\cat C$, a \emph{filtered $\cat C$-object} is an object of the functor $\infty$-category
\[
\Fun(\mathbb{Z}, \cat C)
\]
where $\mathbb{Z}$ is an ordinary category with one arrow $i \to j$ whenever $i > j.$ We denote the $\infty$-category of filtered $\cat C$-objects by $\Fil(\cat C).$ One immediately sees that $\Fil(\cat C)$ is stable, and limits and colimits are computed point-wise. 
\end{defn}
For a filtered $\cat C$-object $X$, we will use the notation $X_n$ to mean $X(n).$ We can think of $\Fil(\cat C)$ as an extension of $\cat C$, as follows:
\begin{prop}\label{idhom}
The functor
\[
(-)_0:  \Fil(\cat C) \to \cat C
\]
sending $X$ to $X_0$ has a fully faithful left adjoint $\iota: \cat C \to \Fil(\cat C).$
\end{prop}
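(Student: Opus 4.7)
The plan is to construct $\iota$ explicitly as a left Kan extension along the inclusion $j: \{0\} \hookrightarrow \ZZ$. Given $c \in \cat C$, the left Kan extension $\iota(c) := j_! c$ has value at $n$ computed by the colimit of the constant $c$-diagram over the comma category $\{0\} \times_\ZZ \ZZ_{/n}$. Since arrows $m \to n$ in $\ZZ$ exist iff $m \geq n$, this comma category is a point when $n \leq 0$ and empty when $n > 0$. Because $\cat C$ is stable (hence pointed), we get the explicit formula
\[
\iota(c)_n = \begin{cases} c & n \leq 0 \\ 0 & n > 0 \end{cases}
\]
with structure map $\iota(c)_{n+1} \to \iota(c)_n$ either the identity, the unique map from zero, or the unique map to zero.

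Next I would verify the adjunction. This is either immediate from the universal property of left Kan extension along $j$, or can be checked directly: a map $\iota(c) \to X$ in $\Fil(\cat C)$ is determined by a compatible family of maps $\iota(c)_n \to X_n$ for all $n \in \ZZ$. For $n > 0$ the map is forced (source is zero), and for $n \leq 0$ the compatibility with the structure maps forces the map $\iota(c)_n = c \to X_n$ to be the composite of a single chosen map $c \to X_0$ with $X_0 \to X_n$. Thus the space of maps $\iota(c) \to X$ is canonically equivalent to $\Hom_{\cat C}(c, X_0)$, naturally in both variables.

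Finally, fully faithfulness will follow from the general principle that a left adjoint is fully faithful if and only if the unit of the adjunction is an equivalence. The unit $c \to (\iota(c))_0$ is the identity by the computation above, so $\iota$ is fully faithful. Equivalently, one reads off the chain
\[
\Hom_{\Fil(\cat C)}(\iota(c), \iota(c')) \simeq \Hom_{\cat C}(c, \iota(c')_0) = \Hom_{\cat C}(c,c').
\]
There is no real obstacle here; the only point requiring care is the orientation convention on $\ZZ$ (arrows $i \to j$ when $i > j$), which determines whether the left adjoint fills in $c$'s below or above degree $0$. With the convention used in the excerpt, it is the former.
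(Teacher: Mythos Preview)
Your proof is correct and in fact more detailed than what the paper provides: the paper states the proposition without proof and simply describes $\iota$ intuitively as the functor sending $X$ to $\cdots \to 0 \to 0 \to X \to X \to \cdots$ with $X$ in non-positive degrees. Your left Kan extension construction recovers exactly this formula and supplies the verification of the adjunction and fully faithfulness that the paper omits.
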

Intuitively, $\iota$ is the functor taking $X$ to the filtered object
\[
\cdots \to 0 \to 0 \to X \to X \to \cdots,
\]
with copies of $X$ in each non-positive degree connected by identity morphisms.

Our presentability assumption allows us to promote the symmetric monoidal structure on $\cat C$ to one on $\Fil(\cat C)$:
\begin{thm}\label{csymmon} 
The symmetric monoidal structure on $\cat C$ induces one on $\Fil(\cat C)$ via Day Convolution. This symmetric monoidal structure has the following properties:
\begin{itemize}
    \item Let $S^0$ denote the unit for the tensor product in $\cat C$. Then, the unit for the tensor product in $\Fil(\cat C)$ is $\iota S^0$.
    \item The tensor product on $\Fil(\cat C)$ preserves colimits in each variable.
    \item The functor $\iota$ is symmetric monoidal.
    \item $\Fil(\cat C)$ contains an internal function functor $F(-,-)$ such that $F(A,-)$ is right adjoint to the functor $A \otimes -.$
\end{itemize}
\end{thm}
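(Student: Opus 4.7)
The plan is to invoke Lurie's general construction of Day convolution from \cite{ha}. Since $\mathbb{Z}$ (with its ``decreasing'' order and monoidal structure given by addition, unit $0$) is a small symmetric monoidal $\infty$-category and $\cat C$ is presentably symmetric monoidal, Day convolution equips $\Fun(\mathbb{Z}, \cat C) = \Fil(\cat C)$ with a presentably symmetric monoidal structure, given pointwise by the colimit formula
\[
(X \otimes Y)_n \simeq \colim_{i + j \geq n} X_i \otimes Y_j.
\]
Colimit preservation in each variable is then immediate: colimits in $\Fil(\cat C)$ are computed pointwise, and the tensor product on $\cat C$ preserves colimits in each variable by the presentably symmetric monoidal assumption.

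To identify the unit, I use the general fact that for Day convolution with source a symmetric monoidal category having unit $0$, the unit is the functor $n \mapsto \Maps_\mathbb{Z}(0, n) \otimes S^0$. Unwinding the ``decreasing'' convention on $\mathbb{Z}$, $\Maps_\mathbb{Z}(0, n)$ is a point when $n \leq 0$ and empty otherwise, so this functor agrees with $\iota S^0$ by Proposition \ref{idhom}. I would then double-check this by plugging $\iota S^0$ into the colimit formula above and verifying the isomorphism directly.

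For the symmetric monoidality of $\iota$, the cleanest path is to observe that $\iota$ is the left Kan extension along the symmetric monoidal inclusion $\{0\} \hookrightarrow \mathbb{Z}$ (strictly unital, since $0$ is the monoidal unit). Functoriality of Day convolution under symmetric monoidal left Kan extension (again from \cite{ha}) then promotes $\iota$ to a symmetric monoidal functor; alternatively, one can check $\iota X \otimes \iota Y \simeq \iota(X \otimes Y)$ by hand from the colimit formula, since the relevant indexing diagram collapses. Finally, the existence of an internal function object $F(-,-)$ follows from the adjoint functor theorem: $\Fil(\cat C)$ is presentable and $A \otimes -$ preserves colimits, so it admits a right adjoint.

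The main obstacle I anticipate is bookkeeping rather than substance — keeping the ``decreasing'' orientation convention on $\mathbb{Z}$ straight when invoking Day convolution formulas stated in \cite{ha} for arbitrary small symmetric monoidal $\infty$-categories, and making sure the reindexing in the colimit formula matches the unit description. Once the conventions are pinned down, each of the four bullet points is a direct consequence of a standard property of Day convolution.
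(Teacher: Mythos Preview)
Your proposal is correct and takes the same approach as the paper: both invoke the Day convolution construction of Glasman and Lurie (the paper simply cites \cite{glasman2013day} and Section~2.2.6 of \cite{ha}, along with \cite{gheorghe2022c} for the filtered case). You have spelled out explicitly what the paper leaves to the references---the colimit formula, the identification of the unit, $\iota$ as symmetric monoidal left Kan extension along $\{0\}\hookrightarrow\mathbb{Z}$, and the internal hom via the adjoint functor theorem---but there is no substantive difference in strategy.
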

\begin{proof}
See \cite{glasman2013day} and its extension in Section 2.2.6 of \cite{ha}. The particular case of filtered spectra is described in \cite{gheorghe2022c}.
\end{proof}
\begin{defn}
The bigraded suspension $\Sigma^{n,t}X$ is the functor sending $X_\bullet$ to $\Sigma^t X_{\bullet - n}.$ 
\end{defn}
Note that $\Sigma^{0,1}$ is the ``actual'' suspension functor coming from the stable $\infty$-category structure on $\Fil(\cat C)$. Later in this paper we will define model structures on $\Fil(\cat C)$ whose localizations have $\Sigma^{-n,1}$ as their corresponding suspension functors for various choices of $n.$

Filtrations come with a natural degree-shifting map, which will play the same role in our localizations that $\tau$ does in motivic or synthetic spectra. We keep the same name:
\begin{defn}\label{tau}
For any $X \in \Fil(\cat C)$, there is a map $\tau: \Sigma^{-1,0}X \to X$ induced by the maps $X_n \to X_{n-1}.$ 
\end{defn}
We also require a notion of ``spheres'' in $\cat C,$ which will be indexed by some abelian group $\cat A$ containing a distinguished copy of $\mathbb{Z}$ as a direct summand. We therefore assume there exists a symmetric monoidal functor $S^{(-)}$ from $\cat A$ (viewed as a discrete symmetric monoidal category) to $\cat C$ such that:
\begin{itemize}
    \item $S^{(n)} = \Sigma^n S^{(0)}.$
    \item The set $S^t$, as $t$ ranges across all of $\cat A$, is a set of compact generators for $\cat C.$
\end{itemize}
We then obtain ``suspension'' functors $\Sigma^t$ for any $t \in A$, which are necessarily equivalences with inverses $\Sigma^{-t}$.
\begin{defn}
Let $S^{n,t}$ denote $\Sigma^{n,0}\iota S^t$. Using Definition \ref{tau}, we obtain a map
\[
S^{-k,0} \xrightarrow{\tau^k} S^{0,0}
\] 
We denote by $C\tau^k$ the cofiber of this map. More generally, for any $X$ we have a map
\[
\Sigma^{-k,0} X \xrightarrow{\tau^k} X.
\]
We denote the cofiber of this map by $X \otimes C\tau^k.$ 
\end{defn}

\begin{warn}
We use the notation $C\tau$ to denote the cofiber of $\tau$ in $\Fil(\cat C).$ In the model structures we construct on $\Fil(\cat C)$, $\tau$ will not be a cofibration and so $C\tau$ will no longer be its cofiber after localization. Confusingly, in Corollary \ref{ctweird} we will see that the cofiber of $\tau$ in the $k$-projective model structure is actually $C\tau^{k+1}$.
\end{warn}
To maintain clarity, for a map $f: X \to Y$ we will always use the notation  $Cf$ or $Y/X$ to mean the corresponding cofiber \emph{in the $\infty$-category $\Fil(\cat C)$}. If we want to take the cofiber in some localization of $\Fil(\cat C)$, we will say so explicitly in words. 

Finally, we note the following:
\begin{thm}\label{ctcomm}
Suppose $\cat C$ satisfies the conditions of Theorem \ref{csymmon}. For each $k$, $C\tau^k$ is a commutative  ring in $\Fil(\cat C)$. The unit map comes from the cofiber sequence
\[
S^{-k,0} \to S^{0,0} \to C\tau^k.
\]
\end{thm}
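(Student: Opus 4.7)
The plan is to construct the commutative ring structure on $C\tau^k$ in $\Fil(\cat C)$ by transferring the corresponding result from filtered spectra via base change along the canonical symmetric monoidal colimit-preserving functor $\Sp \to \cat C$.

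First, since $\cat C$ is presentably symmetric monoidal stable, there is an essentially unique symmetric monoidal colimit-preserving functor $\phi: \Sp \to \cat C$ sending the sphere spectrum to the unit $S^0 \in \cat C$. Post-composition induces a functor $\Phi: \Fil(\Sp) \to \Fil(\cat C)$ which is symmetric monoidal for Day convolution and preserves colimits. By naturality $\Phi$ carries the unit of $\Fil(\Sp)$ to $S^{0,0}$, the shifts to the corresponding $S^{-k,0}$, and the canonical map $\tau$ in $\Fil(\Sp)$ to the map $\tau$ in $\Fil(\cat C)$ (since both arise from the shift natural transformation on the source category $\ZZ$). Since $\Phi$ preserves cofibers it sends the cofiber sequence presenting $C\tau^k$ in $\Fil(\Sp)$ to the analogous sequence in $\Fil(\cat C)$.

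Next, I would invoke the corresponding result for filtered spectra, established in \cite{gheorghe2022c} (already cited in the proof of Theorem \ref{csymmon}): in $\Fil(\Sp)$ the cofiber $C\tau^k$ admits a canonical commutative ring structure whose unit map is the quotient map from $S^{0,0}$. Because $\Phi$ is symmetric monoidal, it carries commutative rings to commutative rings and preserves unit maps; this structure transfers directly to $C\tau^k$ in $\Fil(\cat C)$ with the identification of the unit claimed in the theorem.

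The main technical step is verifying that $\Phi$ is symmetric monoidal for Day convolution; this follows from the standard fact that post-composition with a symmetric monoidal colimit-preserving functor commutes with left Kan extension along a monoidal operation, so the essential input is the symmetric monoidality of $\phi$ itself. A self-contained alternative, avoiding the citation, would construct the commutative ring structure directly by describing $C\tau^k$ as the extension-by-zero of the constant functor at $S^0$ on the subposet $[-k+1,0] \subset \ZZ$ and promoting it to a lax symmetric monoidal functor $\ZZ \to \cat C$ (using the equivalence between commutative algebras in $\Fun(\ZZ, \cat C)$ under Day convolution and lax symmetric monoidal functors $\ZZ \to \cat C$); in this alternate approach the nontrivial multiplication maps are all of the form $S^0 \otimes S^0 \to S^0$ or $S^0 \otimes S^0 \to 0$, and the higher coherences reduce to the $1$-categorical case since the functor factors through a discrete subcategory of $\cat C$.
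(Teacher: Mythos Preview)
Your primary approach---reducing to $\cat C = \Sp$ via the essentially unique symmetric monoidal left adjoint $\Sp \to \cat C$ and then citing the known result for filtered spectra---is correct, and genuinely different from what the paper does. The paper instead works directly in $\Fil(\cat C)$: it exhibits $C\tau^k$ as the image of the monoidal unit under the (lax symmetric monoidal) right adjoint of a compatible reflective localization $\Fil_{\leq 0}(\cat C) \to \Fil_{[-k,0]}(\cat C)$, following Lurie's argument in \cite{lurie2015rotation}. Your route is shorter and makes clear that nothing about $\cat C$ beyond presentable stable symmetric monoidality is used; the paper's route is self-contained and does not depend on locating the statement for general $k$ in the literature (note that \cite{gheorghe2022c} is invoked in the paper only for the Day convolution structure, not for the ring structure on $C\tau^k$, so you should double-check that your citation actually covers all $k$, or else cite Lurie's rotation paper as the paper does).

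Your alternative sketch---via the equivalence between commutative algebras for Day convolution and lax symmetric monoidal functors $\ZZ \to \cat C$---is morally the same idea as the paper's proof, since the paper's localization argument is exactly what produces the required lax symmetric monoidal structure. However, your final clause that ``the higher coherences reduce to the $1$-categorical case since the functor factors through a discrete subcategory of $\cat C$'' is not quite right as stated: the full subcategory on $\{0, S^0\}$ is not discrete (e.g.\ $\Hom(S^0,S^0)$ is not a point), so one really does need an argument such as the paper's localization to supply the coherences. If you pursue this alternative, the cleanest fix is precisely the paper's: observe that truncation to $\Fun([-k+1,0],\cat C)$ is a symmetric monoidal localization, so its right adjoint is lax symmetric monoidal and carries the unit to $C\tau^k$.
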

\begin{proof}
This is a generalization of Theorem 3.2.5 of \cite{lurie2015rotation}, and the proof proceeds along similar lines. 

Let $[-k,0]$ denote the set of integers between $-k$ and $0$ (inclusive) and denote by $\Fil_{[-k,0]}(\cat C)$ the functor $\infty$-category $\Fun([-k,0], \cat C)$ and by $\Fil_{\leq 0}$ the functor $\infty$-category $\Fun(\ZZ_{\leq 0}, \cat C).$ We have a functor
\[
\iota_*: \Fil_{\leq 0}(\cat C) \to \Fil_{[-k,0]}(\cat C)
\]
by restricting the domain, which has a fully faithful right adjoint
\[
\iota^{!}: \Fil_{[-k,0]}(\cat C) \to \Fil_{\geq 0}(\cat C) 
\]
such that
\[
\iota_*(X_0 \to \cdots \to X_{-k}) = X_0 \to \cdots \to X_{-k} \to 0 \to 0 \to 0 \to \cdots 
\]
The usual Day Convolution formula implies that the symmetric monoidal structure  $\Fil_{\leq 0}(\cat C)$ inherits from $\Fil(\cat C)$ is compatible (in the sense of Definition 2.2.1.6 of \cite{ha}) with the localization coming from the adjunction $(\iota^*, \iota_*),$ so we obtain a symmetric monoidal structure on $\Fil_{[-k,0]}(\cat C)$ with unit $\iota^*(S^{0,0}).$ 

In particular, this implies that $\iota_*\iota^*(S^{0,0})$ is a commutative ring, whose unit is the unit map
\[
S^{0,0} \to \iota_*\iota^*(S^{0,0}),
\]
which simple inspection identifies with the cofiber map $S^{0,0} \to C\tau^k.$
\end{proof}
\begin{cor}\label{ctsarealgebras}  
Under the conditions of the previous theorem, we have a chain of commutative ring maps
\[
S^{0,0} \to \cdots \to C\tau^3 \to C\tau^2 \to C\tau.
\]
\end{cor}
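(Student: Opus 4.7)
The plan is to iterate the argument from the proof of Theorem \ref{ctcomm}. That proof identified $C\tau^k$ (after transporting along extension by zero $\Fil_{\leq 0}(\cat C) \hookrightarrow \Fil(\cat C)$) with the image of $S^{0,0}$ under the symmetric monoidal localization endofunctor $L_k = \iota^!\iota_*$ on $\Fil_{\leq 0}(\cat C)$, where $\iota_*$ restricts along $[-k,0] \hookrightarrow \ZZ_{\leq 0}$ and $\iota^!$ is extension by zero at the missing degrees. The unit map $S^{0,0} \to L_k(S^{0,0})$ of the localization was identified with the cofiber map $S^{0,0} \to C\tau^k$, and crucially $L_k(S^{0,0})$ inherits a canonical commutative ring structure from the symmetric monoidality of the localization.

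First I would observe that the nested inclusions $[-1,0] \subset [-2,0] \subset \cdots \subset \ZZ_{\leq 0}$ yield a compatible family of symmetric monoidal restriction functors, each with right adjoint given by extension by zero at the missing degrees. Applying the same Day convolution argument used in Theorem \ref{ctcomm} to each nested pair shows that each inclusion itself induces a symmetric monoidal localization in the sense of Definition 2.2.1.6 of \cite{ha}. The units of these adjunctions then assemble into monoidal natural transformations between the corresponding endofunctors on $\Fil_{\leq 0}(\cat C)$, producing a tower of symmetric monoidal functors
\[
\cdots \to L_{k+1} \to L_k \to \cdots \to L_1
\]
together with compatible monoidal unit maps from the identity.

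Evaluating this tower at the symmetric monoidal unit $S^{0,0}$ and using the identifications $L_k(S^{0,0}) \simeq C\tau^k$ yields the desired chain of commutative ring maps together with compatible units $S^{0,0} \to C\tau^k$. The only real work is the bookkeeping claim that restriction along each inclusion of nested intervals gives a symmetric monoidal localization compatible with the two localizations of $\Fil_{\leq 0}(\cat C)$; this is essentially identical in structure to the single localization already handled in the proof of Theorem \ref{ctcomm}, so I would not expect any serious obstacle. The primary thing to be careful about is keeping the coherence of the tower (as a diagram of symmetric monoidal functors, not merely of underlying $\infty$-functors) honest, which is why the proposal proceeds at the level of localization endofunctors rather than trying to build the maps $C\tau^{k+1} \to C\tau^k$ object-by-object and separately promote them to ring maps.
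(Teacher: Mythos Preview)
Your proposal is correct and follows essentially the same approach as the paper: the paper's proof simply says to apply the argument of Theorem \ref{ctcomm} to the adjunctions coming from the chain of inclusions $[0,0] \subseteq [-1,0] \subseteq [-2,0] \subseteq \cdots \subseteq \mathbb{Z}_{\leq 0}$, which is exactly the tower of nested symmetric monoidal localizations you describe. Your write-up is considerably more detailed about the coherence bookkeeping than the paper's one-line proof, but the underlying idea is identical.
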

\begin{proof}
Apply the same argument to the adjunctions coming from the chain of inclusions\[
[0,0] \subseteq [-1,0] \subseteq [-2,0] \subseteq \cdots \subseteq \mathbb{Z}_{\leq 0}
\]
\end{proof}

\section{Model structures on $\infty$-categories of filtrations}\label{modelsection}
In the previous chapter, we fixed a choice of ``spheres'' $S^t$ in $\cat C$. From this, we obtain several notions of homotopy groups of an object of $\Fil(\cat C)$:
\begin{defn}\label{homodef} For any $t \in A$, the \emph{$t$-th topological homotopy group}\footnote{Throughout this thesis, we use $t$ for total degree and $n$ for filtration degree.} of a (filtered) object $X$ is the abelian group
\[
\pi^{top}_t(X) = [S^t, \colim X_\bullet].
\]
The \emph{level-wise homotopy groups} of $X$ are given by
\[
\pi^{0}_{n,t}(X) = [S^{n,t}, X]_{\Fil(\cat C)} = [S^t, X_n]_{\cat C}.
\]
Notice that $\pi^{0}_{**}$ comes with a natural $\tau$ action, so we define $k$\emph{-sustained homotopy groups} of $X$ to be
\[
\pi^k_{n,t}(X) = \tau^k \pi_{n,t}^{0}(X) := \im\left(\pi_{t}(X_n) \xrightarrow{\tau^k} \pi_t(X_{n-k})\right)
\]
Equivalently,
\[\pi^{k}_{n,t}(X) = \coker(\pi_{t+1}( X_{n-k}/X_{n}) \to \pi_{t}(X_{n}))
\]
\end{defn}
In this chapter, we will use these invariants to build model $\infty$-category structures on $\Fil(\cat C).$
\subsection{Review of model $\infty$-categories}\label{modelreview}
This section is a review of Mazel-Gee's theory of model $\infty$-categories, which we use extensively throughout the rest of this thesis.  As is often the case in $\infty$-category theory, many of the results will not be surprising to readers who are comfortable with ordinary model (1-)categories. The proofs, however, are much harder, and we will not reproduce them here. 

A key goal of model ($\infty$-)category theory is to provide information about \emph{localizations} of ($\infty$-)categories at a collection of arrows:
\begin{defn}
Let $\cat M$ be an $\infty$-category, and $\cat W$ be a sub-$\infty$-category. The localization $\cat M[\cat W^{-1}],$ if it exists, is the unique $\infty$-category such that for any $\infty$-category $\cat C$,
\[
\Fun(\cat M[\cat W^{-1}], \cat C) = \Fun^{\cat W}(\cat M, \cat C),
\]
where the right hand side is the full sub-$\infty$-category of all functors $\cat M \to \cat C$ taking arrows in $\cat W$ to isomorphisms in $\cat C.$
\end{defn}

In general, set-theoretic problems can prevent the construction of localizations. Quillen introduced model structures as a way of both demonstrating the existence of localizations and facilitating computation with them. Mazel-Gee's definition in the $\infty$-categorical case closely mirrors Quillen's. 
\begin{defn}[\cite{mg}, Definition 1.1.1.]
A \emph{model $\infty$-category} consists of an $\infty$-category $\cat M$ with three wide subcategories $\cat W, \cat C, \cat F$ such that:
\begin{enumerate}
\item $\cat M$ is finitely bicomplete.
\item $\cat W$ satisfies the two-out-of-three property.
\item $\cat W, \cat C,$ and $\cat F$ are closed under retracts.
\item There exists a lift in any commutative square
\[
\xymatrix{
x \ar[r] \ar^i[d] & z \ar^p[d] \\
y \ar[r] \ar@{-->}[ru] & w
}
\]
in which $i$ is in $\cat C$, $p$ is in $\cat F$, and either $i$ or $p$ is in  $\cat W.$ 
\item Every arrow in $\cat M$ factors as an arrow in $\cat W \cap \cat C$ followed by an arrow in $\cat F$, and as an arrow in $\cat C$ followed by an arrow in $\cat W \cap \cat F.$
\end{enumerate}

The arrows in $\cat W$, $\cat C$, and $\cat F$ are called \emph{weak equivalences, cofibrations,} and \emph{fibrations}, respectively. Objects whose maps from the initial object are cofibrations are called \emph{cofibrant.} Dually, objects whose maps to the terminal object are fibrations are called \emph{fibrant.} Objects that are both cofibrant and fibrant are called \emph{bifibrant}.
\end{defn}
As in the classical case, we construct localizations by restricting to bifibrant objects and introducing a \emph{homotopy} relation. Classically, homotopy is an equivalence relation constructed by building a \emph{cylinder object} through which the map $X \coprod X \to X$ factors. The $\infty$-categorical homotopy construction is a bit subtler, since the higher homotopies require us to also consider all the maps $$X \coprod X \coprod \cdots \coprod X \to X$$ simultaneously. Mazel-Gee packages all this information into a cosimplicial object of $\cat M$ satisfying a few conditions:
\begin{defn} \cite[Definition~6.1.1]{mg}\label{cylinderdef}
Let $X$ be an object of a model $\infty$-category $\cat M.$ A \emph{cylinder object} for $X$ is a cosimplicial object $\cyl^\bullet(X)$ of $\cat M$ equipped with an equivalence $X \to \cyl^0(X)$ such that:
\begin{enumerate}
    \item The codegeneracy maps $\cyl^n(X) \xrightarrow{\sigma^i} \cyl^{n-1}(X)$ are all weak equivalences.
    \item The latching maps $L_n\cyl^\bullet(X) \to \cyl^n(X)$ are all cofibrations.
\end{enumerate}
\end{defn}
The homotopy relation is now elegantly realized as a geometric realization:
\begin{thm}\cite[Theorem~6.1.9]{mg}\label{homspaces}
Let $X$ be a cofibrant object in a model $\infty$-category $\cat M$, and let $Y$ be a fibrant object. For any cylinder object $\cyl^\bullet(X)$ of $X$, there is an equivalence
\[
|\Hom_{\cat M}(\cyl^\bullet(X),Y)| \to \Hom_{\cat \cat M[\cat W^{-1}]}(X,Y),
\]
where the left hand side is the geometric realization of the corresponding simplicial space.
\end{thm}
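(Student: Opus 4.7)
The plan is to exhibit $\cyl^\bullet(X)$ as a kind of cosimplicial resolution of $X$ that is well-adapted to computing derived mapping spaces into fibrant targets, then upgrade the classical argument for simplicial model categories to the $\infty$-categorical setting. The overall strategy mirrors the familiar fact that in a simplicial model category mapping spaces between bifibrant objects compute derived hom; the new feature here is that a single cylinder object no longer suffices and we must keep the whole cosimplicial tower.

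First, I would construct the comparison map. The localization functor $\gamma: \cat M \to \cat M[\cat W^{-1}]$ provides maps $\Hom_{\cat M}(\cyl^n(X), Y) \to \Hom_{\cat M[\cat W^{-1}]}(\cyl^n(X), Y)$ for each $n$. Since every codegeneracy $\sigma^i: \cyl^n(X) \to \cyl^{n-1}(X)$ is a weak equivalence and $X \to \cyl^0(X)$ is an equivalence, each $\cyl^n(X)$ maps by an equivalence to $X$ after localization, so the target cosimplicial space is levelwise equivalent to the constant one on $\Hom_{\cat M[\cat W^{-1}]}(X,Y)$; realizing gives the desired map.

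Second, I would prove this map is an equivalence by showing that the assignment $X \mapsto |\Hom_{\cat M}(\cyl^\bullet(X), Y)|$, restricted to cofibrant objects, sends weak equivalences to equivalences of spaces. Given a weak equivalence $f: X \to X'$ between cofibrant objects one can lift $f$ to a Reedy-weak-equivalence of cylinder objects by inductively filling the latching squares using the lifting axiom against the fibration $Y \to *$; the cofibration hypothesis on the latching maps $L_n \cyl^\bullet(X) \to \cyl^n(X)$ combined with fibrancy of $Y$ makes $\Hom_{\cat M}(\cyl^\bullet(X), Y)$ into a Reedy-fibrant simplicial space, so its realization is homotopy invariant. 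The universal property of the localization then produces a canonical factorization of $X \mapsto |\Hom_{\cat M}(\cyl^\bullet(X), Y)|$ through $\cat M[\cat W^{-1}]$, and checking its value at the identity (using that $\cyl^0(X) \simeq X$ and that the realization of a cosimplicial object with levelwise equivalent entries to a fixed one is that entry) identifies it with $\Hom_{\cat M[\cat W^{-1}]}(-, Y)$.

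The main obstacle I expect is the second step: rigorously lifting Reedy-theoretic arguments into the $\infty$-categorical model framework. Classically, one argues that a Reedy cofibrant cosimplicial object paired against a fibrant target yields a Reedy fibrant simplicial space, whose realization (equivalently, homotopy colimit) is invariant under Reedy equivalences. In Mazel-Gee's setting lifts against $(\cat W \cap \cat C, \cat F)$ and $(\cat C, \cat W \cap \cat F)$ only exist up to coherent homotopy, so producing the needed compatibilities between latching objects, matching objects, and the cosimplicial lifts requires careful bookkeeping of higher simplices rather than a one-shot diagram chase. I expect the cleanest implementation is an induction on simplicial degree that at each stage invokes the lifting axiom in an appropriate functor $\infty$-category, with the payoff being that the Reedy fibrant replacement of the levelwise hom is the realization, exactly as in the classical story.
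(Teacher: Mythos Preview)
The paper does not prove this statement: it is quoted verbatim as review material from Mazel-Gee's work, with the citation \cite[Theorem~6.1.9]{mg} standing in for the proof. There is therefore no in-paper argument to compare your proposal against.

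As a sketch of how one might prove the cited result, your outline is broadly reasonable---build the comparison map via the localization functor, then show the source is weak-equivalence invariant using the Reedy-cofibrancy of $\cyl^\bullet(X)$ and fibrancy of $Y$---but you have correctly identified the real difficulty in your final paragraph. The step where you ``check its value at the identity'' and conclude the factorized functor agrees with $\Hom_{\cat M[\cat W^{-1}]}(-,Y)$ is not a formality: knowing that a functor out of $\cat M[\cat W^{-1}]$ has the right value on objects does not pin it down, and the actual argument in \cite{mg} requires a more careful comparison (via the hammock-style or bisimplicial models of the localization) rather than an appeal to the universal property alone. If you want to reconstruct the proof you should consult Mazel-Gee's Section~6 directly.
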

In general, it can be difficult to prove that the axioms of a model $\infty$-category hold. One class of model structures that are easier to construct are known as \emph{cofibrantly-generated} model $\infty$-categories.
\begin{defn}
A model $\infty$-category $\cat M$ is \emph{cofibrantly generated} if there exist sets $I$ and $J$ of homotopy classes of maps such that:
\begin{itemize}
\item Every map in $I$ is in $\cat C$
\item Every map in $J$ is in $\cat C \cap \cat W.$
\item The sources of maps in $I$ are small (in Quillen's sense) relative to maps in $I$.
\item The sources of maps in $J$ are small (in Quillen's sense) relative to maps in $J$.
\item $\cat F \cap \cat W$ is the collection of maps with the right lifting property with respect to $I$.
\item $\cat F$ is the collection of maps with the right lifting property with respect to $J$.
\end{itemize}
\end{defn}
\noindent Cofibrantly-generated model categories come with the following \emph{recognition theorem}:
\begin{thm}
Let $\cat M$ be an $\infty$-category with all colimits and finite limits, and let $\cat W$ be a subcategory of $\cat M$ which is closed under retracts and satisfies the two-out-of-three property. Suppose $I$ and $J$ are sets of homotopy classes of maps such that
\begin{itemize}
    \item The source of maps in $I$ (resp. $J$) are small relative to maps in $I$ (resp. $J$.)
    \item $J\cof \subseteq I\cof \cap \cat W$
    \item $I\inj \subseteq J\inj \cap \cat W$
    \item either $I\cof \cap W \subseteq J\cof$ or $J\inj \cap \cat W \subseteq I\inj.$
\end{itemize}
Then the sets $I$ and $J$ define a cofibrantly generated model structure on $\cat M$ whose weak equivalences are $\cat W.$
\end{thm}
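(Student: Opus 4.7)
The plan is to adapt Quillen's classical recognition theorem to the $\infty$-categorical setting, working inside Mazel-Gee's framework. I would begin by declaring the cofibrations to be $I\cof$ and the fibrations to be $J\inj$; closure of these classes under retracts is automatic from their definitions, while closure of $\cat W$ under retracts and the two-out-of-three property are given by hypothesis.

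The first substantive step is to invoke an $\infty$-categorical version of the small object argument, using the smallness assumption on the sources of $I$ and $J$. This produces, for any morphism $f: X \to Y$, two factorizations $f = p \circ i$ and $f = q \circ j$ with $i \in I\cof$, $p \in I\inj$, $j \in J\cof$, $q \in J\inj$. Since $I\inj \subseteq J\inj \cap \cat W$ and $J\cof \subseteq I\cof \cap \cat W$ by hypothesis, the first factorization realizes any map as a cofibration followed by a fibration lying in $\cat W$, and the second as a cofibration lying in $\cat W$ followed by a fibration, establishing the factorization axiom.

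To verify the lifting axiom, the lifting of $I\cof$ against $I\inj$ and of $J\cof$ against $J\inj$ holds by definition, so the remaining task reduces to the two inclusions
\[
J\inj \cap \cat W \subseteq I\inj \quad\text{and}\quad I\cof \cap \cat W \subseteq J\cof.
\]
One of these is granted by the fourth hypothesis, and the other is extracted by the classical retract argument. Supposing for concreteness that $J\inj \cap \cat W \subseteq I\inj$, given $f \in I\cof \cap \cat W$ I would factor $f = q \circ j$ with $j \in J\cof$ and $q \in J\inj$; two-out-of-three forces $q \in \cat W$, hence $q \in I\inj$, and the lift of $f$ against $q$ exhibits $f$ as a retract of $j$, placing $f$ in $J\cof$ by closure under retracts. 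The other inclusion is symmetric, and both halves of the factorization axiom then refine to the claimed factorizations into $\cat W \cap \cat C$ followed by $\cat F$, and $\cat C$ followed by $\cat W \cap \cat F$.

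The main obstacle is the small object argument itself. Classically it is a routine transfinite colimit, but in an $\infty$-category one must ensure each stage is formed as a genuine homotopy pushout, that the accumulating sequential colimit is an $\infty$-categorical filtered colimit, and that the smallness hypothesis ensures every square from a source of $I$ into the terminal colimit factors through some finite stage, which is what forces maps in the second factor to lie in $I\inj$. Fortunately, Mazel-Gee has already established this $\infty$-categorical small object argument together with the appropriate closure properties in \cite{mg}, so once we invoke his framework the remainder of the argument transfers from the classical setting essentially line for line.
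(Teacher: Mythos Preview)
Your sketch is correct and is exactly the standard proof of the recognition theorem, adapted to the $\infty$-categorical setting via Mazel-Gee's small object argument. However, the paper does not actually prove this theorem: it appears in the review section on model $\infty$-categories and is simply stated as a known result due to Mazel-Gee \cite{mg}, with no proof given. So there is nothing in the paper to compare your argument against; your outline is essentially what one would find if one opened \cite{mg} and read the proof there.
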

In many applications, the underlying $\infty$-category of $\cat M$ has a monoidal structure that we would like to import to the localization $\cat M[\cat W^{-1}].$ This will follow from essentially the same conditions as the classical case.
\begin{defn}\cite[Definition 5.5.1]{mg}
A \emph{(symmetric)} monoidal structure on a model $\infty$-category of $\cat M$ consists of a (symmetric) monoidal structure on $\cat M$ such that:
\begin{enumerate}
    \item For any pair of cofibrations $f: X \to Y$ and $f': X' \to Y'$, the induced morphism
    \[
    (X \otimes Y') \coprod_{X \otimes X'} (Y \otimes X') \to Y \otimes Y'
    \]
    is a cofibration, and is a weak equivalence if either $f$ or $f'$ is.
    \item There exists a cofibrant replacement $\mathbb CS$ of the (symmetric) monoidal unit $S$ such that for any object $X$ the induced maps
    \[
    \mathbb{C}S \otimes X \to X
    \]
    and
    \[
    X \otimes \mathbb{C}S \to X
    \]
    are weak equivalences. 
\end{enumerate}
\end{defn}
\noindent As usual, we have:
\begin{thm}\cite[Propositions 5.5.4 and 5.5.6]{mg}
A (symmetric) monoidal structure on $\cat M$ gives rise to a canonical (symmetric) monoidal structure on $\cat M[\cat W^{-1}].$ For any two objects $x$ and $y$ of $\cat M$, their tensor product in $\cat M[\cat W^{-1}]$ can be identified with their tensor product in $\cat M$ as long both are cofibrant. 
\end{thm}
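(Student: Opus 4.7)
The plan is to use the universal property of the localization together with the general theory of symmetric monoidal localizations of $\infty$-categories. The key obstruction is that the tensor product $\otimes: \cat M \times \cat M \to \cat M$ does not generally preserve weak equivalences, so it does not descend naively to $\cat M[\cat W^{-1}] \times \cat M[\cat W^{-1}]$. The remedy is to first restrict to cofibrant objects, where the pushout product axiom forces $\otimes$ to preserve weak equivalences in each variable.

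Concretely, I would begin by using the factorization axiom to produce a cofibrant replacement functor $Q: \cat M \to \cat M$ landing in the full subcategory $\cat M^c$ of cofibrant objects, with a natural weak equivalence $Q \to \mathrm{id}$. The inclusion $\cat M^c \hookrightarrow \cat M$ induces an equivalence $\cat M^c[\cat W^{-1}] \to \cat M[\cat W^{-1}]$, so it suffices to endow the former with a symmetric monoidal structure. An inductive application of the pushout product axiom (a Ken-Brown-lemma-style argument) shows that $\otimes \circ (Q \times \cdots \times Q): \cat M^{\times n} \to \cat M$ sends levelwise weak equivalences to weak equivalences, and hence descends to a functor on the localization by the universal property.

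The main obstacle is to promote these individual $n$-ary operations to a coherent symmetric monoidal structure, i.e.\ to build a coCartesian fibration over the $\infty$-operad governing $\mathbb{E}_\infty$-monoids whose fibers recover the localization. The cleanest way is to invoke Lurie's general machinery for localizations of symmetric monoidal $\infty$-categories (the setting around Proposition 2.2.1.9 and Section 4.1.7 of \cite{ha}): one checks that the collection of weak equivalences is compatible with the symmetric monoidal structure in the precise sense that, for any operation and any family of weak equivalences between cofibrant objects, the induced map is again a weak equivalence. This compatibility reduces to the binary pushout product axiom via induction on arity, and the resulting localized fibration encodes the desired symmetric monoidal structure on $\cat M[\cat W^{-1}]$.

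Finally, the identification on cofibrant objects follows immediately from the construction: if $x$ and $y$ are already cofibrant, then $Qx \simeq x$ and $Qy \simeq y$ via weak equivalences, so the derived tensor product agrees with $x \otimes y$ computed in $\cat M$, which in turn maps isomorphically to its image in the localization (again using the pushout product axiom to see that $x \otimes y$ is cofibrant and hence its weak equivalence class is preserved). The unit condition in the definition of a monoidal model $\infty$-category is used precisely to ensure that the cofibrant replacement $\mathbb{C}S$ of the unit maps to an isomorphism onto $S$ in $\cat M[\cat W^{-1}]$, so that $S$ itself serves as the unit after localization.
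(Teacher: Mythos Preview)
The paper does not actually prove this theorem: it is stated in the review section on model $\infty$-categories and attributed directly to Mazel-Gee \cite[Propositions~5.5.4 and~5.5.6]{mg}, with no proof given. So there is no ``paper's own proof'' to compare your proposal against.

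That said, your sketch is a reasonable outline of how such a result is established, and is in the spirit of Mazel-Gee's argument: restrict to cofibrant objects, use the pushout-product axiom together with a Ken~Brown--type argument to see that the tensor product is homotopical there, and then invoke a general machinery (Lurie's or an $\infty$-categorical Dwyer--Kan localization argument) to assemble the resulting $n$-ary operations into a coherent symmetric monoidal structure on the localization. Your remarks on the unit axiom and on the identification of the derived tensor product with the underived one on cofibrant inputs are also correct. If you want this to stand as an actual proof rather than a sketch, the step that needs the most care is the coherence step: you should point precisely to the statement in \cite{ha} (or in \cite{mg}) that takes as input ``$\otimes$ preserves weak equivalences between cofibrant objects in each variable'' and outputs a symmetric monoidal structure on the localization, since Proposition~2.2.1.9 of \cite{ha} as stated concerns reflective localizations rather than Dwyer--Kan localizations at a general class $\cat W$.
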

Finally, model structures are classically used to define \emph{derived functors} and in particular homotopy limits and colimits.

\begin{defn}
Let $\cat C$ be a small $\infty$-category and $\cat M$ be any model $\infty$-category. The \emph{projective model structure} on $\Fun(\cat C, \cat M)$, if it exists, is the model structure whose weak equivalences $\cat W_{proj}$ and fibrations $\cat F_{proj}$ are determined objectwise.
\end{defn}
\begin{defn}
Let $\cat C$ be a small $\infty$-category and $\cat M$ be any model $\infty$-category. The \emph{injective  model structure} on $\Fun(\cat C, \cat M)$, if it exists, is the model structure whose weak equivalences and cofibrations are determined objectwise.
\end{defn}
\begin{lem}
The projective model structure exists if $\cat M$ is cocomplete, cofibrantly generated, and the sources of its generating sets $I$ and $J$ are compact. The injective model structure exists if $\cat M$ is cofibrantly generated and presentable.
\end{lem}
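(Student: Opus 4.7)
The plan in both cases is to produce generating sets of (trivial) cofibrations and then apply the recognition theorem stated just above. The projective case is essentially formal; the injective case requires an $\infty$-categorical analogue of Jeff Smith's recognition theorem, and that is where the real work lies.

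For the projective model structure, the key observation is that for each object $c \in \cat C$ the evaluation functor $\text{ev}_c \colon \Fun(\cat C, \cat M) \to \cat M$ admits a left adjoint $F_c$, namely left Kan extension along $\{c\} \hookrightarrow \cat C$. I would define
\[
I_{\mathrm{proj}} = \{F_c(i) : c \in \Ob \cat C,\ i \in I\}, \qquad J_{\mathrm{proj}} = \{F_c(j) : c \in \Ob \cat C,\ j \in J\}.
\]
By adjunction, a map $f$ in $\Fun(\cat C, \cat M)$ has the right lifting property against $I_{\mathrm{proj}}$ (resp.\ $J_{\mathrm{proj}}$) if and only if $\text{ev}_c(f)$ is a trivial fibration (resp.\ fibration) in $\cat M$ for every $c$; that is, $I_{\mathrm{proj}}\inj$ coincides with the objectwise trivial fibrations and $J_{\mathrm{proj}}\inj$ with the objectwise fibrations. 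Compactness of the sources in $I$ and $J$, combined with the fact that colimits in $\Fun(\cat C, \cat M)$ are computed objectwise and that each $F_c$ preserves colimits, ensures that the sources of $I_{\mathrm{proj}}$ and $J_{\mathrm{proj}}$ are compact in $\Fun(\cat C, \cat M)$. The remaining inclusions required by the recognition theorem ($J_{\mathrm{proj}}\cof \subseteq I_{\mathrm{proj}}\cof \cap \cat W_{\mathrm{proj}}$, etc.) follow by transporting the corresponding statements for $\cat M$ through $F_c$.

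For the injective model structure the difficulty is that objectwise cofibrations are defined by a condition, not as the saturation of an explicit set, so there is no obvious generating set. My plan is to invoke the $\infty$-categorical analogue of Smith's theorem. Concretely, I would verify: (a) $\Fun(\cat C, \cat M)$ is presentable, which follows from presentability of $\cat M$ since $\cat C$ is small; (b) the subcategory of objectwise weak equivalences is accessible and closed under filtered colimits; (c) the class of objectwise cofibrations is closed under pushouts, transfinite composition, and retracts; and (d) there is a small generating set $I_{\mathrm{inj}}$ of objectwise cofibrations, obtained by taking (representatives for) all objectwise cofibrations between $\kappa$-presentable objects for a sufficiently large regular cardinal $\kappa$ depending on $\cat C$ and the presentability rank of $\cat M$. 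For the generating trivial cofibrations, a solution-set argument then produces $J_{\mathrm{inj}}$ as the subset of $I_{\mathrm{inj}}$ consisting of weak equivalences.

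The main obstacle will be step (b). Accessibility of the weak equivalences is the $\infty$-categorical avatar of Smith's key lemma, and reduces to showing that the weak equivalences in $\cat M$ form an accessible subcategory of its arrow $\infty$-category — a consequence of the cofibrant generation hypothesis on $\cat M$, but one whose $\infty$-categorical proof requires careful bookkeeping of presentability ranks, rather than the more direct transfinite argument available classically. Once (a)--(d) are established, the recognition theorem produces the desired injective model structure, with weak equivalences and cofibrations detected objectwise by construction.
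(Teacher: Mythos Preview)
The paper does not actually prove this lemma: its entire ``proof'' is a citation to Remark~5.1.10 of Mazel-Gee's paper and the references therein. Your proposal, by contrast, sketches the standard argument that such a reference would contain, and the sketch is essentially correct. For the projective case your approach via the left adjoints $F_c$ to evaluation is exactly the classical one (transported to the $\infty$-categorical setting), and your verification that compactness of sources passes to $F_c(X)$ is right, though the key point is really the adjunction $\Hom(F_c(X), Y) \simeq \Hom(X, Y(c))$ together with objectwise computation of colimits, not that $F_c$ preserves colimits. For the injective case your invocation of a Smith-type argument is again the standard route, and you have correctly identified accessibility of the weak equivalences as the crux; this is indeed where Mazel-Gee's treatment (and the combinatorial model category machinery it adapts) does the heavy lifting. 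So your proposal is a faithful unpacking of what the paper's citation points to, not a genuinely different route.
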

\begin{proof}
See Remark 5.1.10 of \cite{mg}, the references therein, and the corresponding footnote. 
\end{proof}
\begin{defn}
Suppose $\cat C$ is a small $\infty$-category, $\cat M$ admits $\cat C$-shaped colimits, and the projective model structure exists on $\Fun(\cat C, \cat M).$ The adjunction
\[
\text{colim}: \Fun(\cat C,\cat M) \rightleftarrows \cat M: \text{const}
\]
descends to an adjunction
\[
\Fun(\cat C, \cat M)[\cat W^{-1}_{proj}] \rightleftarrows \cat M[\cat W^{-1}],
\]
whose left adjoint we call the \emph{homotopy colimit} functor. Similarly, if $\cat C$ is small, $\cat M$ admits $\cat C$-shaped limits, and the injective model structure exists on $\Fun(\cat C, \cat M)$, we obtain an adjunction
\[
\text{const}: \cat M \rightleftarrows \Fun(\cat C, \cat M): \text{lim}
\]
descending to an adjunction
\[
\cat M[\cat W^{-1}] \rightleftarrows \Fun(\cat C, \cat M)[\cat W^{-1}_{inj}]\]
whose right adjoint we call the \emph{homotopy limit functor.}
\end{defn}
\begin{warn}
The terminology here is a bit confusing, since colimits in $\infty$-categories are often called homotopy colimits. We will use the word ``colimit'' to refer to colimits in $\cat C$ or $\Fil(\cat C)$, and reserve ``homotopy colimit'' for this construction which (we will see shortly) generally compute colimits in a \emph{localization} of the ambient $\infty$-category.
\end{warn}
 Here we note an important difference between the $1$-categorical theory and the $\infty$-categorical theory. The homotopy category of a model category very rarely admits interesting limits or colimits. Instead, in many cases homotopy (co)limits and compute the (co)limits in the $\infty$-category produced by localizing \emph{in the $\infty$-categorical sense} at the weak equivalences, which is almost always distinct from the homotopy category.

In the theory of model $\infty$-categories, there is no such distinction: under minor assumptions, homotopy (co)limits in $\cat M$ compute (co)limits in $\cat M[\cat W^{-1}].$
\begin{thm}
Suppose $\cat C$ is small, $\cat M$ admits $\cat C$-shaped colimits, and the projective model structure exists for $\cat M$. Then, the homotopy colimit of a diagram $\cat C \to \cat M$ is equivalent to the colimit of the induced diagram $\cat C \to \cat M[\cat W^{-1}].$
\end{thm}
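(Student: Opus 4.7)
The plan is to establish a compatibility of localization with functor $\infty$-categories, and then deduce the theorem by comparing left adjoints. Postcomposition with the localization functor $L \colon \cat M \to \cat M[\cat W^{-1}]$ gives a functor $L_\ast \colon \Fun(\cat C, \cat M) \to \Fun(\cat C, \cat M[\cat W^{-1}])$. Since the projective weak equivalences are detected objectwise and $L$ inverts $\cat W$, the composite $L_\ast$ inverts $\cat W_{proj}$ and descends to a canonical comparison functor
$$\bar L \colon \Fun(\cat C, \cat M)[\cat W_{proj}^{-1}] \longrightarrow \Fun(\cat C, \cat M[\cat W^{-1}]).$$
The first main step is to show $\bar L$ is an equivalence of $\infty$-categories.

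To check this, I would verify fully faithfulness on bifibrant diagrams using Theorem \ref{homspaces}. Given a projectively cofibrant diagram $X$ and projectively fibrant diagram $Y$, a projective cylinder object $\cyl^\bullet(X)$ exists and, by the defining pointwise conditions in Definition \ref{cylinderdef}, its value $\cyl^\bullet(X)(c)$ at each $c \in \cat C$ is a cylinder object for $X(c)$ in $\cat M$. Consequently the simplicial mapping space $\Hom_{\Fun(\cat C,\cat M)}(\cyl^\bullet(X), Y)$ realizes to the natural-transformation space between $L \circ X$ and $L \circ Y$ in $\Fun(\cat C, \cat M[\cat W^{-1}])$. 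Essential surjectivity follows by taking objectwise cofibrant replacements inside $\cat M$ of any functor $\cat C \to \cat M[\cat W^{-1}]$, then lifting along $L$; this lift exists at the level of diagrams precisely because the projective model structure is assumed to exist.

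Granting the equivalence $\bar L$, consider the adjunction $\colim \dashv \text{const}$ between $\Fun(\cat C, \cat M)$ and $\cat M$. The constant-diagram functor preserves weak equivalences, and by the definition of the homotopy colimit given just before the theorem, the induced adjunction on localizations fits into a commuting square
$$\xymatrix{
\Fun(\cat C, \cat M)[\cat W_{proj}^{-1}] \ar@<0.5ex>[r]^-{\hocolim} \ar[d]^-{\bar L}_-{\simeq} & \cat M[\cat W^{-1}] \ar@<0.5ex>[l]^-{\text{const}} \ar@{=}[d] \\
\Fun(\cat C, \cat M[\cat W^{-1}]) \ar@<0.5ex>[r]^-{\colim} & \cat M[\cat W^{-1}] \ar@<0.5ex>[l]^-{\text{const}}
}$$
in which the right adjoints agree under $\bar L$. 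By uniqueness of adjoints, the left adjoints agree, identifying $\hocolim$ with the $\infty$-categorical colimit in $\cat M[\cat W^{-1}]$.

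The main obstacle is the fully faithfulness in the first step: one must check that the objectwise latching conditions for a projective cylinder translate correctly into the levelwise cylinders needed to compute mapping spaces in $\cat M[\cat W^{-1}]$ at each object of $\cat C$, and that geometric realization commutes past the finite limits used to encode natural transformations. Once this is pinned down (following the objectwise analysis in \cite{mg}), the rest of the argument is a formal consequence of the universal property of adjunctions.
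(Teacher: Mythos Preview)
Your overall architecture matches the paper's: first establish an equivalence
\[
\Fun(\cat C,\cat M)[\cat W_{proj}^{-1}] \;\simeq\; \Fun(\cat C,\cat M[\cat W^{-1}]),
\]
then observe that the constant-diagram right adjoints correspond under it, and conclude by uniqueness of left adjoints. The paper, however, does not attempt to prove this equivalence by hand. It observes that the existence of the projective model structure makes $\cat M$ \emph{hereditary} in the sense of Cisinski \cite[Definition~7.9.4]{cis}, and then invokes \cite[Theorem~7.9.8]{cis} to obtain the identification directly. The adjoint comparison is then a one-liner, exactly as in your last paragraph.

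Your direct attack on the equivalence has two soft spots. First, the end computing $\Hom_{\Fun(\cat C,\cat M)}(\cyl^\bullet(X),Y)$ is taken over all of $\cat C$, which is only assumed small, not finite; your remark that geometric realization need only commute with ``finite limits'' understates the issue. One would need a Reedy-fibrancy argument for the cosimplicial object of mapping spaces, not just a general commutation principle. Second, your essential surjectivity step is close to circular: given an arbitrary functor $F\colon \cat C \to \cat M[\cat W^{-1}]$, ``lifting along $L$'' to a coherent diagram $\cat C \to \cat M$ is precisely the nontrivial content of the equivalence you are trying to prove, and objectwise cofibrant replacement does not supply the required coherences. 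These are exactly the difficulties that Cisinski's hereditary machinery is designed to absorb; the paper simply outsources them rather than reproving them via cylinder objects.
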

\begin{thm}
Suppose $\cat C$ is a small (ordinary) category, $\cat M$ admits $\cat C$-shaped limits, and the injective model structure exists for $\cat M$. Then, the homotopy limit of a diagram $\cat C \to \cat M$ is equivalent to the limit of the induced diagram $\cat C \to \cat M[\cat W^{-1}].$
\end{thm}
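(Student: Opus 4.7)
The plan is to realize the homotopy limit as the right adjoint to $\mathrm{const}$ at the level of localizations, then identify the target $\Fun(\cat C, \cat M)[\cat W^{-1}_{inj}]$ with $\Fun(\cat C, \cat M[\cat W^{-1}])$ in a way compatible with $\mathrm{const}$, so that the homotopy limit is forced to agree with the $\infty$-categorical limit by uniqueness of right adjoints.

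First I would observe that, because weak equivalences in the injective model structure on $\Fun(\cat C, \cat M)$ are defined pointwise, the constant diagram functor $\mathrm{const}: \cat M \to \Fun(\cat C, \cat M)$ preserves weak equivalences tautologically and therefore descends to a functor
\[
\overline{\mathrm{const}}: \cat M[\cat W^{-1}] \to \Fun(\cat C, \cat M)[\cat W^{-1}_{inj}],
\]
whose right adjoint is, by definition (see the paragraph preceding the theorem), the homotopy limit. On the other hand, post-composition with the localization $L: \cat M \to \cat M[\cat W^{-1}]$ carries pointwise weak equivalences of diagrams to pointwise equivalences, so by the universal property of localization it factors through a canonical comparison functor $\Phi: \Fun(\cat C, \cat M)[\cat W^{-1}_{inj}] \to \Fun(\cat C, \cat M[\cat W^{-1}])$ that intertwines $\overline{\mathrm{const}}$ with the usual $\infty$-categorical $\mathrm{const}$.

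The technical heart of the argument is showing that $\Phi$ is an equivalence, and I would do this by computing mapping spaces using Theorem \ref{homspaces}. For an injectively-cofibrant $X$ and injectively-fibrant $Y$, the hom space on the left is $|\Hom_{\Fun(\cat C,\cat M)}(\cyl^\bullet(X), Y)|$. Because both cofibrations and weak equivalences in the injective model structure are determined pointwise, a cylinder object for $X$ in $\Fun(\cat C, \cat M)$ can be assembled from a choice of cylinder object for each $X(c)$ in $\cat M$ (applying Definition \ref{cylinderdef} componentwise). Commuting the geometric realization past the end that presents natural transformations and then applying Theorem \ref{homspaces} pointwise identifies this with $\int_{c \in \cat C} \Hom_{\cat M[\cat W^{-1}]}(X(c), Y(c))$, which is exactly the hom space in $\Fun(\cat C, \cat M[\cat W^{-1}])$. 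Once $\Phi$ is an equivalence, the two right adjoints to $\mathrm{const}$ must coincide, proving the theorem.

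The main obstacle is the commutation of geometric realization with the end computing natural transformations. This is not formal in general, but it becomes tractable here for two reasons: first, $\cat C$ is an ordinary $1$-category, so the end is an honest equalizer of a finite diagram over each pair of morphisms; and second, the cylinder objects produced by Mazel-Gee's machinery are themselves diagrams of cofibrant objects, which control the homotopical behavior of the relevant ends. With these observations in place, the remaining verifications are essentially formal manipulations of universal properties.
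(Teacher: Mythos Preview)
Your overall strategy matches the paper's exactly: identify $\Fun(\cat C, \cat M)[\cat W^{-1}_{inj}]$ with $\Fun(\cat C, \cat M[\cat W^{-1}])$ compatibly with $\mathrm{const}$, then invoke uniqueness of right adjoints. The paper, however, does not attempt your direct hom-space computation. It observes that the existence of the relevant model structure makes $\cat M$ \emph{hereditary} in the sense of Cisinski \cite[Definition~7.9.4]{cis} and then cites \cite[Theorem~7.9.8]{cis} to obtain the identification of $\Fun(\cat C, \cat M)[\cat W^{-1}]$ with $\Fun(\cat C, \cat M[\cat W^{-1}])$ outright. After that the adjunction argument is one sentence.

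Your direct route has real gaps that the citation to Cisinski sidesteps. First, assembling a cylinder object for $X$ in $\Fun(\cat C, \cat M)$ from pointwise cylinder objects for each $X(c)$ requires those choices to be \emph{functorial} in $c$; nothing in Definition~\ref{cylinderdef} hands you that naturality. Second, and more seriously, your justification for commuting geometric realization with the end does not hold up: the end over $\cat C$ is an equalizer of products indexed by the objects and morphisms of $\cat C$, which need not be finite, and geometric realization of simplicial spaces does not commute with infinite products or with equalizers in general. That $\cat C$ is a $1$-category does nothing to help here, and cofibrancy of the entries controls the homotopy type of each factor but not the commutation of $|{-}|$ past the limit. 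Third, even granting full faithfulness, you never address essential surjectivity of $\Phi$: one must still show that every diagram $\cat C \to \cat M[\cat W^{-1}]$ lifts, up to equivalence, to a diagram $\cat C \to \cat M$. That is itself a nontrivial rectification statement, and it is precisely the content that Cisinski's theorem packages.
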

\begin{proof}
We prove the result for homotopy colimits. The existence of the projective model structure implies that $\cat M$ is hereditary in the sense of \cite[Definition~7.9.4]{cis} with respect to the standard universe, so \cite[Theorem~7.9.8]{cis} implies that we can identify $\Fun(\cat C, \cat M)[\cat W^{-1}_{proj}]$ with $\Fun(\cat C, \cat M[\cat W^{-1}]).$ Under this identification, the right adjoint in the homotopy colimit adjunction is the constant functor, whose left adjoint is the $\cat C$-shaped colimit functor in $\cat M[\cat W^{-1}]$. The proof for homotopy limits is essentially identical.
\end{proof}
\subsection{The $k$-projective model structure}\label{modelconstruct}
 In this section, we show that there is a model structure whose weak equivalences are the $\pi^k_{**}$-isomorphisms, which form a class $\cat W^k.$ This model structure can be seen as an $\infty$-categorification of the model structure defined on the category of filtered chain complexes by Cirici, Egas Santander, Livernet, and Whitehouse \cite{cirici2020model}, whose monoidal structure was explored by Brotherston\cite{brotherston2024monoidal}\footnote{The author apologizes that he is not able to provide a fuller exposition of either article, as he became aware of them after leaving the field of topology and has less time than he'd hoped to further edit this paper. He encourages anyone interested to read the originals. }
The author suspects the further model structures Brotherston explores in \cite{brotherston2024distributive} have interesting $\infty$-categorical analogues as well, but he has left the field of stable homotopy theory and does not expect to follow up on this idea. 
 \begin{defn}
 Fix a non-negative integer $k$. A map $f:X \to Y$ of filtered $\cat C$-objects is $k$-exact if the induced map
 \[
 \pi_{**}^k(X) \to \pi_{**}^k(Y)
 \]
 is an isomorphism. We denote the class of $k$-exact maps by $W^k.$
 \end{defn}

In this section we construct a cofibrantly-generated model structure on $\Ch(\cat C)$, analogous to the usual model structure on chain complexes. To do so, we define a set of generating cofibrations and generating acyclic cofibrations:
\begin{defn}
Let $I^k$ be the set of inclusions $S^{n,t} \to \Sigma^{n,t}C\tau^k$. Similarly, let $J^k$ be the set of inclusions $0 \to \Sigma^{n,t}C\tau^k$.
\end{defn}

\begin{prop}\label{inj}
We can characterize $I^k\inj$ and $J^k\inj$ as follows:
\begin{enumerate}
    \item The $J^k$ injections are the maps which are $\pi_t$-surjections on each $X_{n+k}/X_n$ for all $t.$
    \item The $I^k$ injections are maps with are $\pi_t$-surjections on each $X_{n+k}/X_n$ and such that $\pi_{t+1}(X_{n+k}/X_n) \to \pi_{t+1}(Y_{n+k}/Y_n) \times_{\pi_{t}(Y_n)} \pi_{t} X_n$ is a surjection for each $n$ and $t$.
\end{enumerate}
\end{prop}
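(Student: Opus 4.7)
The plan is to translate the two right-lifting conditions into surjectivity statements on Hom-groups in $\Fil(\cat C)$, and then identify those Hom-groups explicitly using the defining cofiber sequence for $C\tau^k$.

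First I would compute the two relevant Hom-groups. A straightforward extension of Proposition \ref{idhom} shows that $(-)_n: \Fil(\cat C) \to \cat C$ admits a fully faithful left adjoint $\iota_n = \Sigma^{n,0}\iota$, and since $S^{n,t} = \iota_n S^t$ by definition, the adjunction gives $[S^{n,t}, Z]_{\Fil(\cat C)} \cong \pi_t(Z_n)$. Applying $[-, Z]$ to the shifted cofiber sequence $S^{n-k,t} \xrightarrow{\tau^k} S^{n,t} \to \Sigma^{n,t}C\tau^k$ and using stability of $\Fil(\cat C)$ produces a long exact sequence in which the $\tau^k$ map is identified with the filtration map $\pi_t(Z_n) \to \pi_t(Z_{n-k})$. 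Comparing this with the long exact sequence of the cofiber sequence $Z_n \to Z_{n-k} \to Z_{n-k}/Z_n$ in $\cat C$, a five-lemma argument yields a natural isomorphism $[\Sigma^{n,t}C\tau^k, Z] \cong \pi_{t+1}(Z_{n-k}/Z_n)$, under which the restriction map $[\Sigma^{n,t}C\tau^k, Z] \to [S^{n,t}, Z]$ induced by $S^{n,t} \to \Sigma^{n,t}C\tau^k$ becomes the connecting homomorphism.

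With these identifications in hand, the proposition reduces to standard lifting-property bookkeeping. A map $f: X \to Y$ lies in $J^k\inj$ precisely when $[\Sigma^{n,t}C\tau^k, X] \to [\Sigma^{n,t}C\tau^k, Y]$ is surjective for every $(n,t)$; after suitable reindexing on the filtration degree, this is exactly statement (1). Similarly, $f$ lies in $I^k\inj$ iff the induced map
\[
[\Sigma^{n,t}C\tau^k, X] \to [\Sigma^{n,t}C\tau^k, Y] \times_{[S^{n,t}, Y]} [S^{n,t}, X]
\]
is surjective, and substituting the identifications above gives the pullback-surjectivity condition in statement (2). Naturality in $Z$ of the comparison of long exact sequences ensures that the induced square of homotopy groups is the one controlled by the lifting problem.

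The main obstacle is purely combinatorial: keeping the shifts, connecting maps, and cofiber conventions consistent throughout. No genuinely $\infty$-categorical subtleties arise, since mapping spaces in the stable $\infty$-category $\Fil(\cat C)$ are infinite loop spaces and the long exact sequences on $\pi_0$ of these mapping spectra behave as expected; the content of the proposition lies entirely in the identification of the abstract cofiber $\Sigma^{n,t}C\tau^k$ with a concrete $\pi_*$-representing object for the successive filtration quotients.
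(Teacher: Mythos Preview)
Your proof is correct and is precisely the argument the paper has in mind: the paper's own proof reads in its entirety ``Immediate from Proposition \ref{idhom},'' and what you have written is the natural unpacking of that sentence via the $(\iota_n,(-)_n)$ adjunction and the defining cofiber sequence for $C\tau^k$. Your remark that stability makes the passage from lifting problems to $\pi_0$-surjectivity of Hom-groups unproblematic is exactly the point; since $I^k$ and $J^k$ are closed under $\Sigma^{0,\pm 1}$, any $\pi_1$-ambiguity in the homotopy pullback is absorbed by the condition at the adjacent value of $t$.
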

\begin{proof}
Immediate from Proposition \ref{idhom}.
\end{proof}
To produce a cofibrantly generated model structure, we need to show that $I^k$ injections are the same as $k$-exact $J^k$ injections. The majority of this is contained in the next two lemmas:
\begin{lem}\label{part4}
Every $I^k$-injection is also $k$-exact.
\end{lem}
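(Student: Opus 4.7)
The plan is to translate the lifting characterization from Proposition~\ref{inj} into diagram chases in the long exact sequences associated to the cofiber triangles $X_n \to X_{n-k} \to X_{n-k}/X_n$ and their $Y$-counterparts. Both equivalent descriptions of $\pi^k_{n,t}$ in Definition~\ref{homodef} (as the image of $\tau^k\colon \pi_t(X_n) \to \pi_t(X_{n-k})$, and equivalently as the cokernel of $\partial\colon \pi_{t+1}(X_{n-k}/X_n) \to \pi_t(X_n)$) will play a role. Naturality of the cofiber sequence in the map $f$ produces a commutative ladder of LESs in which the map $\pi^k_{n,t}(X) \to \pi^k_{n,t}(Y)$ sits as the induced map on kernels (image description) or on cokernels (cokernel description).

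For surjectivity, given a class $y \in \pi^k_{n,t}(Y) \subseteq \pi_t(Y_{n-k})$, I would lift $y$ to some $\tilde y \in \pi_t(Y_n)$ via the image description. The task becomes producing $\tilde x \in \pi_t(X_n)$ and $\alpha \in \pi_{t+1}(Y_{n-k}/Y_n)$ with $f_*(\tilde x) = \tilde y - \partial(\alpha)$, since then $\tau^k \tilde x \in \pi^k_{n,t}(X)$ maps to $y$. This is essentially the content of Proposition~\ref{inj}(2): after reindexing, it says that any compatible pair in $\pi_{t+1}(Y_{n-k}/Y_n) \times_{\pi_t(Y_n)} \pi_t(X_n)$ lifts to $\pi_{t+1}(X_{n-k}/X_n)$, so that once a compatible pair witnessing $\tilde y$ is exhibited, the required lift drops out.

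Injectivity is a dual and simpler application of the same condition. Suppose $\tilde x \in \pi_t(X_n)$ maps to $0$ in $\pi^k_{n,t}(Y)$. By the cokernel description, $f_*(\tilde x) = \partial(\alpha)$ for some $\alpha \in \pi_{t+1}(Y_{n-k}/Y_n)$. The pair $(\alpha, \tilde x)$ is then compatible over $\pi_t(Y_n)$, so Proposition~\ref{inj}(2) supplies $\beta \in \pi_{t+1}(X_{n-k}/X_n)$ with $\partial(\beta) = \tilde x$, and therefore $\tilde x$ already vanishes in $\pi^k_{n,t}(X)$.

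The hard part, as in many arguments of this kind, is bookkeeping rather than mathematical content: two conventions appear for the notation $X_a/X_b$ (Proposition~\ref{inj} uses $X_{n+k}/X_n$, Definition~\ref{homodef} uses $X_{n-k}/X_n$), and these are related by a reindexing shift that must be carried out carefully. One also needs to identify precisely which LES boundary underlies the fibre product in Proposition~\ref{inj}(2); with the correct identification, condition (2) is tailor-made to supply both the surjective lift needed above and the boundary preimage needed for injectivity, and no additional input (such as condition (1)) is required.
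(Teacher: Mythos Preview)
Your injectivity argument is correct and matches the paper's: the fibre-product surjection of Proposition~\ref{inj}(2) immediately gives injectivity of the induced map on cokernels $\pi^k_{n,t}(X) \to \pi^k_{n,t}(Y)$.

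Your surjectivity argument, however, has a genuine gap. You correctly reduce the problem to producing $\tilde x \in \pi_t(X_n)$ and $\alpha \in \pi_{t+1}(Y_{n-k}/Y_n)$ with $f_*(\tilde x) = \tilde y - \partial(\alpha)$, but you then claim this ``is essentially the content of Proposition~\ref{inj}(2).'' It is not. The fibre-product condition says: given a pair $(\alpha,\tilde x)$ with $\partial(\alpha) = f_*(\tilde x)$, one can lift to $\pi_{t+1}(X_{n-k}/X_n)$. That is a statement about pairs where $f_*(\tilde x)$ is \emph{already a boundary}; it cannot manufacture an $\tilde x$ hitting an arbitrary $\tilde y$ modulo boundaries. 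You have assumed the existence of a ``compatible pair witnessing $\tilde y$,'' which is exactly the surjectivity you are trying to prove. Your final remark that ``no additional input \ldots\ is required'' is therefore incorrect.

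The paper closes this gap with a separate observation you are missing: the pushout square
\[
\xymatrix{
S^{n+k,t-1} \ar[r] \ar[d] & 0 \ar[d] \\
\Sigma^{n+k,t-1}C\tau^k \ar[r] & S^{n,t}
}
\]
exhibits $0 \to S^{n,t}$ as lying in $I^k\cell$, so any $I^k$-injection $f$ has the right lifting property against it. This forces $\pi_t(X_n) \to \pi_t(Y_n)$ to be surjective for all $n,t$, and surjectivity on the cokernels $\pi^k_{n,t}$ is then immediate (take $\alpha = 0$). This extra input is not a consequence of the fibre-product condition in Proposition~\ref{inj}(2) alone.
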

\begin{proof}
Fix an $I^k$-injection $f:X \to Y.$ By Proposition \ref{inj}, the map
\[
\pi_{t+1}(X_{n-k}/X_n) \to \pi_{t+1}(Y_{n-k}/Y_n) \times_{\pi_{t}(Y_n)} \pi_{t} X_n
\]
is surjective for each $n$ and $t$, so the map \[
\coker(\pi_{t+1}(X_{n-k}/X_n) \to \pi_{t}(X_n)) \to \coker( \pi_{t+1}(Y_{n-k}/Y_n)  \to \pi_{t}(Y_n))
\]
must be injective. 

Now, the pushout square
\[
\xymatrix{
S^{n+k,t-1} \ar[r] \ar[d] & 0\ar[d] \\
\Sigma^{n+k,t-1}C\tau^k\ar[r] & S^{n,t},
}
\]
shows that the map $0 \to S^{n,t}$ is in $I\cell$ and therefore must have the left lifting property with respect to $f$.  This implies that the map $\pi_t(X_n) \to \pi_t(Y_n)$ is surjective, so the map
\[
\coker(\pi_{t+1}(X_{n-k}/X_n) \to \pi_{t}(X_n)) \to \coker( \pi_{t+1}(Y_{n-k}/Y_n)  \to \pi_{t}(Y_n))
\]
must be as well. 
\end{proof}

\begin{lem}\label{part5}
Every $k$-exact $J^k$-injection is an $I^k$-injection.
\end{lem}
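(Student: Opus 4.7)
The plan is to reduce the desired pullback surjectivity to a lifting statement about the fiber $F = \mathrm{fib}(f)$ and then verify that lifting using $k$-exactness. Two preliminary facts drive the reduction. First, $f$ is necessarily levelwise $\pi_t$-surjective: given $\beta \in \pi_t(Y_n)$, the image $\iota_*\beta \in \pi^k_{n,t}(Y)$ lifts by $k$-exactness to some $\iota_*\alpha$ with $\alpha \in \pi_t(X_n)$, so $f_*\alpha - \beta$ lies in $\ker(\iota_*) = \im(\partial \colon \pi_{t+1}(Y_{n-k}/Y_n) \to \pi_t(Y_n))$; the $J^k$-injection hypothesis lifts any such $\partial$-preimage back to $X$, correcting $\alpha$ to a preimage of $\beta$. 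Second, because $J^k$-injection gives $\pi_t$-surjectivity on $X_{n-k}/X_n \to Y_{n-k}/Y_n$ in every degree, the long exact sequences associated to the fiber sequences $F_n \to X_n \to Y_n$ and $F_{n-k}/F_n \to X_{n-k}/X_n \to Y_{n-k}/Y_n$ both collapse to short exact sequences on $\pi_t$, at every $n$ and $t$.

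Given a pullback element $(\beta, \alpha) \in \pi_{t+1}(Y_{n-k}/Y_n) \times_{\pi_t(Y_n)} \pi_t(X_n)$, I pick $\gamma_1 \in \pi_{t+1}(X_{n-k}/X_n)$ with $f_*\gamma_1 = \beta$ via $J^k$-injection, and observe that $\partial\gamma_1 - \alpha$ lies in $\ker(f_* \colon \pi_t(X_n) \to \pi_t(Y_n))$. By the levelwise short exact sequence at filtration $n$, this deficit is the image of a unique $\rho \in \pi_t(F_n)$. If I can lift $\rho$ to some $\sigma \in \pi_{t+1}(F_{n-k}/F_n)$ along the connecting map $\partial^F$, then $\gamma \coloneqq \gamma_1 - (\text{image of } \sigma \text{ in } \pi_{t+1}(X_{n-k}/X_n))$ will satisfy $f_*\gamma = \beta$ and $\partial\gamma = \alpha$; such a lift exists precisely when $\iota_*\rho = 0$ in $\pi_t(F_{n-k})$.

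The final step is to verify this vanishing. The image of $\iota_*\rho$ in $\pi_t(X_{n-k})$ equals $\iota_*(\partial\gamma_1 - \alpha) = -\iota_*\alpha$, using that $\iota_* \circ \partial = 0$. But $\iota_*\alpha \in \pi^k_{n,t}(X)$ is killed by $f_*$ since $f_*(\iota_*\alpha) = \iota_*(\partial\beta) = 0$, so the injectivity half of the $k$-exactness isomorphism forces $\iota_*\alpha = 0$. Hence $\iota_*\rho$ lies in the kernel of $\pi_t(F_{n-k}) \to \pi_t(X_{n-k})$, which vanishes because the levelwise short exact sequence at filtration $n-k$ makes that map injective. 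The main obstacle I expect is establishing the levelwise $\pi_t$-surjectivity in the first step: without it one cannot identify $\ker f_*$ with the image of $\pi_*(F)$, and the reduction to a lifting problem on the fiber breaks down. After that, the rest is a routine diagram chase.
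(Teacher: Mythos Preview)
Your argument is correct. The approach differs from the paper's in one organizational respect: the paper isolates the statement ``the fiber $F$ of a $k$-exact $J^k$-injection is $k$-acyclic'' as a separate lemma (Lemma~\ref{acycfib}), and then the proof of Lemma~\ref{part5} simply quotes that the connecting map $\pi_{t+1}(F_{n-k}/F_n) \to \pi_t(F_n)$ is surjective and chases exactly as you do. You instead first prove levelwise $\pi_t$-surjectivity of $f$ (which the paper does not need at this point), use it to get short exact sequences on levels and on quotients, and then verify $\iota_*\rho = 0$ for the specific obstruction $\rho$ by pushing into $X$ and invoking injectivity of $\pi^k_{**}(f)$. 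That last step is, in effect, a direct inline proof that $F$ is $k$-acyclic, replacing the paper's rather involved chase in Lemma~\ref{acycfib} with a shorter argument at the cost of the preliminary surjectivity step. The paper's packaging has the advantage that ``$F$ is $k$-acyclic'' is recorded as a standalone fact; your route is more self-contained and arguably cleaner for this one lemma.
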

\begin{proof}
Let $f: X \to Y$ be a $k$-exact $J^k$-injection. By the characterization in Proposition \ref{inj}, we need to check that the map \[
\pi_{t+1}(X_{n-k}/X_n) \to \pi_{t+1}(Y_{n-k}/Y_n) \times_{\pi_{t}(Y_n)} \pi_{t}(X_n)
\]
is surjective for each $n$ and $t$. Let $F$ denote the fiber of $f$, and consider the diagram
\[
\xymatrix{
\pi_{t+1}(F_{n-k}/F_n) \ar[r]^{\delta_*} \ar[d]^{i_*} & \pi_t(F_n) \ar[d]^{i_*} \\
\pi_{t+1}(X_{n-k}/X_n) \ar[r]^{\delta_*} \ar[d]^{g_*} & \pi_t(X_n) \ar[d]^{n_*} \\
\pi_{t+1}(Y_{n-k}/Y_n) \ar[r]^{\delta_*} & \pi_t(Y_n)
}
\]
By Lemma \ref{acycfib}, we know that $F$ is $k$-acyclic, so the top arrow is surjective.  Now, suppose we have $x$ in $\pi_t(X_n)$ and $y$ in $\pi_{t+1}(Y_{n-k}/Y_n)$ such that $\delta_*(y) = f_*(x).$ Since $f$ is a $J^k$ injection, we can pick $z$ in $\pi_{t+1}(X_{n-k}/X_n)$ such that $f_*(z) = y.$ 

Then we have $f_*(\delta_*(z) - x) = 0$, so there exists $s$ in $\pi_t(F_n)$ with $i_*(s) = \delta_*(z) - x$. Since $F$ is $k$-acyclic, there exists $t$ in $\pi_{t+1}(F_{n-k}/F_n)$ with $\delta_*(t) = s$. Since $f_*(i_*(t)) = 0$ and $\delta_*i_*(t) = \delta_*(z) - x$, it follows that $z + i_*(t)$ is our desired preimage of both $x$ and $y$. 
\end{proof}

\begin{prop}
There is a cofibrantly-generated model $\infty$-category structure on $\Fil(\cat C)$ whose weak equivalences, generating cofibrations, and generating acyclic cofibrations are given by $W^k, I^k,$ and $J^k$ respectively.
\end{prop}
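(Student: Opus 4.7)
The plan is to apply the recognition theorem for cofibrantly generated model $\infty$-categories stated earlier in Section~\ref{modelreview}, so the task reduces to verifying its four hypotheses about $I^k$, $J^k$, and $\cat W^k$.

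First I would dispatch the trivial inputs. Since $\cat W^k$ is defined by requiring an isomorphism on the functors $\pi^k_{n,t}$, it satisfies two-out-of-three and is closed under retracts. The sources of $I^k$ are the spheres $S^{n,t}$ (compact generators of $\Fil(\cat C)$ by our assumption on $\cat C$) and the sources of $J^k$ are the zero object, so smallness relative to the relevant cell classes is automatic.

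Next I would establish $J^k\cof \subseteq I^k\cof \cap \cat W^k$. The pushout square displayed in the proof of Lemma~\ref{part4} already realizes $0 \to S^{n,t}$ as an $I^k$-cell; composing with the corresponding element of $I^k$ then exhibits each element of $J^k$ as an $I^k$-cell, so $J^k \subseteq I^k\cell \subseteq I^k\cof$, giving the first inclusion. For $J^k \subseteq \cat W^k$, I would observe that $C\tau^k$ has value $S^0$ in filtration degrees $-k+1,\ldots,0$ and $0$ elsewhere, so the $\tau^k$-multiplication on its homotopy groups vanishes identically and $\pi^k_{**}(\Sigma^{n,t}C\tau^k) = 0$. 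Stability of $\cat W^k$ under pushouts, transfinite composition, and retracts (hence of $J^k\cof$ inside $\cat W^k$) then follows from the cokernel description of $\pi^k_{n,t}$ in Definition~\ref{homodef} combined with the long exact sequences on levelwise $\pi_*$.

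Finally, the two substantive remaining conditions, $I^k\inj \subseteq \cat W^k$ and $J^k\inj \cap \cat W^k \subseteq I^k\inj$, are exactly the content of Lemmas~\ref{part4} and~\ref{part5}; together with the inclusion $I^k\inj \subseteq J^k\inj$ dual to $J^k\cof \subseteq I^k\cof$, this supplies all hypotheses of the recognition theorem and produces the model structure. I expect the most delicate step to be the closure of $\cat W^k$ under $J^k$-cell attachment: because $\pi^k_{n,t}$ is defined as an image rather than as a homological functor in its own right, one has to feed the cokernel presentation into the long exact sequences coming from the cofibers $X_{n-k}/X_n$ in order to reduce stability under cell attachments to the standard additivity of $\pi_*$.
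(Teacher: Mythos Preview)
Your proof follows essentially the same route as the paper's: invoke the recognition theorem, dispatch the bicompleteness and two-out-of-three/retract conditions for $\cat W^k$, handle smallness via compactness of spheres, and reduce the substantive conditions to Lemmas~\ref{part4} and~\ref{part5} together with the observation that each generator of $J^k$ is an $I^k$-cell.

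The one place you diverge is in arguing $J^k\cof \subseteq \cat W^k$. You phrase this as ``stability of $\cat W^k$ under pushouts, transfinite composition, and retracts'' and flag it as the most delicate step, proposing to unwind the cokernel description of $\pi^k_{n,t}$ through long exact sequences. Be careful: $\cat W^k$ is \emph{not} known to be closed under arbitrary pushouts, since $\pi^k_{**}$ is an image functor rather than a homological one (indeed Lemma~\ref{acycfib} needs both $k$-exactness and the $J^k$-injection hypothesis to conclude the fiber is acyclic). The paper sidesteps this entirely by observing that the sources of all maps in $J^k$ are zero, so every map in $J^k\cell$ is literally of the form $X \to X \oplus P$ with $P$ a coproduct of suspensions of $C\tau^k$; since each such summand is $k$-acyclic and $\pi^k_{**}$ is additive over direct sums, the inclusion is a $k$-equivalence. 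This removes the delicacy you anticipated and avoids any claim about general pushout-stability of $\cat W^k$.
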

\begin{proof}
We check the conditions of \cite[Theorem~1.3.11]{mg} . $\Fil(\cat C)$ inherits cocompleteness and finite completeness from $\cat C$, while $W^k$ is closed under retracts and has the two-out-of-three property because \[
X \mapsto \pi^{k}_{n,t}(X)
\] is a functor. $J^k$ automatically permits the small object argument, while $I^k$ permits it by our compactness assumption on the collection of spheres. We therefore only need to check that:
\begin{enumerate}
    \item $J\cof \subseteq I\cof.$ 
    \item $J\cof \subseteq W.$
    \item $I\inj \subseteq J\inj$
    \item $I\inj \subseteq W$
    \item $(J\inj \cap W) \subseteq I\inj$
\end{enumerate}
(1) and (3) are immediate, because the generators of $J^k$ are all $I^k$-cell complexes. (4) and (5) are the statements of Lemmas \ref{part4} and \ref{part5}, respectively.

For (2), it suffices to check that $J\cell \subseteq W$, since $J\cof$ is the collection of retracts of maps in $J\cell$ and $W$ is closed under retracts. But maps in $J\cell$ take the form $X \to X \oplus P$, where $P$ is a coproduct of copies of $\Sigma^{n,t}C\tau^k$ for various $s$ and $q$, so the result follows because $\Sigma^{n,t}C\tau^k$ is $k$-exact.

\end{proof}
\begin{defn}
Let $\cat D^k(\cat C)$ denote the localization of $\Fil(\cat C)$ at $W^k$ using this model structure. 
\end{defn}
 Lemma \ref{repcomp} implies we have containments
 \[W^1 \subseteq W^2 \subseteq W^3 \subseteq \cdots,\]so we obtain successive localization functors
\[
\Fil(\cat C) \to \cat D^1(\cat C) \to \cat D^2(\cat C) \to \cdots.
\]
Unfortunately, the functors between the various $\cat D^k$s do not come from Quillen adjunctions (the cofibration inclusions point the wrong way), so our theory has little to say about them.

To gain some intuition, we will say a little more about the cofibrant objects of $\cat D^k(\cat C).$ 

\begin{thm}
Suppose $f:X \to Y$ is a cofibration in the $k$-projective model structure on $\Fil(\cat C)$. Then,
\begin{itemize}
    \item The induced map $X_n/X_{n+1} \to Y_n/Y_{n+1}$ is the inclusion of a direct summand, where the other direct summand $P_n$ is a retract of coproducts of $S^t$s for all $n$. 
    \item The map $P_n \to \Sigma Y_{n+1}$ lifts to $\Sigma Y_{n+k}$ for all $n$.
\end{itemize}
If $X_n = Y_n = 0$ for $n > 0$, the converse holds.
\end{thm}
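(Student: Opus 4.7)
The plan is to prove both directions by a cellular analysis. For the forward direction, I will show that both conditions are preserved under (i) pushouts of generating cofibrations in $I^k$, (ii) transfinite compositions, and (iii) retracts; this suffices since every cofibration is a retract of an $I^k$-cell complex.

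For step (i), consider the pushout along $\alpha\colon S^{j,t} \to X$ forming $Y = X \cup_{S^{j,t}} \Sigma^{j,t} C\tau^k$. The cofiber sequence $S^{j,t} \to \Sigma^{j,t} C\tau^k \to S^{j-k,t+1}$ identifies $Y/X$ with $S^{j-k,t+1}$. Applying the exact functor $\mathrm{gr}_n := (-)_n/(-)_{n+1}$ yields $\mathrm{gr}_n(Y) \simeq \mathrm{gr}_n(X)$ for $n \neq j-k$, together with a cofiber sequence $\mathrm{gr}_{j-k}(X) \to \mathrm{gr}_{j-k}(Y) \to S^{t+1}$ whose connecting map $S^{t+1} \to \Sigma\,\mathrm{gr}_{j-k}(X)$ is null: the attaching class $\alpha_{j-k} \in \pi_t(X_{j-k})$ equals $\tau \alpha_{j-k+1}$, so it vanishes modulo $\pi_t(X_{j-k+1})$. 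Hence the extension splits with $P_{j-k} = S^{t+1}$, and the connecting map $P_{j-k} \to \Sigma Y_{j-k+1}$ computes to $\Sigma \tau^{k-1}\alpha_j$, which lifts to $\Sigma X_j = \Sigma Y_j$ via $\Sigma \alpha_j$. Steps (ii) and (iii) are then formal: $\mathrm{gr}_n$ and the relevant connecting maps commute with filtered colimits, while a retract of a direct-summand inclusion $A \to A \oplus P$ is itself a summand inclusion $A' \to A' \oplus P'$ with $P'$ a retract of $P$, and lifts compose with retractions.

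For the converse, assume $X_n = Y_n = 0$ for $n > 0$. I will build an $I^k$-cell complex $X \to \tilde Y$ equipped with a retraction $\tilde Y \to Y$ that splits a section $Y \to \tilde Y$ compatible with $f$, exhibiting $X \to Y$ as a retract of a cellular map. Proceed by transfinite induction on decreasing $n$, starting at $n = 0$. At stage $n$, pick a coproduct $\bigsqcup_\beta S^{t_\beta}$ retracting onto $P_n$; for each summand the lifting condition supplies an attaching class in $\pi_{t_\beta - 1}(Y_{n+k})$, which we use to attach a generating cofibration $S^{n+k,t_\beta - 1} \to \Sigma^{n+k,t_\beta - 1} C\tau^k$, producing the desired $P_n$-summand in $\mathrm{gr}_n(\tilde Y)$. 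When $n + k > 0$ the upper-boundedness $Y_{n+k} = 0$ forces the attaching class to vanish, so the cell becomes a wedge summand $S^{n,t_\beta}$; this keeps $\tilde Y$ supported in non-positive filtration. Assembling the sphere retractions in each degree yields the required retraction $\tilde Y \to Y$.

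The main obstacle is step (i): one has to carefully identify the connecting map of the pushout and verify both the vanishing and lifting statements, threading through the fact that $\Sigma^{j,t} C\tau^k$ is supported on $k$ consecutive filtration degrees with identity transition maps. The converse is comparatively direct once this cellular picture is in hand, with the upper-boundedness ensuring that cells which would naively land in positive filtration degree collapse to trivial wedges.
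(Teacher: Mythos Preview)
Your forward direction matches the paper's approach and is in fact more detailed: the paper simply asserts that both properties are preserved by retracts, compositions, and pushouts, while you actually verify the pushout case by computing the cofiber $S^{j-k,t+1}$ and tracking the connecting map through the factorization $\alpha_{j-k}=\tau\alpha_{j-k+1}$.

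Your converse argument, however, diverges from the paper's and has a real gap. You want to attach generating cofibrations to build a larger $\tilde Y$, but your attaching classes live in $\pi_{t_\beta-1}(Y_{n+k})$, whereas to form a cell complex under $X$ you need them in the \emph{previously built} object $\tilde Y^{(n+1)}_{n+k}$. Since you are wedging on the full coproduct $Q_n=\bigsqcup S^{t_\beta}$ rather than $P_n$ itself, $\tilde Y^{(n+1)}$ is strictly larger than $Y$ in the relevant degrees, and you never construct the section $Y\to\tilde Y^{(n+1)}$ that would let you lift those attaching maps; you only sketch the retraction at the end. The paper sidesteps this entirely: because cofibrations are closed under retracts, one may push out directly along $\Sigma^{\ell+k,-1}\iota P_\ell \to \Sigma^{\ell+k,-1}\iota P_\ell\otimes C\tau^k$ (a retract of a coproduct of generating cofibrations), building $Y$ itself as the transfinite composition rather than an auxiliary $\tilde Y$. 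The induction then guarantees $X^{(\ell+1)}_{m}\simeq Y_m$ for $m\ge \ell+1$, so the attaching map in $Y_{\ell+k}$ \emph{is} an attaching map in $X^{(\ell+1)}_{\ell+k}$; the paper invokes Ariotta's chain-complex equivalence to verify this identification and that the colimit recovers $Y$. Your approach can be rescued by carrying both a section and a retraction through the induction, but the bookkeeping is substantially heavier than simply attaching $P_\ell$.
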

\begin{proof}
Note that both properties are preserved by retracts, compositions, and pushouts, so it suffices to observe that that they hold for all generating cofibrations. 

We now prove the partial converse by exhibiting $f: X \to Y$ as a transfinite composition 
\[
X = X^{(1)} \to X^{(0)} \to X^{(-1)} \to \cdots
\]
so that each map is a cofibration, the colimit of the $X^{(i)}$ is $Y$, and $X^{(i)}$ agrees with $Y$ on $\mathbb{Z}_{\geq i}.$ Starting with $\ell = 0$, we assume that $P_\ell$ is a retract of coproducts of $S^t$s, and the map $C_\ell \to \Sigma X_{\ell + 1}$ lifts to $\Sigma X_{\ell + k}.$

Construct the pushout
\[
\xymatrix{
\Sigma^{\ell + k,-1}\iota P_\ell \ar[r] \ar[d] & \ar[d] \Sigma^{\ell + k,-1}\iota P_\ell \otimes  C\tau^k \\
X^{(\ell+1)} \ar[r] & X^{(\ell)}
}
\]
Since $C_\ell$ is a retract of coproduct of $S^t$s, the top map is a coproduct of retracts of generating cofibrations and therefore the map $X^{\ell + 1} \to X^{\ell}$ is a cofibration. Moreover, the commutative square
\[
\xymatrix{
\Sigma^{\ell + k,-1}\iota P_\ell \ar[r] \ar[d] & \ar[d] \Sigma^{\ell + k,-1}\iota P_\ell \otimes  C\tau^k \\
X^{(\ell+1)} \ar[r] & Y
}
\]
implies that we have compatible maps to $Y$. Since both $X$ and $Y$ are bounded above, we can use Theorem 4.7 of \cite{ariotta2021coherent} and consider the corresponding ``coherent chain complexes" supported in nonpositive degrees.

Note that the map
\[\Sigma^{\ell + k,-1}\iota C_\ell \to \Sigma^{\ell + k,-1}\iota C_\ell \otimes  C\tau^k \]
is an isomorphism outside of degree $-\ell$, so as a chain complex $X^{\ell}$ is supported in degrees between $0$ and $-\ell.$  Moreover, the induced map $X^{(\ell)} \to X$ in degree $\ell$ is just the identity on $C_\ell!$ In particular, $X^{(\ell)}$ agrees with $X$ on chain degree $\leq -\ell$, and therefore the same is true in filtration $\geq \ell$, as desired.
\end{proof}

\subsection{The $k$-projective model structure is closed symmetric monoidal}\label{symmonsection}

In this section, we show that our model structures extend to  symmetric monoidal model structures to build a compatible symmetric monoidal structure on the various $\cat D^k(\cat C)$s.

We begin this section by proving a lemma  about $C\tau^k$ and its modules that will come in handy later.
\begin{lem}\label{ctct}
For any $C\tau^k$-module $M$, there is a (non-canonical) equivalence
\[
M \otimes C\tau^k \simeq M \oplus \Sigma^{-k,1} M.
\]
\end{lem}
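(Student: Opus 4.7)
The plan is to exhibit the cofiber sequence obtained by tensoring the defining sequence of $C\tau^k$ with $M$ and show that it is split by the $C\tau^k$-module structure on $M$.

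First, I would tensor the defining cofiber sequence
\[
S^{-k,0} \xrightarrow{\tau^k} S^{0,0} \to C\tau^k
\]
from Theorem \ref{ctcomm} with $M$ to obtain a cofiber sequence
\[
\Sigma^{-k,0} M \xrightarrow{\tau^k} M \xrightarrow{\eta} M \otimes C\tau^k
\]
in $\Fil(\cat C)$, where $\eta$ is the unit map induced by the ring map $S^{0,0} \to C\tau^k$. Rotating this cofiber sequence one step yields
\[
M \xrightarrow{\eta} M \otimes C\tau^k \to \Sigma^{-k,1} M.
\]

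Next, I would use the $C\tau^k$-module structure on $M$ to produce a retraction of $\eta$. The action map $\mu: C\tau^k \otimes M \to M$ satisfies the module unit axiom, so the composite
\[
M \xrightarrow{\eta} M \otimes C\tau^k \xrightarrow{\mu} M
\]
is homotopic to the identity. Thus $\eta$ is a split monomorphism.

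Finally, I would invoke the standard fact that in any stable $\infty$-category, a cofiber sequence $A \to B \to C$ whose first map admits a retraction is split: the retraction together with the map $B \to C$ assembles into an equivalence $B \simeq A \oplus C$. Applying this to our sequence produces the desired equivalence
\[
M \otimes C\tau^k \simeq M \oplus \Sigma^{-k,1} M.
\]
The splitting depends on a chosen retraction, so the equivalence is only natural up to that choice, which matches the ``non-canonical'' caveat in the statement. No serious obstacle is expected here; the only point that warrants care is ensuring that the splitting lemma is invoked with compatible orientations (cofiber vs.\ fiber sequences), which is automatic in the stable setting.
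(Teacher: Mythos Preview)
Your proof is correct and follows essentially the same approach as the paper: tensor the defining cofiber sequence for $C\tau^k$ with $M$ and observe that the resulting sequence splits because $M$ is a $C\tau^k$-module. The paper phrases the splitting as ``$\tau^k = 0$ on $M$'' (so the first map is null), whereas you use the retraction of $\eta$ coming from the module action; these are equivalent ways to split a cofiber sequence in a stable $\infty$-category.
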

\begin{proof}
The cofiber sequence
\[
S^{-k,0} \xrightarrow{\tau^k} S^{0,0} \to C\tau^k
\]
gives a cofiber sequence
\[
\Sigma^{-k,0}M \xrightarrow{\tau^k} M \to M \otimes C\tau^k.
\]
Since $M$ is a $C\tau^k$-module, we must have $\tau^k = 0$ on $M$, so the cofiber sequence splits and we obtain a (non-canonical) identification
\[
M \otimes C\tau^k \simeq M \oplus \Sigma^{-k,1}M.
\]
\end{proof}

With Lemma \ref{ctct} in hand, the proof that the model structures we've constructed are closed symmetric monoidal is relatively straightforward. 
\begin{thm}
The $k$-projective model structure is closed symmetric monoidal
\end{thm}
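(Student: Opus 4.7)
The plan is to verify the two conditions in Definition 5.5.1: the pushout-product axiom and the unit axiom. Closedness will follow automatically, since the underlying $\infty$-category $\Fil(\cat C)$ is already closed by Theorem \ref{csymmon} and the pushout-product axiom is formally equivalent to the pullback-power axiom that descends the internal hom to a right adjoint of the tensor on $\cat D^k(\cat C)$. Because the $k$-projective model structure is cofibrantly generated, the pushout-product axiom reduces to the two claims $I^k \square I^k \subseteq I^k\cof$ and $I^k \square J^k \subseteq J^k\cof$, and both can be checked on generators.

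The easier case is $f \square g$ with $f: S^{n,t} \to \Sigma^{n,t}C\tau^k$ in $I^k$ and $g: 0 \to \Sigma^{n',t'}C\tau^k$ in $J^k$. Direct computation identifies $f \square g$ with the map
\[
\Sigma^{n+n',t+t'}C\tau^k \to \Sigma^{n,t}C\tau^k \otimes \Sigma^{n',t'}C\tau^k,
\]
and Lemma \ref{ctct} splits the target as $\Sigma^{n+n',t+t'}C\tau^k \oplus \Sigma^{n+n'-k,t+t'+1}C\tau^k$, exhibiting $f \square g$ as a pushout of the generating acyclic cofibration $0 \to \Sigma^{n+n'-k,t+t'+1}C\tau^k$. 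Hence $f \square g \in J^k\cof$, as needed.

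The substantive case is $f \square f'$ for two generators in $I^k$; after absorbing shifts we may take both to be the unit $\eta: S^{0,0} \to C\tau^k$. Stability decomposes $C\tau^k \coprod_{S^{0,0}} C\tau^k \simeq C\tau^k \oplus S^{-k,1}$ via the section of the codiagonal, and Lemma \ref{ctct} decomposes $C\tau^k \otimes C\tau^k \simeq C\tau^k \oplus \Sigma^{-k,1}C\tau^k$ via the multiplication coming from Theorem \ref{ctcomm}. Choosing splittings compatible with these retractions shows $\eta \square \eta$ takes the form $\text{id}_{C\tau^k} \oplus \Sigma^{-k,1}\eta$, so it is a pushout of the generating cofibration $\Sigma^{-k,1}\eta: S^{-k,1} \to \Sigma^{-k,1}C\tau^k$ along the coproduct inclusion $S^{-k,1} \hookrightarrow C\tau^k \oplus S^{-k,1}$, hence in $I^k\cof$. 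For the unit axiom I take $\mathbb{C}S = S^{0,0}$ itself: specializing the pushout square from the proof of Lemma \ref{part4} to $(n,t) = (0,0)$ exhibits $\Sigma^{k,-1}C\tau^k \to S^{0,0}$ as a pushout of the generating cofibration $S^{k,-1} \to \Sigma^{k,-1}C\tau^k$, and composing with $0 \to \Sigma^{k,-1}C\tau^k$ in $J^k$ shows that $0 \to S^{0,0}$ is itself a cofibration, so the unit is already cofibrant.

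The main obstacle is the compatible identification of the two non-canonical splittings in the substantive case, i.e.\ verifying that $\eta \square \eta$ respects the decompositions and restricts to the shifted unit on the non-trivial summand. This should follow by comparing $\eta \square \eta$ on each coordinate using the commutative diagram in which the codiagonal on $C\tau^k \coprod_S C\tau^k$ maps to the multiplication on $C\tau^k \otimes C\tau^k$; once that diagram is in place, everything reduces to formal properties of cofibrant generation.
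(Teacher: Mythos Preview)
Your proposal is correct and follows essentially the same route as the paper: reduce the pushout-product axiom to generators, handle $I^k \square J^k$ by identifying $\varphi$ with a split inclusion via Lemma~\ref{ctct}, handle $I^k \square I^k$ by exhibiting $\psi$ as a pushout of a shifted copy of $\eta$, and dispose of the unit axiom by observing that $S^{0,0}$ is already cofibrant. Your matrix description $\eta \square \eta \simeq \mathrm{id}_{C\tau^k} \oplus \Sigma^{-k,1}\eta$ is just an explicit unpacking of the paper's pushout square, and the ``main obstacle'' you flag---compatibility of the two non-canonical splittings---is exactly resolved by the observation you sketch: $\mu \circ \psi = \nabla$, which forces the off-diagonal entries to vanish and identifies the induced map on cofibers with $\Sigma^{-k,1}\eta$. (Your bidegrees $(-k,1)$ are the correct ones; the paper writes $(k,1)$ in both squares, which appears to be a typo.)
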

\begin{proof}
Since $S^{0,0}$ is cofibrant, we only need to check the pushout-product axiom. By the argument of \cite[Corollary 4.2.5]{hovey2007model}, it suffices to check this on generating cofibrations and generating acyclic cofibrations. 

Further, since the generating cofibrations are all shifts of $S^{0,0} \to C\tau^k$ and the acyclic generating cofibrations are all shifts of $0 \to C\tau^k$, it suffices to consider the following two cases.

First, suppose $f$ and $f'$ are both the map $S^{0,0} \to C\tau^k.$ The pushout-product map takes the form
\[
\psi: C\tau^k \coprod_{S^{0,0}} C\tau^k \to C\tau^k \otimes C\tau^k.
\]
Using Lemma \ref{ctct}, note that this map fits into a pushout diagram
\[
\xymatrix{
S^{k,1} \ar[r] \ar[d] & C\tau^k \displaystyle \coprod_{S^{0,0}} C\tau^k \ar^\psi[d] \\
\Sigma^{k,1} C\tau^k \ar[r] & C\tau^k \otimes C\tau^k
}
\]
where the left vertical arrow is a cofibration, so $\psi$ is too. 

Next, suppose $f$ is as before, but $f'$ is the map $0 \to C\tau^k.$ We then have a map
\[
\varphi: C\tau^k \to C\tau^k \otimes C\tau^k
\]
which again using Lemma \ref{ctct} fits into a pushout square
\[
\xymatrix{
0\ar[r] \ar[d] & C\tau^k \ar^{\varphi}[d] \\
\Sigma^{k,1} C\tau^k \ar[r] & C\tau^k \otimes C\tau^k
}
\]
by which we immediately see that $\varphi$ is a cofibration, and acyclic because both its source and target are weak equivalent to $0.$ 
\end{proof}
This produces a canonical symmetric monoidal structure on $\cat D^k(\cat C)$, and we have the following.
\begin{cor}
The localization functor $\Fil(\cat C) \to \cat D^k(\cat C)$ is lax symmetric monoidal.
\end{cor}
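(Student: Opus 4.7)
The plan is to deduce the lax symmetric monoidal structure on $L\colon \Fil(\cat C) \to \cat D^k(\cat C)$ directly from the symmetric monoidal model structure of the preceding theorem, using functorial cofibrant replacement as the bridge. The key observation is that Mazel-Gee's Proposition 5.5.6 (quoted earlier in the excerpt) tells us the symmetric monoidal structure on $\cat D^k(\cat C)$ is characterized by the natural identification $L(X) \otimes L(Y) \simeq L(X \otimes Y)$ for cofibrant $X$ and $Y$. In particular, restricted to the full subcategory $\Fil(\cat C)^{\mathrm{cof}}$ of cofibrant objects, $L$ is already strongly symmetric monoidal on the nose; the task is only to propagate this structure out to all of $\Fil(\cat C)$ in its weaker, lax form.

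First, I would fix a functorial cofibrant replacement $\mathbb{C}\colon \Fil(\cat C) \to \Fil(\cat C)^{\mathrm{cof}}$ together with a natural weak equivalence $\mathbb{C}X \to X$. This exists because the $k$-projective model structure is cofibrantly generated by $I^k$, which (by the compactness of spheres built into our assumptions on $\cat C$) permits the small object argument. For arbitrary objects $X$ and $Y$, the lax comparison map would then be defined as the composite
\[
L(X) \otimes L(Y) \xleftarrow{\sim} L(\mathbb{C}X) \otimes L(\mathbb{C}Y) \xrightarrow{\sim} L(\mathbb{C}X \otimes \mathbb{C}Y) \to L(X \otimes Y),
\]
where the first two arrows are equivalences (the first by $\mathbb{C}X \simeq X$ and $\mathbb{C}Y \simeq Y$ in $\cat D^k(\cat C)$, the second by the strong monoidality of $L$ on cofibrant objects) and the last arrow applies $L$ to the natural map $\mathbb{C}X \otimes \mathbb{C}Y \to X \otimes Y$. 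The unit comparison $\mathbf{1}_{\cat D^k(\cat C)} \to L(S^{0,0})$ is obtained identically from a cofibrant replacement of the monoidal unit.

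The main obstacle is coherence: one needs the resulting structure maps to assemble into an $E_\infty$-algebra structure on $L$ in $\Fun(\Fil(\cat C), \cat D^k(\cat C))$ equipped with pointwise tensor product. I expect the cleanest route is to promote $\mathbb{C}$ itself to a lax symmetric monoidal endofunctor of $\Fil(\cat C)$ — the pushout-product axiom applied levelwise to the cell complexes defining $\mathbb{C}X$ yields canonical maps $\mathbb{C}X \otimes \mathbb{C}Y \to \mathbb{C}(X \otimes Y)$ with the required coherence — so that $L \simeq L \circ \mathbb{C}$ inherits a lax structure from $L|_{\mathrm{cof}}$ being strong. This step is essentially formal but somewhat tedious in the $\infty$-categorical setting; I would defer to the symmetric monoidal variant of Mazel-Gee's localization machinery rather than write out the hexagons by hand.
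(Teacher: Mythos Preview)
The paper gives no proof of this corollary at all: it is stated immediately after the theorem that the $k$-projective model structure is closed symmetric monoidal, with the implicit understanding that it follows directly from Mazel-Gee's Propositions 5.5.4 and 5.5.6 (quoted earlier in the paper). Your argument is correct and is essentially the standard unpacking of \emph{why} a symmetric monoidal model structure yields a lax symmetric monoidal localization functor --- you are supplying the details the paper omits, so there is nothing to compare beyond noting that the paper treats as a one-line citation what you spell out via cofibrant replacement.
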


\subsection{Mapping spaces in the derived $\infty$-category}\label{modelhomspaces}
In this section, we apply Theorem \ref{homspaces} to compute mapping spaces in $\cat D^k(\cat C).$
\begin{lem}
Let $\CC\tau^k$ denote the Amitsur complex of $C\tau^k$ 
\[
 C\tau^k \Rightarrow C\tau^k \otimes C\tau^k \Rrightarrow C\tau^k \otimes C\tau^k \otimes C\tau^k  \Rrightarrow \cdots
\]

The unit map gives a canonical map $\iota: S^{0,0} \to \CC\tau^k$, viewing the source as a constant cosimplicial object. If we define
\[
\Delta = \Sigma^{k,0} C\iota,
\]
then $\Delta$ is a cylinder object for $S^{0,0}$ in the $k$-projective model structure in the sense of Definition \ref{cylinderdef}.
\end{lem}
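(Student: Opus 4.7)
The plan is to check each of the three axioms of Definition \ref{cylinderdef} in turn, exploiting the ring structure on $C\tau^k$ from Theorem \ref{ctcomm} and the splitting provided by Lemma \ref{ctct}.

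First, for the equivalence $S^{0,0} \to \Delta^0$, I would unravel the level-zero term. Since $(\CC\tau^k)^0 = C\tau^k$ and $\iota^0 : S^{0,0} \to C\tau^k$ is the unit map, the cofiber triangle $S^{-k,0} \xrightarrow{\tau^k} S^{0,0} \to C\tau^k$ rotates to identify $C\iota^0$ with a shift of $S^{-k,0}$. After applying $\Sigma^{k,0}$ we obtain $\Delta^0$ together with a canonical map from $S^{0,0}$, and I would verify that this map is a $\pi^k_{**}$-isomorphism by a direct calculation using Definition \ref{homodef}.

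Second, I would show each codegeneracy $\Delta^n \to \Delta^{n-1}$ is a weak equivalence. The codegeneracies of $\CC\tau^k$ are induced by the ring multiplication $m : C\tau^k \otimes C\tau^k \to C\tau^k$ on consecutive factors. Applying Lemma \ref{ctct} iteratively expresses $(C\tau^k)^{\otimes (n+1)}$ as a direct sum indexed by subsets of $\{1,\ldots,n\}$, on which the codegeneracies act combinatorially (collapsing adjacent summands according to $m$). Passing to the cofiber of $\iota$ kills the ``unit summand'' uniformly in $n$, and a direct computation on the remaining summands shows each codegeneracy of $C\iota$ is a $\pi^k_{**}$-isomorphism; the shift $\Sigma^{k,0}$ preserves this property.

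Third, I would show each latching map $L_n \Delta \to \Delta^n$ is a cofibration. The Amitsur complex of a commutative ring $R$ with unit $\eta : S \to R$ has the well-known property that its $n$-th Reedy latching inclusion is an iterated pushout-product of $\eta$ with itself; in our situation $\eta = \iota^0$ is precisely a generating cofibration from $I^k$. Since the $k$-projective model structure is symmetric monoidal by Section \ref{symmonsection}, the pushout-product axiom forces each of these iterated pushout-products to be a cofibration. The functor sending a map $f : S^{0,0} \to X$ to $\Sigma^{k,0} Cf$ is exact, so it commutes with the latching colimit and preserves cofibrations, transporting the conclusion from $\CC\tau^k$ to $\Delta$.

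The main obstacle will be the third step: precisely identifying the latching maps of $\Delta$ with iterated pushout-products of $\iota^0$ requires careful Reedy bookkeeping across both the levelwise cofiber of $\iota$ and the bigraded shift, and one must verify that these operations commute with the latching construction. Lemma \ref{ctct} is the key computational input throughout, since it reduces all the iterated tensor products $(C\tau^k)^{\otimes(n+1)}$ to an explicit direct-sum decomposition on which the cosimplicial structure can be read off directly.
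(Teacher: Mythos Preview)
Your plan for the first two conditions matches the paper's: both decompose $(C\tau^k)^{\otimes n+1}$ via Lemma~\ref{ctct}, pass to the cofiber of $\iota$ and shift, and observe that every summand other than the sphere is $k$-acyclic, forcing the codegeneracies to be weak equivalences.

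For the latching condition your route diverges from the paper's, and there is a gap. The paper stays with the explicit splitting: it identifies $L_n\Delta$ as the sub-sum of $\Delta^n$ missing only the top summand $\Sigma^{-(n-1)k,n}C\tau^k$, so the latching map is a pushout of the generating acyclic cofibration $0 \to \Sigma^{-(n-1)k,n}C\tau^k$ and hence a cofibration. Your argument instead asserts that the Reedy latching map of the Amitsur complex is the iterated pushout-product of $\eta$, but this is not correct as stated: the latching category at level $n$ is the poset of proper \emph{nonempty} subsets of $[n]$, while the pushout-product is the colimit over the full punctured cube including the empty subset. Concretely, for $n=1$ the latching object of $\CC\tau^k$ is the coproduct $C\tau^k \oplus C\tau^k$ with latching map the pair of cofaces, whereas the domain of $\eta\Box\eta$ is the pushout $C\tau^k \cup_{S^{0,0}} C\tau^k$. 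Passing to $C\iota$ does not automatically repair this, since the latching object of the constant cosimplicial $S^{0,0}$ is likewise not $S^{0,0}$ (for $n=1$ it is $S^{0,0}\oplus S^{0,0}$), so your transport step does not reduce to a cobase change along a known cofibration. The paper's direct computation avoids this issue entirely by reading both $L_n\Delta$ and $\Delta^n$ off the same decomposition.

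A minor point on your first step: Definition~\ref{cylinderdef} asks for an actual equivalence $S^{0,0}\to\Delta^0$, not merely a $\pi^k_{**}$-isomorphism; carrying out the cofiber rotation identifies $\Delta^0$ with a sphere on the nose, so no homotopy-group argument is needed there.
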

\begin{proof}
Using the equivalence in Lemma \ref{ctct}, we obtain an equivalence
\[
(C\tau^k)^{\otimes n+1} = \bigoplus_{i = 0}^n \left(\Sigma^{-ik,i} C\tau^k\right)^{\oplus \binom{n}{i}}
\]
on which the codegeneracy maps all act as the identity on the unshifted copy of $C\tau^k.$ In particular, we obtain a (non-canonical) equivalence
\[
\Delta_n = S^{0,0} \oplus \bigoplus_{i = 1}^n \left(\Sigma^{-(i-1)k,i} C\tau^k\right)^{\oplus \binom{n}{i}}
\]
on which the codegeneracy maps all act as the identity on $S^{0,0}.$ Since all the other summands are acyclic, the codegeneracy maps must be weak equivalences.

Similarly, we obtain an equivalence between latching object $L_n\Delta$ and the direct sum
\[
L_n\Delta =  S^{0,0} \oplus \bigoplus_{i = 1}^{n-1} \left(\Sigma^{-(i-1)k,i} C\tau^k\right)^{\oplus \binom{n}{i}}
\]
so that we have a pushout square
\[
\xymatrix{
0 \ar[r] \ar[d] & L_n \Delta\ar[d] \\
 \Sigma^{-(n-1)k,n} C\tau^k \ar[r] & \Delta_n.
}
\]
implying that $L_n\Delta \to \Delta$ is a cofibration.
\end{proof}
We immediately obtain several corollaries:
\begin{cor}
Let $X$ be any cofibrant object in the $k$-projective model structure. A cylinder object for $X$ is given by $X \otimes \Delta.$
\end{cor}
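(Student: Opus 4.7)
The plan is to verify the three defining properties of a cylinder object (Definition \ref{cylinderdef}) for $X \otimes \Delta$, using that $\Delta$ is already a cylinder object for $S^{0,0}$ and that the $k$-projective model structure is symmetric monoidal. The high-level idea is that tensoring with the cofibrant object $X$ is a left Quillen endofunctor, so it preserves the relevant cofibrations and weak equivalences levelwise, while commuting with the latching-object construction because $-\otimes X$ preserves colimits.

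First I would handle the equivalence $X \to (X \otimes \Delta)^0$. Since $\Delta$ is a cylinder object for $S^{0,0}$, we have an equivalence $S^{0,0} \to \Delta^0$, and tensoring with $X$ yields $X \simeq X \otimes S^{0,0} \to X \otimes \Delta^0$. To see this is still a weak equivalence I would note that $\Delta^0$ and $S^{0,0}$ are both cofibrant, so tensoring with the cofibrant $X$ preserves this equivalence by the Ken Brown-style argument baked into the pushout-product axiom (applied to $0 \to X$, which is a cofibration since $X$ is cofibrant).

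Next I would verify that the codegeneracies $(X \otimes \Delta)^n \to (X \otimes \Delta)^{n-1}$ are weak equivalences. By induction on $n$ using the cofibration $L_n \Delta \to \Delta^n$ coming from the cylinder structure on $\Delta$, each $\Delta^n$ is cofibrant. The codegeneracies of $\Delta$ are therefore weak equivalences between cofibrant objects, and the pushout-product axiom implies that tensoring with the cofibrant object $X$ sends such equivalences to equivalences, giving the desired property for $X \otimes \Delta$.

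Finally, and this is the step where the symmetric monoidal structure does the most work, I would check that the latching maps $L_n(X \otimes \Delta) \to (X \otimes \Delta)^n$ are cofibrations. Because $-\otimes X$ preserves colimits, it commutes with the latching construction, so $L_n(X \otimes \Delta) \cong X \otimes L_n\Delta$ and the latching map is $\mathrm{id}_X \otimes (L_n\Delta \to \Delta^n)$. Applying the pushout-product axiom to the cofibrations $0 \to X$ and $L_n\Delta \to \Delta^n$, the resulting pushout-product simplifies (since the pushout over $0$ is just $X \otimes L_n\Delta$) to precisely $X \otimes L_n\Delta \to X \otimes \Delta^n$, which is therefore a cofibration. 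The main obstacle I anticipate is bookkeeping around the identification of $L_n(X \otimes \Delta)$ with $X \otimes L_n \Delta$ — one needs that latching objects are genuine colimits of the cosimplicial diagram so that the left-adjoint $X \otimes -$ commutes with them — but once that is in hand the argument is a clean application of the pushout-product axiom verified in the previous subsection.
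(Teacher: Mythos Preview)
Your argument is correct and takes a genuinely different route from the paper's. The paper simply observes that the explicit computation in the preceding lemma (the decomposition of $(C\tau^k)^{\otimes n+1}$ via Lemma~\ref{ctct} and the resulting pushout squares) survives tensoring with $X$, so the only new thing to check is that the summands $X \otimes \Sigma^{-ik,i}C\tau^k$ remain weakly contractible; this is verified directly by noting that $\tau^k$ acts as zero on $X \otimes C\tau^k$. You instead argue abstractly from the symmetric monoidal model structure: each $\Delta^n$ is cofibrant by Reedy induction, so the codegeneracies are weak equivalences between cofibrant objects and are preserved by the left Quillen functor $X \otimes -$ via Ken Brown, while the latching condition follows from the pushout-product axiom applied to $\emptyset \to X$ and $L_n\Delta \to \Delta^n$ together with the commutation of $X \otimes -$ with the latching colimit. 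Your approach has the virtue of applying verbatim in any symmetric monoidal model $\infty$-category with cofibrant unit, without reference to the particular form of $\Delta$; the paper's approach is shorter here because the explicit decomposition is already on the page, and it isolates the single concrete fact ($X \otimes C\tau^k$ is $k$-acyclic) that is reused in the next two corollaries.
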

\begin{proof}
The proof of the previous lemma is preserved under $X \otimes - .$ The only thing to check is that $X \otimes C\tau^k$ is weakly contractible, which follows because the map
\[
\tau^k: X_n/X_{n+k} \to X_{n-k}/X_n
\]
factors through $X_n/X_n$ and is therefore zero.
\end{proof}
\begin{cor}\label{kcofiber}
Let $f: X \to Y$ be a map of cofibrant filtered $\cat C$-objects. The cofiber $C_k(f)$ of $f$ in $D^k(\cat C)$ is given by the pushout
\[
\xymatrix{
X \ar[r]^f \ar[d] & Y \ar[d] \\
X \otimes C\tau^k \ar[r] & C_k(f) 
}
\]
\end{cor}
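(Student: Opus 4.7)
The plan is to exhibit the displayed square as a homotopy pushout in the $k$-projective model structure that realizes the cofiber of $f$ in $\cat D^k(\cat C)$. Once one knows that $\cat D^k(\cat C)$ is stable (promised later in Section \ref{compactspheres}), the cofiber of $f$ is simply the $\infty$-categorical pushout $Y \sqcup_X 0$, which by the general machinery of model $\infty$-categories is computed by a homotopy pushout of the span $Y \xleftarrow{f} X \to 0$ in the $k$-projective model structure.

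To realize this homotopy pushout as an ordinary pushout in $\Fil(\cat C)$, I would replace the map $X \to 0$ by a cofibration $X \hookrightarrow Z$ with $Z$ weakly equivalent to $0$. The natural candidate is $Z = X \otimes C\tau^k$, with the inclusion coming from tensoring $X$ with the generating cofibration $S^{0,0} \to C\tau^k$. The pushout-product axiom established in Section \ref{symmonsection} then implies this map is a cofibration: it is the pushout-product of the cofibration $0 \to X$ (valid because $X$ is cofibrant) with $S^{0,0} \to C\tau^k$, yielding $X \simeq X \otimes S^{0,0} \to X \otimes C\tau^k$. Moreover $X \otimes C\tau^k$ is $k$-acyclic by exactly the observation used in the preceding corollary: level-wise it is $X_n/X_{n+k}$, and the multiplication-by-$\tau^k$ map factors through $X_n/X_n = 0$, so $\pi^k_{**}(X \otimes C\tau^k) = 0$.

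With $Y$ already cofibrant and $X \hookrightarrow X \otimes C\tau^k$ a cofibration, the ordinary pushout $Y \sqcup_X (X \otimes C\tau^k)$ in $\Fil(\cat C)$ presents the homotopy pushout of $Y \xleftarrow{f} X \to 0$, and therefore identifies with $C_k(f)$ in $\cat D^k(\cat C)$.

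The only substantive input is the cofibration assertion, which is immediate from the symmetric monoidal model structure; the remainder is the standard recipe for computing $\infty$-categorical cofibers as homotopy pushouts against a convenient cofibrant acyclic replacement of the zero object. I expect no genuine obstacle beyond keeping track of which maps are cofibrations in $\Fil(\cat C)$ versus which are merely weak equivalences after localization.
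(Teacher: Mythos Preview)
Your proposal is correct and matches the paper's own argument essentially verbatim: the paper's proof is the one-line observation that $X\otimes C\tau^k$ is weakly contractible, all objects are cofibrant, and $X\to X\otimes C\tau^k$ is a cofibration, which is exactly the homotopy-pushout recipe you spell out. Your appeal to the pushout-product axiom to justify that $X\to X\otimes C\tau^k$ is a cofibration is a nice explicit touch that the paper leaves implicit; the forward reference to stability is unnecessary, since the cofiber is by definition the pushout $Y\sqcup_X 0$ in $\cat D^k(\cat C)$ regardless.
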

\begin{proof}
This follows from the fact that $X \otimes C\tau^k$ is weak equivalent to $0$, all the objects are cofibrant, and the map $X \to X \otimes C\tau^k$ is a cofibration. 
\end{proof}
\begin{cor}\label{ctweird}
The cofiber $C_k(\tau)$ of $\tau: S^{-1,0} \to S^{0,0}$ satisfies
\[
C_k(\tau) = C\tau^{k+1}
\]
\end{cor}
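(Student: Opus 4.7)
The plan is to identify $C_k(\tau)$ with an explicit pushout in $\Fil(\cat C)$ using Corollary \ref{kcofiber}, and then apply the pasting law for pushouts to recognize that pushout as $C\tau^{k+1}$.

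First I would verify that Corollary \ref{kcofiber} applies to $\tau: S^{-1,0} \to S^{0,0}$. The object $S^{0,0}$ is the unit and hence cofibrant, and the pushout square appearing in the proof of Lemma \ref{part4} (suitably shifted) exhibits every $S^{n,t}$ as an $I^k$-cell complex, so $S^{-1,0}$ is also cofibrant. Thus Corollary \ref{kcofiber} gives an identification of $C_k(\tau)$ with the pushout in $\Fil(\cat C)$
\[
\xymatrix{
S^{-1,0} \ar[r]^{\tau} \ar[d] & S^{0,0} \ar[d] \\
S^{-1,0} \otimes C\tau^k \ar[r] & C_k(\tau),
}
\]
where we use $S^{-1,0} \otimes C\tau^k \simeq \Sigma^{-1,0}C\tau^k$.

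Next I would observe that $\Sigma^{-1,0}C\tau^k$ is itself the cofiber of $\tau^k: S^{-k-1,0} \to S^{-1,0}$, obtained by shifting the defining cofiber sequence $S^{-k,0} \xrightarrow{\tau^k} S^{0,0} \to C\tau^k$. I can therefore splice the two pushouts into a single diagram
\[
\xymatrix{
S^{-k-1,0} \ar[r]^{\tau^k} \ar[d] & S^{-1,0} \ar[r]^{\tau} \ar[d] & S^{0,0} \ar[d] \\
0 \ar[r] & \Sigma^{-1,0}C\tau^k \ar[r] & C_k(\tau)
}
\]
in $\Fil(\cat C)$, in which both inner squares are pushouts.

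By the pasting law, the outer rectangle is a pushout as well. Its top horizontal composite is $\tau \circ \tau^k = \tau^{k+1}: S^{-k-1,0} \to S^{0,0}$, so the outer rectangle exhibits $C_k(\tau)$ as the cofiber of $\tau^{k+1}$ in $\Fil(\cat C)$, which is by definition $C\tau^{k+1}$. I do not anticipate any real obstacle here: the only subtlety is the book-keeping to ensure that the top composite is genuinely $\tau^{k+1}$ (as opposed to some other twist of it), which is immediate from the fact that the maps $\tau$ in $\Fil(\cat C)$ compose in the obvious way under suspension. The cofibrancy check is the only input needed to pass from the homotopy pushout in $\cat D^k(\cat C)$ to a strict pushout in $\Fil(\cat C)$.
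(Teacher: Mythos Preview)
Your proposal is correct and matches the paper's own proof: the paper simply writes down the pushout square
\[
\xymatrix{
S^{-1,0} \ar[r] \ar[d] & S^{0,0} \ar[d] \\
\Sigma^{-1,0} C\tau^k \ar[r] & C\tau^{k+1}
}
\]
and appeals to Corollary~\ref{kcofiber}. Your version is essentially the same, with the added (and helpful) details of checking cofibrancy of the spheres and spelling out the pasting-law step that identifies this pushout with the cofiber of $\tau^{k+1}$.
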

\begin{proof}
This follows from the pushout diagram
\[
\xymatrix{
S^{-1,0} \ar[r] \ar[d] & S^{0,0} \ar[d] \\
\Sigma^{-1,0} C\tau^k \ar[r] & C\tau^{k+1}.
}
\]
\end{proof}
\begin{cor}
We have a natural isomorphism
\[
[S^{n,t}, X]_{\cat D^k(\cat C)} = \pi_{n,t}^k(X).
\]
\end{cor}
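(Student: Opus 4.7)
The plan is to apply Theorem~\ref{homspaces}, using the cylinder object $S^{n,t} \otimes \Delta^\bullet$ provided by the preceding results, to present $[S^{n,t}, X]_{\cat D^k(\cat C)}$ as $\pi_0$ of a geometric realization. Since $S^{n,t}$ is cofibrant (as the source of a generating cofibration) and every object in the $k$-projective model structure is fibrant, this gives
\[
[S^{n,t}, X]_{\cat D^k(\cat C)} = \pi_0 \bigl|\Hom\bigl(S^{n,t} \otimes \Delta^\bullet, X\bigr)\bigr|.
\]

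The first step is to unwind this simplicial spectrum at low simplicial degree. The zero-simplex contribution gives $\pi_0 \Hom(S^{n,t}, X) = \pi_t(X_n)$, which surjects onto $\pi_0$ of the realization. At the one-simplex level, using the decomposition of $\Delta_1$ from the cylinder lemma, the two face maps $d^0, d^1 \colon \Delta_0 \to \Delta_1$ are sections of the codegeneracy, and their difference is captured by a canonical map $\delta$ extracted from the Amitsur cosimplicial structure on $\CC\tau^k$. After tensoring with $S^{n,t}$, this canonical map is equivalent to the natural inclusion $S^{n,t} \to \Sigma^{n,t} C\tau^k$ appearing in the cofiber sequence $S^{n-k,t} \xrightarrow{\tau^k} S^{n,t} \to \Sigma^{n,t} C\tau^k$ in $\Fil(\cat C)$. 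By exactness of the long exact sequence obtained by applying $\pi_0 \Hom(-, X)$ to this cofiber sequence, the image of the induced precomposition map $\delta^* \colon \pi_0 \Hom(\Sigma^{n,t} C\tau^k, X) \to \pi_t(X_n)$ is exactly $\ker(\tau^k \colon \pi_t(X_n) \to \pi_t(X_{n-k}))$.

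Quotienting $\pi_t(X_n)$ by this image gives $\pi_t(X_n)/\ker(\tau^k) = \im(\tau^k) = \pi_{n,t}^k(X)$, by the alternative characterization in Definition~\ref{homodef}. The hard part will be verifying that higher simplices of $\Delta^\bullet$ (for $m \geq 2$) contribute no additional equivalence relations to $\pi_0$ of the realization; this should follow from the explicit formula for $\Delta_m$ in the cylinder lemma, since each higher summand is a shift of the acyclic spectrum $C\tau^k$, so the Bousfield–Kan spectral sequence for $\pi_0|K_\bullet|$ collapses and only the contribution from the first face difference survives. A cleaner and more conceptual alternative is to identify the full realization $|\Hom(S^{n,t} \otimes \Delta^\bullet, X)|$ with a derived mapping spectrum in an appropriate $C\tau^k$-module category, for which $\pi^k_{n,t}(X)$ is the tautological value of $\pi_0$.
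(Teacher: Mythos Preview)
Your approach is essentially the same as the paper's: both use the cylinder object $\Delta$ for $S^{0,0}$ constructed just above. The paper's proof is a single line invoking Corollary~6.1.11 of \cite{mg} applied to this resolution, which packages up exactly the $\pi_0$-of-realization computation you are carrying out by hand from Theorem~\ref{homspaces}.

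A couple of remarks on your unwinding. First, the part you flag as ``hard'' is actually automatic: for any simplicial space $K_\bullet$, the set $\pi_0|K_\bullet|$ is the coequalizer of $\pi_0 K_1 \rightrightarrows \pi_0 K_0$, since attaching cells of dimension $\geq 2$ cannot change $\pi_0$. So no Bousfield--Kan argument or higher-simplex analysis is needed. Second, the step that does require care is your identification of the relation imposed at level~1 with precomposition by $S^{n,t} \to \Sigma^{n,t}C\tau^k$; you assert this comes from the Amitsur structure but do not verify it, and the explicit decomposition of $\Delta_1$ in the cylinder lemma involves a suspension shift that you should track. This is presumably exactly what Corollary~6.1.11 of \cite{mg} handles in general, which is why the paper simply cites it.
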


\begin{proof}
Apply Corollary $6.1.11$ of \cite{mg} to the resolution $\Delta \to S^{0,0}.$
\end{proof}

\subsection{The $k$-derived category is compactly generated by spheres}
\begin{thm}\label{compactspheres}
The $\infty$-category $\cat D^k(\cat C)$ is stable. Moreover, if $\cat C$ is compactly generated by the spheres $S^{t}$, then $\cat D^k(\cat C)$ is compactly generated by the spheres $S^{n,t}.$ 
\end{thm}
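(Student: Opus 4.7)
The plan is to combine the identification $[S^{n,t}, X]_{\cat D^k(\cat C)} = \pi^k_{n,t}(X)$ just established with the fact that every object is fibrant in the $k$-projective model structure, and handle stability and compact generation separately.

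For stability, I would argue that because every object is fibrant, finite limits in $\cat D^k(\cat C)$ are computed directly in $\Fil(\cat C)$; in particular, the loops functor $\Omega$ in $\cat D^k(\cat C)$ agrees with $\Omega = \Sigma^{0,-1}$ in $\Fil(\cat C)$. For finite colimits, the model $\infty$-category structure supplies finite bicompleteness, and Corollary \ref{kcofiber} together with the observation that $C\tau^k \simeq 0$ in $\cat D^k(\cat C)$ (since $0 \to C\tau^k$ is a generating acyclic cofibration) shows that the cofiber of a map $f: X \to Y$ between cofibrant objects in $\cat D^k(\cat C)$ agrees with the ordinary $\Fil(\cat C)$-cofiber. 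Since $\Sigma$ and $\Omega$ are mutually inverse on the stable $\infty$-category $\Fil(\cat C)$, the same holds in $\cat D^k(\cat C)$, giving stability.

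For compact generation, the spheres $S^{n,t}$ detect equivalences: if $\pi^k_{n,t}(X) = [S^{n,t}, X]_{\cat D^k(\cat C)}$ vanishes for all $n, t$, then $0 \to X$ is a $\pi^k_{**}$-isomorphism, hence a weak equivalence, so $X \simeq 0$ in $\cat D^k(\cat C)$. To show each sphere is compact, I would use that filtered colimits in $\Fil(\cat C)$ are computed levelwise, that $\pi_t$ on $\cat C$ preserves filtered colimits by compactness of $S^t$, and that the image construction defining $\pi^k_{n,t}$ commutes with filtered colimits of abelian groups. This makes $\pi^k_{n,t}$ filtered-colimit-preserving; in particular $W^k$ is closed under filtered colimits, so filtered colimits in $\cat D^k(\cat C)$ agree with those in $\Fil(\cat C)$. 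The formula $[S^{n,t}, -] = \pi^k_{n,t}$ then delivers compactness.

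The main subtlety is promoting compactness of $[S^{n,t}, -]$ at the level of $\pi_0$ to compactness of the full mapping space $\Map_{\cat D^k(\cat C)}(S^{n,t}, -)$. Once stability is in hand, the higher homotopy groups $\pi_i \Map_{\cat D^k(\cat C)}(S^{n,t}, X)$ are themselves of the form $\pi^k_{n, t+i}(X)$, since $\Sigma = \Sigma^{0,1}$ acts on the second index and $\cat A$ contains $\ZZ$, so the shifts $\Sigma^i S^{n,t} = S^{n,t+i}$ are again among the chosen spheres. Thus compactness at $\pi_0$ for every $S^{n,t+i}$ transfers to compactness of the full mapping space out of $S^{n,t}$, completing the argument.
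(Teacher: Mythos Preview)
Your stability argument contains a genuine error: the suspension functor in $\cat D^k(\cat C)$ is \emph{not} $\Sigma^{0,1}$ but $\Sigma^{-k,1}$. Your reading of Corollary~\ref{kcofiber} is backwards. That corollary computes the $\cat D^k$-cofiber as the $\Fil(\cat C)$-pushout $Y \amalg_X (X \otimes C\tau^k)$, and the fact that $X \otimes C\tau^k \simeq 0$ in $\cat D^k$ is precisely what makes this pushout a model for the homotopy cofiber; it does \emph{not} say that $C_k(f)$ is weakly equivalent to the ordinary $\Fil(\cat C)$-cofiber $Cf$. Corollary~\ref{ctweird} is already an explicit counterexample: $C_k(\tau) = C\tau^{k+1}$, whereas the $\Fil(\cat C)$-cofiber $C\tau$ is $k$-acyclic and hence zero in $\cat D^k(\cat C)$. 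Taking $f \colon X \to 0$ in Corollary~\ref{kcofiber} gives $\Sigma X \simeq (X \otimes C\tau^k)/X \simeq \Sigma^{-k,1} X$, which is the paper's computation. Your identification $\Omega = \Sigma^{0,-1}$ fails for the dual reason: although every object is fibrant, the map $0 \to X$ is generally not a fibration in the $k$-projective model structure, so the ordinary pullback $0 \times_X 0$ in $\Fil(\cat C)$ need not compute the homotopy loop object.

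The paper's route is shorter: it observes that $\cat D^k(\cat C)$ is cocomplete and that the suspension $\Sigma^{-k,1}$ is an autoequivalence, then invokes \cite[Proposition~1.4.2.27]{ha}. For compact generation it cites \cite[Remark~1.4.4.3]{ha} to reduce to the underlying triangulated category, and then checks only that $[S^{n,t},-]_{\cat D^k}$ detects zero objects and commutes with coproducts, the latter following from the corresponding statement in $\Fil(\cat C)$ because $[S^{n,t},-]_{\cat D^k} = \tau^k \cdot [S^{n,t},-]_{\Fil(\cat C)}$. Your filtered-colimit approach to compactness can be salvaged once $\Sigma$ is correctly identified (the shifted spheres $\Sigma^{-ik,i} S^{n,t}$ are still among the generators), but it does more work than necessary.
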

\begin{proof}
Note that $\cat D^k(\cat C)$ inherits cocompleteness from $\cat C$. By Corollary \ref{kcofiber}, the suspension functor in $\cat D^k(\cat C)$ is given by $\Sigma^{-k,1}$, which is an equivalence, so stability follows from \cite[Proposition~1.4.2.27]{ha}.

By \cite[Remark~1.4.4.3]{ha}, compact generation of stable $\infty$-categories is detected on the underlying triangulated categories. In particular, it suffices to check the following two facts:
\begin{enumerate}
\item If $[S^{n,t},X]_{\cat D^k(\cat C)} = 0$ for all $s$ and $q$, then $X$ is a zero object of $\cat D^k(\cat C).$  
\item For any set $\{X^i\}$ of objects of $\cat D^k(\cat C)$, the natural map
\[
\coprod_i [S^{n,t}, X^i]_{\cat D^k(\cat C)} \to [S^{n,t}, \coprod_i X^i]_{\cat D^k(\cat C)}
\]
is an isomorphism of abelian groups. 
\end{enumerate}
The first is immediate, since the given condition implies that the map $X \to 0$ is a weak equivalence. The second follows from the compact generation of $\cat C$, which implies that we have
\begin{align*}
\coprod_i [S^{n,t}, X^i]_{\cat \Fil(\cat C)}  &= \coprod_i [S^{t}, (X^i)_n]_{\cat C}  \\
&=  [S^{t}, \coprod_i (X^i)_n]_{\cat C}  \\
&= [S^{n,t}, \coprod_i X^i]_{\cat \Fil(\cat C)}.  \\
\end{align*}
This is an equivalence of $\mathbb{Z}[\tau]$ modules, by which we get the corresponding statement for $\cat D^k(\cat C).$
\end{proof}

\section{The spectral sequence of a filtered $\cat C$-object}\label{sseqsection}

Filtered objects in stable $\infty$-categories are one of the most common sources of spectral sequences in homotopy theory. Given a filtered spectrum $X$, one constructs a spectral sequence starting with the homotopy groups of the ``associated graded'' spectrum $X \otimes C\tau.$ Under reasonable conditions, this spectral sequence converges to the colimit of the tower. Special cases of this spectral sequence include the Serre Spectral Sequence, Adams-Novikov Spectral Sequence, Grothendieck Spectral Sequence, and Slice Spectral Sequence.

In this chapter, we reconstruct the standard filtration spectral sequence associated to an object of $\Fil(\cat C).$ While this construction is well-known, the purpose of this chapter is to relate the spectral sequence to the various derived categories $\cat D^k(\cat C)$ we've constructed earlier in this paper. This is an essential ingredient in the two chapters, which will allow us to relate the special fiber of the deformation we construct to the $E^k$ page of the filtration spectral sequence. 
\subsection{Construction of the spectral sequence} 
The following is well-known:
\begin{thm}\label{sseqexists}
Let $X$ be a filtered object of a stable $\infty$-category $\cat C$. There is a spectral sequence $E^{n,t}_r(X)$ with
\[
E_{n,t}^1(X) = \pi_{n,t}^{0}(X \otimes C\tau)
\]
which converges conditionally to $\pi_{t}^{top}(X)$ as long as $\lim X = 0.$ The $d_k$ differential takes elements in degree $(n,t)$ to elements in degree $(n+k,t-1).$
\end{thm}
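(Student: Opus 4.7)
The plan is to derive the spectral sequence from the standard exact couple associated to the tower, identify the $E^1$ page with the homotopy of $X \otimes C\tau$, and deduce conditional convergence from the hypothesis $\lim X = 0$.

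First, I would apply the functors $[S^t,-]$ to each cofiber sequence $X_{n+1} \to X_n \to X_n/X_{n+1}$ to produce a family of long exact sequences of abelian groups. Setting $D^1_{n,t} := \pi_t(X_n)$ and $E^1_{n,t} := \pi_t(X_n/X_{n+1})$, these assemble into a bigraded exact couple whose structure maps $i, j, k$ have bidegrees $(-1,0)$, $(0,0)$, and $(+1,-1)$ respectively. Standard exact couple machinery then yields a spectral sequence $(E^r,d_r)$ with $d_r = j \circ i^{-(r-1)} \circ k$ of bidegree $(+r,-1)$, matching the statement.

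Second, I would identify the $E^1$ page. Since cofibers in $\Fil(\cat C)$ are computed pointwise and $X \otimes C\tau$ is by definition the cofiber of $\tau: \Sigma^{-1,0}X \to X$, we have $(X \otimes C\tau)_n \simeq X_n/X_{n+1}$, so
\[
E^1_{n,t} = \pi_t(X_n/X_{n+1}) = \pi^{0}_{n,t}(X \otimes C\tau).
\]

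Third, I would verify conditional convergence. Compactness of the spheres $S^t$ in $\cat C$ guarantees $\colim_n \pi_t(X_n) = \pi_t(\colim_n X_n) = \pi^{top}_t(X)$, so the colimit of the $D^1$ system is the claimed abutment. The hypothesis $\lim X = 0$, combined with the Milnor short exact sequence relating $\pi_*(\lim_n X_n)$ to $\lim_n$ and $\lim^1_n$ of $\pi_*(X_n)$, forces $\lim_n D^1_{*,t} = 0$ and $\lim^1_n D^1_{*,t} = 0$. Boardman's classical criterion for conditional convergence of an exact couple then delivers the conclusion, with the filtration on $\pi^{top}_t(X)$ given by images from $\pi_t(X_n)$.

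No step here poses a genuine technical obstacle; the only real care needed is tracking bidegrees to confirm they match the convention of the statement, which is dictated by $\tau$ having bidegree $(-1,0)$ as a map $\Sigma^{-1,0}X \to X$.
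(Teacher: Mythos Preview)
Your argument is correct and is the standard exact-couple construction; the paper's proof is essentially the same at its core but is packaged as a Cartan--Eilenberg system rather than a single exact couple. Concretely, the paper sets $H^*(i,j) = \pi_{-*+1}(X_i/X_j)$ for all pairs $i \leq j$ (using Lurie's gap diagram to organise the coherence), and then recovers your exact couple as the $r=1$ case. The reason for this extra generality is not Theorem~\ref{sseqexists} itself but the next result, Proposition~\ref{sseqform}, where the Cartan--Eilenberg formalism gives a closed-form description $E^{r+1}_{n,t} = \im\bigl(\pi_t(X_n/X_{n+r+1}) \to \pi_t(X_{n-r}/X_{n+1})\bigr) = \pi^r_{n,t}(X \otimes C\tau^{r+1})$ for every page, which is what ties the spectral sequence to the localizations $\cat D^k(\cat C)$. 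Your approach proves the theorem as stated more directly; the paper's approach buys the identification of later pages with $C\tau^{r+1}$ without having to unwind iterated derived couples. The convergence argument via the Milnor $\lim^1$ sequence is identical in both.
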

In this section we relate this spectral sequence to the various derived $\infty$-categories we've defined. Along the way, we will reprove this theorem as well as the following stronger (but still well-known) version:
\begin{thm}\label{sseqmult}
Let $X$ be a ring in $\Fil(\cat C)$. Then, the above spectral sequence has a multiplicative structure converging conditionally to the multiplicative structure on $\pi^{top}(X).$
\end{thm}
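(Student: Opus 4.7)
The plan is to derive the multiplicative structure on each page from the symmetric monoidal structure on $\cat D^k(\cat C)$ established in Section \ref{symmonsection}, together with the commutative ring structure on $C\tau^{k+1}$ from Theorem \ref{ctcomm}. The key input is the identification promised by Theorem \ref{sseqform}, namely
\[
E_{k+1}^{n,t}(X) \cong [S^{n,t}, X \otimes C\tau^{k+1}]_{\cat D^k(\cat C)},
\]
which should already be established (including a description of the differential $d_{k+1}$ as a connecting map) in the earlier parts of Section \ref{sseqsection}. Given this, the entire multiplicative picture will be engineered inside the presentably symmetric monoidal stable $\infty$-category $\cat D^k(\cat C)$.

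First, I would promote the above identification to a multiplicative one. If $X$ is a ring in $\Fil(\cat C)$, then since $C\tau^{k+1}$ is a commutative ring and the localization functor $\Fil(\cat C) \to \cat D^k(\cat C)$ is lax symmetric monoidal, the object $X \otimes C\tau^{k+1}$ is a ring in $\cat D^k(\cat C)$. Its bigraded homotopy groups $[S^{n,t}, -]_{\cat D^k(\cat C)}$ therefore form a bigraded ring (with signs dictated by the symmetric monoidal structure on the sphere-indexing), and this is exactly the required ring structure on $E_{k+1}$. Compatibility between the structures on successive pages follows from the chain of commutative ring maps $\cdots \to C\tau^{k+2} \to C\tau^{k+1} \to \cdots$ supplied by Corollary \ref{ctsarealgebras}, which induces multiplicative maps between the bigraded rings on the various pages consistent with the spectral sequence's reduction-to-subquotient behavior.

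The main obstacle will be verifying the Leibniz rule for the differentials, since in the $\infty$-categorical setting we cannot fall back on cocycle-level representatives. The cleanest route is to realize $d_{k+1}$ as an honest connecting map in an exact triangle of $X \otimes C\tau^{k+1}$-modules (or bimodules) inside $\cat D^k(\cat C)$; once written this way, the Leibniz rule is a formal consequence of the fact that in a symmetric monoidal stable $\infty$-category, the boundary of a tensored exact triangle acts as a graded derivation on the multiplication of its module structures. Concretely, one chooses a cofiber sequence relating the various $C\tau^j$ whose boundary realizes $d_{k+1}$, tensors with $X$, and observes that the induced connecting map intertwines with the multiplication up to the expected sign.

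Finally, for conditional convergence to the multiplicative structure on $\pi^{top}(X)$, I would use the filtration on $\pi^{top}_*(X) = \pi_*(\colim X)$ defined by the images of $\pi_*(X_n)$. Since $X$ is a ring, this filtration is multiplicative, and the compatibility established in the previous paragraph together with Theorem \ref{sseqexists} identifies its associated graded with $E_\infty$ as bigraded rings. The only subtlety is checking that the comparison map between $E_\infty$ and the associated graded respects multiplication, which again reduces to the symmetric monoidal naturality of the localization $\Fil(\cat C) \to \cat D^k(\cat C)$ and of the colimit functor.
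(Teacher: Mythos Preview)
Your approach is genuinely different from the paper's, and it has a gap in the cross-page compatibility step.

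The paper does \emph{not} pass to the localizations $\cat D^k(\cat C)$ at all. Instead, it stays in $\Fil(\cat C)$ and invokes Douady's classical criterion for a pairing on a Cartan--Eilenberg system: one defines maps
\[
\mu_r: \pi^0_{n,t}(X \otimes C\tau^r) \otimes \pi^0_{u,\ell}(Y \otimes C\tau^r) \to \pi^0_{n+u,t+\ell}(X \otimes Y \otimes C\tau^r)
\]
and checks Douady's axioms (SPP I) and (SPP II). The first is naturality in $\tau$; the second is exactly the derivation property of the boundary map $\delta: C\tau^r \to \Sigma^{-r,1}C\tau$, proved as Lemma \ref{deltader}. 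Douady's machinery then outputs a multiplicative spectral sequence with all the page-to-page compatibility built in. Your ``formal'' Leibniz step is this same lemma, so on that point the two approaches converge; but calling it formal undersells the work --- the paper devotes an appendix lemma to it.

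The gap in your argument is the paragraph on compatibility between successive pages. You identify $E_{k+1}$ with $[S^{*,*}, X \otimes C\tau^{k+1}]_{\cat D^k(\cat C)}$ and $E_{k+2}$ with $[S^{*,*}, X \otimes C\tau^{k+2}]_{\cat D^{k+1}(\cat C)}$, i.e.\ in \emph{different} localizations. The ring map $C\tau^{k+2} \to C\tau^{k+1}$ from Corollary \ref{ctsarealgebras} lives in $\Fil(\cat C)$; it does not by itself produce a comparison between homotopy groups taken in $\cat D^{k+1}$ and in $\cat D^k$. The paper explicitly warns (just after the definition of $\cat D^k(\cat C)$) that the functors $\cat D^k(\cat C) \to \cat D^{k+1}(\cat C)$ are not Quillen and ``our theory has little to say about them.'' So as written you have a separate ring structure on each page with no argument that the one on $E_{k+2}$ is the homology ring of $(E_{k+1}, d_{k+1})$. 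You could repair this by working entirely in $\Fil(\cat C)$ with $\pi^0$ and the $\tau^r$-images, but at that point you are essentially reproving Douady's criterion, which is what the paper does directly.
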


The existence part of the proof of Theorem \ref{sseqexists} is contained in the following two propositions. The first constructs the spectral sequence, and the second relates it to the $\infty$-categories we constructed in the previous section.
\begin{prop}\label{ceexists}
There is an extended (cohomological) Cartan-Eilenberg system with:
\begin{itemize}
    \item $H^*(i,j) = \pi_{-*+1}(X_i/X_j).$ 
    \item For $i \leq i'$, $j \leq j'$, we have $\eta: H^*(i,j) \to H^*(i',j')$ induced by the diagram
    \[
    \xymatrix{ X_{i'} \ar[r] \ar[d] & X_i \ar[d] \\
     X_{j'} \ar[r] & X_{j} \\
    }
    \]
    \item For $i \leq j  \leq k$ we have $\delta: H^*(i,j) \to H^{*+1}(j,k)$ induced by the fiber sequence
    \[
    X_j/X_k \to X_i/X_k \to X_i/X_j.
    \]
\end{itemize}
Here we set $X_{\infty} = \lim X$, $X_{-\infty} = \colim X$, and assume $*$ is $A$-graded.
\end{prop}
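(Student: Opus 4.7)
The plan is bookkeeping: extract a Cartan-Eilenberg system from the natural family of cofibers $X_i/X_j$ associated to the filtration $X$, with the core input being the functoriality of the cofiber construction in the stable $\infty$-category $\cat C$. I would first define the groups by $H^*(i,j) := \pi_{-*+1}(X_i/X_j)$ for each admissible pair, extending to $\pm \infty$ using $X_\infty := \lim X$ and $X_{-\infty} := \colim X$, both of which exist by presentability of $\cat C$. The cofiber $X_i/X_j$ is the pushout of $0 \leftarrow X_j \to X_i$ and depends functorially on the underlying arrow in $\Fil(\cat C)$.

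Next I would construct $\eta$ and $\delta$. For $\eta$, the commutative square displayed in the statement induces a morphism of cofibers by functoriality, and passing to $\pi_{-*+1}$ gives the required map. For $\delta$, the cofiber sequence
\[
X_j/X_k \to X_i/X_k \to X_i/X_j
\]
arising from the octahedral axiom (equivalently the $3 \times 3$ lemma for pushouts in a stable $\infty$-category) supplies a long exact sequence on homotopy groups, and $\delta$ is the connecting map, reindexed according to $H^* = \pi_{-*+1}$.

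It then remains to verify the axioms of an extended Cartan-Eilenberg system: functoriality of $\eta$ under composition of squares (immediate from functoriality of the cofiber), compatibility between $\eta$ and $\delta$ (from naturality of the connecting homomorphism applied to a map between two octahedral triples), and exactness of the appropriate long sequence at each $H^*$ (immediate from the cofiber sequence above). There is no real obstacle, only careful diagram-chasing; indeed, the truly $\infty$-categorical content is packaged into the assertion that cofibers are functorial and that the octahedral axiom holds. The one point requiring attention is the compatibility of the whole package with $X_{\pm \infty}$: this reduces to the facts that finite cofibers commute with all colimits and that $\pi_*$ of an inverse limit is controlled by the usual $\lim^1$ Milnor sequence, both of which follow from stability and presentability of $\cat C$.
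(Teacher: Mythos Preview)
Your approach is correct and essentially the same as the paper's: both extract the Cartan--Eilenberg system from the family of cofibers $X_i/X_j$, with $\eta$ coming from functoriality of cofibers and $\delta$ from the fiber sequence $X_j/X_k \to X_i/X_k \to X_i/X_j$. The only packaging difference is that the paper invokes Lurie's ``gap'' construction (\cite[Lemma~1.2.2.4]{ha}) to produce a single functor $\bar X\colon \cat J \to \cat C$ from the poset of pairs $(m,n)$ with $m \le n$ (including $\pm\infty$), which delivers all the maps and their compatibilities at once rather than assembling them by hand. Your final remark about $\lim^1$ and Milnor sequences is unnecessary: once $X_\infty = \lim X$ and $X_{-\infty} = \colim X$ are fixed as actual objects with the evident maps, the cofibers involving them are formed exactly as for finite indices and no further analysis is needed.
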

\begin{proof}
By Lemma 1.2.2.4 in \cite{ha}, we can construct what Lurie calls a ``gap" diagram $\bar X: \cat J \to \Fil(\cat C)$, where $\cat J$ is the partially ordered set of pairs $(m,n)$ with $m,n \in \mathbb{Z} \cup \{-\infty, \infty\}$ with $m \leq n$. The partial ordering is given by $(m,n) \leq (m',n')$ whenever $m \leq m'$ and $n \leq n'$, and the diagram satisfies:
\begin{enumerate}
\item $\bar X(m,n) = X_m/X_n.$
\item For $m \leq m'$ and $n \leq n'$ the square
\[
\xymatrix{
\bar X(m',n') \ar[r] \ar[d] & \bar X(m',n) \ar[d] \\
\bar X(m,n') \ar[r] & \bar X(m,n)
}
\]
is a pushout square.
\end{enumerate}
The functoriality and naturality axioms for a Cartan-Eilenberg system can be read directly off this diagram, while the long exact sequence comes from the fiber sequence 
\[
    X_j/X_k \to X_i/X_k \to X_i/X_j.
\]
\end{proof}
\begin{prop}\label{sseqform}
Suppose $X_\infty = 0$. Then, the $(r+1)$st derived exact couple of this Cartan-Eilenberg system takes the form
\[
\xymatrix{
\bigoplus_{t,s} \pi^r_{nt} X \ar[rr]^{\tau}  && \bigoplus_{t,s} \pi^r_{nt} X \ar[dl] \\
 & \bigoplus_n \pi^r_{nt}(X \otimes C\tau^{r+1}) \ar[ul] & 
}
\]
The exact couple is induced by the cofiber sequence (in $\cat D^k$) of Corollary \ref{ctweird}
\[
\Sigma^{-1,0}X  \xrightarrow{\tau} X \to X \otimes C\tau^{r+1}.
\]
\end{prop}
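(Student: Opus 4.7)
My plan is to directly identify the $(r+1)$st derived couple with the exact couple obtained by applying $\pi^r_{*,*}(-)$ to the cofiber sequence of Corollary \ref{ctweird}, without needing to induct on $r$.

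By the corollary at the end of Section \ref{modelhomspaces}, the functor $[S^{n,t}, -]_{\cat D^r(\cat C)}$ agrees with $\pi^r_{n,t}(-)$. Applying it to the cofiber sequence
\[
\Sigma^{-1,0} X \xrightarrow{\tau} X \to X \otimes C\tau^{r+1}
\]
in $\cat D^r(\cat C)$ yields a long exact sequence, which unrolls into an exact couple of the claimed shape, with structure map on the $A$-terms inherited from $\tau$. The shifts in the stable suspension $\Sigma^{-r,1}$ of $\cat D^r(\cat C)$ confirm that the induced differential $d = jk$ raises filtration degree by $r+1$ and lowers internal degree by $1$, matching the announced bidegree of $d_{r+1}$.

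It then remains to identify this couple as the $(r+1)$st derived couple of the Cartan-Eilenberg system constructed in Proposition \ref{ceexists}. For the $A$-terms, iterating the original $i$-map $\tau$ exactly $r$ times gives $\im\!\bigl(\pi_t(X_n) \xrightarrow{\tau^r} \pi_t(X_{n-r})\bigr) = \pi^r_{n,t}(X)$ by definition. For the $E$-terms, recall the standard description of the $E^{r+1}$ page as the image of the quotient map
\[
\pi_t(X_n/X_{n+r+1}) \to \pi_t(X_{n-r}/X_{n+1}).
\]
On the other hand, $(X \otimes C\tau^{r+1})_n = X_n / X_{n+r+1}$, and $r$-fold iteration of its $\tau$ structure map reproduces precisely the displayed map. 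By definition $\pi^r_{n,t}(X \otimes C\tau^{r+1})$ is the image of this iterated $\tau$, so it matches $E^{r+1}_{n,t}$ on the nose. Naturality of the long exact sequence identifies the $j$ and $k$ maps of the two couples.

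The main obstacle is checking the classical identification of the $E^{r+1}$ page with the image of $\pi_t(X_n/X_{n+r+1}) \to \pi_t(X_{n-r}/X_{n+1})$ using only the abstract data of the Cartan-Eilenberg system. This amounts to tracking successive kernels and images of the original couple's $i$-, $j$-, and $k$-maps through the derived couple construction, and although standard (following, for example, Boardman's conventions), it requires careful alignment of indexing with our filtered $\infty$-categorical bookkeeping. Once this bookkeeping is in hand, the identification is essentially tautological.
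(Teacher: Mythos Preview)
Your proposal is correct and follows essentially the same route as the paper: both arguments invoke the standard Cartan--Eilenberg identification $E^{r+1}_{n,t} = \im\bigl(\pi_t(X_n/X_{n+r+1}) \to \pi_t(X_{n-r}/X_{n+1})\bigr)$ and then recognize this as $\pi^r_{n,t}(X \otimes C\tau^{r+1})$ by definition of $\pi^r$. Your write-up is in fact slightly more thorough than the paper's, which only explicitly checks the $E$-terms and leaves the $A$-terms and the matching of $j$ and $k$ implicit.
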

\begin{proof}
When $r = 1$, we have the exact couple
\[
\xymatrix{
\bigoplus_{t,s} H^t(s, \infty) \ar[rr]  && \bigoplus_{t,s} H^t(s, \infty)\ar[dl] \\
 & \bigoplus_{t,s} H^t(s,s+1)\ar[ul] & 
}
\]
Equivalently, this is the exact couple
\[
\xymatrix{
\bigoplus_{t,n} \pi_t X_n \ar[rr]^\tau  && \bigoplus_{t,n} \pi_t X_n \ar[dl] \\
 & \bigoplus_{t,n} \pi_t(X_n/X_{n+1}) \ar[ul] & 
}
\]
The $E^{r+1}$ term coming from a Cartan-Eilenberg system is always given by
\[
E^{r+1}= \bigoplus_n \im(H^*(n,n+r+1) \to H^*(n-r,n+1)
\]
which is equivalent to
\[
E^{r+1} = \bigoplus_n \im(\pi_t(X_n/X_{n+r+1}) \xrightarrow{\tau^r} \pi_t(X_{n-r}/X_{n+1}))
\]
So
\[
E^{r+1} = \bigoplus_n \tau^{r} \pi_t(X_n/X_{n+r+1}) = \bigoplus \pi_{n,t}^{r}(X \otimes C\tau^{r+1}).
\]

\end{proof}
In particular, we note that the $k$-exact maps ``see'' the $E^{k+1}$ page of the spectral sequence. 
\begin{cor}\label{kexactek}
A $k$-exact map $f:X \to Y$ induces an isomorphism $f: E^{k+1}_{**}(X) \to E^{k+1}_{**}(Y)$
\end{cor}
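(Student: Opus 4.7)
The plan is to reduce the statement to a claim about $\pi^k_{**}$ applied to a tensor product, and then exploit the symmetric monoidal structure on $\cat D^k(\cat C)$ in which $k$-exact maps are precisely the equivalences.

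First I would apply Proposition \ref{sseqform} to identify the induced map $E^{k+1}(f)$ on spectral sequence pages with the direct sum of the maps $\pi^k_{n,t}(f \otimes C\tau^{k+1})$, where we use the identification $E^{k+1}_{n,t}(X) = \pi^k_{n,t}(X \otimes C\tau^{k+1})$ (and analogously for $Y$). This naturality is immediate from the construction of the derived exact couple in the proof of Proposition \ref{sseqform}, since that construction depends only on $X$. This step reduces the corollary to the single claim that the map $f \otimes C\tau^{k+1}$ is $k$-exact whenever $f$ is.

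For the reduced claim, I would combine two facts established earlier in the paper. By the mapping space identification in Section \ref{modelhomspaces}, in particular the corollary $[S^{n,t}, X]_{\cat D^k(\cat C)} = \pi^k_{n,t}(X)$, a map is $k$-exact if and only if it becomes an equivalence in $\cat D^k(\cat C)$. By Section \ref{symmonsection}, the $k$-projective model structure is symmetric monoidal, so $\cat D^k(\cat C)$ inherits a symmetric monoidal structure in which tensoring with any fixed object preserves equivalences. Since $f$ is by hypothesis an equivalence in $\cat D^k(\cat C)$, so is $f \otimes C\tau^{k+1}$, which is therefore $k$-exact.

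I do not anticipate a real obstacle: the heavy lifting has already been carried out in the construction of the $k$-projective model structure, the verification that it is symmetric monoidal, and the identification of the $E^{k+1}$ page with $\pi^k_{**}(-\otimes C\tau^{k+1})$. The only minor bookkeeping subtlety is that the formula for $E^{k+1}$ involves $C\tau^{k+1}$, while the generating cofibrations of the $k$-projective model structure involve $C\tau^k$; but this is harmless, because we only need $C\tau^{k+1}$ as a fixed object to tensor with, not as a cofibrant replacement of anything.
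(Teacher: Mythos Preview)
Your reduction to showing that $f\otimes C\tau^{k+1}$ is $k$-exact is fine, and the identification $E^{k+1}_{n,t}(X)=\pi^k_{n,t}(X\otimes C\tau^{k+1})$ is exactly Proposition~\ref{sseqform}. The problem is in the second step. The tensor product appearing in that identification is the Day convolution in $\Fil(\cat C)$, whereas the monoidal structure you invoke lives in $\cat D^k(\cat C)$. The localization functor $\Fil(\cat C)\to\cat D^k(\cat C)$ is only \emph{lax} monoidal, so you cannot conclude that $L(X\otimes_{\Fil}C\tau^{k+1})$ agrees with $L(X)\otimes_{\cat D^k}L(C\tau^{k+1})$ unless $X$ is cofibrant. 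Unwinding, the comparison map between these two objects is induced by $QX\otimes_{\Fil}C\tau^{k+1}\to X\otimes_{\Fil}C\tau^{k+1}$ for a cofibrant replacement $QX\to X$; asking this to be a $k$-weak equivalence is precisely an instance of the corollary you are proving. So the appeal to the monoidal structure is circular.

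The paper avoids this entirely with a one-line five-lemma argument. Proposition~\ref{sseqform} already shows that
\[
\pi^k_{**}(X)\xrightarrow{\tau}\pi^k_{**}(X)\to\pi^k_{**}(X\otimes C\tau^{k+1})\to\pi^k_{**}(X)\xrightarrow{\tau}\pi^k_{**}(X)
\]
is exact (it is the $(k+1)$st derived exact couple of the Cartan--Eilenberg system), and this exactness comes from classical spectral-sequence algebra, not from the model structure. Mapping to the analogous sequence for $Y$, the four outer vertical arrows are isomorphisms by hypothesis, so the five lemma gives the middle one. Your approach could be repaired by first proving directly that $-\otimes_{\Fil}C\tau^{k+1}$ sends $k$-weak equivalences to $k$-weak equivalences, but the cleanest way to do that is exactly this five-lemma computation.
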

\begin{proof}
Apply the five lemma to the unrolled exact couple
\[
\xymatrix{
\pi^k_{**}(X) \ar[r]^\tau \ar[d]&  \pi^k_{**}(X)  \ar[r] \ar[d] & \pi^k_{**}(X \otimes C\tau^{k+1}) \ar[r] \ar[d]  &\pi^k_{**}(X) \ar[r]^\tau \ar[d]&  \pi^k_{**}(X) \ar[d] \\
\pi^k_{**}(Y) \ar[r]^\tau &  \pi^k_{**}(Y)  \ar[r]  & \pi^k_{**}(Y \otimes C\tau^{k+1}) \ar[r]   &\pi^k_{**}(Y) \ar[r]^\tau &  \pi^k_{**}(Y) 
}
\]
\end{proof}
The only part of Theorem \ref{sseqexists} remaining is (conditional) convergence. But this is automatic, because we have a short exact sequence 
\[
0 \to \text{lim}^1(\pi_*(X_n) \to \pi_*(\lim X_n) \to \lim \pi_*(X_n) \to 0,
\]
so if the middle term is zero the other two must be as well.

Most of the proof of Theorem \ref{sseqmult} is contained in the following lemma.\begin{lem}\label{pairing}
There is a natural pairing of spectral sequences
\[
\mu: E^{n,t}_r(X) \otimes E^{n',t'}_r(Y) \to E^{n+ n', t+t'}_r(X \otimes Y)
\]
inducing a pairing on the $E_\infty$ pages. 
\end{lem}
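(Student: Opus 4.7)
The plan is to leverage the commutative ring structure on $C\tau^r$ provided by Theorem \ref{ctcomm}. By Proposition \ref{sseqform}, for each $r \geq 1$ we have a natural identification $E^{n,t}_r(X) = \pi^{r-1}_{n,t}(X \otimes C\tau^r)$. The multiplication $m: C\tau^r \otimes C\tau^r \to C\tau^r$, together with the symmetry of $\otimes$ on $\Fil(\cat C)$, yields a natural map
\[
\Phi_r: (X \otimes C\tau^r) \otimes (Y \otimes C\tau^r) \xrightarrow{\simeq} X \otimes Y \otimes C\tau^r \otimes C\tau^r \xrightarrow{\mathrm{id} \otimes m} (X \otimes Y) \otimes C\tau^r.
\]
Given representatives $\alpha: S^{n,t} \to X \otimes C\tau^r$ and $\beta: S^{n',t'} \to Y \otimes C\tau^r$ of classes in $E^{n,t}_r(X)$ and $E^{n',t'}_r(Y)$, I would define $\mu_r([\alpha],[\beta])$ to be the class of
\[
S^{n+n',t+t'} \xrightarrow{\alpha \otimes \beta} (X \otimes C\tau^r) \otimes (Y \otimes C\tau^r) \xrightarrow{\Phi_r} (X \otimes Y) \otimes C\tau^r;
\]
well-definedness follows from naturality of $\Phi_r$ and compatibility with the $\tau$-action.

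To promote these to a pairing of spectral sequences, I would verify both the Leibniz rule for $\mu_r$ with respect to $d_r$ and the compatibility of $\mu_r$ with $\mu_{r+1}$ under the identification $E_{r+1} \cong H_*(E_r, d_r)$. The differential $d_r$ is induced by the connecting morphism of the cofiber sequence $\Sigma^{-1,0}X \xrightarrow{\tau} X \to X \otimes C\tau^r$ in $\cat D^{r-1}(\cat C)$, using Corollary \ref{ctweird}. The ring maps $C\tau^{r+1} \to C\tau^r$ from Corollary \ref{ctsarealgebras} are multiplicative and fit into a compatible tower of such cofiber sequences, so the pairing induced on the homology of $(E_r, d_r)$ is exactly $\mu_{r+1}$. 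The induced pairing on $E_\infty$ is then obtained directly: since each $\mu_r$ carries cycles to cycles and boundaries to boundaries (by Leibniz), the pairings descend to the common sub-quotient $E_\infty^{*,*}$.

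The principal obstacle is the Leibniz rule, since rigorously tracking how the cofiber boundary map interacts with $\Phi_r$ in the $\infty$-categorical setting requires care with coherences. The key simplification is that $C\tau^r$ is $\mathbb{E}_\infty$ by Theorem \ref{ctcomm}, so all higher coherences are automatic; the verification reduces to exhibiting the boundary map $X \otimes C\tau^r \to \Sigma^{1-r,0} X$ as a derivation with respect to $\Phi_r$, which is the standard Cartan--Eilenberg derivation formula translated into our framework. Alternatively, because $C\tau^r$ is a commutative ring object, the functor $- \otimes C\tau^r$ is lax symmetric monoidal up to coherent homotopy, and this observation can be used to define the entire multiplicative structure functorially, bypassing direct verification at each page at the cost of more abstract machinery.
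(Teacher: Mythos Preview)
Your approach is essentially the same as the paper's in substance: both define $\mu_r$ via the ring multiplication $C\tau^r \otimes C\tau^r \to C\tau^r$ (your $\Phi_r$), and both identify the derivation property of the boundary map as the crux. The paper packages the verification differently, however: rather than checking Leibniz and page-to-page compatibility by hand, it invokes Douady's criterion for pairings of Cartan--Eilenberg systems, reducing everything to two axioms (SPP~I) and (SPP~II). Axiom (SPP~I) is the naturality in the structure maps $\eta$ (immediate from functoriality of $\mu_r$ in $X$ and $Y$), and axiom (SPP~II) is precisely the statement that $\delta: C\tau^r \to \Sigma^{-r,1}C\tau$ is a derivation, which the paper isolates as Lemma~\ref{deltader}. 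So where you say ``the verification reduces to exhibiting the boundary map $\ldots$ as a derivation,'' the paper agrees and proves exactly that lemma; Douady's formalism then handles the bookkeeping of cycles, boundaries, and the passage $E_r \Rightarrow E_{r+1}$ automatically.

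What each buys: your direct route is self-contained but, as you acknowledge, tracking the $\infty$-categorical coherences for Leibniz at every page is delicate, and your sketch stops short of actually carrying this out. The paper's route via Douady trades that for a one-time check of (SPP~II) at the level of representing objects $C\tau^r$, which is cleaner. Your alternative suggestion---using that $-\otimes C\tau^r$ is lax symmetric monoidal because $C\tau^r$ is $\mathbb{E}_\infty$---is a legitimate third path and would indeed bypass page-by-page verification, though making it output a pairing of spectral sequences in the classical sense still requires connecting to something like Douady's or Massey's formalism.
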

\begin{proof}
By work of Douady in \cite{douady11suite} (with English expositions in e.g. \cite{hedenlund2021multiplicative, helle2017pairings})
it suffices to provide a pairing on the corresponding Cartan-Eilenberg system satisfying axioms we will refer to as (SPP I) and (SPP II). Rather than stating these in general, we will state and prove the relevant claims in the context of this spectral sequence. 

To construct a pairing, we require maps
\[
\mu_r: \pi_{n,t}^{0}\left(X \otimes C\tau^{r}\right) \otimes \pi_{u,\ell}^{0}\left(Y \otimes C\tau^r\right) \to \pi_{n+u, t+\ell}^{0} \left(X \otimes Y \otimes C\tau^r\right) 
\]
which we define on elements $f \otimes g$ via the composition
\[
\mu_r(f \otimes g): S^{n +u, q + \ell} \xrightarrow{f \otimes g} X \otimes C\tau^r \otimes Y \otimes C\tau^r\xrightarrow{T} X \otimes Y \otimes C\tau^r \otimes C\tau^r \to X \otimes Y \otimes C\tau^r.
\]
Since $\mu_r(f \otimes g)$ is bilinear in $f$ and $g$, this is well-defined. We now have to check Douady's axioms.

\textbf{(SPP I):} For each $s' \leq s$ and $u' \leq u$, we must have a commutative square
\[
\xymatrix{
\pi_{n,t}^{0}\left(X \otimes C\tau^{r}\right) \otimes \pi_{u,\ell}^{0}\left(Y \otimes C\tau^r\right) \ar^{\mu_r}[r] \ar^{\eta \otimes \eta}[d] & \ar^\eta[d] \pi_{n+u, t+\ell}^{0} \left(X \otimes Y \otimes C\tau^r\right)  \\
\pi_{n',t}^{0}\left(X \otimes C\tau^{r}\right) \otimes \pi_{u',\ell}^{0}\left(Y \otimes C\tau^r\right) \ar^{\mu_r}[r]  &  \pi_{n'+u', t+\ell}^{0} \left(X \otimes Y \otimes C\tau^r\right).
}
\]
This follows from applying the general fact that $\mu_r$ is functorial in $X$ and $Y$ to the maps
\[
X \xrightarrow{\tau^{n - s'}} \Sigma^{n - s',0} X  \text{  and  } Y \xrightarrow{\tau^{u-u'}} \Sigma^{u - u',0} Y.
\]

\textbf{(SPP II):} In the following diagram, the diagonal composition must be the sum of the two outside composites:
\begin{figure}[H]
\includegraphics[width=\textwidth-1mm]{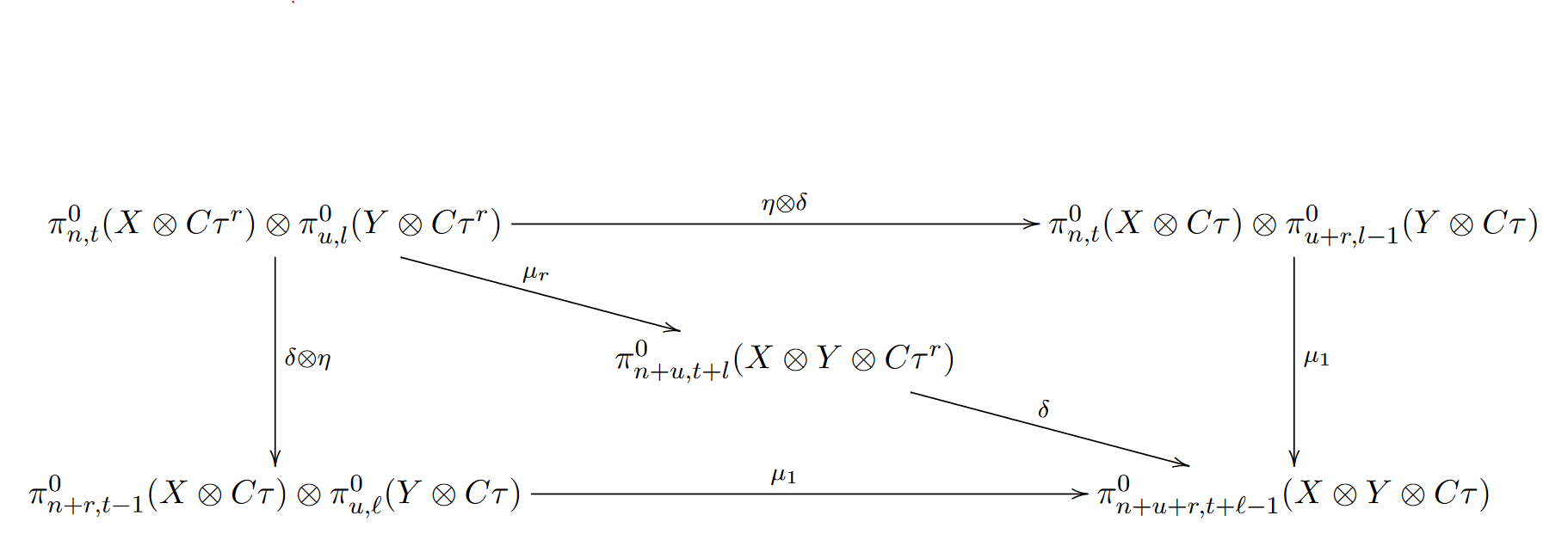}
\end{figure}

This follows from the corresponding diagram $\Fil(\cat C)$ satisfying the same property
\[
\xymatrix{
C\tau^r \otimes C\tau^r \ar^{\eta \otimes \delta}[rr] \ar^{\delta\otimes\eta}[dd]\ar^{\mu_r}[rd] && C\tau \otimes \Sigma^{-r,1} C\tau \ar^{\mu_1}[dd] \\ 
& C\tau^r \ar[rd]^{\delta} & \\
\Sigma^{-r,1} C\tau \otimes C\tau \ar[rr]^{\mu_1} &&\Sigma^{-r,1} C\tau,
}
\]
which is constructed as in the proof of Lemma \ref{deltader}.  
\end{proof}

\subsection{Example: the Adams spectral sequence}\label{adamssection}
In this section we express the Adams Spectral Sequence as the spectral sequence associated to a certain multiplicative filtration, which we will use to relate our results on matric Massey products to a spectral sequence of interest. Write $\FF_p$ for the Eilenberg-Maclane spectrum $\text{H}\FF_p$. Fix a spectrum $X$ and an $\FF_p$ Adams resolution $X_\bullet$ given by
\[
\xymatrix{
X = X_0 \ar[d] & X_1 \ar[l]\ar[d] & X_2\ar[l]\ar[d] & \cdots \ar[l] \\
K_0 & K_1 & K_2 & \cdots
}
\]
Let $\gamma X$ be the filtered spectrum
\[
\cdots \to X \to X \to 0 \to 0 \to \cdots
\]
with copies of $X$ in every positive degree connected by identity morphisms, and set
\[ Q X = \Sigma^{-1,0}(\gamma X /X_\bullet)
\]
so in coordinates we have
\[
Q X = \cdots \to  X_0/X_3 \to  X_0/X_2 \to X_0/X_1 \to \cdots.
\]
with $X_0/X_1$ in degree $0$. 
The canonical map 
\[
\gamma X \to QX
\]
dualizes to a map
\[
F(QX , \FF_p) \xrightarrow{\alpha} \iota F(X , \FF_p),
\]
\begin{prop}
The map $\alpha$ exhibits $F(QX , \FF_p)$ as a cofibrant replacement for $\iota F(X, \FF_p)$ in the $1$-projective model structure on $\Ch(\End(\FF_p)\Mod)$ with ``spheres'' $S^n = \Sigma^n \End(\FF_p).$
\end{prop}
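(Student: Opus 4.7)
The plan is to verify the two defining properties of a cofibrant replacement: (1) that $F(QX, \FF_p)$ is cofibrant in the $1$-projective model structure, and (2) that $\alpha$ is a weak equivalence, i.e., a $\pi^1_{**}$-isomorphism.

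For cofibrancy, I would invoke the characterization of cofibrant objects established at the end of Section \ref{modelconstruct}. For $k = 1$, the lifting condition on $P_n \to \Sigma Y_{n+k}$ collapses to lifting along the identity $\Sigma Y_{n+1} \to \Sigma Y_{n+1}$, which is automatic. So it suffices to show that each associated graded piece of $F(QX, \FF_p)$ is a retract of a coproduct of spheres $S^t = \Sigma^t \End(\FF_p)$. Unwinding the definition $QX = \Sigma^{-1,0}(\gamma X / X_\bullet)$ and using the cofiber sequences $X_{n+1} \to X_n \to K_n$ that define the Adams resolution, the associated graded of $F(QX, \FF_p)$ at filtration $n$ identifies (up to a shift) with $F(K_n, \FF_p)$. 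Because the Adams resolution is set up precisely so that each $K_n$ is a wedge of shifted copies of $H\FF_p$, the spectrum $F(K_n, \FF_p)$ is a coproduct of shifts of $\End(\FF_p)$, i.e., a coproduct of spheres in the sense of the theorem.

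For the weak equivalence, I would produce a cofiber sequence $F(QX, \FF_p) \xrightarrow{\alpha} \iota F(X, \FF_p) \to G$ by applying $F(-, \FF_p)$ to the natural cofiber sequence in $\Fil(\Sp)$ that relates $X_\bullet$, $\gamma X$, and $QX$; here $G$ is (up to a shift) $F(X_\bullet, \FF_p)$, which is the filtered object whose $n$-th level is $F(X_n, \FF_p)$. By the long exact sequence on $\pi^1_{**}$ and the characterization of $\pi^1_{n,t}$ in Definition \ref{homodef}, it suffices to check that $\pi^1_{**}(G) = 0$, i.e., that $\tau$ acts as zero on $\pi^0_{**}(G)$. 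The map $\tau$ on $G$ is induced by the Adams resolution transition maps $X_{n+1} \to X_n$ after applying $F(-, \FF_p)$; these are exactly the maps that are zero on $\FF_p$-cohomology by the defining property of an Adams resolution, so $\tau$ itself is zero on $G$ and hence $\pi^1_{**}(G) = 0$.

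The hardest part will be the bookkeeping: pinning down the shifts relating the internal-Hom $F(-, \FF_p)$ in $\Fil(\cat C)$ to the pointwise mapping spectrum, and identifying the associated graded of $F(QX, \FF_p)$ precisely (rather than just up to shift). Conceptually, however, both (1) and (2) reduce to a single feature of the Adams resolution: each $K_n$ is a generalized Eilenberg--MacLane spectrum, so the associated graded of the resolution is ``free'' (giving cofibrancy) and the maps between consecutive terms vanish on $\FF_p$-cohomology (giving the weak equivalence).
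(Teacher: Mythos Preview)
Your cofibrancy argument is essentially the paper's: both reduce to showing that the associated graded is a coproduct of shifts of $\End(\FF_p)$, which follows because each $K_n$ is a wedge of suspensions of $H\FF_p$.

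Your weak-equivalence argument, however, has a genuine gap. You invoke ``the long exact sequence on $\pi^1_{**}$'' for a cofiber sequence taken in $\Fil(\cat C)$, but no such long exact sequence exists: the functor $\pi^1_{n,t}(-) = \im\bigl(\tau\colon \pi^0_{n,t}(-) \to \pi^0_{n-1,t}(-)\bigr)$ is not half-exact. Concretely, the implication ``$\pi^1_{**}(\text{cofiber}) = 0 \Rightarrow$ the map is a $\pi^1_{**}$-isomorphism'' is false. The map $\tau\colon S^{-1,0} \to S^{0,0}$ has cofiber $C\tau$ with $\pi^1_{**}(C\tau) = 0$, yet $\tau$ is not a $\pi^1_{**}$-isomorphism: it fails to be surjective in filtration $n = 0$. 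A cofiber sequence in $\Fil(\cat C)$ only becomes a (co)fiber sequence in $\cat D^1(\cat C)$, and hence gives a long exact sequence on $\pi^1_{**}$, once you know the map in question is a cofibration or a fibration; that is exactly what has not been established at this point in your argument.

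The paper avoids this by proving directly that $\alpha$ is an \emph{acyclic fibration}. Using that an Adams resolution makes every $H^*(X_n) \to H^*(X_{n+1})$ zero, one gets a splitting $H^t(X_0/X_n) \cong H^t(X_0) \oplus H^{t-1}(X_n)$; reading off the resulting square shows simultaneously that the images of $\tau$ match (weak equivalence) and that the vertical maps are surjective (fibration, via the $J^1\!\inj$ criterion of Proposition~\ref{inj}). Your fiber-is-acyclic idea could be rescued by first checking that $\alpha$ is a fibration, but once you have done that the paper's direct computation is no longer a detour.
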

\begin{proof}
We first check that $\alpha$ is an acyclic fibration. Consider the commutative square
\[
\xymatrix{
F(\Omega X_0/X_n, \FF_p) \ar[r] \ar[d] & F(X_0/X_{n+1},\FF_p) \ar[d] \\
F(X_n, \FF_p) \ar[r] & F(X_0, \FF_p).
}
\]
Applying $\pi_{-t}$ to this square gives
\[
\xymatrix{
H^t(X_0/X_n) \ar[r] \ar[d] & H^t(X_0/X_{n+1}) \ar[d] \\
H^t(X_0) \ar[r] & H^t(X_0).
}
\]

Since $X_\bullet$ is an Adams resolution, we know that $H^t(X_n) \to H^t(X_{n+1})$ is always zero. In particular, this implies that we have a natural splitting
\[
H^t(X_0/X_n) = H^q(X_0) \oplus H^{t-1}(X_n),
\]
so our square is just 
\[
\xymatrix{
H^t(X_0) \oplus H^{t-1}(X_s) \ar[r] \ar[d] & H^t(X_0)  \oplus H^{t-1}(X_{n+1}) \ar[d] \\
H^n(X_0) \ar[r] & H^n(X_0).
}
\]
In particular, this square maps the image of the top map isomorphically onto the image of the bottom map, so $\alpha$ is a weak equivalence. Moreover, the vertical maps are surjective, so $\alpha$ is a fibration.

Therefore  we only need to check that $F(QX, \FF_p)$ is cofibrant. Since its filtration is bounded above, it suffices to check that $(QX, \FF_p)_{s}/(QX, \FF_p)_{s+1}$ is always a direct sum of copies of $\End(\FF_p)$. But this follows because $X_s/X_{s+1}$ is always a direct sum of copies of $\FF_p.$
\end{proof}
We have proven:
\begin{cor}
Let $\CC^R(X)$ denote any cofibrant replacement in the $1$-projective model structure on filtered right $R$-modules, whose ``spheres'' are $S^n = \Sigma^n R$. Then, for any $X$, the spectral sequence associated to 
\[
F(\CC^{\End(\FF_p)}(F(X,\FF_p)),\FF_p)
\]
is the (additive) Adams Spectral Sequence for $X$.
\end{cor}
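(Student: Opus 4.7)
The plan is to leverage the preceding proposition, which identifies $F(QX, \FF_p)$ as one specific cofibrant replacement of $\iota F(X, \FF_p)$ in the $1$-projective model structure on filtered right $\End(\FF_p)$-modules. Since cofibrant replacements are unique up to weak equivalence, any other choice $\CC^{\End(\FF_p)}(F(X, \FF_p))$ is $1$-equivalent to $F(QX, \FF_p)$: both admit acyclic fibrations to $\iota F(X, \FF_p)$, so a lift gives a $1$-exact map between them. The first step is to show that applying the contravariant functor $F(-, \FF_p)$ to this $1$-exact map yields a comparison of filtered spectra whose induced map of Cartan--Eilenberg systems (as in Proposition \ref{ceexists}) is an isomorphism from the $E^2$ page onward. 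This follows by mimicking the five-lemma argument of Corollary \ref{kexactek} with $k = 1$, since $F(-, \FF_p)$ preserves the cofiber sequences used in that proof.

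With the spectral sequence identified up to $1$-equivalence, it remains to compute the spectral sequence of the specific filtered spectrum $F(F(QX, \FF_p), \FF_p)$ and match it with the Adams spectral sequence. Under the finite-type assumptions implicit in the statement, $F(F(K_s, \FF_p), \FF_p) \simeq K_s$ for each Eilenberg--MacLane wedge $K_s = X_s/X_{s+1}$, so $F(F(QX, \FF_p), \FF_p)$ is weakly equivalent to $QX$ itself as a filtered spectrum. By Theorem \ref{sseqform}, the $E^1$ page of its spectral sequence is
\[
\pi_{n,t}^0(QX \otimes C\tau) = \pi_t(QX_n / QX_{n+1}) = \pi_t(X_{n+1}/X_{n+2}) = \pi_t(K_{n+1}),
\]
which is the Adams $E^1$ page up to the standard reindexing. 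The $d_1$ differential, being induced by the connecting map of the tower $QX$, agrees with the Adams $d_1$ coming from the connecting maps $K_{n+1} \to \Sigma K_{n+2}$ of the chosen Adams resolution; higher pages follow by naturality of the derived exact couple.

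The main obstacle is the very first step: rigorously justifying that the contravariant functor $F(-, \FF_p)$ translates a $1$-equivalence on the $\End(\FF_p)$-module side into a map of filtered spectra inducing an isomorphism of spectral sequences from $E^2$ onward, despite our model structures on $\End(\FF_p)$-modules and on spectra not being Quillen-adjoint via this functor. The cleanest workaround is to observe that the Cartan--Eilenberg systems in play are built entirely out of the subquotients $\CC_n/\CC_m$, and that under the finite-type hypotheses on $X$ needed to even speak of \emph{the} Adams spectral sequence, the double $\FF_p$-dualization $F(F(-, \FF_p), \FF_p)$ preserves these subquotients up to weak equivalence; the comparison of Cartan--Eilenberg systems then follows directly from the uniqueness of cofibrant replacements and the compatibility of $F(-, \FF_p)$ with cofiber sequences.
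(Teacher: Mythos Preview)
Your approach is correct and is essentially what the paper intends, though the paper itself offers no separate argument: it simply writes ``We have proven:'' before stating the corollary, treating it as an immediate consequence of the preceding proposition identifying $F(QX,\FF_p)$ as a cofibrant replacement of $\iota F(X,\FF_p)$. Your expansion---using uniqueness of cofibrant replacements up to $1$-equivalence, then pushing this through $F(-,\FF_p)$ and invoking Corollary~\ref{kexactek} to match spectral sequences from $E_2$ onward---spells out exactly the implicit reasoning.

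Your identification of the ``main obstacle'' is apt and in fact more careful than the paper: the passage from a $1$-exact map of filtered $\End(\FF_p)$-modules to an isomorphism of spectral sequences after applying the contravariant $F(-,\FF_p)$ is not addressed in the text, and your finite-type/double-dualization workaround is a reasonable way to close that gap. One small simplification you could make: since the spheres in $\Fil(\End(\FF_p)\Mod)$ are $\Sigma^n\End(\FF_p)$, the $1$-sustained homotopy groups there coincide with the underlying spectrum-level $\pi^1_{n,t}$, so $1$-exactness in the module category is the same as $1$-exactness of underlying filtered spectra. The remaining issue---that $F(-,\FF_p)$ converts $\pi_*$-conditions into $H^*(-;\FF_p)$-conditions---is genuine, but since both cofibrant objects have associated graded pieces that are sums of shifts of $\End(\FF_p)$, the relevant subquotients are $\FF_p$-local and the comparison goes through levelwise under the standard finite-type hypotheses, as you note.
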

Since we are interested in multiplicative structure, we make a stronger claim in the case $X = S^0$: 
\begin{thm}
Keep the setting of the previous corollary. The spectral sequence associated to 
\[
F^{\Fil(\Mod_{\End(\FF_p)})}(\CC^{\End(\FF_p)}(\FF_p),\CC^{\End(\FF_p)}(\FF_p))
\]
is the Adams spectral sequence for the sphere, including its multiplicative structure.
\end{thm}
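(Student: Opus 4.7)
The plan is to bootstrap from the previous corollary, which already handles the additive version, and then account for the multiplicative structure produced by composition of self-maps.

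First, I would establish the additive equivalence. The object $\CC^{\End(\FF_p)}(\FF_p)$ is cofibrant in the $1$-projective model structure on filtered $\End(\FF_p)$-modules, so by Theorem \ref{csymmon} the functor $F(\CC^{\End(\FF_p)}(\FF_p),-)$ is right Quillen. Applying it to the acyclic fibration
\[
\alpha: \CC^{\End(\FF_p)}(\FF_p) \to \iota F(S^0,\FF_p) = \iota\FF_p
\]
yields a weak equivalence
\[
F\bigl(\CC^{\End(\FF_p)}(\FF_p),\CC^{\End(\FF_p)}(\FF_p)\bigr) \xrightarrow{\sim} F\bigl(\CC^{\End(\FF_p)}(\FF_p),\iota\FF_p\bigr).
\]
By Corollary \ref{kexactek}, this induces isomorphisms of all $E^r$ pages for $r \geq 2$. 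The preceding corollary identifies the spectral sequence of the right-hand side with the additive Adams spectral sequence for $S^0$.

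Second, I would introduce the multiplicative structure on the source. The filtered object $F(\CC^{\End(\FF_p)}(\FF_p),\CC^{\End(\FF_p)}(\FF_p))$ carries a canonical $A_\infty$-ring structure given by composition of endomorphisms, so it is a ring object in $\Fil(\Mod_{\End(\FF_p)})$. By Theorem \ref{sseqmult} (and Lemma \ref{pairing}), the associated filtration spectral sequence inherits a multiplicative pairing that converges conditionally to the induced multiplication on $\pi^{top}_*$.

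Third, and this is the main obstacle, I would verify that this composition-induced multiplicative structure matches the classical multiplicative structure on the Adams spectral sequence. On the $E_2$ page the classical structure is the Yoneda product on $\Ext^{*,*}_{\End(\FF_p)}(\FF_p,\FF_p)$, and since $\CC^{\End(\FF_p)}(\FF_p)$ is a projective resolution of $\iota\FF_p$ in the appropriate sense, the composition product on homotopy classes of self-maps of this resolution recovers precisely the Yoneda product. To promote agreement from $E_2$ to the entire spectral sequence, I would argue that both pairings are obtained by restricting a single pairing on the filtered endomorphism ring along the weak equivalence $\alpha_*$, so the multiplicative comparison is automatic on every page by the naturality of the Douady pairing used in Lemma \ref{pairing}. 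Finally, conditional convergence of the multiplication to $\pi^{top}_*$ identifies the abutment with the standard multiplication on $\pi_*(F(\FF_p,\FF_p))$ in the Adams filtration, completing the identification.
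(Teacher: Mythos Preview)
Your approach matches the paper's: establish the additive isomorphism from $E_2$ onward via the $1$-exact map $F(\CC^{\End(\FF_p)}(\FF_p),\alpha)$ and Corollary~\ref{kexactek}, then identify the composition product on $E_2$ with the Yoneda product on $\Ext_{\cat A}(\FF_p,\FF_p)$. Your third-step discussion of ``promoting agreement from $E_2$ to all pages'' is unnecessary and slightly muddled (the target $F(\CC^{\End(\FF_p)}(\FF_p),\iota\FF_p)$ carries no ring structure to compare against): since in a multiplicative spectral sequence the product on $E_{r+1}$ is the subquotient product induced from $E_r$, the $E_2$ multiplicative identification together with the additive page-by-page isomorphism already forces multiplicative agreement on every later page, which is why the paper stops after checking $E_2$.
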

The proof of this is contained in the following two lemmas.
\begin{lem}
The map $\CC^{\End(\FF_p)}(\FF_p) \to \FF_p$ induces an map of spectral sequences
\[
E^k_{**}\left(F^{\Fil(\Mod_{\End(\FF_p)})}\left(\CC^{\End(\FF_p)}(\FF_p),\CC^{\End(\FF_p)}(\FF_p)\right)
\right) \to E^k_{**}\left(F\left(\CC^{\End(\FF_p)}(\FF_p),\FF_p\right)\right)
\]
which is an isomorphism starting at the $E_2$ page.
\end{lem}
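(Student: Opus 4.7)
The plan is to recognize the map of function objects as a weak equivalence in the $1$-projective model structure on $\Fil(\Mod_{\End(\FF_p)})$, and then to invoke the general machinery of Sections \ref{modelsection} and \ref{sseqsection}. Write $\CC$ for $\CC^{\End(\FF_p)}(\FF_p)$. By the preceding proposition, the canonical map $\CC \to \FF_p$ is an acyclic fibration in the $1$-projective model structure; in particular it is a $W^1$-equivalence.

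First I would exploit the closed symmetric monoidal structure on the $1$-projective model structure established in Section \ref{symmonsection}. Since $\CC$ is cofibrant, the pushout-product axiom forces $\CC \otimes (-)$ to be a left Quillen endofunctor, so its right adjoint $F(\CC, -)$ is right Quillen. Because every object in the $1$-projective model structure is fibrant, the standard fact that a right Quillen functor preserves weak equivalences between fibrant objects implies that $F(\CC, -)$ preserves \emph{all} $W^1$-equivalences. Applied to $\CC \to \FF_p$, this gives that the induced map $F(\CC, \CC) \to F(\CC, \FF_p)$ is itself a $W^1$-equivalence.

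Second, I would invoke the chain of containments $W^1 \subseteq W^2 \subseteq W^3 \subseteq \cdots$ recorded in the discussion following the construction of $\cat D^k(\cat C)$, which tells us that every $W^1$-equivalence is automatically $k$-exact for all $k \geq 1$. By Corollary \ref{kexactek}, such a map induces an isomorphism on $E^{k+1}_{**}$ for every $k \geq 1$, so we obtain an isomorphism on each $E^r$ page for $r \geq 2$. The compatibility with differentials, and hence the upgrade to a bona fide map of spectral sequences, is immediate from the functoriality of the Cartan--Eilenberg system of Proposition \ref{ceexists} under maps of filtered objects.

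The main obstacle I expect is essentially bookkeeping: once the symmetric monoidal $1$-projective model structure is available and all objects are known to be fibrant, the argument collapses to one application of the preservation property for right Quillen functors together with Corollary \ref{kexactek}. The only delicate point is checking that $F(\CC, -)$ is honestly right Quillen, which rests on the cofibrancy of $\CC$ supplied by the preceding proposition. The reason the isomorphism starts at $E^2$ rather than $E^1$ is that a $W^1$-equivalence is genuinely weaker than a level-wise $\pi_*$-isomorphism, and the $E^1$ page is computed from $\pi^0_{**}(X \otimes C\tau)$ rather than from $\pi^1_{**}$.
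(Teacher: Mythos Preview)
Your proposal is correct and follows essentially the same route as the paper: show that the induced map on function objects is a $W^1$-equivalence and then invoke Corollary \ref{kexactek}. The paper simply asserts the $1$-exactness of $F(\CC,\CC)\to F(\CC,\FF_p)$ in one line, while you supply the justification via the closed symmetric monoidal model structure (cofibrancy of $\CC$, right Quillenness of $F(\CC,-)$, and Ken Brown together with universal fibrancy); your use of the inclusions $W^1\subseteq W^k$ to hit every $E^{k+1}$ page separately is a harmless variant of the paper's implicit ``isomorphism on $E^2$ plus map of spectral sequences $\Rightarrow$ isomorphism thereafter.''
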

\begin{proof}
This follows immediately because the cofibrant replacement map is $1$-exact on the underlying filtered spectra, so the map 
\[
F^{\Fil(\Mod_{\End(\FF_p)})}\left(\CC^{\End(\FF_p)}(\FF_p),\CC^{\End(\FF_p)}(\FF_p)\right)\to F\left(\CC^{\End(\FF_p)}(F(X,\FF_p),\FF_p\right)
\]
is also $1$-exact, and therefore an isomorphism on spectral sequences starting at the $E_2$-page by Corollary \ref{kexactek}.
\end{proof}
\begin{lem}
The multiplicative structure on 
\[F^{\Fil(\Mod_{\End(\FF_p)})}\left(\CC^{\End(\FF_p)}(\FF_p),\CC^{\End(\FF_p)}(\FF_p)\right)\] induces the usual multiplication on the $E_2$ page $\Ext_{\cat A}(\FF_p,\FF_p).$
\end{lem}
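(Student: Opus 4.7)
The plan is to compare the composition multiplication on the internal function object $F(\CC,\CC)$ (writing $\CC$ for $\CC^{\End(\FF_p)}(\FF_p)$) with the classical Yoneda product on $\Ext_{\cat A}(\FF_p,\FF_p)$. First I would use the cofibrancy characterization (the theorem describing cofibrant objects in the $k$-projective model structure, specialized to $k=1$) to identify each subquotient $\CC_n/\CC_{n+1}$ as a retract of a direct sum of ``spheres'' $\Sigma^t \End(\FF_p)$. Taking $\pi_*$ of the resulting associated graded yields a complex of free modules over the Steenrod algebra $\cat A = \pi_*\End(\FF_p)$, and since $\CC \to \iota\FF_p$ is a weak equivalence in the $1$-projective model structure, this complex is a free $\cat A$-resolution of $\FF_p$; call it $P_\bullet \to \FF_p$.

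Next I would compute the $E_1$ page of the spectral sequence of $F(\CC,\CC)$. By the Day convolution formula for internal hom in $\Fil(\Mod_{\End(\FF_p)})$, the associated graded of $F(\CC,\CC)$ at filtration $n$ is equivalent to a suitable wedge of $F$-spectra between the free subquotients. Passing to $\pi_0$ and using the freeness together with the identification $\pi_* F(\Sigma^a \End(\FF_p), \Sigma^b \End(\FF_p)) = \cat A^{b-a}$, this identifies the $E_1$ page with the bigraded cochain complex $\Hom_{\cat A}^*(P_\bullet, P_\bullet)$, whose cohomology is $\Ext_{\cat A}(\FF_p,\FF_p)$. This identification of additive $E_2$ is consistent with the preceding lemma comparing our spectral sequence to the standard Adams $E_2$.

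The composition multiplication on $F(\CC,\CC)$ is filtered (via Day convolution) and strictly associative. Passing to the associated graded, composition of ``endomorphisms in filtration $n$ and filtration $m$ respectively'' lands in filtration $n+m$ and, under the identifications above, reduces to the standard composition of cochain maps on $\Hom_{\cat A}^*(P_\bullet,P_\bullet)$. Taking cohomology, this is by definition the Yoneda product on $\Ext_{\cat A}(\FF_p,\FF_p)$, which is exactly the classical multiplicative structure on the $E_2$-page of the Adams spectral sequence.

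The main obstacle is the bookkeeping in the middle paragraph: making the Day-convolution internal hom concrete enough to read off both the associated graded and the induced composition pairing, and verifying functorially that the pairing coming from the symmetric monoidal internal-hom structure on $\Fil(\Mod_{\End(\FF_p)})$ agrees, after passage to associated graded and $\pi_0$, with honest composition in the ordinary category of chain complexes of $\cat A$-modules. Once this coherent identification is established, the identification of the resulting cohomology pairing with the Yoneda product is standard homological algebra.
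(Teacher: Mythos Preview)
Your proposal is correct and follows essentially the same route as the paper. The paper's proof is a two-liner: ``by unwinding definitions'' the multiplication is the composition product on $\Hom^*_{\cat A}(Q\FF_p, Q\FF_p)$ for the projective resolution $Q\FF_p$ arising from the cofibrant replacement, and the identification of this composition product with the Yoneda product is then dispatched with a reference to the literature. Your outline is exactly this argument with the ``unwinding'' spelled out (associated graded, freeness of the subquotients, identification of the $E_1$ pairing with composition of chain maps); the bookkeeping you flag as the main obstacle is precisely what the paper absorbs into the phrase ``unwinding definitions.''
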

\begin{proof}
By unwinding definitions, we see that this is the usual composition multiplication on  $\Hom^*_{\cat A}(Q\FF_p, Q\FF_p)$ for the cofibrant replacement $Q\FF_p$ coming from $\CC^{\End(\FF_p)}(\FF_p)$. That this agrees with the Yoneda product is classical: see e.g. the discussion after Lemma 1 of \cite{sjodin1976set}.
\end{proof}

\subsection{Example: Ext and Tor spectral sequences}\label{exttorsec}
In this brief section we record, with little commentary, two special cases of this spectral sequence that we will use throughout the rest of the paper. 
\begin{thm}\label{torfil}
Let $X$ and $Y$ be modules over some ring $R$ in $\cat C$, and suppose we have filtrations $X_\bullet, Y_\bullet$, and $R_\bullet$ on each of the three. Then, there are spectral sequences
\[
E^1_{nt} = \pi_{**}^0\left(\left(X \otimes C\tau\right) \otimes_{R \otimes C\tau} \left(Y \otimes C\tau\right)\right)
\]
\end{thm}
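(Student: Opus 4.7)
My plan is to obtain this spectral sequence as an instance of Theorem \ref{sseqexists}, applied to the filtered object $X \otimes_R Y$. First I would form this object in $\Fil(\cat C)$: since $R_\bullet$ is a filtered ring by assumption and $X_\bullet, Y_\bullet$ are filtered modules over it, the relative tensor product $X \otimes_R Y$ is computed as a colimit of the usual two-sided bar construction in $\Fil(\cat C)$, using the symmetric monoidal structure of Theorem \ref{csymmon}. Feeding this into Theorem \ref{sseqexists} immediately produces a spectral sequence with
\[
E^1_{n,t} = \pi^0_{n,t}\bigl((X \otimes_R Y) \otimes C\tau\bigr)
\]
and $d_r$ of bidegree $(r,-1)$, converging conditionally to $\pi^{top}_t(X \otimes_R Y)$ provided $\lim (X \otimes_R Y) = 0$ (which will hold under mild hypotheses, e.g. when the filtrations on $X_\bullet$ and $Y_\bullet$ are bounded below and the $R_\bullet$-action is compatible).

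The remaining step is to rewrite the $E^1$ page in the asserted form. Here the key observation is that $C\tau$ is a commutative ring object in $\Fil(\cat C)$ (Theorem \ref{ctcomm}), so $(-) \otimes C\tau$ is a symmetric monoidal functor landing in $C\tau$-modules. Symmetric monoidal functors preserve relative tensor products of algebras and their modules, computed as geometric realizations of bar constructions, so
\[
(X \otimes_R Y) \otimes C\tau \;\simeq\; (X \otimes C\tau) \otimes_{R \otimes C\tau} (Y \otimes C\tau)
\]
in $\Fil(\cat C)$. Applying $\pi^0_{n,t}$ to both sides gives the stated identification.

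I expect the main obstacle to be bookkeeping around the relative tensor product: one has to verify that Day convolution genuinely makes $R_\bullet$ into an algebra whose module category contains $X_\bullet$ and $Y_\bullet$, and that the bar construction $\mathrm{Bar}_\bullet(X, R, Y)$ defining the relative tensor product commutes with the (colimit-preserving) functor $- \otimes C\tau$. The first is handled by the symmetric monoidality of $\iota$ and the behavior of Day convolution on algebra structures (Theorem \ref{csymmon} and \cite{ha} \S 2.2.6); the second is immediate since $- \otimes C\tau$ preserves all colimits in each variable. After these verifications, the spectral sequence, its $E^1$ identification, and its conditional convergence to a filtration on $\pi^{top}_*(X \otimes_R Y)$ follow.
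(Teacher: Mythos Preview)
Your proposal is correct and follows essentially the same approach as the paper: form the relative tensor product as the realization of the bar construction in $\Fil(\cat C)$, apply the filtration spectral sequence of Theorem \ref{sseqexists}, and then commute $-\otimes C\tau$ past the bar construction to identify the $E^1$ page. The only cosmetic difference is that the paper carries this last step out by hand, writing $C\tau \simeq |B(C\tau,C\tau,C\tau)|$ and interleaving the two bar constructions, whereas you invoke the general principle that the symmetric monoidal functor $-\otimes C\tau$ preserves relative tensor products; these are the same argument at different levels of abstraction.
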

\begin{proof}
The given filtrations place a filtration on the bar construction $B(X,R,Y)$. To identify the $E^1$ page, note that we have
\begin{align*}
 C\tau \otimes |B(X,R,Y)| &= |C\tau \otimes B(X,R,Y)| \\
                          &= |B(C\tau, C\tau, C\tau) \otimes B(X,R,Y)| \\
                          &= |B(X \otimes C\tau, R \otimes C\tau, Y\otimes C\tau)| \\
                          &= \left(X \otimes C\tau\right) \otimes_{R \otimes C\tau} \left(Y \otimes C\tau\right)
\end{align*} 
\end{proof}

\begin{thm}[Elmendorf-Kriz-Mandell-May \cite{ekmm}]\label{torclassic} 
Let $X$ and $Y$ be modules over some ring $R$ in $\cat C$. There are  spectral sequences
\[
E^2_{r*} = \Tor^r_{\pi_*R}(\pi_*(X), \pi_*(Y)) \Rightarrow \pi_{r + *}(X \otimes_R Y).
\]
and
\[
E^2_{r*} = \Ext^r_{\pi_*R}(\pi_*(X), \pi_*(Y)) \Rightarrow \pi_{r+*}(F_R(X, Y).
\]
The first converges absolutely, while the second converges conditionally. 
\end{thm}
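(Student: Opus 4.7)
The plan is to deduce both spectral sequences from Theorem \ref{torfil} by choosing a filtration of $X$ that serves as a free resolution of $\pi_* X$ over $\pi_* R$, while equipping $R$ and $Y$ with the trivial filtrations $\iota R$ and $\iota Y$. The resolution is built inductively, exactly as in the Adams example of Section \ref{adamssection}: starting from $X_0 = X$, choose a map $\bigoplus_\alpha \Sigma^{t_\alpha} R \to X_n$ whose induced map on $\pi_*$ surjects onto $\pi_* X_n$, let $X_{n+1}$ be the fiber, and iterate, so that $X_n = X$ for $n \leq 0$ and the successive quotients $X_n/X_{n+1}$ are sums of shifts of $R$.

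With this filtration in hand, the associated graded $X \otimes C\tau$ is by construction a sum of shifts of $R$, and $\pi_*(X \otimes C\tau)$ is a free $\pi_* R$-resolution $F_\bullet \twoheadrightarrow \pi_* X$. For the Tor case, Theorem \ref{torfil} gives
\[
E^1_{n,t} = \pi_{n,t}^{0}\bigl((X \otimes C\tau) \otimes_{R \otimes C\tau} (Y \otimes C\tau)\bigr) = \pi_{n,t}^{0}\bigl((X \otimes C\tau) \otimes_R Y\bigr)
\]
since the trivial filtrations give $R \otimes C\tau \simeq R$ and $Y \otimes C\tau \simeq Y$ in filtration zero. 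Because tensoring a sum of shifts of $R$ with $Y$ over $R$ yields the corresponding sum of shifts of $Y$, this $E^1$ page is identified with $F_\bullet \otimes_{\pi_* R} \pi_* Y$, and a check of the connecting map from Proposition \ref{ceexists} identifies $d^1$ with the boundary of the resolution. Hence $E^2 = \Tor^*_{\pi_* R}(\pi_* X, \pi_* Y)$. Conditional convergence follows from $\lim X_\bullet = 0$ (a genuine resolution has vanishing inverse limit), and absolute convergence is obtained from a bounded-below argument: in any fixed total degree only finitely many filtration levels contribute.

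For the Ext spectral sequence, I would instead filter $F_R(X,Y)$ by applying $F_R(-, Y)$ to the same resolution $X_\bullet$. This reverses the arrows, giving a filtered spectrum whose associated graded is $F_R(X \otimes C\tau, Y)$. Applying the multiplicative pairing of Lemma \ref{pairing} and arguing as above, the $E^1$ page becomes $\Hom_{\pi_* R}(F_\bullet, \pi_* Y)$ with $d^1$ equal to the cobar differential, so $E^2 = \Ext^*_{\pi_* R}(\pi_* X, \pi_* Y)$. Here only conditional convergence is available in general, because the induced filtration on $F_R(X,Y)$ need not be bounded below: the $\lim^1$ term in the short exact sequence after Proposition \ref{sseqform} may be nonzero when $\pi_* X$ has projective dimension $\infty$.

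The main obstacle will be verifying that the inductively constructed resolution genuinely has the properties claimed — in particular that the associated graded is identifiable with a sum of shifts of $R$ whose $\pi_*$ is $F_\bullet$ — and keeping careful track of the variance reversal in the Ext case so that the filtration indexing in Theorem \ref{torfil} still applies. The convergence analysis for Ext (distinguishing conditional from absolute) is the last technical point, and requires invoking the short exact sequence relating $\lim^1$ to the target of the spectral sequence, which is the same tool used at the end of the proof of Theorem \ref{sseqexists}.
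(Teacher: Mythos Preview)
Your approach is essentially the same as the paper's: both build a filtration of $X$ whose associated graded is a free $\pi_*R$-resolution of $\pi_*X$, then take the filtration spectral sequence of $X_\bullet \otimes_R Y$ and $F_R(X_\bullet,Y)$. The paper simply packages your hand-built resolution as the cofibrant replacement $QX$ in the $1$-projective model structure on $\Fil(\Mod_R)$ (with spheres $\Sigma^n R$), and writes the two spectral sequences as those attached to $QX \otimes Y$ and $\Hom(QX,Y)$. Your inductive construction is exactly what that cofibrant replacement produces.

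Two small cleanups: routing the Tor case through Theorem~\ref{torfil} with trivial filtrations on $R$ and $Y$ is a detour, since it immediately collapses to the ordinary filtration spectral sequence of $X_\bullet \otimes_R Y$; and the invocation of Lemma~\ref{pairing} in the Ext case is unnecessary, as no multiplicative structure is being used there---you are just taking the spectral sequence of the filtered object $F_R(X_\bullet,Y)$.
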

\begin{proof}
Let $Q$ be the cofibrant replacement functor in $\cat D^1(R\Mod).$ These are just the spectral sequences associated to $QX \otimes Y$ and $\Hom(QX, Y)$, respectively.
\end{proof}

\section{Multiplicative Structure}\label{mmpsection}
In this chapter we relate the symmetric monoidal structures on $\cat C,$ on $\cat D^k(\cat C),$ and on an algebraic $\infty$-category relating to the $E^{k+1}$ page of the filtration spectral sequence. 

More specifically, we show that under mild conditions $\cat D^k(\cat C)$ can be viewed as a deformation whose generic fiber is $\cat C$ and whose special fiber is the derived $\infty$-category $\cat D(\Gr_k(\cat C^{\heart}))$, where $\Gr_k(\cat C^{\heart})$ is the (abelian) category of graded objects of $\cat C^{\heart}$ with grading in $\cat A \otimes \{0, \cdots, k-1\}$. The functor $\cat D^k(\cat C) \xrightarrow{\tau^{-1}} \cat C$ is just the colimit map for any representing filtration, while the functor $\cat D^k(\cat C) \to \cat D\left(\cat \Gr_k(C^{\heart})\right)$ takes $X$ to $E^{k+1}_{**}(X).$ 

The idea of ``a deformation of a stable $\infty$-category" has been formalized in several inequivalent ways. Throughout this paper, we use the term loosely to refer to $\infty$-categories satisfying analogues of conditions 1 through 4, above, which appears to match how Gheorghe et al. and Pstragowski use the term. However, we caution that Barkan \cite{barkan2023chromatic} has defined a deformation to be a $\Fil(\Sp)$-module in the $\infty$-category of presentable $\infty$-categories with colimit-preserving maps, while Burklund, Hahn, and Senger \cite{burklund2020galois} place additional restriction on the deformation's generators. We warn the reader that we do not believe our constructions satisfy these more stringent definitions.\footnote{While our $\infty$-category $\cat D^k(\cat C)$ is a $\Fil(\Sp)$-module in the $\infty$-category of all $\infty$-categories, some of the maps involved do not preserve colimits, a difference that is necessary for our deformation to directly recover the $E_k$ page of a spectral sequence for $k > 2.$}

In the rest of the chapter we argue that the symmetric monoidal structure on each of these functors provides a recipe for transferring higher multiplicative information in the filtration spectral sequence to higher multiplicative information in the spectrum the filtration converges to. As a case in point, we prove a generalization of Moss's convergence theorem, showing that the deformation we've constructed lets us apply an argument of Burklund to a much wider range of spectral sequences. 
\subsection{The $k$-derived category's deformation structure}\label{deformsec}
Recall from the introduction that the phrase ``deformation of stable $\infty$-categories'' is used inconsistently in the literature. Noting that we have already established that $\cat D^k(\cat C)$ is compactly generated by shifts of spheres, we observe in this section that we have a diagram of symmetric monoidal left adjoints
\[
\xymatrix{
 & \cat D^k(\cat C) \ar_{\tau = 0}[dl] \ar^{\tau^{-1}}[dr] & \\
\Mod_{C\tau^{k+1}}(\cat D^k(\cat C)) & & \cat C
}
\]
in which we can identify the special fiber $\Mod_{C\tau^{k+1}}(\cat D^k(\cat C))$ with an algebraic $\infty$-category and the map we've labelled ``$\tau = 0$" takes an object $X$ to the $E^{k+1}$ page of its corresponding spectral sequence. 

This diagram exists for purely formal reasons; the hard part of this is the identification of the fibers. To this end, we construct a $t$-structure on $\cat D^k(\cat C)$. and use it to understand $\Mod_{C\tau^{k+1}}(\cat D^k).$  In particular, we show that there is an equivalence between the subcategories of connective objects which then extends to the equivalence of interest.

\subsubsection{The filtration t-structure} 
In this section, we place a $t$-structure on $\cat D^k(\cat C)$ that will help us to analyze the deformation and identify its heart. 

We grade our t-structures \emph{cohomologically}, so that $\cat C^{\leq 0}$ denotes the connective part of $\cat C.$ 
\begin{defn}
For $k \geq 0$, the \emph{filtration t-structure} on $\cat D^k(\cat C)$ is defined by:
\begin{itemize}
    \item $D^k(\cat C)^{\leq 0}$ is the full subcategory of objects $X$ such that $\pi^k_{nt}(X)$ vanishes for $n > 0.$
    \item $\cat D^k(\cat C)^{\geq 0}$ consists of objects $X$ that  $\pi^k_{nt}(X)$ vanishes for $n < 0.$ 
\end{itemize}
\end{defn}

To see that this is a t-structure, we will need to apply ideas of Keller and Vossieck \cite{keller1988aisles}, expanded upon by Tarrio et al. \cite{tarrio2003construction}, who find technical conditions guaranteeing that a subcategory of a triangulated category is the connective part of a t-structure (for example, if the inclusion functor has a right adjoint.) We will use the following $\infty$-categorical reformulation of their ideas, due to Lurie:
\begin{lem}\label{luriet}
Let $\cat C$ be a presentable stable $\infty$-category. If $\cat C'$ is a full subcategory of $\cat C$ which is presentable, closed under small colimits, and closed under extensions, then there exists a t-structure on $\cat C$ such that $\cat C' = \cat C^{\leq 0}.$
\end{lem}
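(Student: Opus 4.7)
The plan is to build the t-structure from its prescribed connective part $\cat C^{\leq 0} := \cat C'$ by producing a right adjoint to the inclusion and then constructing truncation triangles by hand. The inclusion $i: \cat C' \hookrightarrow \cat C$ is fully faithful and preserves small colimits by hypothesis; together with the presentability of both $\cat C'$ and $\cat C$, Lurie's adjoint functor theorem yields a right adjoint $\tau^{\leq 0}: \cat C \to \cat C'$. I would then define the candidate coconnective part $\cat C^{\geq 1} \subseteq \cat C$ to be the right orthogonal of $\cat C'$ computed via mapping spectra, which is a sensible definition since $\cat C$ is stable.

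For each $X \in \cat C$, the counit of the adjunction gives a map $i\tau^{\leq 0}X \to X$; let $X^{\geq 1}$ denote its cofiber. The essential calculation is that $X^{\geq 1}$ lies in $\cat C^{\geq 1}$: for any $Y \in \cat C'$, full faithfulness of $i$ combined with the adjunction makes the map $\Hom_{\cat C}(Y, i\tau^{\leq 0}X) \to \Hom_{\cat C}(Y, X)$ an equivalence of mapping spectra, and hence the cofiber $\Hom_{\cat C}(Y, X^{\geq 1})$ vanishes. With this in hand the remaining t-structure axioms fall into place: suspension closure of $\cat C^{\leq 0}$ is automatic from colimit closure (as $\Sigma X = 0 \sqcup_X 0$); desuspension closure of $\cat C^{\geq 1}$ follows from shift-compatibility of mapping spectra in a stable $\infty$-category; orthogonality is definitional; and the truncation triangle $i\tau^{\leq 0}X \to X \to X^{\geq 1}$ was just built.

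The main obstacle I anticipate is checking that the resulting t-structure genuinely has $\cat C'$ (and not a strictly larger subcategory) as its connective part. One direction, that every $X \in \cat C'$ satisfies $X^{\geq 1} \simeq 0$, is immediate because the counit of $i \dashv \tau^{\leq 0}$ is an equivalence on the image of the fully faithful right adjoint. For the reverse direction, I would use the hypothesis that $\cat C'$ is closed under extensions together with the fact that retracts of cofiber sequences are cofiber sequences: if $X$ is left-orthogonal to $\cat C^{\geq 1}$, then the map $X \to X^{\geq 1}$ is nullhomotopic, so the truncation triangle splits, exhibiting $X^{\geq 1}[-1]$ as a retract of $X^{\leq 0} \in \cat C'$; closure under colimits (hence retracts) and under suspension forces $X^{\geq 1} \in \cat C'$, and combined with $X^{\geq 1} \in \cat C^{\geq 1}$ this forces $X^{\geq 1} = 0$ and hence $X \simeq X^{\leq 0} \in \cat C'$. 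The extension-closure hypothesis is most naturally packaged by following Lurie's treatment in \cite[\S 1.4.4]{ha}, where an analogous subcategory is constructed from a small set of generators by closing under colimits and extensions; in our setting that closure has already been performed by assumption, so the proof reduces to the adjoint-functor-theorem packaging above together with this straightforward verification.
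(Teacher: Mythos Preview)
Your proposal is correct and in fact goes well beyond what the paper does: the paper's entire proof is the single sentence ``This is Proposition 1.4.4.11 of \cite{ha},'' so the lemma is treated as a black-box citation. What you have written is essentially a sketch of Lurie's own argument for that proposition---produce the truncation functor via the adjoint functor theorem, define the coconnective part as the right orthogonal, and verify the axioms---so there is no meaningful difference in approach, only in level of detail. One small remark: your displayed argument for the reverse inclusion $\,{}^{\perp}(\cat C^{\geq 1}) \subseteq \cat C'$ actually works more directly than you indicate, since once the truncation triangle splits you can observe immediately that $X$ itself is a retract of $X^{\leq 0}\in\cat C'$, bypassing the detour through $X^{\geq 1}[-1]$; but either route is fine.
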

\begin{proof}
This is Proposition 1.4.4.11 of \cite{ha}.
\end{proof}

We proceed in two steps. First, in Lemma \ref{connectivet}, we check conditions guaranteeing that there is a symmetric monoidal t-structure on $\cat D^k(\cat C)$ with the desired connective piece. Second, in Theorem \ref{tstructure}, we check that $\cat D^k(\cat C)_{\leq 0}$ matches the description we gave and identify the t-structure's heart. 

\begin{lem}\label{connectivet}
Let $\cat P$ denote the sub-$\infty$-category $\cat D^k(\cat C)^{\leq 0}$. $\cat P$ has the following properties:
\begin{enumerate}
    \item $\cat P$ is closed under extensions and small colimits.
    \item $\cat P$ is presentable.
    \item $\cat P$ is closed under the symmetric monoidal product on $\cat D^k(\cat C).$ 
\end{enumerate}
\end{lem}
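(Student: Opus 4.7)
The plan is to leverage the identification $\pi^k_{n,t}(X) = [S^{n,t}, X]_{\cat D^k(\cat C)}$ from the preceding section, which reinterprets membership in $\cat P$ as the vanishing of certain mapping groups in a compactly generated stable $\infty$-category. Combined with Theorem \ref{compactspheres}, each functor $\pi^k_{n,t}: \cat D^k(\cat C) \to \Ab$ is accessible, preserves coproducts and filtered colimits, and fits into a long exact sequence for any cofiber sequence. These three facts drive the entire argument.

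For closure under extensions, I would examine the long exact sequence associated to a cofiber sequence $X \to Y \to Z$. Since the suspension in $\cat D^k(\cat C)$ is $\Sigma^{-k,1}$ (Corollary \ref{kcofiber}), this takes the form
\[
\cdots \to \pi^k_{n-k,t+1}(Z) \to \pi^k_{n,t}(X) \to \pi^k_{n,t}(Y) \to \pi^k_{n,t}(Z) \to \pi^k_{n+k,t-1}(X) \to \cdots .
\]
For $n > 0$, the flanking terms $\pi^k_{n,t}(X)$ and $\pi^k_{n,t}(Z)$ vanish by hypothesis, and $\pi^k_{n+k,t-1}(X)$ vanishes since $k \geq 0$ forces $n + k > 0$, so $\pi^k_{n,t}(Y) = 0$. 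For closure under small colimits, every colimit in a stable $\infty$-category is built from coproducts, pushouts, and filtered colimits: coproducts and filtered colimits preserve $\cat P$ by the compactness of the spheres, and pushouts are cofibers of maps, covered by the extension case.

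For presentability, each subcategory $\pi^k_{n,t}{}^{-1}(0) \subseteq \cat D^k(\cat C)$ is accessible since $\pi^k_{n,t}$ is an accessible functor and $\{0\} \subseteq \Ab$ is an accessible subcategory. The small intersection $\cat P = \bigcap_{n > 0,\, t \in \cat A} \pi^k_{n,t}{}^{-1}(0)$ is then accessible (HTT 5.4.7), and combined with closure under small colimits this yields presentability.

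For closure under $\otimes$, I would first establish that $\cat P$ is generated under small colimits by $\{S^{n,t} : n \leq 0,\, t \in \cat A\}$: each such sphere lies in $\cat P$ since $(S^{n,t})_m = 0$ for $m > n$, and conversely, for any $X \in \cat P$, a standard transfinite cell-attachment argument using only these spheres produces $Y \to X$ inducing an isomorphism on $\pi^k_{**}$, which is a weak equivalence by the very definition of the $k$-projective model structure. Granted this, the tensor product preserves colimits in each variable, so it suffices to check that $S^{n,t} \otimes S^{n',t'} \simeq S^{n+n',t+t'} \in \cat P$ for $n, n' \leq 0$, which is immediate. The main obstacle will be this cell-attachment step: one must iterate attachments transfinitely and track the interaction between the $\tau$-action and the mapping groups, using compactness of the spheres at each stage to control the outcome.
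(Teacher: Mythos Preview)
Your proposal is correct; the differences from the paper's argument lie mainly in parts (2) and (3), and are worth recording.

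For (1), a small wording issue: closure under pushouts reduces to closure under \emph{cofibers} (if $X,Y\in\cat P$ then $\operatorname{cofib}(X\to Y)\in\cat P$), which is not literally the extension case. Your long exact sequence, together with the observation $n+k>0$ when $n>0$, already proves cofiber-closure as well as extension-closure; the paper makes exactly the same two deductions from the same sequence, so this is only a labeling slip.

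For (2), the paper argues directly that $\cat P$ is compactly generated by $\{S^{n,t}:n\le 0\}$, whereas you invoke accessibility of the functors $\pi^k_{n,t}$ and closure of accessible subcategories under small limits. Both work; the paper's version has the advantage of identifying the generators of $\cat P$ explicitly, which is precisely the input you then need for your approach to (3).

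For (3) the approaches genuinely diverge. The paper applies the Tor spectral sequence of Theorem~\ref{torclassic},
\[
E^2=\Tor^r_{\pi^k_{**}S^{0,0}}\bigl(\pi^k_{**}X,\pi^k_{**}Y\bigr)\ \Longrightarrow\ \pi^k_{n-kr,\,t+r}(X\otimes Y),
\]
and reads off the vanishing in positive $n$-degree from the grading. Your reduction to generators is more elementary: once $\cat P$ is known to be generated under colimits by $\{S^{n,t}:n\le 0\}$, closure under $\otimes$ is immediate from $S^{n,t}\otimes S^{n',t'}\simeq S^{n+n',t+t'}$ and the fact that $\otimes$ preserves colimits separately in each variable. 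The ``main obstacle'' you flag---that every $X\in\cat P$ is such a colimit---is exactly what the paper asserts without proof in its part~(2); a clean justification is to let $\cat Q$ be the localizing subcategory generated by these spheres, note $\cat Q\subseteq\cat P$ by (1), and observe that for $X\in\cat P$ the cofiber of the colocalization $L_{\cat Q}X\to X$ lies in $\cat P\cap\cat Q^\perp=0$. Your transfinite cell-attachment sketch amounts to the same thing.
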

\begin{proof}
(1) Since $\cat D^k(\cat C)$ is compactly generated by spheres, the natural equivalence 
$$\pi^k_{nt}(X) \simeq [S^{n,t}, X]_{\cat D^k(\cat C)}$$
implies that for any collection $\{X_i\}_{i \in \cat I}$ of objects, the natural map 
\[
\coprod_{i \in \cat I} \pi^k_{**}(X_i) \to \pi^k_{**}\left( \coprod_{i \in \cat I} X_i \right)
\]
is an isomorphism, so $\cat P$ is closed under arbitrary coproducts. Now, suppose $X \to Y \to Z$ is a cofiber sequence in $\cat D^k(\cat C)$. We obtain a long exact sequence
\[
\cdots \to \pi^k_{nt} C\tau^{k+1} \otimes X \to  \pi^k_{nt} C\tau^{k+1} \otimes Y \to  \pi^k_{nt} C\tau^{k+1} \otimes Z \to  \pi^k_{n+k, t-1} C\tau^{k+1} \otimes X \to \cdots,
\]
by which we immediately see that if $X$ and $Z$ are in $\cat P$, then $Y$ must be as well. Similarly, note that if $n > 0$ we must also have $n + k > 0,$ so in particular if $X$ and $Y$ are in $\cat P$ then $Z$ must be as well. Since $\cat P$ is closed under cofibers and arbitrary coproducts, it must be closed under all colimits. 

(2) $\cat P$ is compactly generated by the set of spheres $S^{n,t}$ with $n \leq 0$, and is therefore presentable. 

(3) follows from the (trigraded) spectral sequence of \ref{exttorsec} with
\[
E^2 = \Tor^r_{\pi^k_{nt}S^{0,0}}(\pi^k_{nt}(X), \pi^k_{nt}(Y))
\]
converging to
\[
\pi^k_{n-kr,t+r}( X \otimes Y).
\]

\end{proof}

With this lemma in hand we obtain a $t$ structure, and we can immediately identify its heart.  

\begin{thm}\label{tstructure}
The filtration  t-structure is a symmetric monoidal t-structure, and its heart is the abelian category $\Gr_k\left(\cat C^{\heart}\right)$ of graded objects of $\cat C^\heart,$ where the grading takes values in $\cat A \times \{0, \cdots , k-1\}.$
\end{thm}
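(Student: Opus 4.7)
The plan is to build the t-structure formally, verify its properties, and then identify its heart. Existence of the t-structure is immediate from Lemma \ref{luriet} applied to $\cat P = \cat D^k(\cat C)^{\leq 0}$, with all hypotheses already verified in Lemma \ref{connectivet}. To see that the coconnective part determined by the t-structure agrees with the paper's prescribed $\cat D^k(\cat C)^{\geq 0}$, I would use that $\cat D^k(\cat C)$ is compactly generated by the spheres $S^{n,t}$ (Theorem \ref{compactspheres}) together with the identification $[S^{n,t}, X]_{\cat D^k(\cat C)} = \pi^k_{n,t}(X)$, so that the orthogonality condition defining $\cat C^{\geq 1}$ translates directly into vanishing of $\pi^k_{n,t}$ for $n < 0$ (after shifting by one). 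Symmetric monoidality of the t-structure is then immediate from Lemma \ref{connectivet}(3), since compatibility of the t-structure with the tensor product is equivalent to closure of $\cat D^k(\cat C)^{\leq 0}$ under $\otimes$.

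The main work lies in identifying the heart with $\Gr_k(\cat C^{\heart})$. The plan is to construct mutually inverse functors. First, define $\Phi: \cat D^k(\cat C)^{\heart} \to \Gr_k(\cat C^{\heart})$ that extracts, from each heart object $X$, an $\cat A \times \{0, \ldots, k-1\}$-graded family of objects of $\cat C^{\heart}$: for $(t,i)$ in this bigrading, the corresponding piece is built from the underlying homotopy data of $X$ near filtration level $-i$. Conversely, construct $\Psi: \Gr_k(\cat C^{\heart}) \to \cat D^k(\cat C)^{\heart}$ by assembling a graded object $(A^{t,i})$ into a filtered object via a ``staircase'' pattern: each $\iota A^{t,i}$, appropriately twisted and placed at the $i$-th rung of the staircase, contributes to a filtered object of length $k$ whose transfer map $\tau^k$ exhibits the desired nonzero homotopy precisely at $n = 0$. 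Mutual inverse is then checked against the characterization of heart objects by the vanishing of $\pi^k_{n,t}$ for $n \neq 0$.

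The main obstacle will be the heart identification. The subtlety is that the $\infty$-categorical suspension in $\cat D^k(\cat C)$ is $\Sigma^{-k,1}$ rather than $\Sigma^{0,1}$ (see Theorem \ref{compactspheres}), so each t-structure translation skips $k$ filtration levels at once. This is exactly what gives the heart an extra $\{0, \ldots, k-1\}$-grading: the $k-1$ ``intermediate'' filtration degrees between successive t-structure shifts each contribute their own copy of $\cat C^{\heart}$. Making this precise requires unwinding, from the conditions $\pi^k_{n,t}(X) = 0$ for $n \neq 0$, exactly which filtered objects end up in the heart, and exhibiting the explicit staircase decomposition that matches them with graded objects of $\cat C^{\heart}$. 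I anticipate it will be cleanest to route the argument through the $C\tau^{k+1}$-module category foreshadowed in Section \ref{deformsec}, where the commutative ring structure of Theorem \ref{ctcomm} promotes the heart computation to a comparison of hearts of ``algebraic'' $\infty$-categories whose underlying abelian categories are easier to describe.
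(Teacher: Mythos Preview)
Your overall structure matches the paper's: existence via Lemmas \ref{luriet} and \ref{connectivet}, and symmetric monoidality via Lemma \ref{connectivet}(3). The differences are in the coconnective identification and the heart.

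For the coconnective part, you plan to use compact generation by spheres to reduce the orthogonality condition to vanishing of $\pi^k_{n,t}$. The paper instead proves the harder implication---that $\pi^k_{n,t}(Y) = 0$ for $n \leq 0$ forces $[X,Y]_{\cat D^k(\cat C)} = 0$ for every connective $X$---by running the Ext spectral sequence of Section \ref{exttorsec} and checking the $E^2$ page lands in negative $n$-degree. Your argument is shorter and works, since the proof of Lemma \ref{connectivet}(2) already records that $\cat P$ is generated under colimits by the spheres $S^{n,t}$ with $n \leq 0$.

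For the heart, the paper does not construct an explicit inverse. It writes down the functor $X \mapsto (\pi^k_{n,t}(X))_{0 \leq n < k}$, observes essential surjectivity by placing objects of $\cat C^\heart$ at the appropriate shifts, and proves full faithfulness by the same Ext spectral sequence: for $X, Y$ in the heart the $E^2$ page collapses for degree reasons to give $[X,Y]_{\cat D^k(\cat C)} \cong \Hom_{\pi^k_{**}S^{0,0}}(\pi^k_{**}X, \pi^k_{**}Y)$. Your staircase construction could in principle be completed, but it involves more bookkeeping than the spectral-sequence route, and you would still need an argument that the composite $\Phi \circ \Psi$ is naturally isomorphic to the identity at the level of morphisms, which is essentially the full-faithfulness step anyway.

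One genuine problem: your suggested fallback of routing the heart computation through $\Mod_{C\tau^{k+1}}(\cat D^k(\cat C))$ is circular. The identification of that module category with $\cat D(\Gr_k \cat C^\heart)$ in Theorem \ref{symmonequiv} is proved \emph{after} Theorem \ref{tstructure} and invokes the t-structure---in particular its heart---as input to Pstr\k{a}gowski's Theorem 3.11. So this route is not available here.
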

\begin{proof}
By Lemmas \ref{luriet} and \ref{connectivet}, there is a t-structure on $\cat D^k(\cat C)$ such that $\cat D^k(\cat C)^{\leq 0} = \cat P$. Further, Lemma \ref{connectivet} shows that this t-structure is compatible with the symmetric monoidal structure on $\cat D^k(\cat C)$, so it remains to identify the coconnective objects and the heart.

First, note that $\cat D^k(\cat C)^{\geq 1}$ is precisely the set of all $Y$ such that $[X,Y]_{\cat D^k(\cat C)} = 0$ for all $X$ in $\cat D^k(\cat C)^{\leq 0}.$ In particular, setting $X = S^{n,t}$, we must have
\[
\pi^k_{n,t}(Y) = 0
\]
whenever $s \leq 0.$ 

Conversely, suppose $\pi^k_{n,t}(Y) = 0$ for all $n \leq 0$, and suppose $X$ is in $\cat D^k_{\geq 0}(\cat C).$ We have a spectral sequence with \[E^2_{***} =
\Ext_{\pi^k_{**}S^{0,0}}\left(\pi^k_{**}(X),\pi^k_{**}(Y)\right)
\]
converging conditionally to
\[
[X,Y]_{\cat D^k(\cat C)}^{*,*}.
\]
Since $\pi^k_{**}X$ is concentrated in nonnegative $n$-degree and $\pi^k_{**}Y$ is concentrated in negative $n$-degree, it follows that \[
\Ext^r_{\pi^k_{**}S^{0,0}}\left(\pi^k_{**}(X),\pi^k_{**}(Y)\right)
\]
is concentrated in degrees $n < 0$ and $r \geq 0.$ The grading in the convergence takes the form
\[
(r,n,t) \mapsto (-kr + n, t + r),
\]
so
\[
[X,Y]_{\cat D^k(\cat C)}^{*,*}.
\]
is concentrated in negative $n$-degree, and in particular
\[
[X,Y]_{\cat D^k(\cat C)}^{0,0} = 0,
\]
as desired. 

We next identify the heart. There is an obvious map
\[
\cat D^k(\cat C)^\heart \to \Gr_k\left(\cat C^{\heart}\right)
\]
taking $X$ to $\pi^k_{nt}(X)$ for $0 \leq n < k$. Using shifts of objects in $\cat C^\heart$ it is easy to see that this map is essentially surjective. To see that it is fully faithful, we use the 
spectral sequence from Section \ref{exttorsec}
\[
\Ext^r_{\pi^k_{**}S^{0,0}}\left(\pi^k_{**}(X),\pi^k_{**}(Y)\right)
\]
converging conditionally to
\[
[X,Y]_{\cat D^k(\cat C)}^{*,*}.
\]
The grading takes the form
\[
(r,n,t) \mapsto (-kr + n, t+ r)
\]
If $X$ and $Y$ are both in $\cat D^k(\cat C)^\heart,$ the spectral sequence collapses for degree reasons and we have 
\[
[X,Y] = \Hom_{\pi^k_{**}S^{0,0}}\left(\pi^k_{**}(X),\pi^k_{**}(Y)\right),
\]
as desired.
\end{proof}
As an immediate consequence we have:
\begin{cor}
The homotopy group $\pi_0(X)$ coming from the filtration t-structure can be identified with the direct sum of the $\pi^k_{nt}(X)$ for $0 \leq n \leq k.$
\end{cor}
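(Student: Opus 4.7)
The plan is to read off the corollary directly from Theorem \ref{tstructure}. That theorem identified the heart $\cat D^k(\cat C)^{\heart}$ with the abelian category $\Gr_k(\cat C^{\heart})$ via the functor sending an object of the heart to the graded object whose pieces are $\pi^k_{n,t}$ for $n$ in the relevant range. Since any object of $\Gr_k(\cat C^{\heart})$ is, by definition, the direct sum of its graded pieces, the only content of the corollary is to check that for each $n$ indexing a graded piece of the heart, $\pi^k_{n,t}(\pi_0 X)$ coincides with $\pi^k_{n,t}(X)$.

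To verify this, I would analyze the pair of truncation cofiber sequences
\[
\tau_{\leq 0} X \to X \to \tau_{\geq 1} X, \qquad \tau_{\leq -1} X \to \tau_{\leq 0} X \to \pi_0 X
\]
by applying the long exact sequence of $\pi^k_{n,t}$. The long exact sequence exists because Theorem \ref{compactspheres} exhibits $\pi^k_{n,t}(-) = [S^{n,t}, -]_{\cat D^k(\cat C)}$ as a cohomological functor on the triangulated structure. Since $\tau_{\geq 1} X \in \cat D^k(\cat C)^{\geq 1}$ and $\tau_{\leq -1} X \in \cat D^k(\cat C)^{\leq -1}$, their $\pi^k_{n,t}$ vanish outside the $n$-range defining the heart. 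Splicing the two long exact sequences together then forces the isomorphism $\pi^k_{n,t}(\pi_0 X) \cong \pi^k_{n,t}(X)$ for exactly those $n$ which index the graded pieces of $\cat D^k(\cat C)^{\heart}$, and the direct sum decomposition is tautological in $\Gr_k(\cat C^{\heart})$.

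The main bookkeeping step is translating the $t$-structure conditions on $\cat D^k(\cat C)^{\leq -1}$ and $\cat D^k(\cat C)^{\geq 1}$ into explicit vanishing statements on $\pi^k_{n,t}$, using that the stable suspension in $\cat D^k(\cat C)$ is $\Sigma^{-k,1}$ so the $t$-structure shift $[1]$ translates a filtration-degree condition by $k$ at a time. Once this shift is tracked correctly, the rest of the argument is a straightforward diagram chase and the conclusion is immediate from the identification of the heart.
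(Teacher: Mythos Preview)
Your proposal is correct and is precisely the argument the paper leaves implicit: the paper gives no proof beyond labeling the corollary an ``immediate consequence'' of Theorem~\ref{tstructure}, and your truncation-triangle argument is exactly how one unwinds that claim. Your explicit bookkeeping with the shift $\Sigma^{-k,1}$ is the only nontrivial point, and you have identified it correctly.
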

\subsubsection{Identifying the fibers of the deformation}
In this subsection, we identify the special and generic fibers of this deformation.  We will see that the hard work came in building the t-structure: the structure of the special fiber will be a direct consequence. First, we quickly identify the generic fiber.
\begin{thm}
There is a natural symmetric monoidal equivalence between $\cat D^k(\cat C)[\tau^{-1}]$ to $\cat C.$
\end{thm}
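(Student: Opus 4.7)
The plan is to show $\colim: \Fil(\cat C) \to \cat C$ descends to a symmetric monoidal equivalence $\Phi: \cat D^k(\cat C)[\tau^{-1}] \xrightarrow{\sim} \cat C$, with quasi-inverse induced by $\iota$. As the left Kan extension along the symmetric monoidal functor $\ZZ \to *$, $\colim$ is symmetric monoidal with respect to Day convolution (cf.\ Section~2.2.6 of \cite{ha}). It sends $\tau: \Sigma^{-1,0}X \to X$ to the identity on $\colim X$, hence to an equivalence. To see that it sends $k$-exact maps to equivalences, note that the factorization
\[
\pi_t(X_n) \;\twoheadrightarrow\; \pi^k_{n+k,t}(X) \;\hookrightarrow\; \pi_t(X_{n-k})
\]
together with the identification $\pi_t(\colim X) = \colim_{n\to -\infty} \pi_t(X_n)$ show that $\pi_t(\colim X) = \colim_{n\to-\infty} \pi^k_{n,t}(X)$, so $k$-exact maps induce isomorphisms on colimits. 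Hence $\colim$ factors through the localization, yielding $\Phi$.

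\textbf{Constructing a quasi-inverse.} Set $\Psi := L \circ \iota: \cat C \to \cat D^k(\cat C)[\tau^{-1}]$, where $L$ is the localization functor. Since $\iota$ is symmetric monoidal (Theorem~\ref{csymmon}) and $L$ is lax symmetric monoidal, $\Psi$ inherits a symmetric monoidal structure. Because $\colim \circ \iota = \mathrm{id}_{\cat C}$, we have $\Phi \Psi \simeq \mathrm{id}_{\cat C}$, so $\Phi$ is essentially surjective.

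\textbf{Full faithfulness.} By Theorem~\ref{compactspheres}, the spheres $S^{n,t}$ compactly generate $\cat D^k(\cat C)$, and after inverting $\tau$ they all become equivalent to $\iota S^t = \Psi(S^t)$, so the $\Psi(S^t)$ generate $\cat D^k(\cat C)[\tau^{-1}]$. To check $\Phi$ is an equivalence it suffices to check it is fully faithful on these generators. Using that hom-sets in a $\tau$-inverted localization are colimits of hom-sets in the original category along $\tau$-multiplication, we compute for each $X$:
\[
[\Psi(S^t), X]_{\cat D^k(\cat C)[\tau^{-1}]} \;=\; \colim_{n \to -\infty} [S^{n,t}, X]_{\cat D^k(\cat C)} \;=\; \colim_{n \to -\infty} \pi^k_{n,t}(X) \;=\; \pi_t(\Phi X) \;=\; [S^t, \Phi X]_{\cat C},
\]
with the isomorphism induced by $\Phi$. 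The principal obstacle lies in justifying this hom-set computation: carefully unpacking the $\tau$-localization at the $\infty$-categorical level and verifying that hom-sets out of compact objects really do factor through the expected sequential colimit along $\tau$. Once that is in hand, fully faithfulness follows on the generators and extends by colimit-preservation of $\Phi$, completing the proof.
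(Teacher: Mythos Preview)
Your approach is correct in spirit but takes a substantially different route from the paper. The paper's argument is a two-line observation: since $\cat D^k(\cat C) = \Fil(\cat C)[\cat W_k^{-1}]$ and localizations commute, one has
\[
\cat D^k(\cat C)[\tau^{-1}] \;\simeq\; \Fil(\cat C)[\tau^{-1}][\cat W_k^{-1}].
\]
But $\Fil(\cat C)[\tau^{-1}] \simeq \cat C$ is already known, and maps in $\cat W_k$ become equivalences there (essentially by your own sandwich argument), so the second localization is trivial. Monoidality follows by the same reasoning restricted to cofibrant objects.

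By contrast, you build the equivalence by hand: producing $\Phi$ from $\colim$, constructing an explicit quasi-inverse via $\iota$, and then verifying full faithfulness on the generating spheres by computing hom-sets in the $\tau$-localized category. This is more concrete and self-contained, and the argument that $\colim$ kills $\cat W_k$ is exactly what the paper is implicitly using. Two minor points: your displayed factorization has an indexing slip (it should read $\pi_t(X_n) \twoheadrightarrow \pi^k_{n,t}(X) \hookrightarrow \pi_t(X_{n-k})$, not $\pi^k_{n+k,t}$), though the sandwich conclusion is unaffected; and as you yourself flag, the identification of hom-sets in $\cat D^k(\cat C)[\tau^{-1}]$ with the sequential colimit along $\tau$ needs to be justified (it follows from compactness of $S^{n,t}$ and the standard telescope description of localization at a self-map). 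The paper sidesteps both issues by appealing to the commutation of localizations, which is what makes its proof so short.
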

\begin{proof}
Since localizations commute with one another, we can write
\[
\cat D^k(\cat C)[\tau^{-1}] = \Fil(\cat C)[\tau^{-1}][\cat W_k^{-1}].
 \]
But $\Fil(\cat C)[\tau^{-1}]$ just recovers $\cat C$, and maps in $W_k$ become equivalences in this localization. Monoidality follows by the same argument after restricting $\Fil(\cat C)$ to cofibrant objects. 
\end{proof}
Now we identify the special fiber. To this end, we recall the following definitions from \cite{pstrkagowski2022abstract}.
\begin{defn}
A commutative algebra $A$ in $\cat D^k(\cat C)^{\leq 0}$ is called \emph{shift} if the map  $\tau^k: \Sigma^{-k,0}X \to X$ inducing an isomorphism
\[
\pi^k_{**}(A) = \pi^k_{[0,k-1],*}(A)[\tau^k]
\]
An $A$-module $M$ is called \emph{periodic} if $\pi^k_{**}(M) =  \pi^k_{[0,k-1],*}(M)[\tau^k].$
\end{defn}
Throughout this section we consider the shift algebra $A = S^{0,0}$, noting that any object in $\cat D^k(\cat C)^{\leq 0}$ comes equipped with an essentially unique $A$-module structure. We require two very brief lemmas:

\begin{lem}
The $\infty$-category $\cat D^k(\cat C)^{\leq 0}$ is compactly generated by $S^{n,t}$ with $n < 0.$  
\end{lem}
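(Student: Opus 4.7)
The plan is to verify the two standard criteria for compact generation of a stable $\infty$-category, exactly mirroring the structure of Theorem \ref{compactspheres}: namely, each $S^{n,t}$ with $n<0$ is compact in $\cat D^k(\cat C)^{\leq 0}$, and the collection $\{S^{n,t} : n<0, \, t \in \cat A\}$ jointly detects the zero object.

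The first half is quick. Each $S^{n,t}$ is compact in the ambient $\cat D^k(\cat C)$ by Theorem \ref{compactspheres}, and compactness is inherited by $\cat D^k(\cat C)^{\leq 0}$ because this subcategory is closed under all colimits by Lemma \ref{connectivet}(1). Membership in the connective part follows from a direct computation: using $S^{n,t} = \Sigma^{n,0}\iota S^t$ — which is levelwise $S^t$ in filtration degrees $\leq n$ and zero otherwise — together with the image formula of Definition \ref{homodef}, one finds $\pi^k_{m,s}(S^{n,t}) = 0$ whenever $m > n$. For $n < 0$ this gives the required vanishing in all $m > 0$.

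The substantive step is detection. Given $Y \in \cat D^k(\cat C)^{\leq 0}$ with $\pi^k_{n,t}(Y) = [S^{n,t}, Y]_{\cat D^k(\cat C)} = 0$ for all $n < 0$ and $t \in \cat A$, I must show $Y \simeq 0$. Connectivity already supplies $\pi^k_{n,t}(Y) = 0$ for $n > 0$, so the task reduces to forcing $\pi^k_{0,t}(Y) = 0$. My approach is to map the cofiber sequence $S^{-1,0} \xrightarrow{\tau} S^{0,0} \to C\tau^{k+1}$ of Corollary \ref{ctweird} into $Y$, producing a long exact sequence that relates $\pi^k_{0,t}(Y)$ to $\pi^k_{-1,t}(Y)$ and $[\Omega C\tau^{k+1}, Y]$. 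Using the shift algebra structure on $S^{0,0}$ noted just before the lemma, every object of $\cat D^k(\cat C)^{\leq 0}$ canonically becomes a module, and multiplication by $\tau$ (or $\tau^k$) gives a mechanism to push any class in $\pi^k_{0,t}(Y)$ down into strictly negative filtration, where the hypothesis forces it to vanish.

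The main obstacle is making this last ``propagation'' argument rigorous. A naive rotation of the cofiber sequence produces $\Omega C\tau^{k+1}$, whose homotopy groups (by a computation analogous to that for $C\tau^{k+1}$) are concentrated at positive filtration degree $n = k$, so it is not itself a colimit of spheres $S^{n,t}$ with $n<0$ inside $\cat D^k(\cat C)^{\leq 0}$. The cleanest resolution is probably to iterate the cofiber sequence — or equivalently, to use the Amitsur complex of $C\tau^{k+1}$ from the cylinder-object construction of Section \ref{modelhomspaces} — to obtain a cellular presentation of $S^{0,0}$ whose attaching maps all come from strictly negative-filtration spheres, at which point the long exact sequence inductively produces the required vanishing.
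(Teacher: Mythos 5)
Your three-part plan (compactness, membership, detection) has the right shape, and the first two parts are fine. But the detection step you are laboring over cannot be made to work, because the statement as literally written is false: the unit $C\tau^{k+1}$ of the heart is a counterexample.

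Unwinding the definitions, $C\tau^{k+1} \in \Fil(\cat C)$ is $S^0$ in filtration degrees $-k \leq m \leq 0$ (with identity transition maps) and zero elsewhere. Hence
\[
\pi^k_{n,t}(C\tau^{k+1}) = \im\!\left(\pi_t\!\left((C\tau^{k+1})_n\right) \to \pi_t\!\left((C\tau^{k+1})_{n-k}\right)\right)
\]
vanishes for $n > 0$ (source is zero) and for $n < 0$ (target level $n-k < -k$ is zero), but equals $\pi_t(S^0)$ at $n=0$ because the transition map is the identity. So $C\tau^{k+1}$ is a nonzero object of $\cat D^k(\cat C)^{\leq 0}$ (in fact a heart object) for which $[S^{n,t}, C\tau^{k+1}]_{\cat D^k(\cat C)} = \pi^k_{n,t}(C\tau^{k+1}) = 0$ for every $n < 0$ and every $t$. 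The spheres with $n < 0$ therefore fail to detect the zero object, and no ``propagation'' argument --- whether via the long exact sequence of $S^{-1,0} \to S^{0,0} \to C\tau^{k+1}$, the Amitsur complex, or multiplication by powers of $\tau$ --- can rescue it, since a nonzero class in $\pi^k_{0,t}(Y)$ may be entirely $\tau$-power torsion, as it is for $Y = C\tau^{k+1}$ itself. (You actually sensed the obstruction: you noticed that $\Omega C\tau^{k+1} = \Sigma^{k,-1}C\tau^{k+1}$ has its $\pi^k$ concentrated at $n = k > 0$, so it is not accessible from strictly negative filtrations. Pushing one step further to an explicit counterexample would have saved the rest of the effort.)

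The intended range is surely $n \leq 0$, and with that fix the detection is genuinely ``immediate,'' matching the paper's one-sentence proof: for $Y \in \cat D^k(\cat C)^{\leq 0}$, connectivity gives $\pi^k_{n,t}(Y) = 0$ for $n > 0$, the hypothesis gives vanishing for $n \leq 0$, and then $Y \simeq 0$ by Theorem \ref{compactspheres}. Compactness and membership are as you argued, and presentability of the connective part (Lemma \ref{connectivet}) supplies the remaining hypothesis. (Relatedly, the proof of Corollary \ref{colimgenper} should say $-k < n \leq 0$ rather than $-k < n < 0$, which is empty when $k = 1$; $S^{0,t}$ is still periodic, so the corollary's conclusion survives.)
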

\begin{proof}
This is immediate because $S^{n,t}$ generate $\cat D^k(\cat C)^{\leq 0}$ and the filtration t-structure is presentable. 
\end{proof}
\begin{cor}\label{colimgenper}
The $\infty$-category $\cat D^k(\cat C)^{\leq 0}$ is generated under colimits by periodic objects
\end{cor}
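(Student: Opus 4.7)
The plan is to combine the compact generation of $\cat D^k(\cat C)^{\leq 0}$ by spheres $S^{n,t}$ with $n \leq 0$ from the preceding lemma with the filtration t-structure of Theorem \ref{tstructure} to reduce the statement to verifying that the heart is populated by periodic objects.

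First, I would show that every object in the heart $\cat D^k(\cat C)^{\heart}$ is periodic in the sense of Section \ref{deformsec}. Indeed, if $M\in\cat D^k(\cat C)^{\heart}$ then by definition $\pi^k_{n,*}(M)=0$ for all $n\neq 0$; in particular the map $\tau^k\colon \pi^k_{n,*}(M)\to \pi^k_{n-k,*}(M)$ is forced to be zero, so the defining relation
\[
\pi^k_{**}(M) \;=\; \pi^k_{[0,k-1],*}(M)[\tau^k]
\]
collapses to the tautological identity $\pi^k_{0,*}(M)=\pi^k_{0,*}(M)$ and is satisfied trivially.

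Second, I would argue that the filtration t-structure is right complete on $\cat D^k(\cat C)^{\leq 0}$, using the presentability established in Lemma \ref{connectivet} and the fact that the truncation functors in a presentable stable t-structure commute with filtered colimits of connective objects. It then follows that every $X\in\cat D^k(\cat C)^{\leq 0}$ is the colimit of its Postnikov tower, whose associated graded pieces are suspensions of heart objects.

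Finally, I would patch the two previous steps together. Each compact generator $S^{n,t}$ thereby fits into a colimit diagram whose constituents are shifts of heart objects. Here the chief subtlety arises: shifts of heart objects that are not of the form $\Sigma^{-jk,0}M$ need not themselves be periodic. To handle this, I would use the cofiber sequence
\[
S^{n-1,t}\xrightarrow{\tau} S^{n,t}\to \Sigma^{n,t}C\tau^{k+1}
\]
supplied by Corollary \ref{ctweird}, together with the observation that each $\Sigma^{a,b}\bigl(Y\otimes C\tau^{k+1}\bigr)$ with $Y\in\cat C^{\heart}$ lies in the heart, in order to express the exotic shifts as colimits of genuinely periodic objects.

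The main obstacle is the last step, in which one must verify that the Postnikov layers of the compact generators can in fact be assembled from periodic pieces rather than merely shifted heart pieces; equivalently, that the subcategory generated under colimits by periodic objects is closed under the necessary $\Sigma^{n,0}$-shifts. The verification is essentially bookkeeping once one has the $\tau$-cofiber sequence and the identification of the heart with $\Gr_k(\cat C^{\heart})$, but it is the step requiring the most care.
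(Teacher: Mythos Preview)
Your first step contains a genuine error that undermines the whole strategy. An object $M$ in the heart has $\pi^k_{n,*}(M)=0$ for all $n\neq 0$, so in particular the map $\tau\colon\pi^k_{0,*}(M)\to\pi^k_{-1,*}(M)=0$ is zero. Thus $\tau$ (and a fortiori $\tau^k$) acts as zero on $\pi^k_{**}(M)$, whereas the periodicity condition $\pi^k_{**}(M)=\pi^k_{[0,k-1],*}(M)[\tau^k]$ demands that $\pi^k_{**}(M)$ be free over the polynomial generator $\tau^k$. A nonzero heart object is therefore \emph{not} periodic --- it is $\tau^k$-torsion, the opposite of what you want. Your sentence ``the defining relation collapses to the tautological identity'' is simply false: the right-hand side would place copies of $\pi^k_{0,*}(M)$ in degrees $0,-k,-2k,\ldots$, which does not agree with the left-hand side. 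Since the rest of the plan (Postnikov towers, cofiber sequences) is designed to reduce to this false claim, the argument does not go through.

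The paper's proof avoids the heart entirely and is a two-line observation. By the preceding lemma, $\cat D^k(\cat C)^{\leq 0}$ is generated under colimits by the spheres $S^{n,t}$ with $n\leq 0$. Since the categorical suspension in $\cat D^k(\cat C)$ is $\Sigma^{-k,1}$, every such sphere is an iterated suspension of one with $-k<n\leq 0$, and the spheres in this fundamental domain are periodic directly from the computation of $\pi^k_{**}(S^{n,t})$. There is no need to invoke the t-structure, right completeness, or any bookkeeping with Postnikov layers: the generators themselves are already periodic.
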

\begin{proof}
In the previous lemma  it is enough to take $S^{n,t}$ for $-k < n < 0$, since the others are generated by successive suspensions. These spheres are periodic.
\end{proof}
Now at last we can identify the special fiber.
\begin{thm}\label{symmonequiv}
Suppose $\cat C^\heart$ has enough projectives. There is a symmetric monoidal equivalence 
\[
\Mod_{C\tau^{k+1}}\left(\cat D^k(\cat C)^{\leq 0}\right) \to \cat D(\Gr \cat C^\heart)^{\leq 0}
\]
\end{thm}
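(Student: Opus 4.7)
The plan is to push the filtration $t$-structure forward to the left-hand side, identify the heart, and then appeal to a universal characterization of $\cat D(\Gr_k \cat C^{\heart})^{\leq 0}$. The first step is to transport the filtration $t$-structure to $\Mod_{C\tau^{k+1}}(\cat D^k(\cat C))$ by declaring a $C\tau^{k+1}$-module to be connective if its underlying object lies in $\cat D^k(\cat C)^{\leq 0}$. Since $C\tau^{k+1}$ is itself connective, the forgetful functor is $t$-exact and its left adjoint $-\otimes C\tau^{k+1}$ preserves connectivity, so the induced $t$-structure is well-defined, accessible, and compatible with the symmetric monoidal structure.

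Next I would identify the heart of this induced $t$-structure with $\Gr_k(\cat C^{\heart})$. By Theorem \ref{tstructure} the heart of $\cat D^k(\cat C)$ is already $\Gr_k(\cat C^{\heart})$, so it suffices to prove that the forgetful functor $\Mod_{C\tau^{k+1}}^{\heart} \to \cat D^k(\cat C)^{\heart}$ is an equivalence, i.e., that every heart object carries an essentially unique $C\tau^{k+1}$-module structure. This is an obstruction-theoretic computation: for a heart object $X$, the map $\tau^{k+1}:\Sigma^{-k-1,0}X \to X$ is automatically null for support reasons, and the space of coherent $C\tau^{k+1}$-actions extending this nullhomotopy is controlled by the homotopy groups $\pi^k_{n,*}\Endo(X)$ for $n \leq -k-1$, which vanish by the support of $X$. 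With this identification in hand, I would invoke the universal characterization (in the spirit of \cite[\S 1.3]{ha}) of $\cat D(\cat A)^{\leq 0}$ as the universal presentable stable $\infty$-category with a right-complete, compactly generated, symmetric monoidal $t$-structure whose heart is $\cat A$ and whose connective part is generated by free objects on projective generators of $\cat A$. Compact generation of $\Mod_{C\tau^{k+1}}(\cat D^k(\cat C)^{\leq 0})$ follows from Theorem \ref{compactspheres}; right-completeness follows from that of $\cat D^k(\cat C)^{\leq 0}$; and the enough-projectives hypothesis on $\cat C^{\heart}$ supplies the required compact projective generators of the form $C\tau^{k+1} \otimes \iota P$ with $P$ projective in $\cat C^{\heart}$.

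Symmetric monoidality follows because the equivalence matches tensor products of compact projective generators (where both sides reduce to tensor products in $\Gr_k(\cat C^{\heart})$), and this determines the symmetric monoidal structure by left Kan extension along the inclusion of these generators. The main obstacle is the heart identification: controlling the obstruction theory for $C\tau^{k+1}$-module structures requires careful bookkeeping with the bigrading and with the specific form of the cofiber sequence defining $C\tau^{k+1}$. Once the heart equivalence is established, the remaining verifications reduce to formal applications of Lurie's machinery together with the compact generation results already proved in Theorem \ref{compactspheres}.
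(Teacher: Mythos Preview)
Your approach is plausible in outline but differs substantially from the paper's, and some of your proposed steps are underspecified.

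The paper does not transport the $t$-structure, compute the heart of $\Mod_{C\tau^{k+1}}$, or invoke a universal property of $\cat D(\cat A)^{\leq 0}$. Instead, it observes that $S^{0,0}$ is a \emph{shift algebra} in the sense of \cite{pstrkagowski2022abstract}, that its Postnikov truncation $(S^{0,0})^{\geq 0}$ is exactly $C\tau^{k+1}$, and that $\cat D^k(\cat C)^{\leq 0}$ is generated under colimits by periodic modules (Corollary~\ref{colimgenper}). These are precisely the hypotheses of Theorem~3.11 of \cite{pstrkagowski2022abstract}, which then delivers the underlying equivalence as a black box. Symmetric monoidality is obtained by identifying $\cat D(\Gr_k\cat C^{\heart})^{\leq 0}$ with spherical presheaves on projectives (Lemma~2.60 and Corollary~2.29 of \cite{pstrkagowski2023synthetic}), which pins down the symmetric monoidal structure uniquely. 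So the paper's argument is essentially three short citations once the shift-algebra framework is in place; the definitions and lemmas immediately preceding the theorem exist precisely to feed into Pstr\k{a}gowski's result.

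Your route---identify the heart by obstruction theory, then recognize the module category via a universal property---could in principle work, but several steps are not justified as stated. The ``universal characterization'' you cite from \cite[\S 1.3]{ha} is not sharp enough: recognizing a prestable $\infty$-category as $\cat D(\cat A)^{\leq 0}$ requires knowing it is generated under colimits by projectives of the heart and that these are compact projective (in the prestable sense), not merely that the ambient stable category is compactly generated. You also appeal to right-completeness of the filtration $t$-structure, which the paper never establishes. Finally, your obstruction-theoretic identification of the heart is sketched rather than proved; the relevant vanishing is believable but needs the concrete computation that $C\tau^{k+1}$ lies in the heart of $\cat D^k(\cat C)$ (equivalently, that $\pi^k_{n,*}(C\tau^{k+1})$ vanishes for $n\neq 0$), which you do not carry out. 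The paper's approach sidesteps all of this by outsourcing the recognition problem to a theorem already tailored to this situation.
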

that takes $X \otimes C\tau^{k+1}$ to the $E^k$ page of the spectral sequence associated to $X$. 
\begin{proof}
The underlying equivalence follows from Theorem 3.11 of \cite{pstrkagowski2022abstract}, noting that $(S^{0,0})^{\geq 0} = C\tau^{k+1}$ and applying Corollary \ref{colimgenper}.

To show that this can be promoted to a(n essentially unique) symmetric monoidal equivalence, it suffices to show that there is an essentially unique symmetric monoidal structure on $\cat D(\Gr \cat C^\heart)^{\leq 0}$  such that (1) the canonical symmetric monoidal structure on $\Gr \cat C^\heart$ and (2) . By Lemma 2.60 of \cite{pstrkagowski2023synthetic}, $\cat D(\Gr \cat C^\heart)^{\leq 0}$ is equivalent to the $\infty$-categorical of spherical presheaves (of spaces) on the $\infty$-category of projectives in $\Gr \cat C^\heart.$ The result then follows from Corollary 2.29 in the same paper.
\end{proof}
\begin{cor}
Suppose $\cat C^\heart$ has enough projectives. There is a symmetric monoidal equivalence 
\[
\Mod_{C\tau^{k+1}}\left(\cat D^k(\cat C)\right) \to \cat D(\Gr_k \cat C^\heart)
\]
\end{cor}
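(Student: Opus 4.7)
The plan is to bootstrap from Theorem~\ref{symmonequiv} (which gives the equivalence on connective parts) to the full stable $\infty$-categories by stabilization. Both $\Mod_{C\tau^{k+1}}(\cat D^k(\cat C))$ and $\cat D(\Gr_k \cat C^\heart)$ are presentably symmetric monoidal stable $\infty$-categories equipped with t-structures. On the left, we use the restriction of the filtration t-structure from Theorem~\ref{tstructure}; on the right, the standard t-structure on a derived category. The key observation is that both categories are compactly generated by objects that, after sufficient positive suspension, lie in the connective part: on the left by the spheres $S^{n,t}$ (shifted by $\Sigma^{-k,1}$ into connective degrees), on the right by (shifts of) projective generators of $\Gr_k \cat C^\heart$.

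First I would verify that every object in either category is a filtered colimit of desuspensions of its connective truncations. On the right this is just the usual fact that $Y = \colim_n \Sigma^{-n} \tau_{\leq 0}(\Sigma^n Y)$ in a right-complete stable $\infty$-category with t-structure. On the left the same argument applies once one checks that the filtration t-structure is right-complete on $\Mod_{C\tau^{k+1}}(\cat D^k(\cat C))$, which follows because suspension $\Sigma^{-k,1}$ shifts the connective part into itself (it raises the $n$-cutoff by $k$) and because the relevant homotopy groups detect zero objects, by Theorem~\ref{compactspheres}.

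Next I would extend the symmetric monoidal equivalence $\Phi^{\leq 0}$ of Theorem~\ref{symmonequiv} to a functor $\Phi$ on the full categories by sending $X$ to $\colim_n \Sigma^{-n}\Phi^{\leq 0}(\tau_{\leq 0}\Sigma^n X)$, or equivalently by the universal property: $\Mod_{C\tau^{k+1}}(\cat D^k(\cat C))$ is the presentable stable $\infty$-category freely generated (as a symmetric monoidal colimit-preserving extension) by its connective part together with the inversion of the categorical suspension, and the same holds on the right. Since $\Phi^{\leq 0}$ is a symmetric monoidal equivalence that commutes with the (connectivity-preserving) categorical suspension on both sides, it extends uniquely to a symmetric monoidal equivalence $\Phi$ on the full categories.

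The main technical obstacle will be ensuring that this extension genuinely preserves the symmetric monoidal structure on objects not in the connective part. This should reduce to the fact that the tensor product commutes with filtered colimits on both sides (both are presentably symmetric monoidal), combined with the symmetric monoidality of $\Phi^{\leq 0}$: given $X, Y$ with connective covers $X_n, Y_n$, one has $X \otimes Y = \colim \Sigma^{-2n}(X_n \otimes Y_n)$, and $\Phi^{\leq 0}$ sends each $X_n \otimes Y_n$ to $\Phi^{\leq 0}(X_n) \otimes \Phi^{\leq 0}(Y_n)$, giving compatibility after passing to the colimit. Fully faithfulness then follows by checking on the compact generators (shifts of spheres / projectives), where it reduces to Theorem~\ref{symmonequiv}; essential surjectivity follows because the image contains a compact generating set and is closed under colimits.
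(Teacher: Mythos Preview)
Your approach is correct and is essentially the same as the paper's: both extend the connective equivalence of Theorem~\ref{symmonequiv} to the full stable categories by stabilization. The paper compresses the entire argument into a single line---``take the $\infty$-category of spectrum objects on each side''---relying on the standard fact that a stable $\infty$-category with a suitably complete t-structure is recovered as $\Sp$ of its connective part, whereas you unpack this by hand via colimits of desuspensions of connective covers and explicitly verify fully faithfulness, essential surjectivity, and symmetric monoidality.
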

\begin{proof}
Take the $\infty$-category of spectrum objects on each side of the equivalence in Theorem \ref{symmonequiv}.
\end{proof}

\subsection{An $\infty$-categorical characterization of matric Massey products}\label{realmmpsection}
\subsubsection{Review of chain complexes in stable $\infty$-categories}\label{complexessection}
Filtered objects are sometimes casually referred to as ``the stable $\infty$-categorical version of chain complexes"\footnote{Source: the author has referred to them this way in the past.}. This correspondence was made explicit by Ariotta \cite{ariotta2021coherent}. In this brief section we define chain complexes in stable $\infty$-categories and state Ariotta's theorem, since this language will be more natural when we discuss matric Massey products in the next section. 
\begin{defn}
Let $\Ch$ denote the category with objects $[i]$ and $*_i$ for nonnegative integers $i$ and with a unique arrow $*_i \to [j]$, $[i] \to *_j$, $[i] \to [j]$, and $*_i \to *_j$ for $i >  j.$ 
\[
\xymatrix{
\cdots \ar[r] & \ast_4 \ar[r] \ar[rd]  & \ast_3 \ar[r] \ar[rd] & \ast_2 \ar[r] \ar[rd] & \ast_1 \ar[r] \ar[rd] & \ast_0  \\
\cdots \ar[r] &  [4] \ar[r] \ar[ru] & [3] \ar[r] \ar[ru] & [2] \ar[r] \ar[ru] & [1] \ar[r] \ar[ru] &  [0]
}
\]

A (connective) \emph{chain complex in $\cat C$} is a functor $\Ch \to \cat C$ taking $*_i$ to the zero object in $\cat C$ for all $i$. The $\infty$-category $\Ch(\cat C)$ is the full sub-$\infty$ category of $\Fun(\Ch, \cat C)$ whose objects are chain complexes.
\end{defn}

If we replaced $\cat C$ with an (ordinary) abelian category, this would recover the usual definition of chain complexes. In the $\infty$-category setting, the definition builds in coherent homotopies witnessing the vanishing of certain Toda brackets.

There is a natural $\infty$-categorical analogue of a chain complex's cycle groups $Z_n(X)$:
\begin{defn}
For $0 \leq i < j \leq \infty$, let $\Ch_i^j$ denote the full subcategory of $\Ch$ with objects $[k]$ and $*_k$ for $i \leq k \leq j.$   Given a functor $X: \Ch_i^j \to \cat C$, we let $Z_i^j(X)$ denote the limit of $X$. Similarly, let $C_i^j(X)$ denote the corresponding colimit. We will mildly abuse notation and write $Z_{i}^j(X)$ to denote $Z_i^{j}(X|_{\Ch_i^j})$ if $X$ is defined on a larger category (typically $\Ch$).  
\end{defn}
\begin{prop}\label{fib}
There are fiber sequences $Z_i^j(X) \to X_j \to Z_i^{j-1}(X)$ and cofiber sequences $C_{i+1}^j(X) \to X_i \to C_i^j(X)$
\end{prop}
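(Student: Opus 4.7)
The plan is to exploit the fact that the indexing category $\Ch_i^j$ admits two natural join decompositions: one that isolates $\{[i], *_i\}$ at the bottom, and one that isolates $\{[j], *_j\}$ at the top. Each decomposition converts the relevant (co)limit into a simple pushout or pullback, and the vanishing $X(*_k) = 0$ will collapse these to the desired (co)fiber sequences.

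For the cofiber sequence, by definition of the arrows in $\Ch$ every object of $\Ch_{i+1}^j$ admits a unique morphism to each of $[i]$ and $*_i$; conversely there are no arrows between $[i]$ and $*_i$ themselves, nor any arrows from them back into $\Ch_{i+1}^j$. This exhibits $\Ch_i^j$ as the join $\Ch_{i+1}^j \star \{[i], *_i\}_{\mathrm{disc}}$ of $\Ch_{i+1}^j$ with the discrete two-object category. Applying the standard join formula for colimits in an $\infty$-category (cf. \cite[\S4.2.1]{htt}), or equivalently unwinding the universal property of the colimit directly, one computes
\[
C_i^j(X) \simeq X_i \sqcup_{C_{i+1}^j(X)} X(*_i) \simeq X_i \sqcup_{C_{i+1}^j(X)} 0,
\]
which is exactly the cofiber of the comparison map $C_{i+1}^j(X) \to X_i$.

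Dually, the objects $[j]$ and $*_j$ admit unique maps to every object of $\Ch_i^{j-1}$ while there are no maps between them or from $\Ch_i^{j-1}$ back, giving $\Ch_i^j \simeq \{[j], *_j\}_{\mathrm{disc}} \star \Ch_i^{j-1}$. The dual join formula for limits yields
\[
Z_i^j(X) \simeq X_j \times_{Z_i^{j-1}(X)} X(*_j) \simeq X_j \times_{Z_i^{j-1}(X)} 0,
\]
i.e. the fiber of $X_j \to Z_i^{j-1}(X)$. The only real content is the justification of the join formula in the $\infty$-categorical setting; this is routine, but should be checked carefully, either by direct verification of the universal property (the key point being that the cocone/cone over $\Ch_{i+1}^j$ factors compatibly through both $[i]$ and $*_i$, forcing a pushout/pullback) or by a cofinality argument.
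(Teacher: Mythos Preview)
Your argument is correct and is essentially the same as the paper's. The paper invokes \cite[Proposition~4.4.2.2]{htt} with $L = \Ch_i^j \setminus \{[j]\}$ and $K' = \Ch_i^j \setminus \{*_j\}$, which expresses $Z_i^j(X)$ as the pullback $\lim_L X \times_{\lim_{L\cap K'} X} \lim_{K'} X = 0 \times_{Z_i^{j-1}(X)} X_j$; your join decomposition $\{[j],*_j\}_{\mathrm{disc}} \star \Ch_i^{j-1}$ is exactly the same splitting, packaged slightly differently, and leads to the identical pullback.
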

\begin{proof}
The first is a special case of Proposition 4.4.2.2 of \cite{htt}, with $L$ the full subcategory excluding $[j]$ and $K'$ the full subcategory excluding $*_j.$ The second is dual.
\end{proof}
In the case $j = i + 1$, we see that $Z_i^j(X)$ is the fiber of the map $X_j \to X_i$, while $C_i^j(X)$ is its cofiber, so that we have $C_i^j(X) = \Sigma Z_i^j(X).$ In Appendix \ref{appiter}, we prove the following proposition generalizing this fact:   
\begin{prop}\label{iter}
Iterated fibers agree with iterated cofibers after a shift. More explicitly, for any $X: \Ch_i^j \to \cat C$ we have a natural equivalence
\[
C_i^j(X) = \Sigma^{j-i}Z_i^j(X).
\]
\end{prop}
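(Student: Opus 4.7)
The plan is to induct on $n := j - i$. The base case $n = 0$ is immediate: $\Ch_i^i$ consists of two objects $[i]$ and $*_i$ with no nonidentity morphisms, so $Z_i^i(X) = \lim X = X_i$ and $C_i^i(X) = \colim X = X_i$ (the zero object $X(*_i)$ contributes trivially to both). The case $n = 1$ is the essential stable input: a direct analysis of the diagram $\Ch_i^{i+1}$, using that $X$ sends each $*_k$ to zero, shows $Z_i^{i+1}(X) \simeq \text{fib}(X_{i+1} \to X_i)$ and $C_i^{i+1}(X) \simeq \text{cof}(X_{i+1} \to X_i)$, and in any stable $\infty$-category one has $\text{cof}(f) \simeq \Sigma\,\text{fib}(f)$.

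For the inductive step, I would apply the two fiber/cofiber sequences of Proposition \ref{fib} to write
\[
Z_i^j(X) \simeq \text{fib}\bigl(X_j \to Z_i^{j-1}(X)\bigr) \qquad\text{and}\qquad C_i^j(X) \simeq \text{cof}\bigl(C_{i+1}^j(X) \to X_i\bigr).
\]
The inductive hypothesis applied to the restrictions $X|_{\Ch_i^{j-1}}$ and $X|_{\Ch_{i+1}^j}$ (both chain complexes of length $n-1$) yields $C_i^{j-1}(X) \simeq \Sigma^{n-1} Z_i^{j-1}(X)$ and $C_{i+1}^j(X) \simeq \Sigma^{n-1} Z_{i+1}^j(X)$. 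Combining these with the stable identity $\text{cof}(f) \simeq \Sigma\,\text{fib}(f)$ and commuting $\Sigma$ past fibers and cofibers produces $\Sigma^n Z_i^j(X) \simeq \text{cof}\bigl(\Sigma^{n-1}X_j \to C_i^{j-1}(X)\bigr)$, while on the other side $C_i^j(X) \simeq \text{cof}\bigl(\Sigma^{n-1} Z_{i+1}^j(X) \to X_i\bigr)$. The remaining task is to identify these two expressions.

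The main obstacle is precisely this last identification: the fiber sequence peels from the top of the chain ($X_j$) while the cofiber sequence peels from the bottom ($X_i$), so matching them requires tracking how the connecting maps induced by the coherent nullhomotopies interact across the full length of the chain. My plan to address this is to strengthen the inductive hypothesis so that the equivalence $C_i^j(X) \simeq \Sigma^n Z_i^j(X)$ is natural with respect to the various restriction functors $\Ch_{i'}^{j'} \hookrightarrow \Ch_i^j$, forcing the two connecting maps to match on the nose. A more attractive alternative would be to exhibit $Z_i^j(X)$ and $C_i^j(X)$ as the total fiber and total cofiber of a single "staircase" of iterated pullback/pushout squares built from the shape of $\Ch_i^j$; this would reduce the proposition to the general stable fact that the total fiber and total cofiber of such an $n$-step staircase differ by $\Sigma^n$, provable by an independent one-dimensional induction using only the $n=1$ equivalence already established.
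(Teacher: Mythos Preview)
Your approach is genuinely different from the paper's. The paper does not induct on $j-i$ at all: instead it builds one large poset-shaped diagram containing all of the $X_k$, the iterated fibers $Z_0^k$, the iterated cofibers $C_k^n$, and labelled zero objects $0^k$, $0_k$, glued together along the pushout/pullback squares coming from Proposition~\ref{fib}. It then proves four small cofinality lemmas (three special cases of Quillen's Theorem~A for posets, plus an instance of \cite[4.2.3.10]{htt}) and uses them to successively delete and rewire objects in this diagram without changing its colimit. One sequence of deletions shows the colimit is $C_0^n$; a different sequence reduces the diagram to a string of $2n$ zero objects under $Z_0^n$, whose colimit is $\Sigma^n Z_0^n$.

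You are right to flag the matching step in your induction as the real obstacle. Peeling from opposite ends produces $\Sigma^n Z_i^j \simeq \operatorname{cof}(\Sigma^{n-1}X_j \to C_i^{j-1})$ and $C_i^j \simeq \operatorname{cof}(C_{i+1}^j \to X_i)$, and there is no a priori map $\Sigma^{n-1}X_j \to C_i^{j-1}$ to compare with except the one manufactured from the inductive equivalence itself, which makes the argument circular unless you separately prove a cofiber sequence $\Sigma^{n-1}X_j \to C_i^{j-1} \to C_i^j$ with the correct connecting map. Strengthening naturality alone does not obviously supply this. Your second proposal---exhibiting both objects as total fiber and total cofiber of a single staircase of bicartesian squares---is exactly the right fix, and is in spirit what the paper does: its large diagram is precisely such a staircase with both the $Z$-side and the $C$-side squares present simultaneously, and the cofinality manipulations are how it proves the ``general stable fact'' you invoke. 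So your alternative is correct and converges to the paper's argument; the naive induction as stated does not close.
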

\begin{thm}[Ariotta]\label{ariottathm}
Suppose $\cat C$ is closed under sequential limits. Then, there is an equivalence between the $\infty$-category of chain complexes and the $\infty$-category of filtered objects $X$ with $X_\infty = 0.$ In our notation, this equivalence takes a chain complex \[
\cdots \to Y_2 \to Y_1 \to Y_0 \to Y_{-1} \to \cdots
\]
to the filtered object
\[
\cdots \to \Sigma^{-1}Z_{-\infty}^{-1}(Y) \to Z_{-\infty}^{0}(Y)\to \Sigma Z_{-\infty}^{1}(Y) \to \cdots
\]
\end{thm}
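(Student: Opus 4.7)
The plan is to construct the functor $F \colon \Ch(\cat C) \to \Fil(\cat C)$ described in the theorem statement, observe that it lands in the subcategory $\Fil_0(\cat C)$ of filtered objects with $\lim X = 0$, and show it is an equivalence onto this subcategory by exhibiting an explicit inverse. The structural input is Proposition \ref{fib}, which provides fiber sequences $Z_{-\infty}^j(Y) \to Y_j \to Z_{-\infty}^{j-1}(Y)$; the resulting connecting maps $Z_{-\infty}^{j-1}(Y) \to \Sigma Z_{-\infty}^j(Y)$ furnish the structure maps of $F(Y)$ after appropriate suspension shifts. The sequential-limit hypothesis on $\cat C$ ensures the required iterated fibers exist, while the vanishing of $Y$ on the starred objects $*_i$ forces $\lim F(Y) = 0$.

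I would then construct a candidate inverse $G \colon \Fil_0(\cat C) \to \Ch(\cat C)$. The idea is that a filtered object $X$ with $\lim X = 0$ determines a functor $\Ch \to \cat C$ through its finite subquotients: the cofibers $X_n/X_{n+1}$ supply the underlying objects $Y_n$ (after appropriate shifts), the cofibers $X_n/X_{n+2}$ supply the null-homotopy witnessing $d^2 \simeq 0$, and more generally the cofibers $X_n/X_{n+k}$ furnish the $k$-fold coherence data expected of a chain complex. The compatibility of these cofiber sequences as $k$ varies is exactly the data needed to assemble a coherent functor on the walking-chain-complex category $\Ch$.

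To check that $F$ and $G$ are mutually inverse, the unit $Y \to G F(Y)$ is the easier direction: the subquotients of the iterated-fiber filtration recover $Y_n$ up to natural equivalence, by peeling off one fiber sequence at a time and appealing to Proposition \ref{iter} (iterated fibers and shifted iterated cofibers agree). The counit $F G(X) \to X$ is the main obstacle. My plan would be to first treat filtrations of bounded length, where $F G \simeq \mathrm{id}$ is a direct consequence of Proposition \ref{iter}, and then bootstrap to the general case by writing a general $X \in \Fil_0(\cat C)$ as a limit of its truncations and commuting the sequential limit (which exists by hypothesis) past $F$ and $G$. The condition $\lim X = 0$ is precisely what rules out modification of $X$ ``at infinity'' that would otherwise be invisible to finite subquotient data and thereby obstruct the existence of any inverse to $F$.
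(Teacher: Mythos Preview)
The paper does not give an argument for this theorem at all: its entire proof reads ``This is Theorem 4.7 of \cite{ariotta2021coherent}.'' So your proposal is not competing against a proof in the paper but against an external citation.

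Your sketch is in the right spirit and identifies the correct objects on both sides, but it underestimates where the actual content of Ariotta's theorem lies. The difficult part is not identifying what $F$ and $G$ should do on objects, nor checking that the composites are equivalences levelwise; it is constructing $G$ as a genuine functor of $\infty$-categories. Your sentence ``the compatibility of these cofiber sequences as $k$ varies is exactly the data needed to assemble a coherent functor on the walking-chain-complex category $\Ch$'' is precisely the step that requires real work: a filtered object hands you a compatible family of cofiber towers, but promoting that family to a functor out of $\Ch$ means exhibiting an infinite hierarchy of coherences, and doing so functorially in $X$. Ariotta handles this by a careful analysis of the combinatorics of the indexing categories (relating $\Ch$ to a certain $\ZZ$-parametrized family of cubes and invoking Kan extension machinery), not by writing down the data directly. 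Similarly, your bounded-to-unbounded bootstrap presumes that $F$ and $G$ commute with the relevant sequential limits, which is true but needs justification, and one must check that the levelwise equivalences assemble into a natural equivalence of functors rather than merely a pointwise one.

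In short: the paper defers entirely to Ariotta, and if you want to supply an independent argument you will need to address the coherence problem for $G$ explicitly, which is the substance of the cited reference.
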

\begin{proof}
This is Theorem 4.7 of \cite{ariotta2021coherent}.
\end{proof}

\subsubsection{Chain complexes in the derived $\infty$-category}
In this section we apply the theory of the previous section to the specific case of the derived $\infty$-category of a(n ordinary) commutative ring $R$. We will see that the higher homotopies in the definition of a chain complex correspond to maps satisfying similar properties to May's notion of \emph{matric Massey products.} This will allow us to define matric Massey products using purely $\infty$-categorical data, which will help us in the next sections to pass higher multiplicative information along symmetric monoidal $\infty$-categorical functors. 

The idea that something like this ought to be true appears to be folklore, but we were unable to find a source that either explicitly states or proves the relevant facts. Throughout this section, let $R$ be a commutative ring. Throughout, we make the following sign conventions:
\begin{conv}
The shifted chain complex $X[i]$ has differential $(-1)^i d_X.$
\end{conv}
\begin{conv}
The mapping cone of $f: X \to Y$ has the same elements as $X[1] \oplus Y$ and differential $-d_X + d_Y + f.$
\end{conv}
Our main result provides an explicit construction of chain complexes in $\cat D(R).$
\begin{prop}\label{drchains}
Let $X_n \xrightarrow{f_n} \cdots \xrightarrow{f_2} X_1 \xrightarrow{f_1}  X_0$ be a series of maps of cofibrant chain complexes of $R$-modules. Any extension of this diagram to a chain complex in $\cat D(R)$ comes from a series of maps (not necessarily chain maps) $f_{ij}: X_j \to X_i[j-i-1]$ for $0 \leq i < j \leq n$ such that:
\begin{itemize}
\item $f_{i-1,i} = f_i$.
\item For any cycle $x$, $df_{i,k}(x) = \sum_{j = i+1}^{k-1} (-1)^{j+1}f_{i,j}f_{j,k}(x)$
\end{itemize}
Given this information, the object $Z_0^n(X)$ is represented by a chain complex
\[
\left(Z_0^n(X)\right)_k = (X_0)_{k+n} + (X_1)_{k+n-1} + \cdots + (X_n)_{k}
\]
with differential 
\[
d(x_0,x_1,\cdots, x_n) = (-1)^n\left(\sum_{j = 0}^n (-1)^{j}f_{0j}(x_j), \sum_{j=1}^n (-1)^{j}f_{1j}(x_j), \cdots ,(-1)^ndx_n \right)
\]
where we set $f_{jj}(x) = dx$ to simplify the notation.
\end{prop}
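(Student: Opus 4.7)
The plan is to induct on $n$, leveraging the fiber sequence $Z_0^n(X) \to X_n \to Z_0^{n-1}(X)$ from Proposition \ref{fib}. The base case $n = 0$ is vacuous: $Z_0^0(X) = X_0$, and there is no higher data to specify. For $n = 1$, the only datum is the chain map $f_{01} = f_1$ itself, and the explicit formula reproduces the standard mapping cocone $\mathrm{fib}(f_1)$.

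For the inductive step, a chain complex in $\cat D(R)$ extending the $(n-1)$-truncation $X_{n-1} \to \cdots \to X_0$ amounts, via the fiber sequence, to providing a morphism $\varphi: X_n \to Z_0^{n-1}(X)$ in $\cat D(R)$ whose composite with the projection $Z_0^{n-1}(X) \to X_{n-1}$ equals $f_n$. By the inductive hypothesis, $Z_0^{n-1}(X)$ is represented by the explicit cofibrant complex described in the proposition, and since $X_n$ is cofibrant, any such $\varphi$ is represented by an honest chain map of complexes of $R$-modules.

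Unpacking such a chain map componentwise along $(Z_0^{n-1}(X))_k = \bigoplus_{i=0}^{n-1} (X_i)_{k+n-1-i}$, the data of $\varphi$ is exactly a collection of degree-$(n-1-i)$ linear maps $f_{i,n}: X_n \to X_i$ for $0 \leq i \leq n-1$. The chain-map equation $d \circ \varphi = \varphi \circ d$, evaluated on a cycle $x \in X_n$ and read off in each coordinate, spells out precisely the Leibniz relation in the statement (using the inductive formula for the differential on $Z_0^{n-1}(X)$); the lifting condition yields $f_{n-1,n} = f_n$. Taking the fiber of $\varphi$ then produces the mapping cocone $X_n \oplus Z_0^{n-1}(X)[-1]$, and expanding this out using the inductive description of $Z_0^{n-1}(X)$ gives the stated direct-sum expression and differential for $Z_0^n(X)$.

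The main obstacle is bookkeeping of signs. The overall $(-1)^n$ prefactor and the alternation $(-1)^j$ inside the differential of $Z_0^n(X)$ arise from the interaction of three sign sources: the shift convention $X[i]$ with differential $(-1)^i d_X$, the mapping-cone convention, and the identification of the iterated fiber with the appropriately shifted iterated cofiber from Proposition \ref{iter}. To make the induction close cleanly, I would fix these conventions at the outset and verify that passing from $\mathrm{fib}(\varphi: X_n \to Z_0^{n-1}(X))$ to $Z_0^n(X)$ shifts each summand by one and multiplies the existing differential by $-1$, which accounts exactly for the appearance of $(-1)^n$ in degree $n$ from the factor $(-1)^{n-1}$ in degree $n-1$. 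Once the signs are aligned, the remainder of the argument is formal, reducing entirely to repeated application of Proposition \ref{fib} and the standard fact that morphisms in $\cat D(R)$ between cofibrant complexes are represented by strict chain maps.
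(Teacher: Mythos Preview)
Your proposal is correct and follows essentially the same route as the paper: induction on $n$ via the fiber sequence of Proposition~\ref{fib}, representing the extension datum as an honest chain map $X_n \to Z_0^{n-1}(X)$ between cofibrant complexes, reading off the $f_{i,n}$ componentwise from the chain-map condition, and identifying $Z_0^n(X)$ with the mapping cocone. The only minor difference is that the paper carries out the sign bookkeeping explicitly at the inductive step rather than deferring it, and does not invoke Proposition~\ref{iter} (which is unnecessary here since the fiber is computed directly via the mapping-cone convention).
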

\begin{proof}
We proceed by induction. The case $n = 0$ is automatic. Now, an extension of a chain complex of length $n$ to one of length $n+1$ corresponds to a (homotopy class of) map(s) 
\[
X_{n+1} \to Z_0^n(X)
\]
such that the induced map $X_{n+1} \to X_n$ is $f_n$. Using the formula for $Z_0^n(X)$, a map $f: X_{n+1} \to Z_0^n(X)$ splits into maps $f_{i,n+1}: X_{n+1} \to X_i[n-i].$ For $f$ to be a chain map, we require $df = fd$. In particular, we must have
\begin{align*}
f(dx) &= \sum df(x)  \\
                   &=  (-1)^n\left(\sum_{j = 0}^n (-1)^{j}f_{0j}f_{j,n+1}(x), \sum_{j=1}^n (-1)^jf_{1j}f_{j,n+1}(x), \cdots ,(-1)^ndf_{n,n+1}(x) \right)
\end{align*}
Since $x$ is a cycle, we have $dx = 0$, so we must indeed have
\begin{align*}
df_{i,n+1}(x) &= \sum_{j = i+1}^{n} (-1)^{j+1}f_{i,j}f_{j,n+1}(x).
\end{align*}
Now, $Z_0^{n+1}(X)$ can be computed as the homotopy fiber of $f$. This has components
\begin{align*}
\left(Z_0^{n+1}(X)\right)_k &= (Z_0^n(X))_{k+1} \oplus (X_{n+1})_k\\ 
                            &= (X_0)_{k+n} + (X_1)_{k+n-1} + \cdots + (X_n)_{k}
\end{align*}
and differential
\begin{align*}
d(x_0,\cdots, x_{n+1}) &=  d^{X_{n+1}}(x_{n+1}) - d^{Z_0^n(X)}(x_0,\cdots, x_{n}) + f(x_{n+1}) \\
                    &=\Big(0,0,\cdots,0,dx_{n+1}\Big) \\
                    &\,\,\,\,\,\,\,- (-1)^{n}\left(\sum_{j = 0}^{n} (-1)^{j}f_{0j}(x_j),  \sum_{j=1}^{n} (-1)^{j}f_{1j}(x_j), \cdots ,  \sum_{j=n}^{n} (-1)^{j}f_{n,j}(x_j), 0\right) \\
                    &\,\,\,\,\,\,\,+\Big(f_{0,n+1}(x_{n+1}), f_{1,n+1}(x_{n+1}), \cdots, f_{n,n+1}(x_{n+1}),0\Big)\\
                    &= (-1)^{n+1}\left(\sum_{j = 0}^{n+1} (-1)^{j}f_{0j}(x_j),  \sum_{j=1}^{n+1} (-1)^{j}f_{1j}(x_j), \cdots ,  (-1)^{n+1}dx_{n+1}\right)
\end{align*}
\end{proof}
To relate this to May's notion of \emph{matric Massey products}, we require an additional sign convention. 
\begin{conv}[May, \cite{mmp}]
For an element $x$, we set $\bar x = (-1)^{1 + \text{deg}(x)}x$. For a matrix $X$, let $\bar X$ be the matrix formed by applying this operation to each of its elements.
\end{conv}
We start by recalling the definition of a matric Massey product from \cite{mmp}. Suppose we have an augmented differential graded algebra $U$, a left $U$-module $N$, and a right $U$-module $N$, and let $(V_1, \cdots, V_{\ell+1})$ be a series of multipliable matrices such that the elements of $V_i$ are cycles in $M$ if $i = 1$, cycles in $N$ if $i = \ell+1$, and cycles in $U$ otherwise. We inductively define matrices $A_{i,j}$ and for $0\leq i < j \leq \ell+1$, except for $(i,j) = (0,\ell+1)$, by setting
\[
A_{i-1,i} = V_i
\]
and
\[
dA_{i,j} = \tilde{A}_{i,j}
\]
where
\[ 
\tilde{A}_{i,j} = \sum_{k = i + 1}^{j-1} \bar A_{i,k}A_{k,j}.
\]
Notice that $A_{i,j}$ is not uniquely defined: there can be multiple choices (and often zero choices) for $A_{i,j}$ at each step.
\begin{defn}\label{mmp}[May, \cite{mmp}]
The matric Massey product $\langle V_0, \cdots, V_{\ell+1} \rangle$ is the set of all possible $\tilde{A}_{0, \ell+1}$ coming from different choices for $A_{i,j}$ in the above process. 
\end{defn}
We can relate this to the construction of Proposition \ref{drchains} by a two-step process.

Let $V_1, \cdots V_{\ell+1}$ be multipliable matrices as above. Consider the system
\[
\bigoplus_{d^1_i \in D^1} U[d^1_i] \xrightarrow{V_2} \bigoplus_{d^2_i \in D^2} U[d^2_i] \xrightarrow{V_3} \cdots \xrightarrow{V_{\ell}}\bigoplus_{d^{\ell}_i \in D^\ell} U[d^{\ell}_i],
\]
where the $D^k$s are collections of degrees (possibly with repetition) induced from the $V_i$s. Applying Proposition \ref{drchains}, and noting that a $U$-module map between shifted copies of $U$ is fully determined by the image of $1$, we see extensions of this sequence to a chain complex $X$ in $U$-modules are equivalent to choices $A_{i,j}$ as above for $1 \leq i < j \leq \ell$. 

Moreover, a lift of the map $$R \xrightarrow{V_1} M \otimes_U \bigoplus_{d_i^1 \in D^1} U[d_i^1]$$ to a map $$R \xrightarrow{V_1'} M \otimes_U Z_0^{\ell-1}X$$
is the same information as choices $A_{0,j}$ with $0 < j < \ell$, and a lift of the map
\[
\bigoplus_{d_i^\ell} U[d_i^\ell] \xrightarrow{V_{\ell+1}} N[d^{\ell + 1}]
\]
(where $d^{\ell+1}$ is the degree of $V_1\cdots V_{\ell+1}$) to a map
\[
C_0^{\ell-1}(X) \xrightarrow{V_{\ell+1}'} N[d^{\ell+1}]
\]
is the same information as choices $A_{i,\ell+1}$ with $i > 0.$  Finally, note that $\tilde{A_{0,\ell+1}}$ is the image of $1$ under the composite
\[
\xymatrix{
R[\ell - 1] \ar[r]^-{V_1'[\ell-1]}& M \otimes_U Z_0^{\ell-1}(X) [\ell-1] \ar[d]^{\cong}& \\
&Z_0^{\ell-1}(M \otimes_U X) [\ell-1] \ar[d]^\cong  & \\
&C_0^{\ell-1}(M \otimes_U X) \ar[r]^-{V_{\ell+1}'} & N[d^{\ell+1}].
}\]

We can repackage this as a definition.
\begin{defn}\label{smashtoda}
For any $\cat C$ satisfying the conditions of Section \ref{filtersection}, suppose we have a ring $U$ in $\cat C$, a right $U$-module $M$, and a left $U$-module $N$. Given a multipliable series of matrices $V_1,\cdots, V_{\ell+1}$ such that the elements of $V_1$ lie in in $\pi_*(M)$, the elements of $V_{\ell+1}$  lie in $\pi_*(N)$, and the elements of the other matrices lie in $\pi_*(U)$, we define a (possibly empty) set of elements we call the \emph{smash Toda bracket} and denote
\[
\langle V_1, \cdots, V_{\ell+1} \rangle \subseteq \pi_*(M \otimes_U N)
\]
as follows. First, consider each way of extending the series of maps
\[
\bigoplus_{d^1_i \in D^1} \Sigma^{d^1_i} U \xrightarrow{V_2} \bigoplus_{d^2_i \in D^2} \Sigma^{d^2_i} U \xrightarrow{V_3} \cdots \xrightarrow{V_{\ell}}\bigoplus_{d^{\ell}_i \in D^\ell} \Sigma^{d^\ell_i} U
\] to a chain complex $\mathbb{T}(V)$, and each lift of the maps $$S^{0} \xrightarrow{V_1} \bigoplus_{d^1_i \in D^1} M$$ and $$\bigoplus_{d^{\ell}_i \in D^\ell} \Sigma^{d^\ell_i} U \xrightarrow{V_{\ell + 1}} \Sigma^{d^{\ell +1}} N$$ to maps
$$S^0 \xrightarrow{V_1'} Z_0^{\ell-1}(M \otimes_U \mathbb{T}(V))$$
and $$C_0^{\ell-1}\left(\mathbb{T}(V)\right) \xrightarrow{V_{\ell+1}'} \Sigma^{d^{\ell +1}} N.$$ (If any of these are not possible, the bracket is by definition empty.) The smash Toda bracket then consists of compositions
\[
\xymatrix@=2.5cm{
S^{\ell-1} \ar[r]^-{\Sigma^{\ell-1} V_1'} & \Sigma^{\ell -1} Z_0^{\ell-1}(M \otimes_U \mathbb{T}(V)) \ar[d]^\cong & \\
& C_0^{\ell-1}(M \otimes_U \mathbb{T}(V)) \ar[r]^{M \otimes_U V_{\ell+1}'}  &\Sigma^{d^\ell+1} M \otimes_U N
}
\]

\end{defn}
The discussion leading up to this section has shown:
\begin{thm}
Let $R$ be an ordinary ring, and let $U$ be a differential graded $R$-algebra whose underlying differential graded module is cofibrant in the projective model structure. Then, matric Massey products and smash Toda brackets in $\cat D(R)$ coincide. 
\end{thm}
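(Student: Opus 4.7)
The plan is to unwind both sides simultaneously and match them via the correspondence already implicit in the discussion leading up to the theorem statement. The heart of the argument is that Proposition \ref{drchains} gives an explicit model for chain complexes in $\cat D(R)$ in terms of matrices $f_{ij}\colon X_j \to X_i[j-i-1]$ satisfying $df_{i,k} = \sum_{j=i+1}^{k-1}(-1)^{j+1}f_{i,j}f_{j,k}$, which is (up to signs) exactly the system $A_{i,j}$ in May's definition of matric Massey products.

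First I would set up the bijection. Given multipliable matrices $V_1,\dots,V_{\ell+1}$, I would explicitly identify the three kinds of data appearing on each side. (a) A matric Massey product system requires choices $A_{i,j}$ for $1 \leq i < j \leq \ell$ with $A_{i-1,i} = V_i$ and $dA_{i,j} = \sum_{k=i+1}^{j-1}\bar A_{i,k}A_{k,j}$; these correspond under the free-module identification $\Hom_U(\Sigma^d U, \Sigma^e U) \simeq U[e-d]$ to the $f_{ij}$ in the extension of $\bigoplus \Sigma^{d^1_i} U \xrightarrow{V_2}\cdots\xrightarrow{V_\ell}\bigoplus \Sigma^{d^\ell_i} U$ to a coherent chain complex $\mathbb{T}(V)$ in $\cat D(U)$. (b) Choices $A_{0,j}$ for $0 < j \leq \ell$ correspond to a lift $V_1'\colon S^0 \to Z_0^{\ell-1}(M\otimes_U \mathbb{T}(V))$, using that $M\otimes_U \Sigma^{d_j^i} U \simeq \Sigma^{d_j^i} M$ and that the iterated fiber $Z_0^{\ell-1}$ is computed levelwise by the formula of Proposition \ref{drchains}. (c) Choices $A_{i,\ell+1}$ for $0 < i \leq \ell$ similarly correspond to a lift $V_{\ell+1}'\colon C_0^{\ell-1}(\mathbb{T}(V)) \to \Sigma^{d^{\ell+1}} N$, using Proposition \ref{iter} to identify the iterated cofiber with a shift of the iterated fiber.

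Next I would verify that $\tilde A_{0,\ell+1}$ is exactly the composition defining the smash Toda bracket. Using the explicit chain-complex model of $Z_0^{\ell-1}(M \otimes_U \mathbb{T}(V))$ from Proposition \ref{drchains}, the composition $V_{\ell+1}' \circ V_1'[\ell-1]$ is literally the alternating sum $\sum_{k=1}^{\ell} (\pm) A_{0,k} A_{k,\ell+1}$, evaluated on the generator $1$. Matching this with $\tilde A_{0,\ell+1} = \sum_{k=1}^{\ell} \bar A_{0,k} A_{k,\ell+1}$ is exactly the sign-tracking that May's bar convention $\bar x = (-1)^{1+\deg x} x$ was designed to package.

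The main obstacle is precisely this sign bookkeeping: keeping May's $\bar{(-)}$ convention consistent with the shift convention $d_{X[i]} = (-1)^i d_X$ and the mapping cone convention $-d_X + d_Y + f$ used in Proposition \ref{drchains}, through two nested inductions (the one producing $\mathbb{T}(V)$ and the one producing $Z_0^{\ell-1}$). I would handle this by choosing degree conventions so that the $(-1)^j$ in the differential of $Z_0^n(X)$ matches the $(-1)^{1+\deg A_{0,k}}$ in $\bar A_{0,k}$ on each cycle, then verifying the base case $\ell=2$ (ordinary triple Massey product vs.\ ordinary Toda bracket, where the comparison is classical) and propagating by the inductive structure of both definitions. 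Once signs match, the empty/nonempty dichotomy and the indeterminacy on both sides are automatically in bijection because the choices parameterizing them are in bijection, completing the proof.
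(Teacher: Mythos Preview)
Your proposal is correct and follows essentially the same approach as the paper: the paper's ``proof'' is precisely the discussion immediately preceding the theorem statement, which sets up the bijection between (a) choices $A_{i,j}$ for $1 \le i < j \le \ell$ and extensions to a coherent chain complex $\mathbb{T}(V)$ via Proposition~\ref{drchains}, (b) choices $A_{0,j}$ and the lift $V_1'$, (c) choices $A_{i,\ell+1}$ and the lift $V_{\ell+1}'$, and then identifies $\tilde A_{0,\ell+1}$ with the image of $1$ under the defining composite of the smash Toda bracket. Your added attention to the sign bookkeeping is appropriate, since the paper leaves this implicit in the conventions stated before Proposition~\ref{drchains}.
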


\subsection{Moss's convergence theorem}\label{mosssection}
As a simple demonstration of the power of this kind of deformation, we recall Burklund's proof of Moss's convergence theorem \cite[Section 4.2]{burklundsynthetic} for the Adams Spectral Sequence. Moss's convergence theorem relates two kinds of higher multiplicative structure: Massey products, which are defined in any differential graded algebra, and Toda brackets, which are defined in on the homotopy groups of ring spectra. More specifically, Moss showed that under certain conditions a Massey product in the $E_k$ page of the Adams Spectral Sequence can detect a corresponding Toda bracket in the homotopy groups of its target:
\begin{thmn}[Moss]
For spectra $X$, $Y$, $Z$, and $W$,  let $E_k^{n,t}(X,Y)$ denote the $E^k$ page of the mod p Adams spectral sequence computing $[X,Y^\wedge_p].$ Suppose we have permanent cycles $a \in E_k^{n,t}(X,Y)$, $b \in E_k^{n',t'}(Y,Z),$ and $c \in E_k^{n'',t''}(Z,W)$ such that $ab = 0$ and $bc= 0.$ Suppose $a,b,$ and $c$ detect maps of spectra $\omega, \omega', $ and $\omega''$ of degree $i, i',$ and $i''$ respectively such that $\omega \omega' = \omega' \omega'' = 0.$

Moreover, suppose the spectral sequences $E_k(X,Z)$ and $E_k(Y,W)$ satisfy the following \textbf{crossing differential hypotheses}:
\begin{itemize}
\item For $0 \leq n \leq s + s' - k$, every element of $E^{n, i + i' + n+1}_{n + s' - n +1}(X,Z)$ is a permanent cycle.
\item For $0 \leq n \leq s' + s'' - k$, every element of $E^{n, i' + i'' + n+1}_{n' + s'' - n +1}(Y,W)$ is a permanent cycle.
\end{itemize}
Then, the Massey product $\langle a,b,c \rangle$ contains a permanent cycle that detects an element of the Toda bracket $\langle \omega, \omega', \omega'' \rangle.$ 
\end{thmn}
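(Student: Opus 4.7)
The plan is to work inside the deformation $\cat D^{k-1}(\cat C)$, whose special fiber $\Mod_{C\tau^k}(\cat D^{k-1}(\cat C))$ captures the $E_k$ page (Proposition~\ref{sseqform}) and whose generic fiber $\cat D^{k-1}(\cat C)[\tau^{-1}]$ recovers $\cat C$. Following Burklund's strategy, the goal is to build a single smash Toda bracket $\langle \tilde a,\tilde b,\tilde c\rangle$ in $\cat D^{k-1}(\cat C)$ whose image along the special-fiber functor lies in the Massey product $\langle a,b,c\rangle$ on the $E_k$ page, and whose image along $\tau^{-1}$ lies in the Toda bracket $\langle \omega,\omega',\omega''\rangle$ in $\cat C$. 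Since both come from the same class, the representative of the Massey product produced this way is automatically a permanent cycle detecting the chosen representative of the Toda bracket.

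First I would lift the data. By Proposition~\ref{sseqform} and Corollary~\ref{ctweird}, elements of $E_k$ are $\pi^{k-1}_{**}$-classes after tensoring with $C\tau^k$, and being a permanent cycle is equivalent (via the cofiber sequence $\Sigma^{-k,0}(-)\xrightarrow{\tau^k}(-)\to (-)\otimes C\tau^k$) to admitting a preimage under the connecting map. Fixing such lifts $\tilde a,\tilde b,\tilde c$ in the appropriate bigraded homotopy groups of mapping filtered objects in $\cat D^{k-1}(\cat C)$, symmetric monoidality of the generic-fiber functor identifies their images with $\omega,\omega',\omega''$ (using the additional hypothesis $\omega\omega' = \omega'\omega'' = 0$ to reconcile any sign ambiguity in the choice of lift).

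The hard part is step two, where I would assemble a defining system in $\cat D^{k-1}(\cat C)$: null-homotopies of $\tilde a\cdot\tilde b$ and $\tilde b\cdot\tilde c$ that simultaneously refine the given null-homotopies of $ab,bc$ on the $E_k$ page and of $\omega\omega',\omega'\omega''$ in $\cat C$. The obstructions are bigraded homotopy classes in mapping objects of $\cat D^{k-1}(\cat C)$; translating through Proposition~\ref{sseqform} applied to the relevant mapping filtered objects identifies them with classes on pages $E_{k'}$ for $k\le k'\le 2k-1$ of $E_*(X,Z)$ and $E_*(Y,W)$, together with the differentials that could prevent them from lifting. The crossing differential hypothesis is exactly the statement that every potential obstruction class is itself a permanent cycle, so after adjusting $\tilde a,\tilde b,\tilde c$ by appropriately chosen $\tau^k$-divisible corrections the obstructions can be killed. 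The careful matching of indices --- verifying that the obstruction degrees in $\cat D^{k-1}(\cat C)$ really do fall in the range the hypothesis controls --- is the main bookkeeping and the essential content of the proof.

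With the defining system in hand, Definition~\ref{smashtoda} produces a smash Toda bracket $\langle \tilde a,\tilde b,\tilde c\rangle$ as a class in a bigraded homotopy group of $\cat D^{k-1}(\cat C)$. By naturality of smash Toda brackets under symmetric monoidal functors, tensoring with $C\tau^k$ and using the identification of smash Toda brackets with matric Massey products from Section~\ref{realmmpsection} shows that its image in the special fiber represents an element of $\langle a,b,c\rangle$, while inverting $\tau$ shows that its image in $\cat C$ represents an element of $\langle \omega,\omega',\omega''\rangle$. Since both representatives come from a single lift to $\cat D^{k-1}(\cat C)$, the first is a permanent cycle detecting the second, which is exactly the conclusion of the theorem.
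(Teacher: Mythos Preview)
Your overall architecture is exactly the paper's (and Burklund's): lift the permanent cycles to $\cat D^{k-1}(\cat C)$, build the Toda bracket there, and push it to both fibers. Where you diverge from the paper is in the treatment of the middle step, and your version is vaguer at precisely the point where the content lives.

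The paper does not describe the obstruction to $\tilde a\tilde b=0$ as a class on some intermediate page $E_{k'}$, nor does it adjust the lifts by $\tau^k$-divisible corrections. Instead it isolates a single clean lemma: under the crossing differential hypothesis in the relevant degree, the kernel of
\[
\pi^{k-1}_{n,t}(X)\longrightarrow \pi^{k-1}_{n,t}(X\otimes C\tau^k)=E^k_{n,t}(X)
\]
is $\tau$-power torsion free. The argument is then two lines: $\tilde a\tilde b$ lies in this kernel because $ab=0$ on $E_k$, and $\tilde a\tilde b$ is $\tau$-power torsion because $\tau^{-1}(\tilde a\tilde b)=\omega\omega'=0$; hence $\tilde a\tilde b=0$ on the nose, and likewise $\tilde b\tilde c=0$. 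No adjustment of lifts is needed, and the defining system for the Toda bracket in $\cat D^{k-1}(\cat C)$ exists immediately.

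Your description --- obstructions living on pages $E_{k'}$ for $k\le k'\le 2k-1$, the crossing differential hypothesis forcing them to be permanent cycles, then correcting the lifts --- is closer to the \emph{proof} of that torsion-free lemma than to its application, and as written it does not obviously terminate in $\tilde a\tilde b=0$. The missing idea is the reformulation ``crossing differentials $\Rightarrow$ kernel is $\tau$-torsion free,'' together with the observation that the hypothesis $\omega\omega'=\omega'\omega''=0$ is exactly what makes the lifted products $\tau$-torsion.
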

Further work has generalized this observation to a wider class of higher multiplicative operations and spectral sequences. Lawrence's PhD Thesis \cite{lawrence1969matric} showed that Moss's theorem holds for all matric Massey products. May's seminal paper \cite{mmp} on matric Massey products proves a similar statement for the spectral sequence associated to a filtered differential graded algebra.  More recently, Belmont and Kong \cite{belmont2021toda} have proven the statement for triple Massey products in filtration spectral sequences in arbitrary symmetric monoidal stable topological model categories.

Burklund shows that, at least for the $E_2$ page of the Adams Spectral Sequence, Moss's theorem is an immediate consequence of the deformation structure on synthetic spectra. The proof consists of three quick observations:
\begin{enumerate}
\item Let $\nu$ denote the fully faithful functor from spectra to synthetic spectra. A class $\alpha$ in the Adams $E_2$ page for a spectrum $X$ is a permanent cycle detecting a homotopy class $a$ if and only there is an element $\tilde{\alpha} \in \pi_{**}(\nu X)$ specializing to $\alpha$ in $\pi_{**}(X \otimes C\tau)$ and $a$ in $\tau^{-1}X.$  
\item If $ab = 0$ in the Adams $E_2$ page, then $\tilde{a}\tilde{b}$ is $\tau^k$-torsion for some $k$.
\item The crossing differential hypothesis implies that any $\tau^k$ torsion element in the same degree as $\tilde{a}\tilde{b}$ mapping to zero in the Adams $E_2$ page must be equal to zero. 
\end{enumerate}
These three observations imply that, given $a,b,c$ in the Adams $E_2$ page with $ab = bc = 0$, we can construct lifts $\tilde{a}, \tilde{b}$, and $\tilde{c}$ with $\tilde{a}\tilde{b} = \tilde{b}\tilde{c} = 0$. Because the maps from synthetic spectra to its special and generic fiber are exact and symmetric monoidal, the Toda Bracket $\langle \tilde a, \tilde b, \tilde c \rangle$ specializes to both the Massey product $\langle a,b,c \rangle$ and the Toda bracket $\langle \omega, \omega', \omega'' \rangle$. A similar proof applies to more complex Massey products.

In this section, we construct a deformation of a sufficiently well-behaved symmetric monoidal stable $\infty$-category $\cat C$ whose special fiber corresponds with the $E_k$ page of the more general filtration spectral sequence. As an example application, we then show that Burklund's argument can be carried out in this setting, providing a more intuitive and general proof of Belmont and Kong's result.

To do this, we first prove a couple of lemmas relating homotopical information in $\cat D^k(\cat C)$ to corresponding statements about the spectral sequence.
\begin{lem}\label{survive}
Let $\alpha$ be an element of $E^k_{nt}(X)$. Then, $\alpha$ survives to the $E^j$ page if and only if $\alpha$ is in the image of the natural map
\[
\pi^k_{nt}(X \otimes C\tau^{j+1}) \to \pi^k_{nt}(X \otimes C\tau^{k+1}).
\]
Similarly, $\alpha$ survives to the $E^\infty$ page if and only if it is in the image of the natural map
\[
\pi^k_{nt}(X) \to \pi^k_{nt}(X \otimes C\tau^{k+1}).
\]
\end{lem}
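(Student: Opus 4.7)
The plan is to realize ``surviving to the $E^j$ page'' as a lifting problem in $\cat D^k(\cat C)$, then identify the resulting obstruction with the classical spectral sequence differentials---much in the spirit of Burklund's argument for synthetic spectra that appears later in this section.

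First, I would apply the octahedral axiom to the composable maps $\tau^{j-k}: S^{-(j+1),0} \to S^{-(k+1),0}$ and $\tau^{k+1}: S^{-(k+1),0} \to S^{0,0}$ in $\Fil(\cat C)$ to produce a cofiber sequence
\[
\Sigma^{-(k+1),0} C\tau^{j-k} \to C\tau^{j+1} \to C\tau^{k+1}.
\]
Tensoring with $X$, passing to $\cat D^k(\cat C)$, and applying the representable functor $\pi^k_{**} = [S^{\bullet,\bullet},-]_{\cat D^k(\cat C)}$ from Section~\ref{modelhomspaces} yields the long exact sequence
\[
\pi^k_{n,t}(X \otimes C\tau^{j+1}) \to \pi^k_{n,t}(X \otimes C\tau^{k+1}) \xrightarrow{\partial_{j,k}} \pi^k_{n+k+1,t-1}(X \otimes C\tau^{j-k}),
\]
so $\alpha$ lies in the stated image if and only if $\partial_{j,k}(\alpha) = 0$.

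Second, I would identify $\partial_{j,k}$ with the differentials $d_{k+1},\ldots,d_{j-1}$ of the filtration spectral sequence. Using the identification $(X \otimes C\tau^a)_n \simeq X_n/X_{n+a}$ extracted from the proof of Proposition~\ref{sseqform}, one can unwind $\partial_{j,k}$ as an iterated ``lift and apply the boundary'' composition in the Cartan--Eilenberg system of Proposition~\ref{ceexists}---which is precisely how the classical differentials are defined. Its vanishing is therefore equivalent to $\alpha$ being a cycle for each of $d_{k+1},\ldots,d_{j-1}$, i.e. to surviving to $E^j$. The $E^\infty$ case is handled by the same strategy applied to the cofiber sequence $\Sigma^{-1,0} X \xrightarrow{\tau} X \to X \otimes C\tau^{k+1}$ in $\cat D^k(\cat C)$, which exists by Corollary~\ref{ctweird} tensored with $X$; the resulting connecting map $\pi^k_{n,t}(X \otimes C\tau^{k+1}) \to \pi^k_{n+1,t-1}(X)$ records every higher differential simultaneously, so its vanishing characterizes permanent cycles.

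The main obstacle is this second step. Both $\partial_{j,k}$ and the classical differentials arise from the same Cartan--Eilenberg system, so the identification is conceptually automatic, but matching them on the nose requires careful bookkeeping with the bigraded $(n,t)$-indexing, signs, and the slightly awkward $\tau^k$-image operation used to define $\pi^k_{**}$.
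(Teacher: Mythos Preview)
Your route is sound in outline, but it is considerably more elaborate than what the paper actually does, and the hard step you isolate is precisely the step the paper's argument replaces with a one-line observation.

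The paper never introduces a connecting map or a long exact sequence. Instead it works directly with the Cartan--Eilenberg description from Proposition~\ref{sseqform}, which gives $E^{r+1}_{nt}(X)$ as the image of $\pi_t(X_n/X_{n+r+1}) \to \pi_t(X_{n-r}/X_{n+1})$. Writing down the commutative square
\[
\xymatrix{
\pi_t(X_n/X_{n+j+1}) \ar[r] \ar[d] & \pi_t(X_{n-k}/X_{n+j-k+1}) \ar[d] \\
\pi_t(X_n/X_{n+k+1}) \ar[r] & \pi_t(X_{n-k}/X_{n+1})
}
\]
identifies the image of the bottom row with $E^k_{nt}(X)$ and the image of the top row with $\pi^k_{nt}(X\otimes C\tau^{j+1})$, while $E^j_{nt}(X)$ is the image of the top row followed by one further map to $\pi_t(X_{n-j}/X_{n+1})$. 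The lemma then drops out by comparing these images; no differential needs to be named. The $E^\infty$ case is handled by the same square with the top row replaced by $\pi_t(X_n) \to \pi_t(X_{n-k})$.

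Your approach trades this for two nontrivial tasks. First, you need the octahedral cofiber sequence in $\Fil(\cat C)$ to remain a cofiber sequence in $\cat D^k(\cat C)$; since the localization functor is only lax exact and the map $\Sigma^{-(k+1),0}C\tau^{j-k}\to C\tau^{j+1}$ is not obviously a $k$-projective cofibration, this needs justification (Corollary~\ref{kcofiber} shows homotopy cofibers generally differ from $\Fil(\cat C)$-cofibers). Second, matching $\partial_{j,k}$ with the classical differentials is, as you say, the main obstacle---but carrying it out amounts to exactly the image comparison the paper does in one step. So your plan is correct but circles back to the paper's argument at the crucial moment, after adding an extra layer that itself needs care.
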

\begin{proof}
We have a natural commutative square
\[
\xymatrix{
\pi_t(X_n/X_{n+j+1}) \ar[r] \ar[d]& \pi_t(X_{n-k}/X_{n+j-k+1}) \ar[d]  \\
\pi_t(X_n/X_{n+k+1}) \ar[r]   & \pi_t(X_{n-k}/X_{n+1}).
}
\]
The image of the bottom map is $E^k_{nt}(X)$, while the image of the top map is $\pi_{nt}^k(X \otimes C\tau^{j+1}).$ The claim follows because $E^j_{nt}(X)$ is the image of the composition of the top map with the map $$\pi_t(X_{n-k}/X_{n+j-k+1}) \to \pi_t(X_{n-j}/X_{n+1}).$$ The proof of the second part is similar.
\end{proof}
Notice that our statement of the crossing differential hypothesis is slightly different from Moss's, since we have graded things differently and do not assume our filtrations are bounded on either side.
\begin{defn}
Fix $n$ and $t$. We say that the \emph{crossing differential hypothesis} holds on the $E^k$ page in degrees $n,t$ if, for any $\ell > 0$, all elements of
\[
E^{k + \ell+1}_{n - k - \ell,t+1}(X)
\]
are permanent cycles.
\end{defn}
We follow Burklund by rephrasing Moss's Proposition 6.3  as a statement about $\tau$-power torsion:
\begin{lem}
Suppose the crossing differential hypothesis holds on the $E^k$ page in degree $n,t$. Then, the kernel of the map
\[
\pi^{k-1}_{nt}(X) \to \pi^{k-1}_{nt}(X \otimes C\tau^k) = E^k_{nt}(X)
\]
is $\tau^j$-torsion free for all $j > 0.$
\end{lem}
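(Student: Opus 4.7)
The plan is to combine two long exact sequences with a careful diagram chase, ultimately reducing the statement to a direct application of the crossing differential hypothesis, in the spirit of Burklund's proof of Moss's theorem for synthetic spectra.

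First I would apply the long exact sequence associated to the cofiber sequence $\Sigma^{-k,0}X \xrightarrow{\tau^k} X \to X \otimes C\tau^k$ in $\cat D^{k-1}(\cat C)$: any $\alpha$ in the kernel then has the form $\alpha = \tau^k \beta$ for some $\beta \in \pi^{k-1}_{n+k,t}(X)$. The hypothesis $\tau^j \alpha = 0$ yields $\tau^{k+j}\beta = 0$, and a second application of the long exact sequence, this time for $\Sigma^{-(k+j),0}X \xrightarrow{\tau^{k+j}} X \to X \otimes C\tau^{k+j}$, produces some $b \in \pi^{k-1}_{n-j-k+1,t+1}(X \otimes C\tau^{k+j})$ with $\beta = \partial b$, hence $\alpha = \partial(\tau^k b)$ by naturality.

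Next I would unwind the definition $\pi^{k-1}_{m,t'}(Y) = \im(\pi_{t'}Y_m \to \pi_{t'}Y_{m-k+1})$ to represent $b$ by some $\tilde b \in \pi_{t+1}(X_{n-j-k+1}/X_{n+1})$. Naturality of $\partial$ across the square of filtration shifts identifies $\alpha$ with the image in $\pi_t X_{n-k+1}$ of $\partial \tilde b \in \pi_t X_{n+1}$. Using the long exact sequence of the cofiber sequence
\[
X_{n-k+1}/X_{n+1} \to X_{n-j-k+1}/X_{n+1} \to X_{n-j-k+1}/X_{n-k+1},
\]
a diagram chase shows $\alpha = 0$ if and only if the image $c \in \pi_{t+1}(X_{n-j-k+1}/X_{n-k+1})$ of $\tilde b$ satisfies $\partial c = 0$; equivalently, $c$ lifts to $\pi_{t+1}(X_{n-j-k+1})$.

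Finally I would interpret $c$ as a class in the filtration spectral sequence. At its natural filtration $p \in [n-j-k+1, n-k]$, setting $\ell := n-k-p$, the fact that $\partial c$ lifts to $\pi_t X_{n+1}$ automatically kills every $d_r(c)$ for $r \leq k+\ell$, so $c$ defines a class in $E^{k+\ell+1}_{n-k-\ell,t+1}(X)$. A nonzero $\partial c$ would produce a nontrivial differential $d_r(c)$ with $r \geq k+\ell+1$, which the crossing differential hypothesis rules out for $\ell \geq 1$, forcing $\partial c = 0$ and hence $\alpha = 0$.

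The main obstacle I expect is keeping track of the two long exact sequences and their boundaries in the correct bigradings; in particular, the borderline case $\ell = 0$, corresponding to $\tilde b$ having natural filtration exactly $n-k$, sits just outside the range literally covered by the hypothesis. Since one cannot in general reduce $\tilde b$'s natural filtration without already assuming the conclusion, this endpoint needs either a slight extension of the hypothesis to include $E^{k+1}_{n-k,t+1}(X)$ or a dedicated argument, and pinning down exactly which differentials must vanish to close this gap is the most delicate part of the argument.
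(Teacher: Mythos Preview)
Your proposal has a concrete error and, relatedly, is missing the key idea that closes the ``borderline'' gap you yourself flag.

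The error is in your first long exact sequence. In $\cat D^{k-1}(\cat C)$ the cofiber of $\tau^m: S^{-m,0}\to S^{0,0}$ is $C\tau^{m+k-1}$, not $C\tau^m$ (this is Corollary~\ref{ctweird} iterated). In particular the cofiber of $\tau$, not of $\tau^k$, is $X\otimes C\tau^k$, so the kernel of $\pi^{k-1}_{nt}(X)\to\pi^{k-1}_{nt}(X\otimes C\tau^k)$ consists of classes of the form $\tau y$ with $y\in\pi^{k-1}_{n+1,t}(X)$, not $\tau^k\beta$. The same miscount appears in your second sequence: with $\tau^{j+1}y=0$ the relevant cofiber in $\cat D^{k-1}$ is $X\otimes C\tau^{j+k}$, which is what the paper uses. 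Once you fix this, your $b$ becomes the paper's $z\in\pi^{k-1}_{n-j-k,\,t+1}(X\otimes C\tau^{j+k})$ and all the bigrading bookkeeping realigns.

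The missing idea is the minimality trick that eliminates your $\ell=0$ endpoint. The paper does not try to prove $\alpha=0$ directly. Instead it argues by contradiction: take a nonzero $\alpha$ in the kernel and choose the \emph{smallest} $j$ with $\tau^j\alpha=0$. Writing $\alpha=\tau y$ and producing $z$ as above, the crossing differential hypothesis now applies with $\ell=j>0$, so $z$ is a permanent cycle and hence lifts to $\pi^{k-1}_{n-j-k,t+1}(X)$; the commutative square comparing $C\tau^{j+k}\to\Sigma^{-j-k+1,1}X$ with $C\tau^k\to\Sigma^{-k+1,1}X$ then identifies the boundary of $z$ with $\tau^j y=\tau^{j-1}\alpha$, forcing $\tau^{j-1}\alpha=0$ and contradicting the minimality of $j$. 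This is exactly the device that sidesteps your $\ell=0$ case: you never need to show $\alpha$ itself vanishes in one shot, only to drop the torsion exponent by one, and that step always lands strictly inside the range $\ell>0$ covered by the hypothesis.
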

\begin{proof}
Suppose otherwise. Choose $x$ in $\pi^{k-1}_{nt}(X)$ mapping to zero in $E^k_{nt}(X)$, and choose the smallest $j$ such that $\tau^{j}x = 0.$ Using the exact couple in Proposition \ref{sseqform}, we must have $x = \tau y$ for some $y$ in $\pi^{k-1}_{n+1,t}(X).$ Notice that to prove $x = 0$ it suffices to show that $\tau^jy = 0,$ since this will imply that $\tau^{j-1}x = 0$ and contradict the assumption that $j$ was minimal. 

Since $\tau^{j+1}y = 0$, we can use the cofiber sequence (in $\cat D^{k-1}(\cat C)$)
\[
\Sigma^{-j-1,0}X \xrightarrow{\tau^{j+1}} X \to X \otimes C\tau^{j+k}
\]
to find a preimage $z$ of $y$ in $\pi^{k-1}_{n-j-k,t+1}(X \otimes C\tau^{j+k}).$ By the crossing differential hypothesis, $z$ is a permanent cycle and therefore the image of an element of $\pi^{k-1}_{n-j-k,t+1}(X),$ and so maps to zero in $\pi^{k-1}_{n-j+1,t}X.$

But the commutative diagram
\[
\xymatrix{
X \otimes C\tau^{j+k} \ar[r] \ar[d] & \Sigma^{-j-k+1,1}X \ar[d]^-{\tau^j} \\
X \otimes C\tau^{k} \ar[r] & \Sigma^{-k+1,1}X
}
\]
shows that the image of $z$ in $\pi^{k-1}_{n-j+1,t}X$ is equal to $\tau^jy,$ which must therefore be zero.
\end{proof}
Finally, we are in a position to state Moss's convergence theorem.
 \begin{thm}
Let $M, U,$ and $N$ be filtered objects of $\cat C$ with convergent spectral sequences such that $U$ is a ring, $M$ is a right $U$-module, and $N$ is a left $U$-module. Let $\langle V_0, \cdots , V_n \rangle$ be a multipliable system of matrices of permanent cycles the $E_k$ pages of the relevant spectral sequences, detecting matrices of elements $\langle \tilde V_0, \cdots \tilde V_n \rangle$ in the homotopy groups of the colimits of $M$, $U$, and $N$. Moreover, suppose that the spectral sequences for $M$, $U$, and $N$ satisfy the crossing differential hypothesis in the degrees of any product of elements from any number of adjacent matrices. 

Then, if the matric Massey product $\langle V_0, \cdots , V_n \rangle$ is not empty, then it contains a permanent cycle detecting an element of the smash Toda bracket $\langle \tilde V_0, \cdots \tilde V_n \rangle$
 \end{thm}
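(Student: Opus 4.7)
The plan is to extend Burklund's synthetic-Adams proof to our general deformation setting. By Proposition \ref{sseqform} (with $r = k-1$) and Corollary \ref{ctweird}, the deformation $\cat D^{k-1}(\cat C)$ has special fiber $\Mod_{C\tau^k}$ capturing the $E^k$ page via $X \mapsto X \otimes C\tau^k$, and generic fiber $\cat D^{k-1}(\cat C)[\tau^{-1}] \simeq \cat C$ capturing colimits. The strategy is to build a single smash Toda bracket element inside $\cat D^{k-1}(\cat C)$ whose two specializations recover, respectively, an element of the matric Massey product on $E^k$ and an element of the smash Toda bracket $\langle \tilde V_0,\dots,\tilde V_n\rangle$ in $\cat C$.

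First, lift each entry of each $V_i$ to a class $\widehat V_i \in \pi^{k-1}_{**}$ of the corresponding filtered object in $\cat D^{k-1}(\cat C)$, using Lemma \ref{survive}: because $V_i$ is a permanent cycle, it is in the image of $\pi^{k-1}_{**}(X) \to \pi^{k-1}_{**}(X \otimes C\tau^k) = E^k_{**}(X)$. These lifts $\widehat V_i$ specialize to $V_i$ on the $E^k$ page and to $\tilde V_i$ under $\tau^{-1}$. Next, lift the matric Massey product data (the system of matrices $A_{i,j}$ from Definition \ref{mmp}) inductively from the special fiber to $\cat D^{k-1}(\cat C)$. At each stage, the obstruction to extending a partial chain complex to one more step is a class in the kernel of $\pi^{k-1}_{**} \to E^k_{**}$, hence of the form $\tau^k \cdot z$ by the cofiber sequence $\Sigma^{-k,0}X \xrightarrow{\tau^k} X \to X \otimes C\tau^k$. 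The $\tau^{-1}$-image of this obstruction equals the corresponding sub-product of detecting classes in $\pi_*$ of the colimit, which vanishes (making the obstruction $\tau$-nilpotent), and the torsion-freeness lemma preceding the theorem --- applied in the degree of the sub-product, where the crossing differential hypothesis holds by assumption --- then forces the obstruction itself to vanish. Parallel reasoning produces compatible lifts of the boundary maps $V_1',V_{n+1}'$ required by Definition \ref{smashtoda}.

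The completed data assembles into a smash Toda bracket element $\widehat\omega \in \pi^{k-1}_{**}(M \otimes_U N)$ in $\cat D^{k-1}(\cat C)$. Since both specialization functors are symmetric monoidal and exact, $\widehat\omega \otimes C\tau^k$ lies in the smash Toda bracket formed from the $V_i$ in the special fiber, which by the identification of matric Massey products with smash Toda brackets from Section \ref{realmmpsection} is precisely the matric Massey product $\langle V_0,\dots,V_n\rangle$ on the $E^k$ page; this special-fiber element is a permanent cycle because it arises from $\widehat\omega \in \pi^{k-1}_{**}$ via the map $-\otimes C\tau^k$ (Lemma \ref{survive}). Meanwhile $\tau^{-1}\widehat\omega$ lies in the smash Toda bracket $\langle \tilde V_0,\dots,\tilde V_n\rangle$ in $\pi_*(M \otimes_U N)$, and the detection statement holds because both specializations descend from the common lift $\widehat\omega$.

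The main obstacle is the inductive obstruction argument in step two: at each stage the obstruction lives in a degree dictated by the particular sub-product of $V_i$s being killed, and the theorem's hypothesis is calibrated so that the crossing differential condition holds in exactly these degrees. Carrying out the induction rigorously requires careful bookkeeping of which sub-products occur at which step of the construction of $\mathbb{T}(V)$, verifying that the corresponding classes in $\pi_*$ of the colimit vanish (so that obstructions are genuinely $\tau$-nilpotent), and checking that successive readjustments of the chosen lifts do not disturb their two specializations. The degrees of obstructions match those appearing in the definition of the smash Toda bracket, which in turn match the ``degrees of any product of elements from any number of adjacent matrices'' in the theorem's hypothesis.
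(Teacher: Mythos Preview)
Your approach is essentially the same as the paper's: work in $\cat D^{k-1}(\cat C)$, lift the permanent cycles $V_i$ to classes $\hat V_i$ in $\pi^{k-1}_{**}$ via Lemma \ref{survive}, lift the chain-complex data $\mathbb{T}(V)$ using the crossing differential hypothesis, and then specialize to the special fiber (recovering an element of the matric Massey product, necessarily a permanent cycle) and to the generic fiber (landing in the smash Toda bracket for the $\tilde V_i$) using that both specialization functors are exact and symmetric monoidal.

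Where you go beyond the paper is in unpacking the lifting step. The paper simply asserts ``the crossing differentials hypothesis then implies we can lift the entire chain complex $\mathbb{T}(V)$,'' while you correctly isolate the mechanism: each obstruction lies in $\ker(\pi^{k-1}_{**}\to E^k_{**})$, and the torsion-freeness lemma forces such a class to vanish \emph{provided} it is $\tau$-power torsion. Your justification for $\tau$-nilpotence---that the $\tau^{-1}$-image is a sub-product of the detecting classes $\tilde V_i$ and hence vanishes---is the delicate point. The theorem's hypotheses do not literally assert that these sub-products vanish in $\pi_*$ of the colimit (compare Moss's original triple-product statement, where $\omega\omega'=\omega'\omega''=0$ is explicitly assumed). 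You rightly flag this as requiring verification, but you should be aware that the paper's proof does not supply this verification either; it is absorbed into the one-sentence assertion. A minor correction: by Corollary \ref{ctweird} the cofiber of $\tau$ in $\cat D^{k-1}$ is $C\tau^k$, so the kernel in question is the image of a single $\tau$, not $\tau^k$---this does not affect the argument, since the torsion-freeness lemma covers all powers.
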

\begin{proof}
By Lemma \ref{survive}, the matrices $V_1, \cdots, V_{n-1}$ lift from $\pi^{k-1}_{**}(U \otimes C\tau^k)$ to matrices $\hat V_1, \cdots, \hat V_{n-1}$ in $\pi^{k-1}_{**}(U)$ with the property that $\tau^{-1}\hat V_i = \tilde V_i.$ Similarly, $V_0$ and $V_n$ lift to $\pi^{k-1}_{**}(M)$ and $\pi^{k-1}_{**}(N).$  The crossing differentials hypothesis then implies we can lift the entire chain complex $\mathbb{T}(V)$ of Definition \ref{smashtoda} from $\pi^{k-1}_{**}(U \otimes C\tau^k)$ to $\pi^{k-1}_{**}(U).$ Since the ``tensor with $C\tau^k$'' endofunctor of $\cat D^{k-1}$ is exact, it preserves the functors $C_0^{n-1}$ and $Z_0^{n-1}$, which can be written as iterated (co)fibers. Therefore the entire diagram
\[ 
\xymatrix@=2.5cm{
S^{\ell-1} \otimes C\tau^k \ar[r]^-{\Sigma^{\ell-1} V_1'} & \Sigma^{\ell -1} Z_0^{\ell-1}(M \otimes_U \mathbb{T}(V)) \ar[d]^\cong & \\
& C_0^{\ell-1}(M \otimes_U \mathbb{T}(V)) \ar[r]^{M \otimes_U V_{\ell+1}'}  &\Sigma^{d^\ell+1} M \otimes_U N
}
\]
lifts from $\Mod_{C\tau^k}(\cat D^{k-1}(\cat C))$ to $\cat D^{k-1}(\cat C),$ so we obtain an element $\alpha_V$ of $\langle \hat V_1, \cdots, \hat V_n \rangle$ which maps to an element $\alpha_{\hat{V}}$ of $\langle V_1, \cdots, V_n \rangle$. The statement then follows because the map 
\[
\tau^{-1}: \cat D^{k-1}(\cat C) \to \cat C
\]
is pointed and symmetric monoidal, and hence preserves smash Toda brackets, so we have \[
\tau^{-1} \langle \hat V_1, \cdots, \hat V_n \rangle \subseteq \langle \tilde V_1, \cdots, \tilde V_n \rangle.
\]
But $\tau^{-1}\alpha_{\hat{V}}$ is the set of elements detected by $\alpha_V,$ completing the proof.
\end{proof}
\section{Generation by Matric Massey Products}\label{gmsection}
\subsection{Introduction}
Recall that Gugenheim and May \cite{gm} proved that for any field $R$ and augmented connected graded algebra $A$, the cohomology $\Ext_A(R,R)$
of $A$ is generated by elements in degree $s = 1$ under matric Massey products. 

Much of Gugenheim and May's proof is formal, but there is a single step (Proposition 5.16) which relies on the explicit structure of the bar construction used to calculate $\Ext_A(R,R).$ This piece of the argument does not generalize to other DGAs, but we find that it can be replaced with an argument that works for differential graded algebra whose underlying algebra is \emph{Koszul}: that is, generated by elements in degree $s = 1$, with relations generated by elements in degree $s = 2$, with relations between relations generated in degree $s = 3$, and so on. In particular, this is true is the algebra is freely generated in degree $s = 1$, as in the cobar construction computing $\Ext_A(R,R).$

The main result of the chapter is the following:
\begin{thm}\label{mainmmpthm}
Let $E_r^{s,t}$ be a multiplicative spectral sequence over a field $R$ concentrated in degrees $s \geq 0$, and suppose the $E_1$ page can be chosen to be Koszul. Let $E_{2,r}^{s,t}$ denote the set of elements in $E_2^{s,t}$ which survive to the $E_r$ page. Then:
\begin{itemize}
\item The $E_2$ page is generated under matric Massey products by elements in degree $s = 1$. 
\item $E_{2,r}^{*,*}$ is generated under matric Massey products by elements in degree $0 < s < r.$
\end{itemize}
\end{thm}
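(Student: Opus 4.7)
The plan is to prove the two bullets separately; the first is a Koszul generalization of Gugenheim-May, and the second leverages the first via the deformation structure developed in Section \ref{deformsec}.

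For the first bullet, I would adapt the argument of \cite{gm} to the Koszul setting. Most of Gugenheim-May's proof is formal and proceeds once one has a resolution with certain structural properties; their single obstruction is Proposition 5.16, which invokes the explicit bar construction. Koszulity of the $E_1$ page provides the needed substitute: a Koszul algebra $A$ admits a minimal free resolution $F_\bullet$ in which $F_n$ is concentrated in internal degree $n$ and whose differentials are matrices of degree-$1$ elements. This structural fact is exactly what the Gugenheim-May machinery needs to extract matric Massey product representations of Ext classes in terms of these differentials, giving the first bullet.

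For the second bullet, I would work inside the deformation $\cat D^{r-1}(\cat C)$. By Lemma \ref{survive} together with Proposition \ref{sseqform}, an element $\alpha \in E_2$ surviving to $E_r$ lifts to a class in $\cat D^{r-1}(\cat C)$, and in fact to the special fiber $\Mod_{C\tau^r}(\cat D^{r-1}(\cat C))$, which by the corollary to Theorem \ref{symmonequiv} is symmetric monoidally $\cat D(\Gr_{r-1} \cat C^\heart)$. The filtration grading of this algebraic $\infty$-category takes values in $\{0, 1, \ldots, r-2\}$, so any matric Massey product computed there is built from pieces of filtration degree strictly less than $r$. Applying the Koszul-Gugenheim-May argument of the first bullet internally yields a decomposition of $\alpha$ as a matric Massey product in the special fiber with all pieces of filtration degree $0 < s < r$. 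To transfer this to a matric Massey product in the $E_2$ page itself, I would use the $\infty$-categorical characterization of matric Massey products via chain complexes (Section \ref{realmmpsection}) together with the symmetric monoidality of the localization $\cat D^{r-1}(\cat C) \to \cat D^1(\cat C)$: since smash Toda brackets are preserved under symmetric monoidal functors of stable $\infty$-categories, the decomposition descends to one in $\cat D^1(\cat C)$, equivalently in $E_2$, with the same degree bound.

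The main obstacle I anticipate is verifying that the Koszul-Gugenheim-May machinery genuinely runs inside the algebraic $\infty$-category $\cat D(\Gr_{r-1} \cat C^\heart)$, since the algebra governing its Ext calculations is no longer literally the Koszul algebra of the $E_1$ page but a derived avatar of it. Identifying the right Koszul-type resolution in this special fiber, and checking that its differentials are compatible with the filtration grading, is the technical heart of the argument; once that is set up, the degree translation across the deformation and the descent to $E_2$ should be essentially formal from the material already developed in Sections \ref{realmmpsection} and \ref{deformsec}.
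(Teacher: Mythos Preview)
Your first bullet is on the right track in spirit, though the paper organizes it more precisely: it introduces a \emph{derived} Koszul condition on a DGA $U$ (namely that $H_*(R\otimes_U^{\LL}R)$ is concentrated in degree $n=0$), shows via the suspension map $\sigma$ that derived Koszulity forces every element of $H(U)$ in degree $n<-1$ into $\ker\sigma$ and hence to be matric-Massey decomposable, and then uses the \emph{inverse filtration} spectral sequence (not a direct inspection of the Koszul resolution) to prove that classical Koszulity of the underlying algebra implies derived Koszulity.

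Your second bullet, however, has a genuine gap. You propose to compute in $\cat D^{r-1}(\cat C)$ and then descend via ``the symmetric monoidality of the localization $\cat D^{r-1}(\cat C)\to\cat D^1(\cat C)$.'' That functor does not exist in the paper's framework: the containments $W^1\subseteq W^2\subseteq\cdots$ produce localizations in the opposite direction, $\cat D^1(\cat C)\to\cat D^2(\cat C)\to\cdots$, and the paper explicitly remarks that these do not even arise from Quillen adjunctions, so there is no mechanism to carry a Toda-bracket decomposition from $\cat D^{r-1}$ back down to $\cat D^1$. Relatedly, the special fiber of $\cat D^{r-1}$ encodes the $E_r$ page, whereas the target $E_{2,r}$ is a subset of $E_2$; these are different objects and your descent would have to account for that.

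The paper avoids all of this by never leaving $\cat D^1(\cat C)$. The key move is to study the $C\tau^{r+1}$-module $U\otimes^{\LL} C\tau^{r+1}$ \emph{inside} $\cat D^1(\cat C)$, and to use the ring map $C\tau^{r+1}\to C\tau^2$ (Corollary \ref{ctsarealgebras}) to obtain a Toda-bracket-preserving map $\pi^1_{**}(U\otimes C\tau^{r+1})\to\pi^1_{**}(U\otimes C\tau^2)=E_2$ whose image is exactly $E_{2,r}$ (Lemma \ref{survive}). The suspension/indecomposables argument is then rerun for $U\otimes C\tau^{r+1}$ in $\cat D^1$: a filtered Koszul condition forces $\pi^1_{**}(R\otimes^{\LL}_U R\otimes C\tau^2)$ into degree $n=0$, and a short filtration argument (Lemma \ref{ctkp1degrees}) propagates this to show $\pi^1_{**}(R\otimes^{\LL}_U R\otimes C\tau^{r+1})$ lives in degrees $1-r\le n\le 0$, which is exactly the range $0<s<r$. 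So the degree bound comes not from the $\Gr_{r-1}$ grading of a special fiber but from the length of the filtration on $C\tau^{r+1}$ by copies of $C\tau^2$.
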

The first part is a generalization of the main result of \cite[Chapter 5]{gm}, while the second is wholly new. The result holds in our motivating example of the Adams Spectral Sequence, but also more generally. 

We prove the first bullet point in Section \ref{gmkoszulsection}, and the second in Section \ref{filkoszulsection}. Before that, in Section \ref{classickossec}, we record a handful of classical algebraic facts that we will need in what follows.

\subsection{A classical identification of an augmented algebra's indecomposables}\label{classickossec}
This brief section contains classical algebraic facts and definition we will reference in the following two sections. First, we formalize our notion of a \emph{Koszul algebra}.
\begin{defn}
Let $R$ be a field, and let $U$ be a negatively-graded augmented graded $R$-algebra. We say $U$ is \emph{Koszul} if $\Tor^n_U(R,R)$ is concentrated in degree $-n.$
\end{defn}
Following Wall's work \cite{wall1960generators}, note that this places strong conditions on the generators of $U$. By writing down a minimal $U$-resolution of $R$, one observes that $U$ must be generated by elements in degree $n=-1$, with relations in degree $n=-2$, with relations between relations in degree $n=-3$, and so on.

Koszulity turns out to be the right condition to generalize Gugenheim and May's argument. For our applications, the most important example is:
\begin{ex}
Let $U$ be a free algebra generated by elements in degree $n = -1.$ Then $U$ is Koszul.
\end{ex}
Classically, we can identify indecomposables in $U$ with a subspace of $\Tor_1^U(R,R).$ The next two sections will use Gugenheim and May's insight that if $U$ carries a differential structure this identification can be taken further, to relate elements indecomposable \emph{by matric Massey products} in $U$ with a subspace of a \emph{derived} version of $\Tor_1^U(R,R).$ Here we record the classical statement and proof to motivate our proofs in the next two sections. To start, we define a map traditionally called the \emph{suspension}.
\begin{defn}\label{sigmaclassical}
Let $R$ be a ring and $U$ be an augmented $R$-algebra with augmentation ideal $IU.$ We define the \emph{suspension map}
\[
\sigma: IU \to \Tor_1^U(R,R)
\]
as follows. Choose a free $U$-resolution $X_\bullet$ of $R$ with $X_0 = U.$ 

For $u \in IU$, let $x$ denote the image of $u$ under the inclusion $IU \to X_0.$ Since $x = 0 $ in $H_0(X_\bullet)$, there exists $y$ in $X_1$ with $dy = x.$ Notice that $y \otimes 1$ is a cycle in $X_\bullet \otimes R$, and so corresponds to an element of $\Tor^U_1(R,R)$, which we call $\sigma(u).$ 

To see that this did not depend on the choice of $y,$ suppose that we instead chose $y'$ with $dy' = x$. Then $d(y - y') = 0,$ so there exists $z$ in $X_2$ with $dz = (y - y').$ In particular, $d(z \otimes 1) = (y \otimes 1 - y' \otimes 1)$ in $X_\bullet \otimes R,$ so $\sigma(u)$ is indeed well-defined in $\Tor^U_1(R,R).$ With further work it is possible to show that $\sigma$ did not depend on the choice of resolution, but this is not relevant to the rest of this paper, so we omit it.
\end{defn}

Notice that $\sigma$, as defined, is actually a map of $U$-modules, where we view the right hand side as the direct sum of a bunch of copies of $R$ with $U$-action induced by the augmentation map. In particular, we see that $\sigma(u) = 0$ whenever $u$ is the product of two elements of $IU.$ The converse is true as well. To see this, we require the following (immediate) observation. 
\begin{obs}\label{iuiuclassical}
Let $IU$ be the augmentation ideal of $U$, defined by the short exact sequence
\[
0 \to IU \to U \to R \to 0.
\]
An element of $IU$ is decomposable if and only if it lands in the image of the natural map
\[
IU \otimes_U IU \to IU.
\]
\end{obs}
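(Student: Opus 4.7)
The plan is to unpack the definition of ``decomposable'' and observe that both sides of the claimed equivalence describe the same subspace of $IU$, namely $IU \cdot IU$. By definition, an element of $IU$ is decomposable if and only if it is a sum of products $uv$ with $u,v \in IU$, which is precisely the image of the multiplication map $\mu \colon IU \otimes_R IU \to IU$ obtained by restricting the multiplication on $U$. So the task reduces to identifying the image of $\mu$ with the image of the map $IU \otimes_U IU \to IU$ in the statement.

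For this, I would use the canonical surjection $q \colon IU \otimes_R IU \twoheadrightarrow IU \otimes_U IU$, which is a quotient map because tensoring over $U$ imposes only additional relations beyond tensoring over $R$. The multiplication factors as $\mu = \bar\mu \circ q$, where $\bar\mu \colon IU \otimes_U IU \to IU$ is the map from the statement; this is well-defined because for $u,v,w \in IU$ one has $u(vw) = (uv)w$ in $IU$, so the relation $uv \otimes_U w = u \otimes_U vw$ is preserved. Since $q$ is surjective, $\image(\bar\mu) = \image(\mu)$, which is exactly the set of decomposable elements.

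There is no real obstacle here — the statement is purely formal once one notes that $\mu$ descends along the quotient $q$, which is immediate from associativity of $U$. The main thing to get right is keeping track of which tensor product is being used: the ``natural map'' $IU \otimes_U IU \to IU$ in the statement is the one induced by multiplication after passing to the relative tensor product, and everything else follows by inspection.
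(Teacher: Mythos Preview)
Your proposal is correct. The paper does not give a proof of this observation at all, calling it ``immediate''; your argument is precisely the standard unpacking of why it is immediate, so there is nothing to compare.
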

This observation allows us to identify the kernel of $\sigma$ as the decomposables in $IU$, which will imply the desired bijection between the image of $\sigma$ and the indecomposables.
\begin{prop}\label{kersigmaclassical}
The kernel of $\sigma$ consists of precisely the decomposable elements of $IU$. 
\end{prop}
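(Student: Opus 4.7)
The plan is to prove both containments directly, exploiting the fact that a free resolution $X_\bullet \to R$ with $X_0 = U$ has boundary $d_1: X_1 \to X_0$ whose image is precisely $IU$ (since $R = U/IU$), together with Observation \ref{iuiuclassical}.

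For the easy direction, suppose $u \in IU$ is decomposable, so by Observation \ref{iuiuclassical} we can write $u = \sum a_i b_i$ with $a_i, b_i \in IU$. For each $i$ I would pick a preimage $y_i \in X_1$ with $d y_i = b_i$, then observe that $\sum a_i y_i \in X_1$ satisfies $d(\sum a_i y_i) = \sum a_i b_i = u$. This is a valid choice for the element $y$ in Definition \ref{sigmaclassical}, and since each $a_i \in IU$, every term $a_i y_i \otimes 1$ vanishes in $X_1 \otimes_U R$, so $\sigma(u) = 0$.

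For the converse, suppose $\sigma(u) = 0$, where $y \in X_1$ is any lift of $u$. Then $y \otimes 1$ is a boundary in $X_\bullet \otimes_U R$, meaning there exists $w \in X_2$ with $y \otimes 1 = dw \otimes 1$ in $X_1 \otimes_U R = X_1/(IU \cdot X_1)$. Lifting this equation back to $X_1$ gives $y - dw \in IU \cdot X_1$, so I can write $y = dw + \sum a_i x_i$ with $a_i \in IU$ and $x_i \in X_1$. Applying $d$ and using $d^2 = 0$ yields $u = dy = \sum a_i (d x_i)$, and since the image of $d_1$ is contained in $IU$, each $d x_i$ lies in $IU$. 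Thus $u \in (IU)^2$, i.e., $u$ is decomposable.

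There is no real obstacle here; the argument is classical and short. The one point that requires a bit of care is making sure the definition of $\sigma$ does not depend on the chosen lift $y$, but this is already addressed in Definition \ref{sigmaclassical}. Alternatively, one could derive the proposition more conceptually by tensoring the short exact sequence $0 \to IU \to U \to R \to 0$ with $R$ over $U$ to identify $\Tor_1^U(R,R) \cong IU/(IU)^2$ and verifying that $\sigma$ corresponds to the canonical projection; this reformulation is worth flagging because it is the classical shadow of the derived, matric-Massey-product generalization that drives the next section.
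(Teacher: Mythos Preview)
Your proof is correct. Your primary argument is a direct, hands-on computation: you explicitly build a lift of a decomposable element that visibly dies in $X_1 \otimes_U R$, and conversely unpack what it means for $y \otimes 1$ to be a boundary to extract a decomposition of $u$. The paper instead takes the conceptual route you flag at the end: it introduces the map $\pi: IU = U \otimes_U IU \to R \otimes_U IU$, identifies $\ker\pi$ with the decomposables via the right-exact sequence obtained by tensoring $0 \to IU \to U \to R \to 0$ with $IU$, and then shows $\ker\sigma = \ker\pi$ by embedding $\Tor_1^U(R,R)$ into $R \otimes_U IU$ using the long exact Tor sequence and the vanishing of $\Tor_1^U(R,U)$.

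Your explicit argument is more elementary and self-contained; the paper's is shorter once the homological machinery is in place, and---as you correctly anticipate---it is the template that gets upgraded verbatim to the derived and filtered settings in the next two sections (where $\pi$ becomes a map out of a derived tensor product and Observation \ref{iuiuclassical} is replaced by its matric-Massey-product analogue). So your instinct to flag the alternative was exactly right: that is the version the paper runs with.
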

\begin{proof}
Let $\pi$ denote the natural map $IU = U \otimes_U IU \to R \otimes_U IU.$ The long exact sequence
\[
0 \to IU \to U \to R \to 0
\]
induces an exact sequence
\[
IU \otimes_U IU \to U \otimes_U IU \xrightarrow{\pi} R \otimes_U IU \to 0.
\]
so the kernel of $\pi$ is the image of $IU \otimes_U IU$ in $U \otimes_U IU,$ which by Observation \ref{iuiuclassical} consists of the decomposable elements of $IU$.

Now, consider the diagram
\[
\xymatrix{
 \Tor^U_1(R,U) \ar[r] &  \Tor^U_1(R,R)\ar[r] & R\otimes_U IU  \\
& IU \ar^\sigma[u] \ar^\pi[ur], &&&
 }
\]
which commutes by the definitions of $\pi$ and $\tau.$ The top row is exact and $\Tor^U_1(R,U)$ is 0, so $\ker \sigma$ must be equal to $\ker \pi$.
\end{proof}
\subsection{Koszul conditions and generation in Gugenheim-May}\label{gmkoszulsection}
Gugenheim and May study matric Massey products in $\Ext_A(R,R)$, which is the cohomology of the cobar construction of an augmented connected algebra $A$. In this section, we show that their argument generalizes to relate more general differential-graded algebras to their cohomology rings.

\begin{warn}\label{sngrading}
The $(n,t)$ gradings we have introduced are inherently \emph{homological}. Gugenheim and May use cohomological grading $(s,t)$, where $s = -n.$ We will find $n$-grading more convenient for our proofs, but state our final result with $s$-grading to facilitate comparison and application to the Adams spectral sequence.
\end{warn}

Our goal is to prove the following theorem, which is equivalent to the first bullet point of Theorem \ref{mainmmpthm}. The proof will closely follow Chapter 5 of Gugenheim and May's book \cite{gm}, which proves the case where $U$ is the cobar construction of an augmented connected algebra $A$. 
\begin{thm}
Let $U$ be a differential graded algebra whose underlying algebra is Koszul. The homology $H(U)$ is generated under matric Massey products by elements of degree $n = -1.$
\end{thm}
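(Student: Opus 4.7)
The plan is to follow \cite[Chapter 5]{gm}, generalizing their Proposition 5.16 (which uses explicit bar-construction formulas) to an arbitrary DGA with Koszul underlying algebra $\bar U$. We proceed by downward induction on $|n|$: the case $n = -1$ is immediate, and for $[u] \in H(U)_{-k}$ with $k \geq 2$ we aim to write $[u]$ as a matric Massey product of classes in degrees $-1, \ldots, -(k-1)$, which by induction are themselves matric Massey products of degree-$-1$ classes. Throughout, Koszulity is the only input from the structure of the underlying algebra; everything else comes from the DGA structure on $U$ and the $\infty$-categorical machinery already developed in Section \ref{realmmpsection}.

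The key construction is a tower of higher suspension maps generalizing the classical $\sigma: IU \to \Tor_1^U(R,R)$ of Definition \ref{sigmaclassical}. Fix a minimal free $\bar U$-resolution $X_\bullet \to R$. Koszulity forces each $X_k$ to be generated in internal degree $-k$, so each differential $d_k: X_k \to X_{k-1}$ is recorded by a matrix $V_k$ of degree-$-1$ elements, and the identities $d_{k-1} d_k = 0$ furnish the first layer of matric Massey compatibility conditions among the $V_k$. Combined with the $\infty$-categorical repackaging of matric Massey products from Section \ref{realmmpsection}, particularly Proposition \ref{drchains}, a lift of a cycle $u \in U$ through the resolution $X_\bullet$ is precisely the data of a chain complex of free $U$-modules whose smash Toda bracket realizes $[u]$ as a matric Massey product $\langle V_1, \ldots, V_k \rangle$ built out of degree-$-1$ classes.

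The main obstacle is producing this lift: showing that every class $[u] \in H(U)_{-k}$ does extend through all $k$ levels of $X_\bullet$ in a way compatible with the $f_{ij}$-relations of Proposition \ref{drchains}. Here we generalize the kernel computation of Proposition \ref{kersigmaclassical} to the iterated setting. At the $j$-th stage of the lift, the obstruction to extending lives in a derived $\Tor_{j+1}^{\bar U}(R,R)$-group, which Koszulity forces to be concentrated in internal degree $-(j+1)$. This degree control is the crux of the argument: a nonvanishing obstruction can only appear in a degree compatible with a strictly smaller matric Massey product, which the compatibility relations underlying Definition \ref{mmp} then allow us to subtract off by modifying the lift at an earlier stage. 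An inductive bookkeeping argument, parallel in structure to Gugenheim and May's original but with Koszulity substituted for the explicit bar-construction identities, closes the induction and yields the theorem.
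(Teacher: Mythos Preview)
Your proposal has a genuine gap. You fix a minimal free $\bar U$-resolution $X_\bullet \to R$ and record each differential $X_k \to X_{k-1}$ as a matrix $V_k$ of degree-$-1$ elements of $\bar U = U$. You then want to read $\langle V_1,\ldots,V_k\rangle$ as a matric Massey product in $H(U)$. But matric Massey products (Definition \ref{mmp}) and smash Toda brackets (Definition \ref{smashtoda}) require the entries of the $V_i$ to be \emph{cycles}, i.e., classes in $H(U)$; your $V_k$ are merely degree-$-1$ elements of the underlying algebra and need not satisfy $d_U(V_k)=0$. For instance, take $\bar U$ free on degree-$-1$ generators $x,y$ with $d_U x = 0$ and $d_U y = x^2$: the minimal $\bar U$-resolution of $R$ has first differential $(x,y)$, and $y$ is not a cycle. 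The relation $d_{k-1}d_k = 0$ you invoke is an identity in $\bar U$, not a statement about homology classes, so it does not ``furnish the first layer of matric Massey compatibility conditions'' in the sense required. Your obstruction discussion in the final paragraph presupposes that the bracket $\langle V_1,\ldots,V_k\rangle$ is well-posed to begin with, so it cannot repair this.

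The paper avoids this by decoupling the two inputs. It introduces a \emph{derived} Koszul condition on the DGA $U$ (that $R\otimes_U^\LL R$ has homology concentrated in degree $0$) and observes that Gugenheim and May's suspension argument already shows: derived-Koszul implies $H(U)$ is generated in degree $-1$ under matric Massey products (Theorem \ref{koszulgendga}). The underlying-algebra Koszulity enters only in Theorem \ref{dgakoszuliskoszul}, via a short spectral-sequence argument: place on $U$ the \emph{inverse} filtration by internal degree, so that the associated graded is $\bar U$ with zero differential, and run the resulting spectral sequence $\Tor_*^{\bar U}(R,R)\Rightarrow H(R\otimes_U^\LL R)$. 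Classical Koszulity concentrates the $E^1$ page on the line $r = -n$, forcing the abutment into degree $0$. This cleanly separates ``where the decomposables live'' (the GM suspension kernel computation, which needs no explicit resolution of $R$ over $\bar U$) from ``why Koszulity of $\bar U$ controls the derived tensor product'' (a collapse argument), and never asks you to interpret $\bar U$-resolution data as $H(U)$-classes.
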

Our proof strategy is as follows. First, we will define a ``derived'' Koszul condition for differential graded algebras, which we will show implies that the homology is generated under matric Massey products by elements of degree $n = -1.$ Then, we will show that any differential graded algebra whose underlying algebra is Koszul (in the classical sense) is also Koszul in the derived sense. To start, we define:
\begin{defn}
Let $R$ be a field, and let $U$ be an augmented differential graded $R$-algebra. We say $U$ is \emph{Koszul} if $R \otimes_U^\LL R$ has homology concentrated in degree $n = 0.$
\end{defn}
Here $R \otimes_U^\LL R$ is the \emph{derived} tensor product, which Gugenheim and May notate as $\Tor^*_U(R,R).$ We state the two parts of our proof as separate theorems. 
\begin{thm}\label{koszulgendga}
Let $U$ be a Koszul differential graded $R$-algebra. Then $HU$ is generated by elements of degree $-1$ under matric Massey products.
\end{thm}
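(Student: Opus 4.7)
The plan is to follow Gugenheim and May's argument for the bar construction, replacing their explicit analysis of the bar resolution with a general argument valid for any derived-Koszul differential graded algebra. At the heart of the proof is a sequence of ``higher suspension maps'' generalizing Definition \ref{sigmaclassical}, whose joint kernel consists exactly of matric-Massey-decomposable elements; the derived Koszul condition then forces all elements of $H_{-m}(U)$ with $m \geq 2$ to lie in this kernel.

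First I would construct, for each $n \geq 1$, a derived suspension map $\sigma_n : H_{<0}(U) \to \Tor_n^U(R,R)$ using iterated connecting maps from the cofiber sequence $IU \to U \to R$ of $U$-modules, mimicking Definition \ref{sigmaclassical} but taking $n$ successive connecting maps rather than one. Concretely, in the $\infty$-categorical framework, $\sigma_n(x)$ is the obstruction to extending a representative cycle of $x$ to a chain complex of $U$-modules of the shape $R \to U \to U \to \cdots \to U \to \Sigma^m U$ by successive lifts across the quotient maps $U \to R$, where $m$ is the degree of $x$.

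Next I would establish an analogue of Proposition \ref{kersigmaclassical}: the joint vanishing of $\sigma_1, \dots, \sigma_n$ on an element $x \in H_{<0}(U)$ is equivalent to $x$ being expressible as a matric Massey product with at most $n$ factors drawn from strictly-lower-absolute-degree classes of $HU$. The forward direction follows from the smash Toda bracket characterization of matric Massey products in Section \ref{realmmpsection}: a smash Toda bracket is by definition the image of a map from a shifted sphere through an iterated fiber, which forces the derived suspension obstructions to vanish. The converse is a direct inductive lifting argument, identical in spirit to classical Gugenheim-May but cleaner because the $\infty$-categorical definition of smash Toda brackets is already built from iterated fibers (Proposition \ref{iter}) of chain complexes of $U$-modules, rather than from explicit cocycle constructions.

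Finally I would invoke the derived Koszul condition: since $R \otimes_U^{\LL} R$ is concentrated in total degree zero, and since $\Tor_n^U(R,R)$ contributes to this total degree via the combination of the homological $n$-grading and the internal $t$-grading inherited from $U$, a simple degree count forces $\sigma_n(x) = 0$ for every $x \in H_{-m}(U)$ with $m \geq 2$ and every $n$. Hence every such $x$ is a matric Massey product, and inducting on $m$ we may replace each factor by a further matric Massey product until all remaining factors live in degree $-1$. The main obstacle is the middle step, carefully checking that the joint kernel of the $\sigma_n$ coincides with matric-Massey-decomposable elements. In Gugenheim-May this is proven by delicate explicit cocycle manipulations; the hope is that the smash Toda bracket framework of Section \ref{realmmpsection} makes the identification nearly tautological, by tracking lifts through iterated fiber sequences rather than chain-level coboundaries.
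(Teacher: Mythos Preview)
Your proposal over-complicates the argument and introduces a genuine gap. The paper's proof (following Gugenheim--May) uses only a \emph{single} suspension map
\[
\sigma : H(IU) \longrightarrow E^{\infty}_{1,*}(R,U,R),
\]
landing in the filtration-one row of the algebraic Eilenberg--Moore spectral sequence for $R\otimes_U^{\LL} R$. The key lemma is that $\ker\sigma$ already equals the set of \emph{all} matric-Massey-decomposable elements: this follows from the cofiber sequence $IU\otimes_U^{\LL}IU \to IU \to IU\otimes_U^{\LL}R$ together with Gugenheim--May's identification of the image of $H(IU\otimes_U^{\LL}IU)\to H(IU)$ with matric Massey products. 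The Koszul hypothesis then says $H(R\otimes_U^{\LL}R)$ lives in total degree~$0$, so $E^{\infty}_{1,n}=0$ for $n\neq -1$, and every class of degree $\neq -1$ lies in $\ker\sigma$. That is the entire proof.

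Your family $\sigma_n$ for $n\geq 2$ is problematic on two counts. First, such higher obstruction maps are not well-defined on all of $H_{<0}(U)$: the obstruction at stage $n$ depends on the choices of lifts at stages $1,\dots,n-1$, so at best you get a map defined only on $\bigcap_{j<n}\ker\sigma_j$, and only up to indeterminacy. Second, and more seriously, your degree count does not do what you claim. If $x\in H_{-m}(U)$ and $\sigma_n$ preserves internal degree, then $\sigma_n(x)$ lands in the piece $E^{\infty}_{n,-m}$, which under Koszul vanishes precisely when $n\neq m$; for $n=m$ the target need not vanish, so you cannot conclude $\sigma_m(x)=0$ from degree reasons alone. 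Your inductive scheme therefore stalls exactly at the step where you would need it. The fix is simply to drop the higher $\sigma_n$: once you know $\ker\sigma_1$ is the full set of decomposables, the single vanishing $\sigma_1(x)=0$ for $m\geq 2$ suffices.
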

\begin{thm}\label{dgakoszuliskoszul}
Let $U$ be a differential graded $R$-algebra. If the underlying graded $R$-algebra of $U$ is Koszul, then $U$ is as well.
\end{thm}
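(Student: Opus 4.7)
The plan is to compute $R \otimes_U^{\LL} R$ via the reduced two-sided bar complex $B = B(R,U,R) = \bigoplus_{p \geq 0} (IU)^{\otimes p}$ and extract the DG Koszul condition from the spectral sequence of a suitable filtration. The complex $B$ is bigraded by bar degree $p$ and internal $n$-degree $q$, with total differential $d = d_b + d_U$, where $d_b$ has bidegree $(-1,0)$ and $d_U$ (the Leibniz extension of the differential on $U$) has bidegree $(0,-1)$. Viewed as a single chain complex, $B$ is graded by the total degree $n = p+q$, and this is precisely the grading that detects DG-Koszulity of $U$.

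The key step is to filter $B$ by internal degree, $F_\alpha = \{x \in B : \text{internal-deg}(x) \leq \alpha\}$. Because $d_b$ preserves internal degree while $d_U$ strictly lowers it, $d_U$ vanishes on the associated graded and $E^0$ carries only $d_b$; the $E^0$-page is the reduced bar complex of $U^\flat$ (viewed as a DGA with zero differential) equipped with its bar differential. Hence
\[
E^1_{\alpha,\beta} \;=\; \Tor_\beta^{U^\flat}(R,R)_\alpha,
\]
and by the Koszul hypothesis on $U^\flat$ this vanishes unless $\alpha = -\beta$. So the entire $E^1$-page is concentrated on the diagonal $n = \alpha+\beta = 0$.

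Every differential $d^r$ decreases the total degree $n$ by one, so starting from $E^1$ concentrated in $n = 0$, every $d^r$ lands in the zero group at $n = -1$ and must vanish. The spectral sequence collapses at $E^1$, giving that the associated graded of $H(B) = H(R \otimes_U^{\LL} R)$ is concentrated in total degree $0$, which is the DG Koszul condition on $U$.

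The main obstacle I anticipate is convergence: when $U^\flat$ has infinitely many Koszul generators, the internal-degree filtration on $B$ is not bounded below in each fixed total degree. Because the sequence collapses at $E^1$ there are no extension problems, so the question reduces to verifying that the filtration is exhaustive and complete in the relevant sense. I would address this by either replacing $B$ with a completion that computes the same derived tensor product, or by exploiting a connectivity hypothesis on $U$ to see that the filtration stabilizes from above in each fixed total degree. As a fallback, I could bypass convergence entirely by constructing a small DG $U$-resolution of $R$ that lifts the classical Koszul resolution of $U^\flat$; the obstructions to such a lift live in bidegrees of $\Ext^{**}_{U^\flat}(R,R)$ that are forced to vanish by Koszulity.
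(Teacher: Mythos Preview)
Your approach is essentially identical to the paper's: the paper filters $U$ by internal degree (what it calls the ``inverse filtration'') so that the associated graded is the underlying algebra $V(U)$ with zero differential, and then invokes the filtered-Tor spectral sequence of Theorem~\ref{torfil} to obtain exactly your $E^1 = \Tor^{V(U)}_*(R,R)$ concentrated on the diagonal $n = 0$. The paper does not address your convergence concern at all---it simply asserts convergence---so your caution here is, if anything, more careful than the original.
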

The proof of Theorem \ref{koszulgendga} is essentially contained in Section 5 of \cite{gm}, although they do not state it in this generality. We sketch it here to emphasize that the proof works for any Koszul differential graded $R$-algebra, rather than just the cobar construction, and to foreshadow the proofs in the next section. We start by generalizing the suspension map from definition \ref{sigmaclassical} to include the presence of differentials. 

\begin{defn}[Definition 3.7 of \cite{gm}]
Let $R$ be a ring and $U$ be an augmented differential graded $R$-algebra with augmentation ideal $IU.$ Denote by $E^k_{**}(R,U,R)$ the algebraic Eilenberg-Moore spectral sequence computing the homology of $R \otimes_U^\LL R.$ The suspension map
\[
\sigma: H(IU) \to E^\infty_{1*}(R,U,R) 
\]
is the top horizontal composition
\[
\xymatrix{
H(IU) \ar^\sigma[r] \ar[d]^{\sigma} & E^\infty_{1*}(R,U,R) \\
\Tor_{1*}^{HU}(R,R) \ar@{=}[r] & E^2_{1*}(R,U,R) \ar[u],
}
\]
where the $\sigma$ appearing on the left hand arrow is the classical map of Definition \ref{sigmaclassical}.
\end{defn}
Gugenheim and May then prove a generalization of Proposition \ref{kersigmaclassical}:
\begin{thm}[Corollary 5.13 of \cite{gm}]
The kernel of the map $\sigma$ consists of the elements of $U$ which are decomposable by matric Massey products. 
\end{thm}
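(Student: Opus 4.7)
The plan is to factor $\sigma$ through the edge sequence of the algebraic Eilenberg--Moore spectral sequence and analyze the kernel one page at a time. By definition $\sigma$ is the composite
\[
H(IU) \xrightarrow{\sigma_{cl}} \Tor^{HU}_{1*}(R,R) = E^2_{1*}(R,U,R) \xrightarrow{\text{edge}} E^\infty_{1*}(R,U,R).
\]
Proposition \ref{kersigmaclassical} applied to the (classical, ungraded) algebra $HU$ identifies $\ker \sigma_{cl}$ with the ordinary decomposables in $H(IU)$, i.e.\ classes $\alpha = \beta\gamma$ with $\beta,\gamma \in H(IU)$. These are already matric Massey products of length two (with trivial defining data), so the ``ordinary product'' contribution to $\ker \sigma$ is accounted for. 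It remains to identify those classes which survive to $E^2_{1*}$ but are hit by some higher differential in the Eilenberg--Moore spectral sequence.

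Next I would describe the differentials hitting $E^r_{1,*}$ explicitly in terms of matric Massey products. An element in $\ker(\text{edge})$ is, by convergence, precisely one that is the target of some $d_r : E^r_{r+1,*} \to E^r_{1,*}$ for $r \geq 2$. To show these hit classes are exactly the matric Massey products of length $r+1$ in $H(IU)$, I would use the chain-complex picture of Section \ref{complexessection} together with Proposition \ref{drchains}. Concretely, a class in $E^r_{r+1,*}$ is represented by a tuple of maps $(f_1,\ldots,f_{r+1})$ of shifted copies of $U$ together with null-homotopy data $\{A_{ij}\}_{0 \leq i < j \leq r+1}$ fitting the defining recursion of Definition \ref{mmp}; and the image under $d_r$ is precisely the class $\tilde A_{0,r+1}$ determined by that data. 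This is the same identification used in the smash-Toda-bracket interpretation of Section \ref{realmmpsection}, applied to the filtered bar construction $B(R,U,R)$ whose associated spectral sequence is the Eilenberg--Moore one.

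With this dictionary in hand the argument closes symmetrically. If $\alpha \in H(IU)$ satisfies $\sigma(\alpha) = 0$, then either $\sigma_{cl}(\alpha) = 0$, in which case $\alpha$ is an ordinary product, or $\sigma_{cl}(\alpha) \neq 0$ but dies on some page $E^r$; by the previous paragraph, this witnesses $\alpha$ as an element of a matric Massey product of length $r+1$ in elements of $H(IU)$, hence as a matric Massey product decomposable class. Conversely, if $\alpha$ lies in a matric Massey product $\langle V_1,\ldots,V_{r+1}\rangle$ with entries in $H(IU)$, then the defining data assemble into a cycle in $E^r_{r+1,*}$ whose $d_r$-image is $\alpha$, so $\sigma_{cl}(\alpha)$ is a boundary and $\sigma(\alpha) = 0$.

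The main obstacle I expect is the identification in the second paragraph: rigorously matching the Eilenberg--Moore differentials $d_r$ acting on $E^r_{r+1,*}$ with the matric Massey product construction of Definition \ref{mmp}, including signs and indeterminacy. The cleanest route is to invoke Theorem \ref{ariottathm} to reinterpret the bar filtration as a coherent chain complex, apply Proposition \ref{drchains} to read off that the null-homotopy data $\{A_{ij}\}$ are exactly the higher coherences of a length-$r$ chain complex in $\cat D(U\text{-}\Mod)$, and then observe that computing $d_r$ on the Eilenberg--Moore $E^r$ page is, unwound, the formation of the class $\tilde A_{0,r+1}$. The sign conventions of Section \ref{realmmpsection} are set up precisely to make this match May's conventions from \cite{mmp}, so no further bookkeeping should be required beyond tracking the identification of $E^2$ with $\Tor^{HU}$.
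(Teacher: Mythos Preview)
Your approach is different from the paper's, and while it could be made to work, it routes through the hardest part of Gugenheim--May rather than around it. The paper's sketch never decomposes $\ker\sigma$ page by page. Instead it cites as a black box the key derived analogue of Observation~\ref{iuiuclassical}: the image of
\[
H\bigl(IU \otimes_U^{\LL} IU\bigr) \longrightarrow H(IU)
\]
is exactly the set of matric-Massey-product decomposables. From there the argument is a two-line fiber-sequence computation: the sequence $IU \to U \to R$ induces an exact sequence identifying that image with $\ker\pi$, and then $\ker\pi = \ker\sigma$ exactly as in the classical case (Proposition~\ref{kersigmaclassical}). No spectral-sequence differentials are ever named.

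Your plan instead factors $\sigma$ as $\sigma_{cl}$ followed by the edge map and tries to identify, for each $r\geq 2$, the image of $d_r\colon E^r_{r+1,*}\to E^r_{1,*}$ with length-$(r{+}1)$ Massey products. That identification is correct in spirit, but it is precisely the content of the explicit bar-construction analysis in \cite{gm} that the paper is trying to abstract away; invoking Proposition~\ref{drchains} and Section~\ref{realmmpsection} (which are forward references set up for a different purpose) does not quite discharge it without essentially reproving GM's Proposition~5.16. The payoff of the paper's route is that all of that bookkeeping is absorbed into the single statement about $H(IU\otimes_U^{\LL}IU)$, which then transports cleanly to the filtered setting in Section~\ref{filkoszulsection}.
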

\begin{proof}[Sketch of proof]
As a derived counterpart of Observation \ref{iuiuclassical}, Gugenheim and May show that the image of the map
\[
H\left(IU \otimes_U^\LL IU\right)  \to H\left(IU\right)
\]
out of the derived tensor product is the set of decomposable in $H(IU)$ under \emph{matric Massey products.} The fiber sequence
\[
IU \to U \to R
\]
induces a fiber sequence
\[
IU \otimes_U^\LL IU  \to IU \otimes^\LL_U U \xrightarrow{\pi} IU \otimes_U^\LL R,
\]
which then leads to an exact sequence
\[
H\left(IU \otimes_U^\LL IU\right)  \to H\left(IU\right) \xrightarrow{\pi} H\left(R \otimes_U^\LL IU\right).
\]
In particular, the kernel of the map $\pi$ must be the set of indecomposables. Following the proof of the previous section, one sees that $\ker \pi  = \ker \sigma.$ 
\end{proof}

The proof of Theorem \ref{koszulgendga} is then immediate. The Koszul condition implies that $E^\infty_{1,n}(R,U,R)$ is concentrated in degree $-1$, so elements in any other degree must land in the kernel of $\sigma$ and hence be decomposable. 

In the remainder of this section, we prove Theorem \ref{dgakoszuliskoszul}. The argument is a variant of the proof of Proposition 5.16 in \cite{gm} designed to avoid explicit features of the cobar construction. To start, we place a filtration on an arbitrary chain complex $X$.
\begin{defn}
Let $X$ be a chain complex. The \emph{inverse filtration} on $X$ is the natural filtration with 
$$F_n(X)_i = \begin{cases}
    X_i & \text{  for  } i \leq - n \\
    0 & \text{  for  } i > -n.
\end{cases}$$ 
\end{defn}
We call this the ``inverse'' filtration to contrast it with the canonical filtration
$$F_n(X)^{\text{can}}_i = \begin{cases}
    X_i & \text{  for  } i > n \\
    \ker(d_i) & \text{  for  } i = n \\
    0 & \text{  for  } < n.
\end{cases}$$ 
which runs in the opposite direction. While the canonical filtration leads to a spectral sequence based on the homology of the DGA (traditionally called the algebraic Eilenberg-Moore spectral sequence), the inverse filtration leads to a spectral sequence based on the DGA's underlying algebra.
\begin{proof}[Proof of Theorem \ref{dgakoszuliskoszul}]
Consider the inverse filtration on $U$, and note that the associated graded complex of $F_\bullet(U)$ is equivalent to the underlying algebra $V(U)$  of $U$ with zero differential. Applying \ref{torfil} to with filtration on $U$ and the trivial filtration $S\iota R$ on $R$, we obtain a spectral sequence
\[
E^1_{**} = \Tor_*^{V(U)}(R,R) \Rightarrow H(R \otimes^\LL_U R).
\]
Since $V(U)$ is Koszul, the $E^1$ page is concentrated in degrees $(n,-n)$, so $ H(R \otimes^\LL_U R)$ must be concentrated in degree zero.
\end{proof}
\subsection{Koszul conditions and generation in $\Fil(\cat C).$}\label{filkoszulsection}
In this section we prove the second bullet point of Theorem \ref{mainmmpthm}.
In particular, we prove:
\begin{thm}\label{secondbullet}
Let $E_r^{s,t}$ be a multiplicative spectral sequence over a field $R$, and suppose the $E_1$ page can be chosen to be freely generated by elements in degree $s = 1$ in addition to a unital copy of $R$ in degree $s = t = 0.$  Let $E_{2,r}^{s,t}$ denote the set of elements in $E_2^{s,t}$ which survive to the $E_r$ page. Then:
\begin{itemize}
\item $E_{2,r}^{*,*}$ is generated under matric Massey products by elements in degree $0 < s < r.$
\end{itemize}
\end{thm}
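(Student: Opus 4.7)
The plan is to reproduce the derived-suspension strategy of Section \ref{gmkoszulsection} internally in the deformation $\cat D^{r-1}(\cat C)$, exploiting the fact that (by Proposition \ref{sseqform}) the $E_r$ page is computed by $\pi^{r-1}_{**}$ of $U\otimes C\tau^r$, while ``surviving to the $E_r$ page'' is detected (by Lemma \ref{survive}) as a lifting condition all the way up to $\pi^{r-1}_{**}(U)$. First I would realize the given spectral sequence as the filtration spectral sequence of an augmented filtered $R$-algebra $U\in\Fil(\cat C)$ whose associated graded is the given Koszul algebra $E_1$; the cobar-style argument used in Section \ref{adamssection} produces such a $U$, and the multiplicative structure on the spectral sequence is the one induced by the ring structure on $U$ via Theorem \ref{sseqmult}.

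Second, following Gugenheim--May I would construct a suspension map
\[
\sigma_r\colon \pi^{r-1}_{**}(IU)\longrightarrow \pi^{r-1}_{**}(R\otimes_U^{\LL}R)
\]
as the composite of the classical suspension (now applied to $\pi^{r-1}_{**}$-homology in $\cat D^{r-1}(\cat C)$) with the edge map of the Eilenberg--Moore spectral sequence computing the right-hand side. Using the bar-construction model for $R\otimes_U^{\LL}R$ and the identification of matric Massey products with smash Toda brackets from Section \ref{realmmpsection}, the same kernel-computation used in Corollary 5.13 of \cite{gm} (and recalled in the sketch of Theorem \ref{koszulgendga}) shows that $\ker \sigma_r$ consists precisely of those elements of $\pi^{r-1}_{**}(IU)$ that are decomposable as matric Massey products in $\cat D^{r-1}(\cat C)$. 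Because $\cat D^{r-1}(\cat C)\to\cat C$ is symmetric monoidal and $\tau^{-1}$-inverting, such a decomposition descends to a matric Massey decomposition on the $E_r$ page whose factors are the images of $\pi^{r-1}_{**}$-classes.

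Third, I would analyze the target $\pi^{r-1}_{**}(R\otimes_U^{\LL}R)$ via the inverse filtration on $U$, exactly as in the proof of Theorem \ref{dgakoszuliskoszul}. Theorem \ref{torfil} gives a Tor spectral sequence whose $E^1$ is $\Tor^{E_1}(R,R)$; by hypothesis $E_1$ is a free algebra on generators in degree $s=1$ and is therefore Koszul, so this Tor is concentrated in degrees $(s,-s)$. Tracking the reindexing through the deformation $\cat D^{r-1}(\cat C)$ (where the relevant suspension is $\Sigma^{-(r-1),1}$ and $C\tau^r$-torsion kills filtration $s\ge r$), I expect the conclusion that $\pi^{r-1}_{**}(R\otimes_U^{\LL}R)$ is concentrated in the filtration range $0<s<r$. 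Combined with the kernel description, this finishes the proof: any $E_r$-surviving class in degree $s\ge r$ lifts to $\pi^{r-1}_{**}(U)$ (by Lemma \ref{survive}), lands in $\ker\sigma_r$ for degree reasons, and is therefore a matric Massey product of classes with $0<s<r$.

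The main obstacle I expect is the third step: correctly identifying the target of $\sigma_r$ and proving the precise vanishing range $0<s<r$. The classical Eilenberg--Moore argument used $\Tor_s^U(R,R)$ being concentrated in internal degree $-s$; here we need the analogous statement for the derived bar construction computed inside $\cat D^{r-1}(\cat C)$, which mixes the Koszul bound ($s$-filtration comes only from the generators on the $E_1$ page) with the $C\tau^r$-torsion bound ($s<r$ comes from the deformation parameter). Both ingredients are separately available, but stitching them together will require a careful compatibility check between the inverse filtration on $U$, the $(r-1)$-projective t-structure of Theorem \ref{tstructure}, and the multiplicative pairing of Lemma \ref{pairing}. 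Everything else is essentially a transcription of the proof of Theorem \ref{koszulgendga} into the deformed category, using the smash-Toda-bracket description of matric Massey products established in Section \ref{realmmpsection} to ensure the kernel/decomposability identification survives the passage through the symmetric monoidal functor $\cat D^{r-1}(\cat C)\to \Mod_{C\tau^r}(\cat D^{r-1}(\cat C))$.
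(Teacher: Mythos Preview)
Your high-level plan---mimic Gugenheim--May's derived suspension argument inside a deformation---is right, but you have chosen the wrong deformation. You propose to work in $\cat D^{r-1}(\cat C)$ and with $\pi^{r-1}_{**}(U)$. This is a genuine error, not just a bookkeeping difference: the matric Massey products the theorem asks for live on the $E_2$ page, and by Theorem \ref{symmonequiv} that structure is captured by the special fiber of $\cat D^1(\cat C)$, not of $\cat D^{r-1}(\cat C)$. If you carry out your argument in $\cat D^{r-1}$, the smash Toda brackets you produce descend (as you yourself note) to Massey products on $E_r$, which is not what is being claimed. Relatedly, your invocation of Lemma \ref{survive} is incorrect: that lemma does not say that an $E_2$-class surviving to $E_r$ lifts to $\pi^{r-1}_{**}(U)$; it says such a class lies in the image of $\pi^{1}_{**}(U\otimes C\tau^{r+1})\to \pi^{1}_{**}(U\otimes C\tau^{2})$. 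There is no natural way to push a Toda-bracket decomposition in $\pi^{r-1}_{**}(U)$ back to a Massey-product decomposition inside $E_2$.

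The paper's proof stays entirely in $\cat D^1(\cat C)$ and instead varies the coefficient object: it proves generation of $\pi^1_{**}(U\otimes C\tau^{r+1})$ by smash Toda brackets in degrees $1\le s\le r-1$ (Theorem \ref{koszulgenfil}), then pushes forward along the ring map $C\tau^{r+1}\to C\tau^2$, which preserves Toda brackets and has image exactly $E_{2,r}$. The suspension map and its kernel identification (Proposition \ref{kersigma}) are run with $U\otimes C\tau^{k+1}$ in place of $U$, all inside $\cat D^1$. The key step you are missing---and the substitute for your ``$C\tau^r$-torsion bound''---is Lemma \ref{ctkp1degrees}: one filters $C\tau^{r+1}$ in $\cat D^1(\cat C)$ by suspensions of $C\tau^2$, so the Koszul condition on $R\otimes_U^{\LL}R\otimes C\tau^2$ (concentrated in $n=0$) forces $\pi^1_{**}(R\otimes_U^{\LL}R\otimes C\tau^{r+1})$ into the range $1-r\le n\le 0$. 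Your inverse-filtration/Koszul argument is still used, but only to verify the $\cat D^1$-Koszul condition on $U$ (Theorem \ref{filkoszuliskoszul}), not to produce the degree bound directly.
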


Our strategy is similar to that of the previous section. First, applying Lemma \ref{multssarefil} if necessary, we may assume our spectral spectral sequence comes from a ring object in $\Fil(\cat C)$ for some stable $\infty$-category $\cat C.$ Next, we define another ``derived'' variant of the Koszul condition, which is essentially the definition of the previous section constructed within the special fiber of the $\cat D^1$ deformation. We show that this condition implies the desired generation condition on $E_{2,r}^{s,t}$, and then we show that a Koszul $E^1$ page makes the spectral sequence Koszul in this derived sense. 

Throughout this section, we use the notation $\otimes^\LL$ to emphasize that our tensor products are taken in $\cat D^1(\cat C)$ rather than in $\Fil(\cat C)$, and we use $n$-grading, directing the reader who prefers $s$-grading to Warning \ref{sngrading}. Our Koszul condition is as follows.
\begin{defn}
Let $R$ be a(n ordinary) field viewed as an object of $\cat C$, and let $U$ be an augmented $R$-algebra in $\cat D^1(\cat C).$ We say $U$ is \emph{Koszul} if $\pi_{nt}^1(R\otimes^\LL_U R \otimes^\LL C\tau^2)
$
is concentrated in degree $0.$
\end{defn}
The goal of this section is to prove two theorems:
\begin{thm}\label{koszulgenfil}
Let $U$ be a Koszul $R$-algebra which is cofibrant in the $1$-projective model structure on $\cat Fil(\cat C)$. Then for each $r$, $\pi^1_{nt}(U \otimes C\tau^{r+1})$ is generated under (matric) smash Toda brackets by elements with degree $1-r \geq n \geq -1.$ 
\end{thm}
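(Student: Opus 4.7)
The plan is to mimic the proof of Theorem \ref{koszulgendga} one level up, now working inside the $\infty$-category of $C\tau^{r+1}$-modules in $\cat D^1(\cat C)$ and replacing matric Massey products with smash Toda brackets in the sense of Definition \ref{smashtoda}. Cofibrancy of $U$ in the 1-projective model structure ensures that all the derived tensor products we need behave like honest Day-convolution tensor products in $\Fil(\cat C)$, so the manipulations below can be carried out concretely.

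The first step is to construct a derived suspension map. Let $IU$ denote the fiber of the augmentation $U \to R$ in $\cat D^1(\cat C)$. Applying $R \otimes^{\LL}_U (-) \otimes C\tau^{r+1}$ to the fiber sequence $IU \to U \to R$ yields
\[
R \otimes^{\LL}_U IU \otimes C\tau^{r+1} \to R \otimes C\tau^{r+1} \to R \otimes^{\LL}_U R \otimes C\tau^{r+1}.
\]
Taking the long exact sequence on $\pi^1_{\bullet, \bullet}$ and precomposing the boundary with the natural map $\pi^1_{n,t}(IU \otimes C\tau^{r+1}) \to \pi^1_{n,t}(R \otimes^{\LL}_U IU \otimes C\tau^{r+1})$ produces a suspension map
\[
\sigma^r : \pi^1_{n,t}(IU \otimes C\tau^{r+1}) \to \pi^1_{*,*}(R \otimes^{\LL}_U R \otimes C\tau^{r+1})
\]
with a fixed bidegree shift, directly parallel to Definition 3.7 of \cite{gm}.

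The second and most delicate step is to identify $\ker \sigma^r$ with the matric-smash-Toda-decomposable elements. The derived analog of Observation \ref{iuiuclassical} should identify the decomposables with the image of the multiplication map $\pi^1_{n,t}(IU \otimes^{\LL}_U IU \otimes C\tau^{r+1}) \to \pi^1_{n,t}(IU \otimes C\tau^{r+1})$; this ought to follow from Definition \ref{smashtoda} together with the chain-complex description of Proposition \ref{drchains} and Ariotta's equivalence (Theorem \ref{ariottathm}). The fiber sequence $IU \otimes^{\LL}_U IU \otimes C\tau^{r+1} \to IU \otimes C\tau^{r+1} \to R \otimes^{\LL}_U IU \otimes C\tau^{r+1}$ then shows this image equals the kernel of projection into $\pi^1_{n,t}(R \otimes^{\LL}_U IU \otimes C\tau^{r+1})$, which a diagram chase mirroring Proposition \ref{kersigmaclassical} matches with $\ker \sigma^r$. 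This is the main obstacle: Gugenheim and May's proof (Corollary 5.13 of \cite{gm}) uses explicit bar-construction formulas, which must be replaced here by $\infty$-categorical manipulations of iterated (co)fibers, keeping careful track of the $C\tau^{r+1}$ factor throughout and of how the chain-complex data encoded in Proposition \ref{drchains} translates into concrete matric Massey presentations.

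Finally I would bound where the target of $\sigma^r$ can be nonzero using the Koszulity hypothesis. By definition $\pi^1_{n,t}(R \otimes^{\LL}_U R \otimes C\tau^2)$ is concentrated in $n = 0$. Inducting on $r$ via the cofiber sequence $\Sigma^{-r, 0} C\tau \to C\tau^{r+1} \to C\tau^r$, obtained by factoring $\tau^{r+1} = \tau^r \circ \tau$ and applying the octahedral axiom, and noting that $C\tau$ is supported at a single filtration level, one shows that $\pi^1_{n,t}(R \otimes^{\LL}_U R \otimes C\tau^{r+1})$ is concentrated in the range $1 - r \leq n \leq 0$. Combined with the previous step and the fact that $\pi^1_{\bullet,\bullet}(IU)$ is concentrated in strictly negative filtration, this forces any element of $\pi^1_{n,t}(IU \otimes C\tau^{r+1})$ with $n$ outside $[1-r, -1]$ to lie in $\ker \sigma^r$ and hence to be matric-smash-Toda-decomposable. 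An inductive descent on $|n|$ then recovers the stated generation property for $\pi^1_{n,t}(U \otimes C\tau^{r+1})$.
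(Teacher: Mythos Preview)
Your overall strategy matches the paper's: define a suspension $\sigma$, identify its kernel with the smash-Toda-decomposable elements (the paper carries this out as Observation~\ref{iuiufil} and Proposition~\ref{kersigma}), bound the filtration degrees in which the target of $\sigma$ can be nonzero, and conclude. Steps 1 and 2 of your proposal are essentially the paper's argument, and you are right that step 2 is where the real work lies.

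There is, however, a genuine gap in your step 3. The sequence $\Sigma^{-r,0}C\tau \to C\tau^{r+1} \to C\tau^r$ you invoke is a cofiber sequence in $\Fil(\cat C)$, but it is \emph{not} one in $\cat D^1(\cat C)$: the object $C\tau$ is $1$-acyclic (it is literally the target of a generating acyclic cofibration in $J^1$), so in $\cat D^1(\cat C)$ your sequence would collapse to an equivalence $C\tau^{r+1}\simeq C\tau^r$, which is false. Equivalently, $\pi^1_{**}$ does not send arbitrary cofiber sequences in $\Fil(\cat C)$ to long exact sequences, so the induction cannot be run as written. Note too that your hypothesis concerns $C\tau^2$, not $C\tau$; this is not an accident, since $C\tau^2$ is the cofiber of $\tau$ \emph{in $\cat D^1(\cat C)$} by Corollary~\ref{ctweird}.

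The repair is exactly what the paper does in Lemma~\ref{ctkp1degrees}: filter $C\tau^{r+1}$ so that the associated graded pieces, computed in $\cat D^1(\cat C)$ via Corollary~\ref{kcofiber}, are shifts of $C\tau^2$, and then run a spectral sequence. Equivalently, you can salvage your octahedral argument by working \emph{in $\cat D^1(\cat C)$}: since the $\cat D^1$-cofiber of $\tau^j$ is $C\tau^{j+1}$, the factorization $\tau^{r+1}=\tau^r\circ\tau$ yields a genuine cofiber sequence $\Sigma^{-r,0}C\tau^2 \to C\tau^{r+2} \to C\tau^{r+1}$ in $\cat D^1(\cat C)$, and now the induction goes through using the Koszul hypothesis directly.
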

\begin{thm}\label{filkoszuliskoszul}
Suppose $U$ is a ring in $\Fil(\cat C)$ which is cofibrant in the 1-projective model structure, and $\pi^0_{**}(U \otimes C\tau)$ is Koszul. Then $U$ is as well. 
\end{thm}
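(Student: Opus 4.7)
The plan is to follow the strategy of Theorem~\ref{dgakoszuliskoszul}, but with the inverse filtration replaced by the filtration that $U$ already carries as an object of $\Fil(\cat C)$. Write $V = \pi^0_{**}(U \otimes C\tau)$ for the associated graded bigraded ring, which is Koszul by hypothesis. Give $R$ the trivial filtration $\iota R$, viewed as a two-sided $U$-module via the augmentation $U \to R$, so that $\pi^0_{**}(\iota R \otimes C\tau) = R$ is concentrated in bidegree $(0,0)$. Cofibrancy of $U$ in the $1$-projective model structure ensures that the filtered bar construction $|B(\iota R, U, \iota R)|$ is a correct model for $R \otimes^{\LL}_U R$ in $\Fil(\cat C)$.

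Now I would apply Theorem~\ref{torfil} to obtain a spectral sequence for this filtered bar construction whose $E^1$ page is
\[
E^1_{n,t} \;=\; \pi^0_{n,t}\bigl((\iota R \otimes C\tau) \otimes^{\LL}_{U \otimes C\tau} (\iota R \otimes C\tau)\bigr).
\]
Inside the parentheses we have the derived tensor product of graded modules over the graded ring $U \otimes C\tau$, so its homotopy can be computed by the graded bar complex and identifies with $\Tor^V(R, R)$, together with the standard bar-degree shift coming from the geometric realization of a simplicial object in a stable $\infty$-category. Concretely, a class in $\Tor^V_p(R, R)$ of internal $V$-degree $n_V$ contributes to $E^1_{n,t}$ with $n = n_V + p$. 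By Koszulity of $V$, $\Tor^V_p(R, R)$ is concentrated at $n_V = -p$, so every contribution lands at $n = 0$, and $E^1_{n,t}$ is concentrated in the single filtration degree $n = 0$.

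Since every differential $d_r : E^r_{n,t} \to E^r_{n+r,\, t-1}$ strictly increases $n$, all higher differentials vanish on a page concentrated in $n = 0$; the spectral sequence therefore collapses at $E^2 = E^1$, and the limit term is still concentrated in $n = 0$. Identifying $E^2_{n,t}$ with $\pi^1_{n,t}(R \otimes^{\LL}_U R \otimes^{\LL} C\tau^2)$ via Proposition~\ref{sseqform} then gives the desired conclusion. The main technical hurdle is the bookkeeping in the middle paragraph: one must verify that the filtration on $|B(\iota R, U, \iota R)|$ really does combine bar degree with the internal $V$-degrees of the tensor factors in the prescribed way, so that the Koszul locus $\{n_V = -p\}$ collapses precisely to the line $n = 0$ on the $E^1$ page. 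This is the same ``Adams-filtration-equals-resolution-degree'' phenomenon already at work in the construction of the Adams spectral sequence in Section~\ref{adamssection}, but its careful treatment inside the filtered bar construction is where the real care is required.
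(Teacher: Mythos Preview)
Your approach diverges from the paper's in a way that introduces a genuine error in the degree bookkeeping. You try to use $U$'s own filtration directly, observing that the $E^2$ page of the filtration spectral sequence for $R\otimes^{\LL}_U R$ is exactly $\pi^1_{n,t}(R\otimes^{\LL}_U R\otimes C\tau^2)$ by Proposition~\ref{sseqform}. That observation is correct and elegant, but your identification of the $E^1$ page breaks down. The categorical suspension in $\Fil(\cat C)$ is $\Sigma^{0,1}$ (as noted right after the definition of $\Sigma^{n,t}$), so the bar-degree shift coming from geometric realization of the simplicial bar construction lands in the $t$-coordinate, not in $n$. Your formula $n = n_V + p$ should read $n = n_V$ with $t$ shifted by $p$; Koszulity of $V$ then places the contributions at $n = -p$ for each $p \geq 0$, not at $n = 0$, and the argument does not close. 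The analogy with Section~\ref{adamssection} is misleading: there the filtration on the object \emph{is} the resolution filtration by construction, whereas here $U$'s filtration and the bar filtration are independent. There is also a second gap: $U\otimes C\tau$ need not be formal as a ring in $\Mod_{C\tau}(\Fil(\cat C))$, so $\pi^0_{**}$ of the derived tensor product is in general only the abutment of a further spectral sequence from $\Tor^V(R,R)$, not equal to it.

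The paper repairs both issues by introducing a \emph{second} filtration, the inverse filtration $F_\bullet(U)$, placing $U\otimes C\tau^2$ in $\Fil(\cat D^1(\cat C))$ and thereby adding an extra grading $n'$. The associated graded for this new filtration is, by design, the graded pieces $U_r/U_{r+1}$, so $n'$ is forced to coincide with the internal $n$-grading. One then runs Theorem~\ref{torfil} for the outer filtration and Theorem~\ref{torclassic} to compute its $E^1$ page; in that inner spectral sequence the Tor-degree $r$ shifts $n'$ (and hence $n$), and the Koszul locus $r = -n$ collapses correctly to $n = 0$ in the abutment. The inverse filtration is not decoration: it is exactly the device that routes the resolution degree into the filtration grading rather than the topological one.
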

These will jointly imply Theorem \ref{secondbullet}, because by Lemma \ref{survive}, $E^{2,r}_{n,t}(U)$ is the image of the map
\[
\pi^1_{nt}(U \otimes C\tau^{r+1}) \to \pi^1_{nt}(U \otimes C\tau^{2}),
\]
and this map preserves (matric) smash Toda brackets.

We begin with the following lemma.
\begin{lem}
We have an identification 
\[
R \otimes^\LL_U R \otimes^\LL C\tau^{k+1} \to (R \otimes^\LL C\tau^{k+1}) \otimes^\LL_{U \otimes^\LL C\tau^{k+1}} (R \otimes^\LL C\tau^{k+1}).
\]
\end{lem}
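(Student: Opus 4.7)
The plan is to interpret tensoring with $C\tau^{k+1}$ as base change along the commutative ring map $S^{0,0} \to C\tau^{k+1}$ furnished by Theorem \ref{ctcomm}, and then appeal to the general principle that a symmetric monoidal left adjoint commutes with relative tensor products (since the latter are computed as bar constructions and the former preserves both $\otimes$ and geometric realizations).

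Concretely, I would first observe that the base-change functor
\[
(-) \otimes C\tau^{k+1}: \cat D^1(\cat C) \to \Mod_{C\tau^{k+1}}\bigl(\cat D^1(\cat C)\bigr)
\]
is symmetric monoidal, where the target carries its relative tensor product over $C\tau^{k+1}$; this is the standard fact that extension of scalars along a map of commutative algebras in a presentably symmetric monoidal stable $\infty$-category is itself symmetric monoidal (and is a left adjoint, hence colimit-preserving). Next, I would express $R \otimes^\LL_U R$ as the geometric realization of the two-sided bar construction $B_\bullet(R,U,R)$ with $B_n(R,U,R) = R \otimes U^{\otimes n} \otimes R$. Applying $(-) \otimes C\tau^{k+1}$ term-by-term and using the symmetric monoidality to identify
\[
\bigl(R \otimes U^{\otimes n} \otimes R\bigr) \otimes C\tau^{k+1} \simeq (R \otimes C\tau^{k+1}) \otimes_{C\tau^{k+1}} (U \otimes C\tau^{k+1})^{\otimes_{C\tau^{k+1}} n} \otimes_{C\tau^{k+1}} (R \otimes C\tau^{k+1}),
\]
one recognizes the image of $B_\bullet(R,U,R)$ under base change as $B_\bullet(R \otimes C\tau^{k+1},\, U \otimes C\tau^{k+1},\, R \otimes C\tau^{k+1})$ computed inside $\Mod_{C\tau^{k+1}}$. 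Passing to geometric realizations, which commute with the left adjoint $(-) \otimes C\tau^{k+1}$, produces the desired equivalence.

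The main point of care, rather than an obstacle per se, is bookkeeping the ambient $\infty$-category in which each relative tensor product is formed: the left-hand side involves $\otimes^\LL_U$ in $\cat D^1(\cat C)$, while the right-hand side involves $\otimes^\LL_{U \otimes C\tau^{k+1}}$ in the subcategory of $C\tau^{k+1}$-modules. Once it is observed that the forgetful functor $\Mod_{C\tau^{k+1}}(\cat D^1(\cat C)) \to \cat D^1(\cat C)$ preserves the underlying bar construction (so that the two realizations agree on underlying objects of $\cat D^1(\cat C)$), the entire content of the lemma becomes the symmetric monoidality of base change, and no further calculation is required.
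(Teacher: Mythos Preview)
Your proposal is correct and follows essentially the same approach as the paper. The paper's proof simply says ``This is the same argument as the proof of Lemma~\ref{torfil},'' and that argument is precisely the bar-construction manipulation you describe: one writes $R \otimes^\LL_U R$ as a geometric realization, commutes $C\tau^{k+1}$ past the realization, and distributes it across the tensor factors. The paper packages the distribution step via the identity $C\tau^{k+1} \simeq |B(C\tau^{k+1}, C\tau^{k+1}, C\tau^{k+1})|$ and a product of bar constructions, whereas you phrase it as symmetric monoidality of extension of scalars; these are two ways of saying the same thing.
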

\begin{proof}
This is the same argument as the proof of Lemma \ref{torfil}.
\end{proof}
We now continue as before by defining a suspension map. 
\begin{defn}
Let $U$ be an augmented $R$-module in $\cat D^1(\cat C)$ with augmentation ideal $IU$, and let $E^*(U)$ denote the spectral sequence from Theorem \ref{torclassic} with
\[
E^2_{rnt}(U) = \Tor_r^{\pi^1_{**}\left(U\otimes^\LL C\tau^{k+1}\right)}\left(\pi^1_{**}(R\otimes^\LL C\tau^{k+1}),\pi^1_{**}(R\otimes^\LL C\tau^{k+1})\right)
\]
converging to
\[
\pi^1_{n-r,t+r}\left(R \otimes^\LL_{U} R\otimes^\LL C\tau^{k+1}\right).
\]
We define
\[
\sigma: \pi^1_{**}(IU\otimes^\LL C\tau^{k+1}) \to E^\infty_{1**}(U) 
\]
as the top horizontal composition
\[
\xymatrix{
\pi^1_{**}(IU\otimes^\LL C\tau^{k+1}) \ar^\sigma[r] \ar[d]^{\sigma} & E^\infty_{1**}(U) \\
\Tor_{1}^{\pi^1_{**}(IU\otimes^\LL C\tau^{k+1})}(\pi^1_{**}(R\otimes^\LL C\tau^{k+1}),\pi^1_{**}(R\otimes^\LL C\tau^{k+1})) \ar@{=}[r] & E^2_{1**}(U) \ar[u],
}
\]
where the $\sigma$ labelling the left hand arrow is the classical suspension map from Definition \ref{sigmaclassical}.
\end{defn}
As before, the first step to relating $\sigma$ with decomposables is to identify indecomposables with a subspace of the homotopy groups of a tensor product. In this setting, we have the following version of Observation \ref{iuiuclassical}.
\begin{obs}\label{iuiufil}
Let $U$ be a (not-necessarily commutative) $R$-algebra in $\cat D^1(\cat C)$ with an augmentation giving rise to a split fiber sequence
\[
IU \to U \to R,
\]
and fix a right $U$-module $M$. Every element in the image of
\[
\pi^1_{**}(M \otimes^\LL_U IU) \to \pi^1_{**}(M \otimes^\LL_U U) 
\]
is decomposable by nontrivial smash Toda brackets (by which we mean smash Toda brackets such that the last factor comes from $\pi^1_{**}(IU),$ not just $\pi^1_{**}(U).$)
\end{obs}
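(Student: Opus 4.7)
Proof Proposal: Identify $M \otimes^\LL_U IU$ with the totalization $|B_\bullet(M, U, IU)|$ of the two-sided bar construction, where $B_n = M \otimes U^{\otimes n} \otimes IU$, and observe that the given map $\pi^1_{**}(M \otimes^\LL_U IU) \to \pi^1_{**}(M \otimes^\LL_U U) = \pi^1_{**}(M)$ is induced by the inclusion $i: IU \hookrightarrow U$ acting on the last tensor factor. The plan is to use the skeletal filtration of this totalization to express an arbitrary element of the image as a smash Toda bracket in the sense of Definition \ref{smashtoda}.

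First, fix $x \in \pi^1_{**}(M)$ in the image and choose a lift $\tilde x: S^{n,t} \to M \otimes^\LL_U IU$ in $\cat D^1(\cat C)$. Since the spheres $S^{n,t}$ are compact generators of $\cat D^1(\cat C)$ by Theorem \ref{compactspheres}, the map $\tilde x$ factors through some finite stage $F_\ell$ of the skeletal filtration of $|B_\bullet(M, U, IU)|$. Apply Ariotta's correspondence (Theorem \ref{ariottathm}) to reinterpret $F_\ell$ as the iterated cofiber $C_0^\ell(X)$ of a coherent chain complex $X$ of length $\ell+1$ in the sense of Section \ref{complexessection}.

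Next, Proposition \ref{drchains} unpacks this data explicitly: the chain complex $X$ is determined by matrices $V_2, \ldots, V_\ell$ encoding the bar differentials (iterated multiplications in $U$), together with a matrix $V_{\ell+1}$ representing the inclusion $i$ composed with the final face map of the bar construction, while the lift of $\tilde x$ provides a matrix $V_1$ together with the nullhomotopies demanded by the compatibility relations of Definition \ref{mmp}. Composing with the canonical equivalence $M \otimes^\LL_U U \simeq M$ recovers $x$, and unwinding the iterated cofiber identifies this composition with the smash Toda bracket $\langle V_1, V_2, \ldots, V_{\ell+1}\rangle$. The nontriviality condition---that the entries of $V_{\ell+1}$ lie in $\pi^1_{**}(IU)$ rather than merely $\pi^1_{**}(U)$---is immediate because the final tensor factor of $B_n(M, U, IU)$ is $IU$ at every simplicial level.

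The main obstacle I expect is the bookkeeping required to reconcile the sign and shift conventions across Ariotta's equivalence, the iterated cofiber formula of Proposition \ref{drchains}, and the smash Toda bracket construction of Definition \ref{smashtoda}. In particular, the alternating-sum differentials of the bar construction must be made to line up with the signs appearing in the explicit formulas of Proposition \ref{drchains}, and the shifts in the identification $C_0^\ell \simeq \Sigma^\ell Z_0^\ell$ of Proposition \ref{iter} must be tracked carefully. This is routine but error-prone; the remaining steps are essentially formal once the identifications are in place.
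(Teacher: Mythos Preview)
Your overall strategy---resolve, lift through a finite stage by compactness of spheres, then unwind as a smash Toda bracket---matches the paper's, but your choice of resolution does not fit Definition~\ref{smashtoda} and this is a genuine gap, not just bookkeeping. That definition requires the intermediate chain complex to consist of \emph{finite direct sums of shifts of $U$}, with the maps between them given by matrices whose entries lie in $\pi^1_{**}(U)$. The skeletal filtration of $|B_\bullet(M,U,IU)|$ has associated graded pieces built from $M \otimes U^{\otimes n} \otimes IU$, and the face maps are alternating sums of multiplication maps $U \otimes U \to U$. These are not matrices with entries in $\pi^1_{**}(U)$ unless $U$ happens to be a finite wedge of spheres, so there is no way to read off matrices $V_2,\dots,V_\ell$ in the required sense. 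Your appeal to Proposition~\ref{drchains} is also misplaced: that proposition is stated for the derived category of an ordinary ring and relies on explicit chain-level formulas, whereas here we are in $\cat D^1(\cat C)$.

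The paper fixes this by resolving $IU$ rather than forming the bar construction: take a cofibrant replacement $X_\bullet$ of $IU$ in the $1$-projective model structure on $\Fil(\Mod_U)$, chosen so that it arises from a free $\pi^1_{**}(U)$-resolution of $\pi^1_{**}(IU)$. This forces the terms of the associated coherent chain complex $\Ch(X)_\bullet$ to be honest direct sums of shifts of $U$, so that the differentials \emph{are} matrix maps $V_2,\dots,V_n$ with entries in $\pi^1_{**}(U)$. Compactness of spheres then lifts any $\alpha \in \pi^1_{**}(M \otimes^\LL_U IU)$ to a finite stage $M \otimes^\LL_U X_{-n} \simeq M \otimes^\LL_U C_0^n(\Ch(X)_\bullet)$, and the augmentation $\Ch(X)_0 \to IU$ supplies $V_{n+1}$ with entries in $\pi^1_{**}(IU)$, giving nontriviality for free. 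The sign and shift issues you anticipated are handled by Proposition~\ref{iter} together with the construction of $\Ch(X)_\bullet$; no analogue of Proposition~\ref{drchains} is needed.
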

\begin{proof}
Fix a cofibrant replacement $X_\bullet$ of $IU$, taken in the 1-projective model structure on $\Fil(\Mod_U).$ Combining the results of section 3 with Theorem \ref{ariottathm}, we can view this resolution as a chain complex $\Ch(X)_\bullet$. Note that we can build such a replacement from any free $\pi^1_{**}(U)$ resolution of $\pi^1_{**}(IU)$, so in particular we can assume the elements of $X_\bullet \otimes C\tau$ are coproducts of sums of shifts of $U$, not merely of retracts of $U$. 

Now, choose $\alpha \in \pi^1_{**}(M\otimes^\LL_U IU)$. Since $\colim X_\bullet = IU$ and spheres are compact by Theorem \ref{compactspheres}, there must be some $n> 0$ such that $\alpha$ lifts to some $\hat\alpha$ in $\pi^1_{n't}\left(M \otimes^\LL X_{-n} \right).$  But by Theorem \ref{ariottathm}, we can identify $X_{-n}$ with $\Sigma^nZ_0^n(\Ch(X)_\bullet)$, which by Proposition \ref{iter} is equivalent to $C_0^n(\Ch(X)_\bullet)).$ Since $\Ch(X_\bullet)$ is a complex of direct sums of shifts of $U$, the maps $\Ch(X)_k \to \Ch(X)_{k-1}$ are matrix maps with elements in $\pi^1_{**}(U)$. Call these matrices $V_2,\cdots, V_{n}$. Then, we can express $\alpha$ as a composition
\[
\xymatrix{
S^{n',t} \ar[r] & \Sigma^{\ell -1} Z_0^{\ell-1}(M \otimes_U^\LL X|_{\Ch_0^n}) \ar[d]^\cong & \\
& C_0^{\ell-1}(M \otimes_U^\LL X|_{\Ch_0^n}) \ar[r]  & M \otimes_U IU
}
\]
and therefore (using Definition \ref{smashtoda}) as an element of a nontrivial smash Toda bracket. Composing this with the map $M \otimes_U IU \to M \otimes_U U$ exhibits the image of $\alpha$ as a nontrivial smash Toda bracket, as desired.
\end{proof}
With this observation, the counterpart of Proposition \ref{kersigmaclassical} is very similar to the classical case. 
\begin{prop}\label{kersigma}
The kernel of $\sigma$ consists of elements decomposable in terms of matric Toda brackets.
\end{prop}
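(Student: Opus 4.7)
The plan is to adapt the proof of Proposition \ref{kersigmaclassical} essentially word-for-word, replacing short exact sequences of modules with cofiber sequences in $\cat D^1(\cat C)$ and applying $\pi^1_{**}(- \otimes^\LL C\tau^{k+1})$ wherever the classical proof takes homology. I would begin by defining
\[
\pi: \pi^1_{**}(IU \otimes^\LL C\tau^{k+1}) \to \pi^1_{**}(R \otimes^\LL_U IU \otimes^\LL C\tau^{k+1})
\]
as the map induced by the augmentation $U \to R$, using the identification $IU \simeq U \otimes^\LL_U IU$.

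The next step is to identify $\ker \pi$ with the set of matric-Toda-decomposable elements. Tensoring the cofiber sequence $IU \to U \to R$ with $- \otimes^\LL_U IU \otimes^\LL C\tau^{k+1}$ yields a cofiber sequence
\[
IU \otimes^\LL_U IU \otimes^\LL C\tau^{k+1} \to IU \otimes^\LL C\tau^{k+1} \xrightarrow{\pi} R \otimes^\LL_U IU \otimes^\LL C\tau^{k+1},
\]
and the resulting long exact sequence on $\pi^1_{**}$ exhibits $\ker \pi$ as the image of the left-most term. Observation \ref{iuiufil} with $M = IU$ shows every such element is decomposable by a nontrivial matric smash Toda bracket; conversely, unwinding Definition \ref{smashtoda} shows that every element expressed as such a bracket (with final factor in $\pi^1_{**}(IU)$ rather than $\pi^1_{**}(U)$) factors its defining composite through $IU \otimes^\LL_U IU \otimes^\LL C\tau^{k+1}$, and hence lies in this image.

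Finally, I would show $\ker \sigma = \ker \pi$ using the analog of the classical commutative triangle. Applying $R \otimes^\LL_U (-) \otimes^\LL C\tau^{k+1}$ to the same cofiber sequence gives the row
\[
\pi^1_{**}(R \otimes^\LL_U R \otimes^\LL C\tau^{k+1}) \to \pi^1_{*-1,*+1}(R \otimes^\LL_U IU \otimes^\LL C\tau^{k+1}) \to \pi^1_{*-1,*+1}(R \otimes^\LL C\tau^{k+1}),
\]
and both $\sigma$ (via the inclusion $E^\infty_{1,**}(U) \hookrightarrow \pi^1_{**}(R \otimes^\LL_U R \otimes^\LL C\tau^{k+1})$ coming from the Eilenberg-Moore filtration) and $\pi$ fit into a commutative triangle with this connecting map. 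The point is that the Eilenberg-Moore spectral sequence computing $\pi^1_{**}(R \otimes^\LL C\tau^{k+1}) = \pi^1_{**}(R \otimes^\LL_U U \otimes^\LL C\tau^{k+1})$ collapses in resolution degree $0$, which plays the role of the classical vanishing $\Tor^U_1(R,U) = 0$ and forces the connecting map to be injective on the $E^\infty$ subquotient where $\sigma(x)$ lives.

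The main obstacle will be this last step: unlike the classical case, where $\sigma$ takes values directly in a $\Tor$ group, the derived $\sigma$ lands in a subquotient of a spectral sequence, so one must check carefully both that $\sigma$ really does factor as claimed through the connecting homomorphism of the cofiber sequence and that the relevant piece of the Eilenberg-Moore filtration is detected faithfully. Once this identification is in place, the equality $\ker \sigma = \ker \pi$ is immediate from commutativity and injectivity, completing the argument.
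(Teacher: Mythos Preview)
Your proposal follows essentially the same route as the paper: define $\pi$ from the augmentation, identify $\ker\pi$ with decomposables via Observation~\ref{iuiufil} and the cofiber sequence $IU\to U\to R$, and then show $\ker\sigma=\ker\pi$ via a commutative diagram. You also correctly flag the one genuine subtlety, namely that $\sigma$ lands in the subquotient $E^\infty_{1,**}(U)$ rather than in $\pi^1_{**}(R\otimes^\LL_U R\otimes^\LL C\tau^{k+1})$ itself, so it is not a priori clear that $\sigma$ can be composed with the connecting map of the cofiber sequence.

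The paper's resolution of this point is slightly more concrete than what you sketch: rather than arguing abstractly about the Eilenberg--Moore filtration, it observes directly that $E^\infty_{0,**}=R[\tau]/\tau^{k+1}$ splits off $\pi^1_{**}(R\otimes^\LL_U R\otimes^\LL C\tau^{k+1})$ as a direct summand (coming from the unit $R\to R\otimes^\LL_U R$). This promotes $E^\infty_{1,**}$ from a subquotient to an honest subspace, so $\sigma$ extends to a map $\sigma'$ into the full homotopy group. One then checks that the image of $\sigma'$ meets the kernel of the connecting map (which is the image of $\pi^1_{**}(R\otimes^\LL_U U)=R[\tau]/\tau^{k+1}$) only in zero, giving $\ker\sigma=\ker\sigma'=\ker\pi$. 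This is the same mechanism as your ``collapses in resolution degree $0$'' remark, just made explicit; filling in your last paragraph with this splitting argument would complete your proof.
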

\begin{proof}
Let $\pi$ denote the composition 
\[
\xymatrix{
\pi^1_{**}(IU\otimes^\LL C\tau^{k+1}) \ar@{=}[r] \ar^\pi[rd] & \pi^1_{**}(IU \otimes^\LL C\tau^{k+1} \otimes^\LL_{U\otimes^\LL C\tau^{k+1}} U\otimes^\LL C\tau^{k+1}) \ar[d] \\& \pi^1_{**}(IU \otimes^\LL C\tau^{k+1}\otimes^\LL_{U\otimes^\LL C\tau^{k+1}} R\otimes^\LL C\tau^{k+1})}
\]
induced by the augmentation map.

The kernel of $\pi$ is the image of $$\pi^1_{**}(IU \otimes^\LL C\tau^{k+1}\otimes^\LL_{U\otimes^\LL C\tau^{k+1}} IU\otimes^\LL C\tau^{k+1})$$ in $\pi^1_{**}(IU\otimes^\LL C\tau^{k+1})$, which by Observation \ref{iuiufil} is the collection of nontrivial matric Toda brackets. Now, note that $E^\infty_{0nt} = R[\tau]/\tau^{k+1}$ is a direct summand of $\pi^1_{**}(R \otimes^\LL_U R\otimes C\tau^{k+1})$, so we may extend $\sigma$ to a map 
\[
\xymatrix{
E^\infty_{1nt}(U) \ar[r] & \pi^1_{n-1, t+1}(R \otimes^\LL C\tau^{k+1}\otimes^\LL_{U\otimes^\LL C\tau^{k+1}} R\otimes^\LL C\tau^{k+1}) \\
\pi^1_{nt}(IU\otimes^\LL C\tau^{k+1}) \ar^\sigma[u] \ar^{\sigma'}[ru] &
}
\]
Now, since $\pi^1_{**}(R\otimes^\LL C\tau^{k+1}) = R[\tau]/\tau^k$ is a direct summand of $\pi^1_{**}(R \otimes^\LL_U R)$, we obtain a diagram whose rightmost column is exact.
\[
\xymatrix{
&0 \ar[d] \\
&\pi^1_{nt}(R \otimes^\LL_U U\otimes C\tau^{k+1})\ar[d] \\
 \pi^1_{n+1,t-1}(IU\otimes^\LL C\tau^{k+1}) \ar[r]^\sigma \ar[rd]^\pi & \pi^1_{n+1,t-1}(R \otimes^\LL_U R\otimes^\LL C\tau^{k+1}) \ar[d] \\
 & \pi^1_{n+1,t-1}(R \otimes^\LL_U IU\otimes^\LL C\tau^{k+1})
}
\]
Comparing the definitions of $\pi$ and $\sigma$ shows that the diagram commutes. Moreover, $0$ is the only element in both the image of $\sigma$ and the image of $R[\tau]/\tau^k$, so it follows that $\ker \sigma = \ker \pi.$
\end{proof}
To complete the proof, we need to constrain $\pi_1(R \otimes^\LL_U R \otimes^\LL C\tau^{k+1}).$

\begin{lem}\label{ctkp1degrees}
Suppose $\pi^1_{**}(X \otimes^\LL C\tau^2)$ is concentrated in degree $n = 0$. Then $\pi^1_{**}(X \otimes^\LL C\tau^{k+1})$ is concentrated in degrees between $0$ and $1-k.$ 
\end{lem}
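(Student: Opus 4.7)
My plan is to induct on $k$. The base case $k=1$ is exactly the hypothesis of the lemma, which places $\pi^1_{**}(X \otimes^\LL C\tau^2)$ in the range $[1-1, 0] = \{0\}$. For the inductive step, I factor $\tau^k$ as the composition $S^{-k,0} \xrightarrow{\tau^{k-1}} S^{-1,0} \xrightarrow{\tau} S^{0,0}$ and apply the octahedral axiom \emph{inside} the stable $\infty$-category $\cat D^1(\cat C)$.

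The key input is Corollary \ref{kcofiber}, which describes cofibers of maps between cofibrants in $\cat D^1(\cat C)$ as pushouts of the form $X \otimes C\tau \coprod_X Y$. Applied to $\tau^m : S^{-m,0} \to S^{0,0}$, a direct pushout computation using this formula identifies the cofiber in $\cat D^1(\cat C)$ with the $\Fil(\cat C)$-object $C\tau^{m+1}$ (notably, the $\cat D^1$-cofiber ``gains one extra filtration copy'' relative to the $\Fil(\cat C)$-cofiber — consistent with Corollary \ref{ctweird} in the case $m=1$). Applying octahedral under these identifications yields the cofiber sequence
\[
\Sigma^{-1,0} C\tau^{k} \longrightarrow C\tau^{k+1} \longrightarrow C\tau^2 \qquad \text{in } \cat D^1(\cat C).
\]
Tensoring with a cofibrant replacement of $X$ (so that $X \otimes^\LL$ agrees with $X \otimes$) and applying the identification $\pi^1_{n,t}(-) = [S^{n,t},-]_{\cat D^1(\cat C)}$ from Section \ref{modelhomspaces} produces the long exact sequence
\[
\cdots \to \pi^1_{n+1, t}(X \otimes^\LL C\tau^{k}) \to \pi^1_{n, t}(X \otimes^\LL C\tau^{k+1}) \to \pi^1_{n, t}(X \otimes^\LL C\tau^2) \to \cdots,
\]
where the index shift in the first term reflects $\pi^1_{n,t}(\Sigma^{-1,0} Y) = \pi^1_{n+1, t}(Y)$.

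The inductive hypothesis forces the first term to vanish when $n+1 \notin [2-k, 0]$; the hypothesis of the lemma forces the third term to vanish when $n \neq 0$. For $n > 0$ both outer terms vanish (since $n+1 > 1$ and $n \neq 0$), so by exactness the middle vanishes; for $n < 1-k$ one has $n+1 < 2-k$ and $n < 0$, so again both outer terms vanish. Thus $\pi^1_{**}(X \otimes^\LL C\tau^{k+1})$ is concentrated in $[1-k, 0]$, completing the induction. The main pitfall to avoid is the temptation to use the ``$\Fil(\cat C)$-octahedral'' cofiber sequence $\Sigma^{-2,0} C\tau^{k-1} \to C\tau^{k+1} \to C\tau^2$: this is a genuine cofiber sequence in $\Fil(\cat C)$, but it does \emph{not} descend to a cofiber sequence in $\cat D^1(\cat C)$, because cofibers in the two $\infty$-categories differ. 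The octahedral construction must be performed entirely within $\cat D^1(\cat C)$ itself.
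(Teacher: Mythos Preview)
Your proof is correct. Both you and the paper decompose $C\tau^{k+1}$ into shifts of $C\tau^2$ inside $\cat D^1(\cat C)$, but you package this differently. The paper writes down the full length-$k$ filtration
\[
\Sigma^{-k,0} C\tau \to \Sigma^{1-k,0} C\tau^2 \to \cdots \to C\tau^{k+1}
\]
at once, computes each successive $\cat D^1$-cofiber as a shift $\Sigma^{-k+j,0}C\tau^2$ via Corollary~\ref{kcofiber} and Lemma~\ref{ctct}, and then invokes the spectral sequence of Theorem~\ref{torfil} to read off the concentration range directly from the $E^2$ page. You instead peel off one $C\tau^2$ at a time via the octahedral cofiber sequence $\Sigma^{-1,0}C\tau^k \to C\tau^{k+1} \to C\tau^2$ and induct, using only a long exact sequence at each step. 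Unwinding your induction reproduces exactly the paper's filtration, so the arguments are equivalent in content; your version is slightly more elementary in that it avoids the spectral sequence formalism, while the paper's version is a single-shot argument with no induction. Your careful remark that the octahedral must be run in $\cat D^1(\cat C)$ rather than in $\Fil(\cat C)$ is exactly the point the paper is making when it computes the successive cofibers using Corollary~\ref{kcofiber}.
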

\begin{proof}
Filter $C\tau^{k+1}$ as
\[
\Sigma^{-k,0} C\tau \to \Sigma^{1-k,0} C\tau^2 \to \cdots \to C\tau^k.
\]
Using Corollary \ref{kcofiber} and Lemma \ref{ctct}, we can compute the cofiber in $\cat D^1(\cat C)$ of the map
\[
\Sigma^{-k + j -1, 0} C\tau^{j} \to \Sigma^{-k+j,0} C\tau^{j+1}
\]
to be $\Sigma^{-k+j,0}C\tau^2 \oplus \Sigma^{-k-1,1} C\tau$, which is weak equivalent to $\Sigma^{-k+j,0} C\tau^2.$

Applying Theorem \ref{torfil} to this filtration, we obtain a spectral sequence converging to $\pi^1_{**}(X \otimes^\LL C\tau^{k+1})$ with
\[
E^2_{rnt} = \pi^1_{nt}(X \otimes^\LL \bigoplus_{r = 1}^{k} \Sigma^{-k+r,0} C\tau^2),
\]
which is concentrated in degrees $1-k \leq n \leq 0.$
\end{proof}

The proof of Theorem \ref{koszulgenfil} is then immediate. Proposition \ref{kersigma} implies that any element of $\pi^1_{**}(IU \otimes^\LL C\tau^{k+1}$ outside the kernel of sigma is a nontrivial smash Toda bracket. But Lemma \ref{ctkp1degrees} and the Koszul condition imply that the kernel is trivial outside of degrees $1-k \leq n \leq 0,$ completing the proof.

Now, we prove Theorem \ref{filkoszuliskoszul}. 

\begin{defn}
Let $U$ be an object of $\Fil(\cat C).$ The \emph{inverse filtration} on $U$ is the natural object $F_\bullet(U)$ in $\Fil(\Fil(\cat C))$ with
\[
\left(F_n(U)\right)_i = \begin{cases}
    U_i & \text{  for  } i > n\\
    U_n & \text{  for  } i \leq n.
\end{cases}
\]
\end{defn}
Note that if $U$ is cofibrant, the map $F_n(U) \to F_{n-1}(U)$ is a cofibration, so this is also a filtration of $U$ in $\cat D^1(\cat C)$.

Moreover, in this case we can use Corollary \ref{kcofiber} to compute the homotopy cofiber of the map $F_{n+1}(U) \to F_n(U)$ in the 1-projective model structure. The cofiber takes the form
\[
\cdots \to X_{n+2}/X_{n+3} \xrightarrow{0} X_{n+1}/X_{n+2} \xrightarrow{0} X_n/X_{n+1} \xrightarrow{id} X_n/X_{n+1} \xrightarrow{id} X_n/X_{n+1} \to \cdots 
\]
which is weak equivalent to 
\[
\cdots \to 0 \to 0 \to  X_n/X_{n+1} \xrightarrow{id} X_n/X_{n+1} \xrightarrow{id} X_n/X_{n+1} \to \cdots 
\]
which is precisely the cofiber of this map computed in $\Fil(\cat C).$ This observation will help us in the following proof:
\begin{proof}[Proof of Theorem \ref{filkoszuliskoszul}]
We want to understand $\pi^1_{**}(R \otimes^\LL_U R \otimes C\tau^2).$ As before, this is equivalent to
\[
(R \otimes C\tau^2) \otimes^\LL_{U \otimes C\tau^2} (R \otimes C\tau^2).
\]
Placing the inverse filtration on $U$, we find that the associated graded complex $G_n(U)$ takes the form
\begin{align*}
G_n(U) &= \left(F_n(U) \otimes C\tau^2\right)/\left(F_{n+1}(U) \otimes C\tau^2\right)\\
&= \left(F_n(U)/F_{n+1}(U)\right) \otimes C\tau^2 \\
            &= \Sigma^{r,0} C\tau^2 \otimes U_r/U_{r+1}.
\end{align*}
By Theorem \ref{torfil}, there is a spectral sequence  with
\[
E^1_{n'nt} = \pi^0_{n'nt}\left(R \otimes C\tau^2 \otimes^\LL_{G_n(U)} R \otimes C\tau^2 \right).
\]
Here the right hand side is being computed in $\Fil(\cat D^1(\cat C)),$ with the index $n'$ coming from the $\Fil$-filtration and $n$ coming from the $\cat D^1$-filtration. The sequence converges to $\pi^1_{n,t}(R \otimes^\LL_U R \otimes C\tau^2).$ 

To compute the $E^1$ page, we apply the spectral sequence of Theorem \ref{torclassic}, which takes the form
\[
E^2_{rn'nt}= \Tor^r_{\pi_{**}^0(U)}(R,R)
\]
 and converges to
\[
\pi^0_{n' + r, n + r, t-r}\left(R \otimes C\tau^2 \otimes^\LL_{G_n(U)} R \otimes C\tau^2 \right).
\]
Looking closely at the filtration $G_n(U)$, we note that both $n$ and $n'$ are equal to the internal $n$-grading on the homotopy groups/rings on the right hand side. By our Koszulity assumption, this $E^2$ page is concentrated in degrees $r = -n.$ In particular, we conclude that $\pi^1_{**}(R \otimes^\LL_U R \otimes C\tau^2)$ and so $U$ is Koszul. 
\end{proof}

\section{Additional lemmas}
In this appendix we gather a handful of lemmas used at various points in the text.
\begin{lem}\label{repcomp}
Let $A_n \to \cdots \to A_0$ and $B_n \to \cdots \to B_0$ be sequences of maps (not chain complexes) of sets and suppose:
\begin{enumerate}
    \item $f$ maps the image of $A_n$ in $A_1$ isomorphically onto the image of $B_n$ in $B_1$.
    \item  $f$ maps the image of $A_{n-1}$ in $A_0$ isomorphically onto the image of $B_{n-1}$ in $B_0$.
\end{enumerate}
Then, $f$ maps the image of $A_n$ in $A_0$ isomorphically onto the image of $B_n$ in $B_0.$
\end{lem}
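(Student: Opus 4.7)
The plan is to handle surjectivity and injectivity separately, using hypothesis (1) for surjectivity and hypothesis (2) for injectivity. Let me write $g_{i,j}: A_j \to A_i$ and $h_{i,j}: B_j \to B_i$ for the composite maps (with $i \leq j$) in the two chains, and abuse notation by writing $f$ for the map $A_i \to B_i$ in every degree; the ladder commutativity $f \circ g_{i,j} = h_{i,j} \circ f$ is built into the hypothesis that $f$ is a map of sequences.

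For surjectivity, I would start with an arbitrary $b \in \mathrm{im}(h_{0,n}) \subseteq B_0$, pick a lift $\tilde b \in B_n$, and consider $b_1 = h_{1,n}(\tilde b) \in \mathrm{im}(h_{1,n})$. Hypothesis (1) provides a (unique) $a_1 \in \mathrm{im}(g_{1,n})$ with $f(a_1) = b_1$. Then $g_{0,1}(a_1) \in \mathrm{im}(g_{0,n})$, and commutativity of the ladder gives $f(g_{0,1}(a_1)) = h_{0,1}(b_1) = b$, as required.

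For injectivity, the observation is that since $g_{0,n}$ factors through $A_{n-1}$, we have the containment
\[
\mathrm{im}(g_{0,n}) \subseteq \mathrm{im}(g_{0,n-1}).
\]
Hypothesis (2) says $f$ is injective on $\mathrm{im}(g_{0,n-1})$, so it is \emph{a fortiori} injective on the smaller subset $\mathrm{im}(g_{0,n})$. Combined with the surjectivity step, this gives the desired bijection.

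There isn't really a main obstacle here — the argument is essentially a diagram chase, and the only subtlety is recognizing that (1) is used ``one level up'' (in $A_1, B_1$) to produce preimages, while (2) is used ``at the target'' (in $A_0, B_0$) to kill any ambiguity, so that the two hypotheses complement each other rather than overlap.
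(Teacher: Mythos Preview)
Your proof is correct and takes essentially the same approach as the paper: injectivity follows because $\mathrm{im}(g_{0,n}) \subseteq \mathrm{im}(g_{0,n-1})$ and hypothesis~(2), while surjectivity comes from the fact that $\mathrm{im}(g_{0,n})$ is the image of $\mathrm{im}(g_{1,n})$ under $A_1 \to A_0$ together with hypothesis~(1). The paper phrases the surjectivity step more tersely (``the image of $A_n$ in $A_0$ is a quotient of the image of $A_n$ in $A_1$''), but your explicit element chase is the same argument unwound.
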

\begin{proof}
Call the map from the image of $A_n$ in $A_0$ to the image of $B_n$ in $B_0$ $f_*.$ The image of $A_n$ in $A_0$ is a subset of the image of $A_{n-1}$ in $A_0$ (and the same for $B$), so $f_*$ is injective. Similarly, the image of $A_n$ in $A_0$ is a quotient of the image of $A_n$ in $A_1$, so $f_*$ is surjective. 
\end{proof}

 \begin{lem}\label{acycfib}
Suppose $f: X \to Y$ is $k$-exact and a $J^k$ injection. Then, the fiber $F$ of $f$ is $k$-acyclic.
 \end{lem}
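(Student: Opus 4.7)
The plan is to recognize $f$ as a fibration between fibrant objects in the $k$-projective model structure, and then invoke the standard fact that the homotopy fiber of a weak equivalence is a zero object. A $J^k$-injection is by definition a map with the right lifting property against $J^k = \{0 \to \Sigma^{n,t}C\tau^k\}$, so $f$ is a fibration; moreover, every object of $\Fil(\cat C)$ is fibrant, because for any $X$ the composite $\Sigma^{n,t}C\tau^k \to 0 \to X$ provides a lift against each $0 \to \Sigma^{n,t}C\tau^k$ along the unique map $X \to 0$. Combined with $k$-exactness, $f$ is therefore a fibration between fibrant objects that is also a weak equivalence.

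In a stable model $\infty$-category, the strict fiber of such a map computes the homotopy fiber in the localization. Consequently, $F$ represents the fiber of $L(f)$ in $\cat D^k(\cat C)$, where $L: \Fil(\cat C) \to \cat D^k(\cat C)$ denotes the localization. Since $f$ becomes an equivalence in $\cat D^k(\cat C)$, its fiber there is a zero object, so $F$ is weakly equivalent to $0$ in the $k$-projective structure. By the definitional description of $W^k$, this is precisely the vanishing $\pi^k_{**}(F) = 0$, i.e., $F$ is $k$-acyclic.

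The main obstacle is a minor logical wrinkle: this lemma is used as one of the hypotheses checked en route to establishing the $k$-projective model structure, so strictly speaking we cannot cite conclusions about that structure. This is innocuous because the two facts we actually need --- triviality of fibrant replacement, and identification of strict and homotopy fibers for fibrations between fibrant objects --- depend only on the raw generating data $(I^k, J^k)$ and the stability of $\Fil(\cat C)$, and not on the full verification of the model-structure axioms. A direct diagram chase is also possible using the short exact sequence
\[
0 \to \pi_*(F_{n-k}/F_n) \to \pi_*(X_{n-k}/X_n) \to \pi_*(Y_{n-k}/Y_n) \to 0
\]
from $J^k$-injectivity together with the $\pi^k$-isomorphism from $k$-exactness; given $s \in \pi_t(F_n)$, one pushes $x = i_*s$ through $k$-exactness to find $\tau^k x = 0$ and so $x = \delta_X(z)$, lifts $f_*(z) = j_*(y)$ across the $J^k$-surjection, and seeks to modify $z$ into an element of $\ker f_* = \pi_{t+1}(F_{n-k}/F_n)$ mapping to $s$ under $\delta_F$. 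Closing this chase for arbitrary $s$ requires careful bookkeeping of the interaction of $\tau^k$ with the connecting maps of three long exact sequences at two adjacent filtration levels, and is considerably more delicate than the model-theoretic route above.
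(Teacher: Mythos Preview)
Your model-theoretic argument is genuinely circular, and the circularity is not innocuous. The assertion that ``the strict fiber of a fibration between fibrant objects computes the fiber in the localization'' is a theorem about model $\infty$-categories whose proof goes through the factorization axioms (via some form of Ken Brown's lemma). Since this lemma is an input to Lemma~\ref{part5}, which is itself one of the conditions verified to establish the $k$-projective model structure, you cannot appeal to consequences of that model structure here. Your claim that the relevant facts ``depend only on the raw generating data $(I^k,J^k)$ and the stability of $\Fil(\cat C)$'' is not justified: stability of $\Fil(\cat C)$ tells you that $F \to X \to Y$ is simultaneously a fiber and cofiber sequence \emph{in $\Fil(\cat C)$}, but says nothing about whether the localization functor $L$ preserves either kind of sequence, nor whether $\cat D^k(\cat C)$ is even stable. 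Both of those facts are established in the paper only \emph{after} the model structure is in place (Theorem~\ref{compactspheres}).

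The paper's proof is precisely the diagram chase you sketch in your final paragraph but decline to carry out. It works entirely at the level of homotopy groups of the stable $\infty$-category $\Fil(\cat C)$: given $a \in \pi_t(\Sigma F_n)$, one first uses $k$-exactness of $f$ to lift $i_*(a)$ to some $b \in \pi_t(X_{n-k}/X_n)$, then uses the $J^k$-injection hypothesis together with a second application of $k$-exactness (at the adjacent filtration level $n-k$) to correct $b$ by an element of the form $\ell_*(\text{something})$ so that the result lies in $\ker f_*$, and hence lifts to $\pi_t(F_{n-k}/F_n)$. The ``delicate bookkeeping'' you mention is exactly the content of the lemma at this point in the logical development; it is not optional.
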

\begin{proof}
Consider the following diagram, noting that the vertical triples are fiber sequences.
\[
\xymatrix{
       &  & F_{n-k}/F_n \ar[r]^{\delta} \ar[d]^{i} & \Sigma F_n \ar[d]^{i} \\
\Omega X_{n-2k}/X_{n-k} \ar[r]^\delta \ar[d]^{f} & X_{n-k} \ar[d]^{f} \ar[r]^\ell & X_{n-k}/X_n \ar[r]^\delta \ar[d]^{f} & \Sigma X_n \ar[d]^{f} \\
\Omega Y_{n-2k}/Y_{n-k}\ar[r]^\delta &Y_{n-k} \ar[r]^\ell &  Y_{n-k}/Y_n\ar[r]^\delta  & \Sigma Y_n \\
}
\]
 Pick an arbitrary $a$ in $\pi_{t}(\Sigma F_n).$ To show that $\tau^k a = 0$, we need to show that $a$ lifts to $\pi_t(F_{n-k}/F_n).$  Since $f_*(i_*(a)) = 0$ and $f$ is exact, there exists $b$ in $\pi_t(X_{n-k}/X_n)$ with $\delta_*b = i_*(a).$ 

Now, $f_*(b)$ might be nonzero. But $\delta_*f_*(b) = f_*(\delta_*b) = 0,$ so there exists $c$ in $\pi_t(Y_{n-k})$ such that $\ell_*(c) = f_*(b).$ Now, since $f$ is exact, there exist $y$ in  $\pi_t(\Omega Y_{n-2k}/Y_{n-k})$ and $s$ in $\pi_t(\Omega X_{n-2k}/X_{n-k})$ such that $f_*(s) = c + d_*y.$ Since $f$ is a $J^k$ injection, there exists $x$ in $\pi_t(\Omega X_{n-2k}/X_{n-k})$ with $f_*(x) = y,$ so in particular $f_*(s - d_*x) = c. $

Then, $f_*(\ell_*(s - d_*x)) = \ell_*(c) = f_*(b).$ But by definition, $d \circ \ell = 0$, so $d_*\ell_*( s - d_*x) = 0.$ So if we set $t = b + \ell(dx - c),$ we have:
\begin{itemize}
    \item $f_*(t) = 0$ 
    \item $d_*(t) = i_*(a)$
\end{itemize}
This guarantees $t$ lifts to $u$ in $\pi_t(F_{n-k}/F_n)$ with $d_*u = a$, as desired!
 \end{proof}
 
\begin{lem}\label{deltader} 
The cofiber map $\delta$ defined by the cofiber sequence
\[
\Sigma^{-k,0} C\tau^k \to C\tau^{2k} \to C\tau^k \xrightarrow{\delta} \Sigma^{-k,1} C\tau^{k}
\]
is a derivation, in the sense that we have a commutative square
\[
\xymatrix{
C\tau^k \otimes C\tau^k \ar[r]^\mu \ar[d]^{1 \otimes \delta + \delta \otimes 1} & C\tau^k \ar[d]^{\delta} \\
C\tau^k \otimes \Sigma^{-k,1} C\tau^k \oplus \Sigma^{-k,1} C\tau^k \otimes C\tau^k \ar[r] & \Sigma^{-k,1} C\tau^k}
\]
where the bottom horizontal map is the sum of two copies of $\mu$.
\end{lem}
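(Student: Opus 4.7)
My strategy is to realize $\delta$ as the boundary map of a cofiber sequence in $C\tau^{2k}$-modules, and then derive the Leibniz rule by naturality, using the ring map $r\colon C\tau^{2k}\to C\tau^k$ from Corollary~\ref{ctsarealgebras} to compare the multiplications $\mu_{2k}$ on $C\tau^{2k}$ and $\mu$ on $C\tau^k$.

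First I would observe that the given cofiber sequence arises from applying the octahedral axiom to the factorization $\tau^{2k}=\tau^k\circ\tau^k$, and that under this identification the middle map $C\tau^{2k}\to C\tau^k$ is precisely the ring map $r$. Since $r$ is a map of commutative rings, we obtain a commutative square
\[
\xymatrix{
C\tau^{2k}\otimes C\tau^{2k}\ar[r]^-{\mu_{2k}}\ar[d]^{r\otimes r} & C\tau^{2k}\ar[d]^{r}\\
C\tau^k\otimes C\tau^k\ar[r]^-{\mu} & C\tau^k.
}
\]
The cofiber of the right-hand vertical map is $\Sigma^{-k,1}C\tau^k$ with connecting map $\delta$. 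Thus, taking cofibers of the two vertical maps and invoking naturality of the boundary map, the square above induces exactly the square of the lemma once we identify the cofiber of $r\otimes r$ and its connecting map with the expected pieces.

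To compute $\operatorname{cof}(r\otimes r)$, I would factor $r\otimes r$ as $(1\otimes r)\circ(r\otimes 1)$ and apply the octahedral axiom. Each of $r\otimes 1$ and $1\otimes r$ has cofiber $\Sigma^{-k,1}C\tau^k\otimes C\tau^{2k}$ and $C\tau^k\otimes\Sigma^{-k,1}C\tau^k$ respectively, and the octahedron produces a cofiber sequence
\[
\Sigma^{-k,1}C\tau^k\otimes C\tau^{2k}\longrightarrow\operatorname{cof}(r\otimes r)\longrightarrow C\tau^k\otimes\Sigma^{-k,1}C\tau^k.
\]
Postcomposing the first factor with the remaining $r$ and recognizing the two boundary maps as $\delta\otimes 1$ and $1\otimes\delta$ gives the connecting map of $\operatorname{cof}(r\otimes r)$ as $\delta\otimes 1+1\otimes\delta$. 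Finally, the induced bottom horizontal map is obtained from $\mu_{2k}$ by restricting along the two $C\tau^{2k}$-module structures and projecting to $C\tau^k$; by unwinding, this is exactly the sum of two copies of $\mu$, completing the square.

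\textbf{Main obstacle.} The technical heart of the argument is the identification of $\operatorname{cof}(r\otimes r)$ as (essentially) the direct sum $\Sigma^{-k,1}C\tau^k\otimes C\tau^k\,\oplus\, C\tau^k\otimes\Sigma^{-k,1}C\tau^k$ together with the correct connecting map $\delta\otimes 1+\delta\otimes 1$. The splitting arises from the octahedral axiom applied to the factorization $r\otimes r=(r\otimes 1)(1\otimes r)=(1\otimes r)(r\otimes 1)$, and the equality of these two factorizations (via the symmetry of the monoidal structure) is what produces the \emph{sum} characteristic of a derivation, rather than either summand alone. Care is needed because the analogous splitting in Lemma~\ref{ctct} is only non-canonical; here the point is that it is only the connecting \emph{map} that must match, which is canonical. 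Once this identification is in hand, the remainder is formal diagram-chasing with naturality of cofiber sequences.
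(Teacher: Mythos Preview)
Your approach is correct in outline and genuinely different from the paper's. You exploit the commutative ring map $r\colon C\tau^{2k}\to C\tau^k$ and the naturality square for $\mu$, then decompose $\operatorname{cof}(r\otimes r)$ via the octahedral axiom applied to the factorization $r\otimes r=(1\otimes r)\circ(r\otimes 1)$. The paper, by contrast, never invokes $C\tau^{2k}$ or its ring structure in the proof at all: following Balderrama, it works directly with the $2\times 2$ square of spheres $S^{-k,0}\otimes S^{-k,0}\to S^{0,0}\otimes S^{0,0}$, forms its pushout $PO$, and recognizes $C\tau^k\otimes C\tau^k$ as the total cofiber. Two explicit maps of cofiber sequences---one comparing $PO\to S^{0,0}$ with $S^{-k,0}\to S^{0,0}$ via the fold, the other comparing $S^{-2k,0}\to PO$ with $S^{-2k,0}\to S^{-k,0}$---are then pasted to produce the derivation square, with $\delta$ appearing as the composite $C\tau^k\to S^{-k,1}\to \Sigma^{-k,1}C\tau^k$.

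Your route is more conceptual and makes the Leibniz rule look like a formal consequence of $r$ being a ring map; the paper's route is more elementary, needing only the defining cofiber sequence of $C\tau^k$ and basic total-cofiber manipulations. One caution on your side: the octahedral step produces a cofiber sequence $\Sigma^{-k,1}C\tau^k\otimes C\tau^{2k}\to\operatorname{cof}(r\otimes r)\to C\tau^k\otimes\Sigma^{-k,1}C\tau^k$, not a priori a splitting, and the left term still carries a $C\tau^{2k}$ factor rather than $C\tau^k$. You do not actually need the splitting---what you need is that the induced map $\operatorname{cof}(r\otimes r)\to\Sigma^{-k,1}C\tau^k$ restricts on each ``piece'' to $\mu$, so that the composite $C\tau^k\otimes C\tau^k\to\operatorname{cof}(r\otimes r)\to\Sigma^{-k,1}C\tau^k$ is $\mu\circ(1\otimes\delta)+\mu\circ(\delta\otimes 1)$. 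This follows by running the octahedral comparison for \emph{both} factorizations $(1\otimes r)(r\otimes 1)$ and $(r\otimes 1)(1\otimes r)$ and matching them against the naturality ladder; you should make that step explicit rather than asserting the connecting map is $\delta\otimes 1+1\otimes\delta$.
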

This lemma is essentially a standard step in proving that the spectral sequence associated to a ring in $\Fil(\cat C)$ is multiplicative: we prove the corresponding statement about the representing objects $C\tau^k$ rather than the spectral sequence itself.  Our proof closely follows appendix A.3 of \cite{balderrama2021deformations}. We prove the Lemma over $\FF_2$ to avoid signs, and refer the interested reader to the aforementioned appendix to fill in the correct signs. 
\begin{proof}

We have a commutative square
\[\xymatrix{
S^{-k,0} \otimes S^{-k,0} \ar[r] \ar[d] & S^{0,0} \otimes S^{-k,0} \ar[d] \\
S^{-k,0} \otimes S^{0,0}\ar[r] & S^{0,0} \otimes S^{0,0}
}
\]
with an evident isomorphism to the commutative square 
\[
\xymatrix{
S^{-2k,0} \ar[r] \ar[d] & S^{-k,0} \ar[d] \\
S^{-k,0} \ar[r] & S^{0,0}
}
\]
Let $PO$ denote the pushout 
\[\xymatrix{
S^{-k,0} \otimes S^{-k,0} \ar[r] \ar[d] & S^{0,0} \otimes S^{-k,0} \ar[d] \\
S^{-k,0} \otimes S^{0,0} \ar[r] & PO.
}
\]
By standard results on total cofibers, we obtain a cofiber sequence
\[
PO \to S^{0,0} \otimes S^{0,0} \to C\tau^k \otimes C\tau^k.
\]
Now, the commutative square 
\[\xymatrix{
PO \ar[r] \ar[d] & S^{0,0} \otimes S^{0,0} \ar[d] \\
S^{-k,0}  \ar[r] & S^{0,0}
}
\]
extends to a map of cofiber sequences
\[\xymatrix{
PO \ar[r] \ar[d] & S^{0,0} \otimes S^{0,0} \ar[d] \ar[r] & C\tau^k \otimes C\tau^k \ar[d] \ar[r] & \Sigma^{0,1} PO \ar[d] \\
S^{-k,0}  \ar[r] & S^{0,0} \ar[r] & C\tau^k \ar[r] & S^{-k,1}
}
\]
and a simple inspection shows that the given map is the multiplication map.  

From here, the commutative square
\[\xymatrix{
S^{-k,0} \otimes S^{-k,0} \ar[r] \ar[d] & PO  \ar[d] \\
S^{-2k,0}  \ar[r] & S^{-k,0}
}
\]
extends to a map of cofiber sequences
\[\xymatrix{
S^{-k,0} \otimes S^{-k,0} \ar[r] \ar[d] & PO \ar[r] \ar[d] & S^{-k,0} \otimes C\tau^k \oplus C\tau^k \otimes S^{-k,0} \ar[d] \\
S^{-2k,0}  \ar[r] & S^{-k,0} \ar[r] & \Sigma^{-k,0} C\tau^k
}
\]
Pasting the rightmost square of the first cofiber sequence to the rightmost square of the second cofiber sequence gives a commutative diagram
\[
\xymatrix{
C\tau^k \otimes C\tau^k \ar[rr] \ar[d] && C\tau^k \ar[d]\\
\Sigma^{0,1} PO \ar[rr] \ar[d] && \Sigma S^{-k}\ar[d] \\
  S^{-k,1} \otimes C\tau^k \oplus C\tau^k \otimes S^{-k,1} \ar[rr] \ar[dr] && \Sigma^{-k,1} C\tau^k \\
  &\Sigma^{-k,1} C\tau^k  \otimes C\tau^k \oplus C\tau^k \otimes \Sigma^{-k,1} C\tau^k \ar^{\Sigma^{-k,1} (\mu \oplus\mu )}[ru] &
}
\]
where the bottom triangle comes from the unit axiom of a ring. 

By inspection, the left vertical composition is $\delta \otimes 1 \oplus 1 \otimes \delta$, and the right vertical composition is $\delta$, giving us the desired square. 
\end{proof}

\begin{lem}\label{multssarefil}
Any bigraded multiplicative spectral sequence over a field starting whose unit is a permanent cycle is isomorphic,  starting at the $E^1$ page, to the spectral sequence associated to a filtered DGA.
\end{lem}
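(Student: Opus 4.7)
The plan is to realize the given spectral sequence directly as coming from a filtered DGA built out of its own pages and differentials, exploiting the field hypothesis to split every exact sequence in sight.

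First, since $R$ is a field, choose, for each $r \geq 1$, internal splittings
\[
E_r^{s,t} \;=\; B_r^{s,t} \oplus H_r^{s,t} \oplus C_r^{s,t},
\]
with $Z_r = B_r \oplus H_r$, $H_r \cong E_{r+1}$, and $d_r|_{C_r}$ a bigraded isomorphism onto $B_r^{+r,-1}$. Iterating these isomorphisms writes
\[
A := E_1 \;\cong\; E_\infty \,\oplus\, \bigoplus_{r \geq 1}\bigl(C_r \oplus d_r C_r\bigr)
\]
as a bigraded $R$-vector space. Filter $A$ by $F^s A := \bigoplus_{s'\geq s} A^{s',*}$ and put a differential on $A$ by $d := \sum_{r \geq 1} \widehat d_r$, where $\widehat d_r$ agrees with $d_r$ on the $C_r$-summand and vanishes elsewhere. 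Each $\widehat d_r$ raises filtration by exactly $r$, so $(A, F, d)$ is a filtered complex; unwinding the construction of Section \ref{sseqsection} shows the associated (additive) spectral sequence coincides with the given one from $E^1$ on.

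The subtle step is upgrading $A$ to a filtered DGA inducing the given multiplicative structure. The obvious choice $\mu_0: A \otimes A \to A$ coming from the $E_1$-multiplication fails to make $d$ a derivation, because for $r \geq 2$ the differential $\widehat d_r$ is only a derivation for the multiplication on $E_r$, not on $E_1$. We correct this by an inductive perturbation
\[
\mu = \mu_0 + \mu_1 + \mu_2 + \cdots,\qquad \mu_i:\ A \otimes A \to A \text{ raising filtration by } i.
\]
Expanding the Leibniz identity for $d$ and $\mu$ by filtration degree yields, at each level $r$, an equation whose obstruction is a bigraded Hochschild $2$-cocycle on $E_1$ with coefficients in a subquotient of $A$; this cocycle is precisely the failure of the lower-filtration data to match the multiplicative structure on $E_{r+1}$ induced from $(E_r, d_r)$. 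By hypothesis, the original spectral sequence \emph{is} multiplicative, so this cocycle is exact, and working over a field lets us choose a primitive to serve as $\mu_r$. Associativity is enforced by a completely analogous perturbation on $A^{\otimes 3}$, using that the $E_1$-multiplication is itself associative. The process terminates on each pair of elements in finitely many steps because the filtration is exhaustive and separated in each bidegree.

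Finally, applying Theorem \ref{sseqmult} to the filtered DGA $(A, F, d, \mu)$ produces a multiplicative spectral sequence; by construction this agrees with the given spectral sequence, pages, differentials, and multiplications, from $E^1$ onwards. The heart of the argument --- and the step I expect to be the only real work --- is the obstruction-theoretic construction of $\mu$ in Step 3; everything else is a matter of unwinding definitions and using that over a field the relevant cohomological obstructions are the only ones that need to vanish, and they vanish precisely because we started with a genuinely multiplicative spectral sequence.
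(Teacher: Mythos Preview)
Your approach differs substantially from the paper's, and the divergence is at exactly the point you flag as ``the subtle step.''  The paper does \emph{not} perturb the multiplication at all: it builds the same filtered complex $(A,F,d)$ that you do, equips it with the $E_1$-multiplication $\mu_0$, and then asserts in one sentence that ``the Leibniz rule says precisely that this multiplication preserves the differential on $X$.''  No correction terms $\mu_1,\mu_2,\ldots$ are introduced.

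You are right to be suspicious of that sentence.  If $a,b\in C_1$ have $d_1(a)\,b=a\,d_1(b)=0$ in $E_1$, then $ab$ is a $d_1$-cycle, and nothing in the axioms for a multiplicative spectral sequence prevents $d_2[ab]\neq 0$ on $E_2$.  In the filtered complex this forces $d(ab)\neq 0$, while Leibniz for $\mu_0$ would give $d(ab)=d_1(a)\,b\pm a\,d_1(b)=0$.  So the paper's quick argument does not go through as written, and your instinct to deform $\mu_0$ is well-founded.

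That said, your proposal is a plan rather than a proof, and several of the steps you label as routine are where the actual content lives.  You assert that the level-$r$ obstruction is a Hochschild $2$-cocycle and that multiplicativity of the spectral sequence forces it to be a coboundary, but you do not identify the coefficient module, do not write down the cocycle, and do not explain why the hypothesis kills it; this is the heart of the argument and cannot be left implicit.  The associativity correction is dismissed as ``completely analogous,'' but perturbing $\mu$ to enforce associativity will in general disturb the Leibniz identity you just arranged, so the two towers of corrections have to be built simultaneously and shown to be compatible.  Finally, the termination claim (``finitely many steps because the filtration is exhaustive and separated'') is not justified: if the spectral sequence has nonzero differentials on infinitely many pages, the sum $\mu=\sum_i\mu_i$ is genuinely infinite and you need a completeness argument, not a finiteness one.

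In short: the paper's route is much shorter but glosses over the very compatibility you worry about; your route addresses that gap in principle, but as written it defers all of the real work to unproved obstruction-theoretic claims.
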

\begin{proof}
 We can decompose the $E_1$ page into a vector space $S_\infty$ of permanent cycles, a sequence of vector spaces $S_i$ of sources of nonzero differentials on the $E_i$ page, and a sequence of vector spaces $T_i$ of targets of nonzero differentials on the $E_i$ page. Note that we have a canonical isomorphism $S_i \to T_i.$

Now, let $X$ be the filtered differential graded $R$-module
\[
X = S_\infty \oplus \mathbb{D}_1 \otimes_{\FF_p} S_1 \oplus \mathbb{D}_2 \otimes_{\FF_p} S_2 \oplus \cdots
\]
where $S_\infty$ has filtration zero and
\[
\mathbb{D}_i = \cdots \to 0  \to R \to R \to 0 \to \cdots
\]
where the first $R$ has filtration $0$ and the second lives in filtration $i$. 

The spectral sequence associated to $X$ is by construction  (additively) isomorphic the one we started with. A multiplicative structure on our original spectral sequence places a ring structure on the $E_1$ page, which is the bigraded module obtained by forgetting the differential on $X.$ The Leibniz rule says precisely that this multiplication preserves the differential on $X$, so we obtain a DGA with the desired spectral sequence.
\end{proof}
 
 \section{Proof of Proposition \ref{iter}}\label{appiter}
 In this appendix, we provide a proof of Proposition \ref{iter}. Throughout, we use Lurie's notation $\cat D_{x/}$ to denote the overcategory of maps in a category $\cat D$ from an object $x$. We will prove the proposition at the end of this appendix by writing down a diagram for which both $C_i^j(X)$ and $\Sigma^{j-i} Z_i^j(X)$ are colimits. In order to do so, we need four lemmas letting us manipulate poset-shaped diagrams without changing their colimit.  The first three are special cases of the $\infty$-categorical version of Quillen's theorem A, which states that a functor $i: \cat E \to \cat D$ is cofinal if and only if for every object $x \in \cat D$, the comma $\infty$-category $x/i$ (defined to be $\cat D_{x/} \times_{\cat D} \cat E$) is a weakly contractible simplicial set.
\begin{lem}\label{sfin}
Let $\cat D$ be a partially ordered set, and let $\cat E$ be a subcategory of shape $a \leftarrow b \to c.$ In order that the inclusion $i: \cat E \to \cat D$ be final, it suffices to check that:
\begin{itemize}
    \item Every $d \in \cat D$ has a map to either $a$ or $c$. 
    \item $b$ is the (categorical) product of $a$ and $c$.
\end{itemize}
\end{lem}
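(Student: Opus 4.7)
The plan is to invoke the $\infty$-categorical Quillen's Theorem A recalled in the paragraph immediately preceding the lemma: it suffices to check that, for every $d \in \cat D$, the comma $\infty$-category $d/i = \cat D_{d/} \times_{\cat D} \cat E$ is a weakly contractible simplicial set. Because $\cat D$ is a poset, $\cat D_{d/}$ is the full sub-poset on those objects receiving a (necessarily unique) morphism from $d$, and $d/i$ reduces to the full sub-poset of $\cat E$ on the set $S_d \subseteq \{a,b,c\}$ of elements receiving such a morphism. Identifying $d/i$ with $S_d$ in this way turns the whole problem into a finite combinatorial check.

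The first step is to enumerate the a priori possibilities for $S_d$. Since $b \to a$ and $b \to c$ are morphisms in $\cat E \subseteq \cat D$, composition in the poset $\cat D$ forces the implication $b \in S_d \Rightarrow a,c \in S_d$, leaving only the five possibilities $S_d \in \{\emptyset,\{a\},\{c\},\{a,c\},\{a,b,c\}\}$. The first hypothesis of the lemma, that every $d$ maps to $a$ or $c$, rules out $S_d = \emptyset$. The second hypothesis, that $b$ is the categorical product of $a$ and $c$ in $\cat D$, rules out $S_d = \{a,c\}$: any $d$ mapping to both $a$ and $c$ would factor through the universal property of $b$, forcing $b \in S_d$.

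In each of the three remaining cases I only need to verify weak contractibility of $d/i$ directly. When $S_d$ is a singleton, $d/i$ is a one-point category, hence contractible. When $S_d = \{a,b,c\}$, $d/i$ is isomorphic to the span $a \leftarrow b \to c$, and $b$ is an initial object of this poset because both $b \to a$ and $b \to c$ lie in $\cat E$; the nerve of a small category with an initial object is contractible, so we are done.

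The proof is almost entirely formal once the case analysis above is set out; the only point that warrants care is the direction convention for morphisms in $\cat D$, and correspondingly the fact that the word \emph{product} in the statement is to be interpreted via its universal property in $\cat D$ (so that an element lying below both $a$ and $c$ lies below $b$), rather than via any ad hoc meet/join interpretation. This is bookkeeping rather than a genuine obstacle, so I do not expect any substantial difficulty beyond laying out the five-case dichotomy cleanly.
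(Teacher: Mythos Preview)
Your proof is correct and follows essentially the same approach as the paper: both invoke Quillen's Theorem A and observe that the two hypotheses rule out exactly the cases $S_d = \emptyset$ and $S_d = \{a,c\}$, leaving only subposets of $\cat E$ that are evidently weakly contractible. Your version is simply a more explicit unpacking of the paper's two-sentence argument.
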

\begin{proof}
For arbitrary $d \in \cat D$, the first condition implies that the comma category $d / i$ is a nonempty subcategory of $\cat E$, and the second implies that it cannot be $\{a,c\}.$ 
\end{proof}
\begin{lem}\label{qa}
Let $\cat D$ be an ordinary category, and $d$ be an object of $\cat D$ equipped with a morphism $f: d \to e$ such that every morphism out of $d$ factors as a composition $g \circ f.$ Then, the inclusion $i: \cat D - \{d\} \to \cat D$ is final.
\end{lem}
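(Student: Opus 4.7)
The plan is to apply the $\infty$-categorical Quillen's Theorem A invoked in Lemma \ref{sfin}: the functor $i$ is final iff for every $x \in \cat D$ the comma category $x/i := \cat D_{x/} \times_{\cat D} (\cat D - \{d\})$ is weakly contractible. I would verify this in the two cases $x \neq d$ and $x = d$.

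For $x \neq d$, the object $x$ itself lies in $\cat D - \{d\}$, so $(x, \mathrm{id}_x)$ is an object of $x/i$. For any other $(y, h: x \to y) \in x/i$, the morphism $h$ provides a morphism $(x, \mathrm{id}_x) \to (y, h)$ in $x/i$; since $\cat D$ is being treated as a poset in the intended applications (cf.\ the introductory remarks to Appendix \ref{appiter}), such a morphism is unique. Hence $(x, \mathrm{id}_x)$ is initial in $x/i$, which is therefore weakly contractible.

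For $x = d$, the comma category $d/i$ consists of pairs $(y, h: d \to y)$ with $y \neq d$, with morphisms $k: y \to y'$ satisfying $k \circ h = h'$. The given $f: d \to e$ produces an object $(e, f) \in d/i$, and the factorization hypothesis supplies, for every $(y, h) \in d/i$, some $g: e \to y$ with $g \circ f = h$, i.e.\ a morphism $(e, f) \to (y, h)$ in $d/i$. In the poset setting such a $g$ is unique, so $(e, f)$ is an initial object of $d/i$, and the nerve of $d/i$ is weakly contractible.

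The main obstacle is the $x = d$ case: this is where the factorization hypothesis actually does work, and without the poset assumption (or some form of uniqueness of factorization) it is possible for $d/i$ to fail to be weakly contractible, since two distinct factorizations of a single $h$ would create a loop in the nerve. Because all uses of the lemma are for subcategories of the poset $\Ch$, this subtlety is benign; I would therefore simply remark on why morphisms are unique in the intended applications and conclude initiality of $(e, f)$ from existence of the factorization alone.
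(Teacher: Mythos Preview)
Your proof is correct and follows the same approach as the paper: apply Quillen's Theorem A and, in each case, exhibit an initial object of the comma category (namely $x$ itself when $x \neq d$, and $e$ when $x = d$). Your observation that the argument tacitly needs the poset hypothesis is well taken---the paper's proof makes the same implicit assumption when it asserts $d/i = (\cat D - \{d\})_{e/}$, and indeed without uniqueness of the factorization the lemma can fail for general categories, though all applications in the appendix are to posets.
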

\begin{proof}
Pick an arbitrary $x \in \cat D$. If $x \neq d$, then the comma category $x/i$ is just $(\cat D - \{d\})_{x/}$. If $x = d$, then $x/i = (\cat D - \{d\})_{e/}$ by the factorization assumption. In either case, $x/i$ is weakly contractible.
\end{proof}

\begin{lem}\label{lift}
Let $\cat D$ be a partially ordered set with objects $a,b,c$ such that:
\begin{itemize}
    \item There are maps $a \to c$ and $b \to c$.
    \item There are no maps $a \to b$ or $b \to a$.
    \item For any map $d \to a$, there exists a coproduct $b \coprod d$ in $\cat D.$
\end{itemize}

Let $\cat E$ be the partially ordered set extending $\cat D$ to include a map $a \to b$

Then, the inclusion $i: \cat D \to \cat E$ is final. Moreover, any functor $\cat D \to \cat C$ sending $a$ and $b$ to zero objects extends to a functor $\cat E \to \cat C.$
\end{lem}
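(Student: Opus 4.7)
The plan is to verify finality via Quillen's Theorem A by directly analyzing the comma posets $x/i$, and to handle the extension claim through the universal property of a zero object. Since $i$ is the identity on objects and both $\cat D$ and $\cat E$ are posets, $x/i$ can be identified with the upper set $U_x^{\cat E} = \{d \in \cat D : x \to d \text{ in } \cat E\}$, equipped with the ordering restricted from $\cat D$.

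First I would split into two cases. If there is no map $x \to a$ in $\cat D$, then closing $\cat E$ under composition produces no new morphisms out of $x$, so $U_x^{\cat E} = U_x^{\cat D}$ has $x$ as a minimum and is weakly contractible. Otherwise (including the case $x = a$) one has $U_x^{\cat E} = U_x^{\cat D} \cup U_b^{\cat D}$, since the only new morphisms out of $x$ are $x \to b$ and its transitive consequences through $b$. The key combinatorial claim is that every chain in $U_x^{\cat E}$ lies entirely in one of $U_x^{\cat D}$ or $U_b^{\cat D}$: if $y$ satisfies $y \geq x$ in $\cat D$ but $y \not\geq b$, and $y'$ satisfies $y' \geq b$ but $y' \not\geq x$, then $y \leq y'$ in $\cat D$ would force $y' \geq x$ while $y' \leq y$ would force $y \geq b$, so $y$ and $y'$ cannot co-exist in a chain. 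Granted this, the nerve of $U_x^{\cat E}$ is the pushout of the nerves of $U_x^{\cat D}$ and $U_b^{\cat D}$ along the nerve of their intersection. Each of these three posets has a minimum element --- $x$, $b$, and $x \vee b$ respectively, the last existing by the coproduct hypothesis applied to the map $x \to a$ --- and is therefore weakly contractible, so the pushout $U_x^{\cat E}$ is too.

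For the extension, the only morphisms in $\cat E$ not already in $\cat D$ are the new arrow $a \to b$ together with composites $d \to a \to b$, $a \to b \to y$, and $d \to a \to b \to y$ involving pre-existing maps. Because $F(a) = F(b) = 0$ is a zero object, each such composite must be sent to the unique (up to contractible choice) zero map between its endpoints; this forces a coherent definition of $\tilde F$, and compatibility with any pre-existing $\cat D$-morphism that coincides in $\cat E$ with one of these composites is automatic because all morphisms into or out of a zero object agree up to contractible choice.

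The main obstacle is the chain-decomposition step justifying the Mayer--Vietoris pushout; this is where the assumption that $\cat D$ contains neither $a \to b$ nor $b \to a$ is used in an essential way. A direct deformation-retract argument would be cleaner but seems to require coproducts $x \vee b$ for $x$ \emph{above} $a$, which the hypothesis does not supply.
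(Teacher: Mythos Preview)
Your proof is correct and follows essentially the same approach as the paper. The paper's argument is terser: it simply notes that when $d$ maps to $a$ the comma category is $\cat D_{d/} \cup \cat D_{b/}$ with intersection $\cat D_{d\coprod b/}$, and declares the union weakly contractible. Your chain-decomposition argument is exactly the justification needed to see that the nerve of this union is genuinely a pushout of simplicial sets along the nerve of the intersection, a step the paper leaves implicit; so your version is a more fleshed-out rendering of the same proof rather than a different route.
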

\begin{proof}
Consider an arbitrary element $d$ in $\cat D.$ 

If $d$ does not map to $a$, then the simplicial set $d/i$ is equivalent to $D_{d/},$ which is of course contractible.

If $d$ does map to $a$, then $d/i$ is the (not disjoint) union $D_{d/} \cup D_{b/}.$ The intersection is $D_{d \coprod b /},$ implying the union is weakly contractible.

The rest of the claim follows from the universal properties of zero objects.
\end{proof}
The fourth lemma is a special case of \cite{htt} 4.2.3.10, and will give conditions under which we can remove from a diagram objects which are already colimits of certain subdiagrams. 
\begin{lem}\label{colim}
Let $\cat D$ be a partially ordered set. For a given object $x$, let ${\cat D'}_{/x}$ denote the sub-$\infty$-category of $D$ consisting of objects with a map to $x$, excluding $x$ itself.

Given an object $d$ and a functor $F$ from $\cat D$ into an $\infty$-category $\cat C$  such that $F|_{\cat D_{/d}}$ exhibits $F(d)$ as the colimit of $F|_{\cat D'_{/d}}$, it follows that the colimit of $F$ is the same as the colimit of $F|_{\cat D - \{d\}}.$
\end{lem}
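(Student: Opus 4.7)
The plan is to recognize the hypothesis as saying exactly that $F$ is a pointwise left Kan extension of its restriction to $\cat D \setminus \{d\}$, and then conclude by the general principle that colimits commute with left Kan extensions. Let $i: \cat D \setminus \{d\} \hookrightarrow \cat D$ denote the full subcategory inclusion, and set $G = F \circ i$.

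First I would verify that $F \simeq \Lan_i G$ pointwise. At any $x \in \cat D$ with $x \neq d$, the slice $\infty$-category $(\cat D \setminus \{d\})_{/x}$ has $x$ itself as a terminal object, so the pointwise formula gives $(\Lan_i G)(x) \simeq G(x) = F(x)$ automatically. At $x = d$, the slice $(\cat D \setminus \{d\})_{/d}$ is precisely the $\cat D'_{/d}$ of the statement, and the pointwise formula reads $(\Lan_i G)(d) \simeq \colim F|_{\cat D'_{/d}}$. This is exactly the equivalence supplied by hypothesis, so $F$ is identified with $\Lan_i G$.

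Next I would invoke the standard compatibility: for any functor $i: A \to B$ between small $\infty$-categories and any $G: A \to \cat C$ admitting a left Kan extension, one has $\colim_B \Lan_i G \simeq \colim_A G$, which follows formally from the fact that $\colim$ is itself a left Kan extension (along the unique functor $B \to *$) together with transitivity of Kan extensions. Applied to our situation this yields
\[
\colim_{\cat D} F \;\simeq\; \colim_{\cat D} \Lan_i G \;\simeq\; \colim_{\cat D \setminus \{d\}} G \;=\; \colim_{\cat D \setminus \{d\}} F|_{\cat D \setminus \{d\}},
\]
which is the desired identification.

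The main subtlety is bookkeeping about which slice appears in the pointwise Kan extension formula and matching it to the author's $\cat D'_{/d}$ (the set of objects mapping to $d$, excluding $d$ itself). Since $d \notin \cat D \setminus \{d\}$, the slice $(\cat D \setminus \{d\})_{/d}$ indeed excludes $d$, and the partial-order hypothesis means there is at most one morphism between any two objects, so no higher coherence data is lost in the identification. An alternative presentation is to cite \cite[Proposition~4.2.3.10]{htt} directly: the hypothesis guarantees that the induced map of slice $\infty$-categories is cofinal in the relevant sense, so the colimit over $\cat D$ agrees with the colimit over $\cat D \setminus \{d\}$.
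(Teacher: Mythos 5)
Your proof is correct and takes a genuinely different (and arguably cleaner) route than the paper. You recognize the hypothesis as saying precisely that $F$ is the pointwise left Kan extension of its restriction $G = F|_{\cat D \setminus \{d\}}$ along the full inclusion $i$ (using that for $x \neq d$ the slice $(\cat D\setminus\{d\})_{/x}$ has $x$ as a terminal object, so the pointwise condition is automatic there, while at $d$ it is exactly the stated hypothesis), and then conclude $\colim_{\cat D} F \simeq \colim_{\cat D\setminus\{d\}} G$ from transitivity of left Kan extensions. The paper instead invokes HTT Proposition 4.2.3.10 directly, constructing the auxiliary functor $\cat D \to \sSet_{/K}$ that sends $x\neq d$ to $K_{/x}$ and $d$ to $\cat D'_{/d}$, checking contractibility of the relevant fibers $\cat J_\sigma$. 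Both arguments are valid; yours is more conceptual and closer to the ``standard'' formulation of when a diagram may be pruned without changing its colimit, while the paper's is more explicitly simplicial. One small caveat: your closing remark describing HTT 4.2.3.10 as a cofinality statement is not quite accurate---that proposition is a recipe for computing a colimit as a colimit of colimits over a covering family of sub-diagrams, not a cofinality criterion---but since your main argument does not rely on that aside, the proof stands.
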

\begin{proof}
This is a special case of HTT 4.2.3.10. Let $K$ be the nerve of $\cat D - \{d\}$, and let $\cat J = \cat D.$ The functor $\cat D \to \sSet_{/K}$ is defined on objects as follows:
\begin{itemize}
    \item For $x \neq d$, send $x$ to $K_{/x}$.
    \item For $x = d$, send $x$ to $\cat D'_{/d}.$ 
\end{itemize}
Note that $\cat J_\sigma$ is automatically contractible, because for $\sigma = x_1 \to \cdots \to x_n$, $\cat J_\sigma$ is isomorphic to $\cat D_{x_n/},$ so the hypotheses of HTT 4.2.3.10 are satisfied. The lemma then follows because the induced diagram $q|_{N(\cat j)}$ is equivalent to $F$. 
\end{proof}

\begin{proof}[Proof of Propositions \ref{iter}]
Assume without loss of generality that $i = 0$, and set $j$ equal to $n$ so we can use $i$ and $j$ to index things throughout the proof. We will construct a diagram $F: \cat D \to \cat S$ whose colimit is $C_0^n$, and use the previous four lemmas to transform the diagram without changing the colimit, until we end up with a diagram whose colimit is $\Sigma^{n}Z_0^n.$ 

Construct a poset-shaped diagram $\cat D$ inductively as follows:
\begin{itemize}
    \item Start with the subdiagram $X_n \to X_{n-1} \to \cdots \to X_0$ of $X.$
    \item Attach the pullback (and hence pushout) square \[
\xymatrix{
Z_0^1 \ar[r] \ar[d] & 0^1 \ar[d] \\
X_1 \ar[r] &  X_0
},
\]
along the edge $X_1 \to X_0$, (where we label individual zero objects $0_i$ and $0^i$ to help keep track of them later.)
\item By the universal property, the map $X_2 \to X_1$ lifts to $Z_0^1,$ and we can attach the pushout square
\[
\xymatrix{
Z_0^2 \ar[r] \ar[d] & 0^2 \ar[d] \\
X_2 \ar[r] &  Z_0^1
}.
\]
\item Continue this pattern until we've attached the pushout square
\[
\xymatrix{
Z_0^n \ar[r] \ar[d] & 0^n \ar[d] \\
X_n \ar[r] &  Z_0^{n-1}
}.
\]
\item In the same way, we can attach the pushout square
\[
\xymatrix{
X_n \ar[r] \ar[d] & X_{n-1} \ar[d] \\
0_n \ar[r] &  C_{n-1}^n
}
\]
and factor $X_{n-1} \to X_{n-2}$ through a map $C_{n-1}^n \to X_1$. We continue as before until we've attached the pushout square
\[
\xymatrix{
C^n_1 \ar[r] \ar[d] & X_0 \ar[d] \\
0_1 \ar[r] &  C_0^n
}
\]
\end{itemize}
Let $\cat D \to \cat S$ be the diagram constructed above, \emph{excluding} $C_0^n.$  To give some idea what we mean, here's $\cat D$ when $n = 3$: 
\[
\xymatrix{
 & 0^3 \ar[d] & 0^2 \ar[d] & & \\
Z_0^3 \ar[ru] \ar[d] & Z_0^2 \ar[ru] \ar[d] & Z_0^1 \ar[r] \ar[d] & 0^1 \ar[d]  \\
 X_3 \ar[r] \ar[d] \ar[ru]& X_2 \ar[ru] \ar[r] \ar[d] & X_1 \ar[d] \ar[r] & X_0  \\
0_3 \ar[r] & C_2^3 \ar[ru] \ar[d] & C_1^3 \ar[ru] \ar[d] &  & \\
& 0_2 \ar[ru] & 0_1 &   \\
}
\]
Note that, by Lemma \ref{sfin}, the subcategory $0_1 \leftarrow C_1^n \to X_0$ is final, so the colimit of this entire diagram is $C_0^n$, as we might hope! 

Now, for $1 \leq j \leq n-1,$ note that the pushout square defining $C_j^n$ is final in $\cat D'_{/C_j^n}$ by Lemma \ref{sfin}, so we can remove the $C_j^n$ (in ascending order of $j$) by Lemma \ref{colim}. This leaves us with a diagram $\cat B$ containing no $C_j^n$'s, and with maps $0_i \to 0_j$ for $i > j$ and $0_i \to X_k$ for $i > k + 1.$  In the case $n = 3$, the diagram looks like this: 
\[
\xymatrix{
 & 0^3 \ar[d] & 0^2 \ar[d] & & \\
Z_0^3 \ar[ru] \ar[d] & Z_0^2 \ar[ru] \ar[d] & Z_0^1 \ar[r] \ar[d] & 0^1 \ar[d]  \\
 X_3 \ar[r] \ar[d] \ar[ru]& X_2 \ar[ru] \ar[r] \ar[dd] & X_1 \ar[dd] \ar[r] & X_0  \\
0_3 \ar[rd] \ar[rru] &   &  &  & \\
& 0_2 \ar[rruu] \ar[r] & 0_1  &   \\
}
\]
Now for $2 \leq k \leq n$, we will lift the maps $0_k \to X_{k-2}$ to maps $0_k \to 0^{k-1}.$ This can of course be done without changing the commutativity of the diagram. To check that the new diagram has the same colimit as the old, we need to check that, for arbitrary $d \in \cat B$ mapping to $0_k$, $0^{k-1}$ and $d$ have a coproduct in $\cat D$. We check the three possible cases:
\begin{itemize}
    \item If $d = 0_k,$ then $0^{k-1} \coprod d$ is $X_{k-1}.$
    \item If $d = 0_{k+1}$ or $d = X_k$ then $0^{k-1} \coprod d$ is $Z_0^{k-1}.$
    \item Otherwise, $0^{k-1} \coprod d = 0^{k-1}$
\end{itemize}
We conclude by Lemma \ref{lift} with $(a,b,c) = (0_k, 0^{k-1}, X_{k-2})$ that the modified diagram, which we will call $W$, still has the same colimit. In the case $n  = 3$, we have the following diagram:
\[
\xymatrix{
 & 0^3 \ar[d] & 0^2 \ar[d] & & \\
Z_0^3 \ar[ru] \ar[d] & Z_0^2 \ar[ru] \ar[d] & Z_0^1 \ar[r] \ar[d] & 0^1 \ar[d]  \\
 X_3 \ar[r] \ar[d] \ar[ru]& X_2 \ar[ru] \ar[r] \ar[dd] & X_1 \ar[dd] \ar[r] & X_0  \\
0_3 \ar[dr] \ar[rruuu] &  &  &  & \\
& 0_2 \ar[rruuu] \ar[r] & 0_1  &   \\
}
\]
Now, we apply Lemmas \ref{colim} and \ref{sfin} again to remove $X_0$, at which point $X_1$ only has one arrow pointing out of it and we can remove it by Lemma \ref{qa}. But now Lemmas \ref{colim} and \ref{sfin} tell us we can remove $Z_0^1$, and Lemma $\ref{qa}$ lets us remove $X_2.$ We can continue until we've removed all the $X_i$ and all the $Z_0^i$ except for $Z_0^n.$ 

We're left with a diagram containing only $Z_0^n$, $0^i$ and $0_i$, with $Z_0^n$ initial and maps $0^i \to 0_j$, $0_i \to 0_j$, $0^i \to 0^j$, and $0_i \to 0^j$ whenever $i > j.$ 
\[
\xymatrix{
 & 0^3 \ar[dddd] \ar[r] & 0^2 \ar[dddd] \ar[dr] & & \\
Z_0^3 \ar[ru] \ar[dd] & & & 0^1  \\
 &  &  &  \\
0_3 \ar[dr] \ar[rruuu] &  &  &  & \\
& 0_2 \ar[rruuu] \ar[r] & 0_1  &   \\
}
\]
Applying \cite{htt} 4.4.2.2 repeatedly shows that the colimit of this diagram is $\Sigma^n Z_0^n,$ as desired. 
\end{proof}

\bibliographystyle{unsrt}
\bibliography{main.bbl}

\begin{thebibliography}{10}

\bibitem{isaksen2023stable}
Daniel~C Isaksen, Guozhen Wang, and Zhouli Xu.
\newblock Stable homotopy groups of spheres: From dimension 0 to 90.
\newblock {\em Publications math{\'e}matiques de l'IH{\'E}S}, 137(1):107--243,
  2023.

\bibitem{hill2016nonexistence}
Michael~A Hill, Michael~J Hopkins, and Douglas~C Ravenel.
\newblock On the nonexistence of elements of kervaire invariant one.
\newblock {\em Annals of Mathematics}, pages 1--262, 2016.

\bibitem{browder1969kervaire}
William Browder.
\newblock The kervaire invariant of framed manifolds and its generalization.
\newblock {\em Annals of Mathematics}, pages 157--186, 1969.

\bibitem{bruner1993ext}
Robert~R Bruner.
\newblock Ext in the nineties.
\newblock {\em Contemporary Mathematics}, 146:71--71, 1993.

\bibitem{nassau2010secondary}
Christian Nassau.
\newblock On the secondary steenrod algebra.
\newblock {\em arXiv preprint arXiv:1011.2471}, 2010.

\bibitem{chua2022}
Dexter~Edralin Chua.
\newblock {\em The E3 Page of the Adams Spectral Sequence}.
\newblock PhD thesis, Harvard University, 2022.

\bibitem{wang2017triviality}
Guozhen Wang and Zhouli Xu.
\newblock The triviality of the 61-stem in the stable homotopy groups of
  spheres.
\newblock {\em Annals of Mathematics}, 186(2):501--580, 2017.

\bibitem{wang1967cohomology}
John~SP Wang.
\newblock On the cohomology of the mod-2 steenrod algebra and the non-existence
  of elements of hopf invariant one.
\newblock {\em Illinois Journal of Mathematics}, 11(3):480--490, 1967.

\bibitem{moss}
R~Michael~F Moss.
\newblock Secondary compositions and the adams spectral sequence.
\newblock {\em Mathematische Zeitschrift}, 115(4):283--310, 1970.

\bibitem{mmp}
J~Peter May.
\newblock Matric massey products.
\newblock {\em Journal of Algebra}, 12(4):533--568, 1969.

\bibitem{kochman1978chain}
Stanley~O Kochman.
\newblock A chain functor for bordism.
\newblock {\em Transactions of the American Mathematical Society},
  239:167--196, 1978.

\bibitem{gm}
Victor~KAM Gugenheim and J~Peter May.
\newblock {\em On the theory and applications of differential torsion
  products}, volume 142.
\newblock American Mathematical Soc., 1974.

\bibitem{bruner2009adams}
Robert~R Bruner.
\newblock An adams spectral sequence primer.
\newblock {\em Department of Mathematics. Wayne State University. Detroit MI},
  pages 48202--3489, 2009.

\bibitem{cohen}
Joel~M Cohen.
\newblock The decomposition of stable homotopy.
\newblock {\em Annals of Mathematics}, pages 305--320, 1968.

\bibitem{gheorghe2021special}
Bogdan Gheorghe, Guozhen Wang, and Zhouli Xu.
\newblock The special fiber of the motivic deformation of the stable homotopy
  category is algebraic.
\newblock {\em Acta Mathematica}, 226(2):319--407, 2021.

\bibitem{pstrkagowski2023synthetic}
Piotr Pstragowski.
\newblock Synthetic spectra and the cellular motivic category.
\newblock {\em Inventiones mathematicae}, 232(2):553--681, 2023.

\bibitem{dugger2010motivic}
Daniel Dugger and Daniel~C Isaksen.
\newblock The motivic adams spectral sequence.
\newblock {\em Geometry \& Topology}, 14(2):967--1014, 2010.

\bibitem{burklund2019boundaries}
Robert Burklund, Jeremy Hahn, and Andrew Senger.
\newblock On the boundaries of highly connected, almost closed manifolds.
\newblock {\em arXiv preprint arXiv:1910.14116}, 2019.

\bibitem{patchkoria2021adams}
Irakli Patchkoria and Piotr Pstragowski.
\newblock Adams spectral sequences and franke's algebraicity conjecture.
\newblock {\em arXiv preprint arXiv:2110.03669}, 2021.

\bibitem{burklund2021extension}
Robert Burklund.
\newblock An extension in the adams spectral sequence in dimension 54.
\newblock {\em Bulletin of the London Mathematical Society}, 53(2):404--407,
  2021.

\bibitem{burklundsynthetic}
Robert Burklund.
\newblock Synthetic cookware v2.0.
\newblock {\em preparation, draft available on the author’s webpage}.

\bibitem{mg}
Aaron Mazel-Gee.
\newblock {\em Goerss-Hopkins obstruction theory via model infinity
  categories}.
\newblock University of California, Berkeley, 2016.

\bibitem{boardman1973homotopy}
John~Michael Boardman and Rainer~M Vogt.
\newblock {\em Homotopy invariant algebraic structures on topological spaces},
  volume 347.
\newblock Springer, 1973.

\bibitem{joyal2002quasi}
Andr{\'e} Joyal.
\newblock Quasi-categories and kan complexes.
\newblock {\em Journal of Pure and Applied Algebra}, 175(1-3):207--222, 2002.

\bibitem{joyal2008notes}
Andr{\'e} Joyal.
\newblock Notes on quasi-categories.
\newblock {\em preprint}, 2008.

\bibitem{htt}
Jacob Lurie.
\newblock {\em Higher topos theory}.
\newblock Princeton University Press, 2009.

\bibitem{ha}
Jacob Lurie.
\newblock Higher algebra. 2021.
\newblock {\em Preprint, available at http://www. math. harvard. edu/\~{}
  lurie}, 2021.

\bibitem{glasman2013day}
Saul Glasman.
\newblock Day convolution for infinity-categories.
\newblock {\em arXiv preprint arXiv:1308.4940}, 2013.

\bibitem{gheorghe2022c}
Bogdan Gheorghe, Daniel~C Isaksen, Achim Krause, and Nicolas Ricka.
\newblock C-motivic modular forms.
\newblock {\em Journal of the European Mathematical Society},
  24(10):3597--3628, 2022.

\bibitem{lurie2015rotation}
Jacob Lurie.
\newblock Rotation invariance in algebraic k-theory.
\newblock {\em preprint}, 2015.

\bibitem{cis}
Denis-Charles Cisinski.
\newblock {\em Higher categories and homotopical algebra}, volume 180.
\newblock Cambridge University Press, 2019.

\bibitem{cirici2020model}
Joana Cirici, Daniela~Egas Santander, Muriel Livernet, and Sarah Whitehouse.
\newblock Model category structures and spectral sequences.
\newblock {\em Proceedings of the Royal Society of Edinburgh Section A:
  Mathematics}, 150(6):2815--2848, 2020.

\bibitem{brotherston2024monoidal}
James~A Brotherston.
\newblock Monoidal model structures on filtered chain complexes relating to
  spectral sequences.
\newblock {\em arXiv preprint arXiv:2402.09207}, 2024.

\bibitem{brotherston2024distributive}
James~A Brotherston.
\newblock A distributive lattice of model structures relating to spectral
  sequences.
\newblock {\em arXiv preprint arXiv:2402.09893}, 2024.

\bibitem{ariotta2021coherent}
Stefano Ariotta.
\newblock Coherent cochain complexes and beilinson t-structures, with an
  appendix by achim krause.
\newblock {\em arXiv preprint arXiv:2109.01017}, 2021.

\bibitem{hovey2007model}
Mark Hovey.
\newblock {\em Model categories}.
\newblock Number~63. American Mathematical Soc., 2007.

\bibitem{douady11suite}
A~Douady.
\newblock La suite spectrale d’adams: structure multiplicative, exp. 19.
\newblock {\em S{\'e}minaire Henri Cartan, Ecole Normale Sup{\'e}rieure, vol},
  11(2):1958--1959.

\bibitem{hedenlund2021multiplicative}
Alice~Petronella Hedenlund.
\newblock Multiplicative tate spectral sequences.
\newblock 2021.

\bibitem{helle2017pairings}
Gard~Olav Helle.
\newblock Pairings and convergence of spectral sequences.
\newblock Master's thesis, 2017.

\bibitem{sjodin1976set}
Gunnar Sj{\"o}din.
\newblock A set of generators for ext r (k, k).
\newblock {\em Mathematica Scandinavica}, 38(2):199--210, 1976.

\bibitem{ekmm}
Anthony~D Elmendorf, I~Kriz, MA~Mandell, and JP~May.
\newblock Rings, modules, and algebras in stable homotopy theory.
\newblock In {\em American Mathematical Society Surveys and Monographs,
  American Mathematical Society}, 1995.

\bibitem{barkan2023chromatic}
Shaul Barkan.
\newblock Chromatic homotopy is monoidally algebraic at large primes.
\newblock {\em arXiv preprint arXiv:2304.14457}, 2023.

\bibitem{burklund2020galois}
Robert Burklund, Jeremy Hahn, and Andrew Senger.
\newblock Galois reconstruction of artin--tate r-motivic spectra.
\newblock {\em arXiv preprint arXiv:2010.10325}, 4, 2020.

\bibitem{keller1988aisles}
Bernhard Keller and Dieter Vossieck.
\newblock Aisles in derived categories.
\newblock {\em Bull. Soc. Math. Belg. S{\'e}r. A}, 40(2):239--253, 1988.

\bibitem{tarrio2003construction}
Leovigildo~Alonso Tarr{\'\i}o, Ana~Jerem{\'\i}as L{\'o}pez, and
  Mar{\'\i}a~Jos{\'e} Salorio.
\newblock Construction of t-structures and equivalences of derived categories.
\newblock {\em Transactions of the American Mathematical Society},
  355(6):2523--2543, 2003.

\bibitem{pstrkagowski2022abstract}
Piotr Pstragowski and Paul VanKoughnett.
\newblock Abstract goerss-hopkins theory.
\newblock {\em Advances in Mathematics}, 395:108098, 2022.

\bibitem{lawrence1969matric}
Albert~F Lawrence.
\newblock {\em Matric Massey products and matric Toda brackets in the Adams
  spectral sequence}.
\newblock PhD thesis, The University of Chicago, 1969.

\bibitem{belmont2021toda}
Eva Belmont and Hana~Jia Kong.
\newblock A toda bracket convergence theorem for multiplicative spectral
  sequences.
\newblock {\em arXiv preprint arXiv:2112.08689}, 2021.

\bibitem{wall1960generators}
CTC Wall.
\newblock Generators and relations for the steenrod algebra.
\newblock {\em Annals of Mathematics}, pages 429--444, 1960.

\bibitem{balderrama2021deformations}
William Balderrama.
\newblock Deformations of homotopy theories via algebraic theories.
\newblock {\em arXiv preprint arXiv:2108.06801}, 2021.

\end{thebibliography}


\end{document}